\DeclareSymbolFontAlphabet{\mathbb}{AMSb}
\DeclareMathAlphabet{\mathbscr} {U}{BOONDOX-cal}{r}{n}
\providecommand{\U}[1]{\protect\rule{.1in}{.1in}}
\newtheorem{theorem}{Theorem}[section]
\newtheorem*{theorem*}{Theorem}
\newtheorem*{corollary*}{Corollary}
\newtheorem*{claim*}{Claim}
\newtheorem{corollary}[theorem]{Corollary}
\newtheorem{definition}[theorem]{Definition}
\newtheorem{example}[theorem]{Example}
\newtheorem{lemma}[theorem]{Lemma}
\newtheorem{proposition}[theorem]{Proposition}
\newtheorem*{proposition*}{Proposition}
\newtheorem{remark}[theorem]{Remark}
\numberwithin{equation}{section}
\newcommand{\id}{\mathrm{Id}}       
\newcommand{\pr}{\operatorname{pr}} 
\newcommand{\im}{\operatorname{Im}} 
\renewcommand{\graph}{\mathrm{Graph}} 
\renewcommand{\d}{\mathrm{d}} 
\newcommand{\can}{\mathrm{can}}     
\newcommand{\red}{\mathrm{red}}     
\newcommand{\Lie}{\mathscr{L}}      
\newcommand{\hol}{\operatorname{hol}} 
\newcommand{\hor}{\operatorname{hor}} 
\newcommand{\dev}{\operatorname{dev}} 
\newcommand{\Aff}{\operatorname{Aff}} 
\newcommand{\R}{\mathbb{R}}         
\newcommand{\C}{\mathbb{C}}         
\newcommand{\Z}{\mathbb{Z}}         
\newcommand{\Tt}{\mathbb{T}}         
\newcommand{\Ss}{\mathbb{S}}         
\newcommand{\X}{\mathfrak{X}}       
\newcommand{\depth}{\textrm{depth}} 
\newcommand{\st}{\mathrm{st}}
\newcommand{\diffto}{\xrightarrow{\raisebox{-0.2 em}[0pt][0pt]{\smash{\ensuremath{\sim}}}}}
\newcommand{\ract}{\curvearrowright}    
\newcommand{\GL}{\operatorname{GL}}     
\newcommand{\Span}{\operatorname{span}} 
\newcommand{\grad}{\operatorname{grad}}
\newcommand{\act}{\mathbscr{a}}     
\newcommand{\Act}{\mathbscr{A}}     
\newcommand{\G}{\mathcal{G}}            
\renewcommand{\H}{\mathcal{H}}          
\newcommand{\T}{\mathcal{T}}            
\renewcommand{\O}{\mathcal{O}}          
\newcommand{\tto}{\rightrightarrows}    
\newcommand{\al}{\alpha}                
\newcommand{\be}{\beta}                 
\newcommand{\dto}{\dashrightarrow}      
\newcommand{\sslash}{\mathbin{/\mkern-6mu/}}
\newcommand{\TLag}{\underline{\T_{\textrm{Lag}}}}
\newcommand{\TFlat}{\underline{\T_{\textrm{Flat}}}}
\newcommand{\TLagn}{\underline{\Tt^n_{\textrm{Lag}}}}
\newcommand{\IAMan}{\mathbf{IntAffMan}} 
\newcommand{\SympTB}{\mathbf{SympTorBun}} 
\newcommand{\twprod}{\mathbin{%
    \ooalign{\raise1.15ex\hbox{$\scriptstyle\sim$}\cr\hidewidth$\times$\hidewidth\cr}%
    }}
\newcommand{\rvline}{\hspace*{-\arraycolsep}\vline\hspace*{-\arraycolsep}}
\begin{document}
\title{K\"ahler metrics and toric Lagrangian fibrations}

\author{Rui Loja Fernandes}
\address{Department of Mathematics, University of Illinois at Urbana-Champaign, 1409 W. Green Street, Urbana, IL 61801 USA}
\email{ruiloja@illinois.edu}

\author{Maarten Mol}
\address{Department of Mathematics, University of Toronto, 40 St. George Street, Toronto, ON M5S 2E4 Canada}
\email{maarten.mol.math@gmail.com}

\thanks{RLF was partially supported by NSF grants DMS-2003223 and DMS-2303586. MM was supported by the Max Planck Institute for Mathematics.}

\begin{abstract}
We extend the Abreu-Guillemin theory of invariant K\"ahler metrics from compact toric symplectic manifolds to any, compact or not, symplectic manifold admitting a toric action of a symplectic torus bundle. This is a much wider class of manifolds which can be characterized as those symplectic manifolds admitting a Lagrangian fibration with at most elliptic singularities. The base of such a toric Lagrangian fibration is a codimension 0 submanifold with corners of an integral affine manifold, called a Delzant domain. This concept generalizes the Delzant polytope associated with a compact symplectic toric manifold. Given a Delzant domain of finite type, we provide a Delzant-type construction of a Lagrangian fibration with moment image being the specified Delzant domain. We establish a 1:1 correspondence between invariant K\"ahler metrics and pairs consisting of an elliptic connection on the total space of the fibration and a hybrid $b$-metric on the base Delzant domain, both with specified residues over the facets. Finally, we characterize extremal invariant K\"ahler metrics as those whose scalar curvature descends to an affine function on the base integral affine manifold. We show that this provides a method for finding and constructing extremal K\"ahler metrics.
\end{abstract}
\maketitle

\setcounter{tocdepth}{1}
\tableofcontents

\section{Introduction}

In the late 1990s, Guillemin \cite{Guill94} and Abreu \cite{Abr98} described all invariant, compatible K\"ahler metrics for a compact symplectic toric manifold, in terms of singular Hessian metrics on the associated Delzant polytope. Abreu's work also encompasses a fourth-order nonlinear PDE expressing the condition for an invariant K\"ahler metric to be extremal in the sense of Calabi. Subsequently, Donaldson \cite{Donald02} developed the analysis of Abreu's equation and formulated K-stability for polytopes, sparking a series of subsequent research works in the search for extremal K\"ahler metrics (see, e.g., the recent survey \cite{LS23}). The main aim of this work is to extend the Abreu-Guillemin theory to a much wider class of symplectic manifolds.

The class of symplectic manifolds we consider here, which includes both compact and non-compact manifolds, have a type of symmetry which is described by an action of a symplectic torus bundle $(\T,\Omega)$. This type of action includes as a special case the ordinary Hamiltonian torus actions but also, e.g., torus actions with cylinder valued moment maps or even examples with no globally defined Hamiltonian torus action. More precisely, we consider symplectic manifolds admitting \emph{toric} actions of symplectic torus bundles $(\T,\Omega)$ (see Definition \ref{def:toric:Hamiltonian:space}). These can be characterized, alternatively, as symplectic manifolds admitting a \emph{toric Lagrangian fibration}, i.e., a singular Lagrangian fibration with only elliptic-type singularities, or as \emph{locally toric symplectic manifolds}  (see Theorems \ref{thm:Dufour-Molino} and \ref{thm:equiv:toric:Lagrangian}):

\begin{theorem*}
For a map $\mu:(S,\omega)\to M$, the following are equivalent:
\vskip 5 pt
\begin{enumerate}[(i)]
\item $\mu:(S,\omega)\to M$ is a toric Lagrangian fibration;
\item $\mu:(S,\omega)\to M$ is the moment map of a toric Hamiltonian $\T$-space;
\item For each $x\in \mu(S)$ there is a local chart $(U,\phi)$ centered at $x$ such that $\phi\circ\mu:(\mu^{-1}(U),\omega)\to \R^n$ is the moment map of a toric $\Tt^n$-action.
\end{enumerate}
\end{theorem*}

We picture a toric Hamiltonian $\T$-space as in the following diagram
\[
\vcenter{\xymatrix@C=10pt@R=30pt{
(\T,\Omega) \ar[dr]_p & {}\save[]+<-22pt,0cm>*\txt{\Large $\circlearrowright$}\restore (S, \omega) \ar[d]^-{\mu} \\
  & M   }}
\]
where $M$ is an integral affine manifold, $p:(\T,\Omega)\to M$ is the corresponding symplectic torus bundle, $(S,\omega)$ is a symplectic manifold and $\T$ acts on $S$ along the moment map $\mu$. Delzant's classification of toric symplectic manifolds \cite{De88} has been extended to a classification of toric $\T$-spaces in terms of their moment map image $\Delta=\mu(S)$ \cite{Maarten} (see also Section \ref{sec:toricspaces}). The space $\Delta$ is now a codimension 0 submanifold with corners of the integral affine manifold $M$, called a \textit{Delzant domain}. Delzant domains are much more general than Delzant polytopes. For instance, they can have non-trivial fundamental group. 
In an informal sense, one can say that we consider ``toric'' symplectic manifolds whose associated Delzant polytopes -- which now live inside an integral affine manifold $M$, instead of $\R^n$ -- can have non-trivial topology. This allows for a much broader class of symplectic manifolds.

In the particular case of principal Hamiltonian $\T$-spaces, which correspond to Lagrangian fibrations without singularities, the classification reduces to Duistermaat's result in \cite{Dui80} (see also Section \ref{sec:principal}). The classical symplectic toric manifolds, both compact and non-compact (see \cite{KL15}), fit into this framework as follows (see Theorem \ref{thm:classical:toric:spaces}):

\begin{theorem*}
    A toric Hamiltonian $\T$-space is a classical symplectic toric manifold if and only if the associated Delzant space has trivial affine holonomy.
\end{theorem*}

Given an integral affine manifold $M$ and a Delzant domain $\Delta\subset M$, the classification mentioned above requires the construction of a toric $\T$-space $\mu:(S,\omega)\to M$
with $\mu(S)=\Delta$. This is done for general Delzant domains in \cite{Maarten}. Here, for a large class of Delzant domains, we give an alternative construction in the spirit of Delzant's original construction. This is the content of the following theorem (see Section \ref{sec:delzant} for details). 

\begin{theorem*} 
Let $\Delta\subset M$ be a finite type Delzant domain with $d$ facets. Every toric $\T$-space with moment image $\Delta$ can be realized as a symplectic quotient
\begin{equation}
    \label{eq:sympletic:quotient}
    \big((P\times \C^d)\sslash(\Gamma\ltimes \Tt^d),\,\omega_\red\big),
\end{equation}
with $P$ a principal Hamiltonian $p^*\T$-space, $p:\tilde{M}\to M$ the universal covering space {and $\Gamma$ the image of the group homomorphism $\pi_1(\Delta)\to \pi_1(M)$ induced by the inclusion $\Delta\hookrightarrow M$}. If the Lagrangian Chern class vanishes one can let $P=p^*\T$.
\end{theorem*}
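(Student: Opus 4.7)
The plan is to carry out an equivariant version of Delzant's original symplectic quotient construction, performed upstairs on the universal cover $p:\tilde{M}\to M$, where the integral affine structure is rendered globally by a developing map and facet normals are single-valued, and then descended to $M$ via the deck group $\Gamma=\pi_{1}(M)$. I would start by lifting $\Delta$ to the Delzant subspace $\tilde{\Delta}=p^{-1}(\Delta)\subset\tilde{M}$; the $d$ facets of $\Delta$ have primitive inward normals $u_{1},\dots,u_{d}$ that become single-valued on $\tilde{M}$ via the developing map, and these exponentiate to a homomorphism of abelian group bundles $\iota:\underline{\Tt^{d}}\to p^{*}\T$ over $\tilde{M}$.

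Next I would assemble the two building blocks: first, a principal Hamiltonian $p^{*}\T$-space $P\to\tilde{M}$, which exists by the principal classification recalled in Section~\ref{sec:principal}, and which can be taken equal to $p^{*}\T$ itself precisely when the Lagrangian-Chern class vanishes; and second, the standard Hamiltonian $\Tt^{d}$-space $(\C^{d},\omega_{\st})$ with moment map $\mu_{\st}(z)=\tfrac{1}{2}(|z_{1}|^{2},\ldots,|z_{d}|^{2})$. On $P\times\C^{d}$ I then define a Hamiltonian $\Tt^{d}$-action by combining the standard rotation on $\C^{d}$ with the action on $P$ obtained by composing $\iota$ with the principal $p^{*}\T$-action. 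Symplectic reduction at the appropriate level would cut out a space on which the residual $p^{*}\T$-action is toric with moment map image $\tilde{\Delta}$: in the interior this recovers $P$, while near each facet the standard elliptic singular model is inserted precisely by the corresponding $\C$-factor, matching the local normal form of a toric Lagrangian fibration guaranteed by the theorem in the introduction.

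To return to $M$, I would then quotient by $\Gamma$. Deck transformations act on $\tilde{M}$, hence on $p^{*}\T$ (fiberwise by monodromy) and on $P$ (via a lift determined by the principal bundle structure), while the induced action on $\Tt^{d}$ is the permutation tracking how $\Gamma$ shuffles the lifts of each of the $d$ facets. Together these assemble into a semidirect product $\Gamma\ltimes\Tt^{d}$ acting on $P\times\C^{d}$, and the combined symplectic quotient yields the required toric $\T$-space with moment map image $\Delta$. That every toric $\T$-space with image $\Delta$ arises this way would then follow by invoking the classification of Section~\ref{sec:toricspaces}, since both sides are determined up to isomorphism by the same Delzant invariants. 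The main obstacle I foresee is the $\Gamma$-equivariant construction of $\iota$ and of the lifted $\Gamma$-action on $P$: one must verify that the twist of $\iota$ by affine monodromy is compensated exactly by the permutation action of $\Gamma$ on the factors of $\Tt^{d}$, so that the semidirect action descends consistently; a secondary technicality is checking that the zero level of the $\Tt^{d}$-moment map is cut out transversely and that the $(\Gamma\ltimes\Tt^{d})$-action is proper and locally free on it, which should reduce to the Delzant/transversality conditions built into the definition of $\Delta$.
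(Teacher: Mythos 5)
Your proposal follows the paper's own route almost exactly: first the simply connected construction via the boundary defining functions and reduction of $\T_{\widetilde\Lambda}\times\C^d$ by $\Tt^d$ at level zero (Theorem \ref{thm:Delzant:1:connected}), then descent by a semidirect product $\Gamma\ltimes\Tt^d$ in which $\Gamma$ permutes the coordinates of $\C^d$ (Theorem \ref{thm:Delzant}), with freeness on the zero level coming from Proposition \ref{prop:Delzant:half-space:depth:intersection:number}. So there is no divergence of method to report; the issues are two places where the sketch, as written, would not go through.

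First, the indexing. You take $d$ to be the number of facets of $\Delta$ and claim their ``primitive inward normals become single-valued on $\tilde M$.'' That is false in general: a single facet of $\Delta$ lifts to several (a priori infinitely many) facets of the lifted Delzant subspace, and distinct lifts give \emph{distinct} integral affine defining functions whenever the affine holonomy along loops in $\Delta$ moves them --- this is exactly the situation of Example \ref{ex:inftypeDelzantspace}, and it is the case the theorem is designed to cover. The correct indexing set is $\H(\widehat{\Delta})$, the primitive boundary defining functions of $\widehat{\Delta}:=(i_\Delta)_*(\widetilde{\Delta})$, whose finiteness (together with the half-space description \eqref{eq:Delzant:half-space:type}, which you also need in order to get moment image exactly $\widehat\Delta$ and not something larger) is precisely the finite type hypothesis; and $\Gamma$ should be the image of $\pi_1(\Delta)\to\pi_1(M)$ acting on the single component $\widehat\Delta$, not all of $\pi_1(M)$ acting on $p^{-1}(\Delta)$, which may be disconnected with infinitely many facets. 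With that correction your ``permutation of lifts'' is the right structure, but the $\Tt^d$ you act by and the $\C^d$ you adjoin must both be indexed by $\H(\widehat\Delta)$. Second, the final appeal to the classification (``both sides have the same Delzant invariants'') is not yet an argument: you must identify the Lagrangian--Chern class of the quotient with the restriction to $\Delta$ of the class of $P$, and you must explain why every class in $\check{H}^1(\Delta,\TLag)$ is realized by some $\Gamma$-equivariant principal $P$ over $\widetilde M$ --- which requires $P$ to be pulled back from a principal $\T_\Lambda$-space on $M$, and in general requires first shrinking $M$ to a neighborhood of $\Delta$ so that the restriction map $\check{H}^1(M,\TLag)\to\check{H}^1(\Delta,\TLag)$ becomes surjective. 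The paper handles the trivial class by exhibiting an explicit equivalence of the quotient with the canonical space $\mu_\Delta:(S_\Delta,\omega_\Delta)\to M$, and the general class only through the pullback mechanism just described; your sketch should do the same rather than quoting the classification for both sides.
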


As we explain in Section \ref{sec:torus:bundles}, one can still make sense of $\T$-invariant objects, such as $\T$-invariant metrics and complex structures. Using this we extend the Abreu-Guillemin theory to toric $\T$-spaces. Our main result is the following (see Section \ref{sec:inv:metrics:toric}). 

\begin{theorem*}
Let $\mu:(S,\omega)\to M$ be a toric $\T$-space, with Delzant domain $\Delta:=\mu(S)\subset M$. There is a 1-1 correspondence between invariant K\"ahler metrics on $S$ compatible with $\omega$ and the following data.
\begin{enumerate}[(i)]
    \item A flat Lagrangian elliptic connection for $\mu:S\to M$ with zero radial residue over the open facets of $\Delta$.
    \item A Hessian hybrid $b$-metric on $\Delta$ with residues at the open facets given by the primitive outward-pointing normals multiplied by $-\tfrac{1}{4\pi}$.
\end{enumerate} 
Moreover, an invariant K\"ahler metric $G$ is extremal if and only if its scalar curvature $S_G$, viewed as function on $\Delta$, is affine.
\end{theorem*}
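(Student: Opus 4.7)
The plan is to establish the bijection by reducing, via the complex structure $J$, to a local Guillemin-type analysis near the elliptic singularities of $\mu$, and to derive the extremality statement from Abreu's scalar-curvature formula. Given a $\T$-invariant K\"ahler metric $G$ compatible with $\omega$, the complex structure $J$ determined by $\omega(X,JY)=G(X,Y)$ is $\T$-invariant. On the principal locus $S^\circ\subset S$ the vertical distribution $V:=\ker \d\mu$ is Lagrangian and tangent to the $\T$-orbits, so $H:=JV$ is a Lagrangian complement. The K\"ahler condition forces $H$ to be involutive and flat, and its $\T$-invariance makes it descend to a flat Lagrangian connection for $\mu:S^\circ\to\Delta^\circ$. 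Dually, $G|_V$, transported via $v\mapsto \omega(v,\cdot)$, yields a Riemannian metric on $\Delta^\circ$ which, by the standard Guillemin argument in action-angle coordinates, is the Hessian of a locally defined symplectic potential.

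The main technical step is to control the behavior across the elliptic strata. The classification of toric $\T$-spaces invoked earlier in the paper provides, near any point where $\mu$ has an elliptic singularity of corank $k$, a normal-form neighborhood $\C^k\times(T^*U/\Lambda)$ with standard action and moment map. On such a model the Abreu-Guillemin analysis of compatible invariant K\"ahler metrics is exact: writing $G$ in terms of a symplectic potential $u$ on $\R_{\ge 0}^k\times U$, smoothness of $G$ and $J$ across the open facet $\{\mu_i=0\}$ forces $u-\tfrac{1}{4\pi}\mu_i\log\mu_i\in C^\infty$. In the hybrid $b$-metric language this is precisely the condition that $\mathrm{Hess}(u)$ has residue $\tfrac{1}{4\pi}$ times the primitive outward normal to the facet, while $JV$ extends across the facet exactly when the radial residue of the connection vanishes. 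Conversely, given a pair $(H,g)$ satisfying the stated conditions, one recovers a symplectic potential via $g=\mathrm{Hess}(u)$ in each normal-form chart, assembles the local K\"ahler metrics provided by Guillemin's formula, and glues using $\T$-invariance together with the connection $H$. The main obstacle here is the corner bookkeeping: one must verify that the two residue conditions remain independent across the corner stratification and that the local flat Lagrangian connections patch into a single global elliptic connection on $S$.

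For the extremality statement, Abreu's formula reads, in action coordinates on $\Delta$,
\[
S_G=-\sum_{i,j}\pd{i}\pd{j}g^{ij},
\]
where $(g^{ij})$ is the inverse of the Hessian of the symplectic potential. Since $G$ is $\T$-invariant, $S_G$ is $\T$-invariant and descends to a smooth function on $\Delta$; smoothness up to the facets follows from the $b$-metric residue conditions, which cancel the singular contributions of the log terms. Calabi's condition that $\grad_G S_G$ be a holomorphic Killing field, combined with $\T$-invariance, reduces in action-angle coordinates to the requirement that $\d S_G$ be a constant covector on $\Delta$, i.e.\ that $S_G$ be affine. This reproduces the classical Abreu argument verbatim on each normal-form chart, and the local statements patch together by $\T$-invariance.
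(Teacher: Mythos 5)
Your outline is correct and the overall strategy coincides with the paper's: reduce to the regular case on $\mathring{S}$ (Theorem \ref{thm:kahlmetrlagrfib}) to get the flat Lagrangian connection and the Hessian metric on $\mathring{\Delta}$, then control the boundary behaviour in the standard elliptic local model. Where you genuinely diverge is in how that boundary step is carried out. You run the classical Abreu--Guillemin analysis through a symplectic potential $u$ and the condition $u-\tfrac{1}{4\pi}\ell_i\log\ell_i\in C^\infty$; the paper never introduces potentials in the proof, but instead proves a pointwise characterization (Lemma \ref{lemma:smoothnessinvmetrics:localmodel}) of exactly which $\Tt^n$-invariant symmetric $2$-tensors on $\C^k\times\Tt^{n-k}\times\R^{n-k}$ extend smoothly across the strata --- the key constraints being the orders of vanishing of the coefficients in the polar frame and the relation $g_{\phi_j,\phi_j}=(2\pi)^2g_{r_j,r_j}$ over $\{x^j=0\}$, from which the residue $\tfrac{1}{4\pi}$ falls out by comparing $G$ with $G^{-1}$ at a facet. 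The tensorial route is coordinate-free, needs no patching, and applies verbatim in the converse direction; your route connects directly to the classical literature but compresses two points where real work is required. First, the Guillemin condition $u-u_0\in C^\infty$ alone is \emph{not} equivalent to smoothness of the metric: one also needs Abreu's determinant condition, which in the paper is the non-degeneracy built into Definition \ref{def:metricwithpoles}, so your identification of ``residue $=\tfrac{1}{4\pi}\times$ normal'' with the boundary condition should record that the non-degeneracy of the hybrid $b$-metric is carrying that extra condition. Second, the assertion that $(\ker\d\mu)^\perp$ extends to an elliptic connection with zero radial residue must be \emph{proved} for an arbitrary invariant compatible $J$ (it is a conclusion of the theorem, not a hypothesis that may fail); it amounts to showing that the horizontal lift of the Euler-like $b$-vector field $2x^i\partial_{x^i}$ is Euler-like with respect to $S_F$, which the paper extracts from the orthogonality relations and the invertibility of the matrix of $\delta^\sharp$. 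Your treatment of extremality matches the paper's (Theorem \ref{thm:lagrfib:metrics} on $\mathring{\Delta}$ plus density of $\mathring{S}$).
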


Elliptic connections and hybrid $b$-metrics are tensors with specified singularities over the boundary of $\Delta$, introduced in Sections \ref{sec:elliptic:connections} and \ref{sec:bmetrics}, respectively. These are defined in a coordinate-free way using the elliptic tangent bundle of $\mu$ \cite{CaGu,CKW22,CKW23,CW22} and the $b$-tangent bundle of $\Delta$ \cite{Mel93}. In the classical theory, the presence of an elliptic connection is hidden in a choice of action-angle coordinates over the interior of the polytope, and any two such choices lead to isomorphic invariant K\"ahler metrics. This is no longer the case in our more general setting. {We also emphasize that the theorem does not assume $S$ to be compact -- though the notion of toric $\T$-space does require $\mu$ to be proper as map onto its image -- and so includes, as a special case, Theorem 2.14 in \cite{AS12} concerning non-compact symplectic toric manifolds.}

When $\Delta\subset M$ is a finite type Delzant domain and $M$ carries a Hessian metric, our Delzant type construction above allows one to apply K\"ahler reduction, leading to the following result (see Section \ref{sec:inv:metrics:toric}).

\begin{corollary*}
    Let $(M,g)$ be a connected integral affine Hessian manifold and $\Delta\subset M$ a Delzant domain of finite type with primitive boundary defining functions $\ell_1,\dots,\ell_d$. When $P=p^*\T$, there is a reduced K\"ahler metric on the symplectic quotient \eqref{eq:sympletic:quotient} which induces the Hessian hybrid $b$-metric 
    \[
    g_\Delta=g+\mathrm{Hess}_\Lambda(\phi)
    \] 
    on $\Delta$, where $\phi$ is the smooth function on $\mathring{\Delta}$ given by 
    \begin{equation}
    \label{eq:canonical:potential:introd}
        \phi=-\frac{1}{4\pi}\sum_{i=1}^d\ell_i\log|\ell_i|.
    \end{equation}
\end{corollary*}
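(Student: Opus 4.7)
By the preceding theorem, the reduced K\"ahler metric on \eqref{eq:sympletic:quotient} determines a unique Hessian hybrid $b$-metric $g_\Delta$ on $\Delta$; the task is to identify it explicitly. The plan is to compute $g_\Delta$ on the open stratum $\mathring{\Delta}$ via K\"ahler reduction in symplectic potentials, and then to verify that the resulting expression has the prescribed facet residues.

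First, I would reduce to the interior. Over $\mathring{\Delta}$, all coordinates $z_i$ are nonzero on the level set of the Delzant moment map, so $\Gamma\ltimes\Tt^d$ acts freely and the quotient restricts to a principal Hamiltonian $\T$-space. By the principal case of the main theorem (Duistermaat's correspondence, Section~\ref{sec:principal}), the restriction $g_\Delta|_{\mathring{\Delta}}$ is an ordinary $\Lambda$-Hessian metric on $\mathring{\Delta}$, namely the $\Lambda$-Hessian of the symplectic potential of the reduced K\"ahler structure in integral affine momentum coordinates. Thus, modulo the sign convention linking symplectic potentials to hybrid $b$-metrics in the main theorem, $g_\Delta|_{\mathring{\Delta}}$ is computed by the Abreu--Guillemin recipe.

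Next, I would compute this symplectic potential explicitly. The product K\"ahler structure on $P\times\C^d$ is separable, hence its total symplectic potential in integral affine momentum coordinates $(\mu,\nu)$ is $u_g(\mu)+u_0(\nu)$, where $u_g$ is a symplectic potential for the Hessian K\"ahler structure on $P$ lifted from $g$ (so that $\mathrm{Hess}_\Lambda(u_g)=g$ on $\tilde M$), and $u_0$ is the flat symplectic potential on $\C^d$ in its standard $\Tt^d$-momentum coordinates $\nu$. A direct polar-coordinate computation gives $u_0(\nu) = \tfrac{1}{4\pi}\sum_i(\nu_i\log\nu_i - \nu_i)$ modulo $\Lambda$-affine terms, the factor $\tfrac{1}{4\pi}$ being dictated by the normalization of the standard $\Tt^d$-moment map with respect to the lattice $\Lambda$. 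K\"ahler reduction at the Delzant level then amounts to substituting $\nu_i=\ell_i(\mu)$, and since $\sum_i \ell_i(\mu)$ is $\Lambda$-affine (each $\ell_i$ being so) it can be discarded, yielding on $\mathring{\Delta}$ the reduced symplectic potential $u_g(\mu) + \tfrac{1}{4\pi}\sum_i \ell_i(\mu)\log\ell_i(\mu)$. Taking the $\Lambda$-Hessian and absorbing the sign convention of the main theorem produces $g_\Delta = g + \mathrm{Hess}_\Lambda(\phi)$ on $\mathring{\Delta}$, with $\phi$ as stated.

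It remains to verify the residue condition and to identify $g_\Delta$ globally on $\Delta$. At a facet $F_i=\{\ell_i=0\}$ only the $i$-th summand of $\phi$ contributes a singular piece to $\mathrm{Hess}_\Lambda(\phi)$, and its leading behavior in integral affine coordinates is proportional to $\ell_i^{-1}\,\d\ell_i\otimes\d\ell_i$. In the coordinate-free $b$-metric formalism of Section~\ref{sec:bmetrics}, this is precisely the radial residue equal to $\tfrac{1}{4\pi}$ times the primitive outward-pointing normal at $F_i$, which is the residue data prescribed by the main theorem. Hence $g+\mathrm{Hess}_\Lambda(\phi)$ is a Hessian hybrid $b$-metric on $\Delta$ with the correct residues, and by the uniqueness part of the main theorem it must equal $g_\Delta$. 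The principal obstacle is the K\"ahler-reduction step: one must extend the classical Abreu--Guillemin sum-of-potentials rule to the present framework, where $P\to\tilde M$ may be a non-trivial principal $p^*\T$-space and the $\Gamma$-action must descend cleanly to the quotient, while carefully tracking the $\tfrac{1}{4\pi}$ normalization and the sign conventions relating symplectic potentials to Hessian hybrid $b$-metrics. Once the formula on $\mathring{\Delta}$ is in place, the residue check is essentially formal.
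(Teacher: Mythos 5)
Your proposal follows essentially the same route as the paper: the paper's own proof is a one-paragraph appeal to ``the usual K\"ahler reduction recipe'' (restrict the product metric to the zero level, descend to the quotient), and your symplectic-potential computation is just that recipe carried out explicitly, with the residue check as an added (and strictly speaking redundant) consistency verification, since two hybrid $b$-metrics agreeing on the dense interior $\mathring{\Delta}$ already coincide.

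Two small points need fixing. First, the substitution in your reduction step should be $\nu_i=-\ell_i$, not $\nu_i=\ell_i$: the level set is $\mu_\phi=0$ with $\mu_\phi=\phi+(|z_1|^2,\dots,|z_d|^2)$, so $|z_i|^2=-\ell_i\ge 0$ on $\Delta$ (indeed $\log\ell_i$ is undefined on $\mathring{\Delta}$ where $\ell_i<0$). The minus sign in \eqref{eq:canonical:potential:introd} comes from this constraint, not from ``a sign convention of the main theorem''; with the correct substitution one gets $\tfrac{1}{4\pi}\sum_i(-\ell_i)\log(-\ell_i)$ plus affine terms, i.e.\ exactly $\phi$ modulo affine functions, and no convention needs absorbing. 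Second, you flag but do not resolve the descent of the potential from $\widehat{\Delta}\subset\widetilde{M}$ to $\Delta$: the individual terms $\ell_i\log|\ell_i|$ are defined only on the universal cover and need not be $\Gamma$-invariant. The paper's resolution is that the $\Gamma$-action permutes the primitive boundary defining functions, so the \emph{sum} $\sum_i\ell_i\log|\ell_i|$ is $\Gamma$-invariant and descends to a smooth function on $\mathring{\Delta}$; this one-line observation should be included rather than left as an ``obstacle.''
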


The relationship between our Delzant type construction and the standard Delzant construction will be explained in the main body of the paper.

Our main theorem above provides an effective way of constructing invariant K\"ahler metrics. In this paper, we only discuss a few illustrative examples, although we do show how to recover, by our techniques, the extremal K\"ahler metrics on complex ruled surfaces over an elliptic curve that appear in the work of Apostolov et al. \cite{ACGT08} -- see Example \ref{ex:nontrivspherebun:nonstdIA:cylinder:3}. We hope in future work to discuss more elaborate examples.


{This paper is organized as follows. Section \ref{sec:torus:bundles}  collects some basic definitions, facts and some background on Hamiltonian actions of symplectic torus bundles and integral affine structures. We also start discussing some examples that will be used throughout the paper to illustrate the theory. Section \ref{sec:principal} discusses non-singular Lagrangian fibrations, which we view as principal Hamiltonian spaces for symplectic torus bundles. We recall Duistermaat's classification of non-singular Lagrangian fibrations from this point of view and we introduce the notion of Lagrangian connection for such fibrations. We explain the relation of such connections to the Lagrangian Chern class of the fibration, and give a cohomological classification of flat Lagrangian fibrations. Section \ref{sec:toricspaces} extends the theory in the previous
section to singular Lagrangian fibrations with elliptic singularities, i.e., toric Lagrangian fibrations. For that, we give a short overview of the notions of elliptic and b-tangent bundles, and we introduce the notion of elliptic Lagrangian connection. This section also includes a discussion of how classical symplectic toric manifolds fit into this more general framework. Section \ref{sec:delzant} discusses Delzant domains of finite type and contains a proof of our Delzant type construction for such domains. The last two sections of the paper are dedicated to the study of invariant K\"ahler metrics: Section \ref{sec:inv:metrics:non-singular} discusses the theory for non-singular Lagrangian fibrations and Section \ref{sec:inv:metrics:toric} extends this to toric Lagrangian fibrations. }
\medskip

{\bf Acknowledgments.} We would like to express our gratitude to Marius Crainic, Eugene Lerman, Ioan Marcut, David Mart\'{i}nez Torres, Daniele Sepe and Aldo Witte for the many inspiring discussions that 
contributed to shaping the ideas presented in this paper. Special thanks are extended to Miguel Abreu, who provided invaluable advice on 
this project and has become a collaborator. We also acknowledge the generous support of the Max Planck Institute for Mathematics in Bonn and the University of Illinois Urbana-Champaign at various stages of this project. Finally, we are indebted to the anonymous referee, whose many helpful suggestions and comments greatly improved the paper.

\medskip
{\bf Conventions and notations.} In this paper, by a torus we mean a compact, connected, real, abelian Lie group. For manifolds with corners, always denoted by $\Delta$, we follow the conventions and terminology from \cite{Maarten}. In particular, $\R^n_k$ denotes the manifold with corners $[0,\infty[^k\times \R^{n-k}$. For further background on manifolds with corners see, e.g., \cite{Michor80}. 

\section{Hamiltonian $\T$-spaces}
\label{sec:torus:bundles}

In this sections we introduce Hamiltonian actions of symplectic torus bundles and  discuss some basic properties. 

\subsection{Symplectic torus bundles}
\label{subsec:torus:bundles:background}

In this paper, by a {\bf torus bundle} we mean a fiber bundle $p:\T\to M$ where:
\begin{itemize}
    \item each fiber is a Lie group isomorphic to a torus;
    \item the fiberwise multiplication $m:\T\times_M \T\to\T$, inversion $i:\T\to\T$ and unit $u:M\to\T$ are smooth maps.
\end{itemize}
On the other hand, an {\bf equivalence} of torus bundles is a diffeomorphism
\[
\xymatrix{
\T_1\ar[r]^\Psi\ar[d]_{p_1} & \T_2\ar[d]^{p_2}\\
M\ar@{=}[r]_{\id} & M}
\]
which restricts to a Lie group isomorphism on each fiber. 

We will be specially interested in torus bundles carrying a symplectic form: 

\begin{definition}
A {\bf symplectic torus bundle} is a torus bundle $p:\T\to M$ with a symplectic form $\Omega\in\Omega^2(\T)$ which is multiplicative, i.e., such that
\[ m^*\Omega=\pr_1^*\Omega+\pr_2^*\Omega,\] where $m,\pr_1,\pr_2:\T\times_M \T\to\T$ 
are the fiberwise multiplication and the projections onto each factor, respectively. An {\bf equivalence} of symplectic torus bundles is an equivalence of torus bundles preserving the symplectic forms.
\end{definition}

The multiplicative condition forces the fibers of $p:\T\to M$ and the unit section $u:M\to\T$ to be Lagrangian submanifolds, so $\dim\T=2\dim M$.

\begin{example}
    The simplest example of a symplectic torus bundle is 
    \[ \pr:\Tt^n\times\R^n\to\R^n,\qquad \Omega=\sum_{i=1}^n \d\theta_i\wedge \d x^i, \]
    where $(\theta_1,\dots,\theta_n)$ denote the angle coordinates on $\Tt^n:=\R^n/\Z^n$.
\end{example}

\begin{remark}[Symplectic groupoids]
 The reader familiar with the groupoid language will recognize that a (symplectic) torus bundle is just a source-connected (symplectic) groupoid where the source and target maps coincide and multiplication is abelian. 
 
 Many of our results extend to the more general setting of symplectic groupoids. We will discuss this in future work. For now, we will sometimes use the language of symplectic groupoids in an auxiliary manner, but keeping the exposition self-contained. The curious reader can consult \cite{CFM21} for the basics of (symplectic) groupoids.
\end{remark}


Given a symplectic torus bundle $p:(\T,\Omega)\to M$, the symplectic form allows to identify the Lie algebras of the fibers with the fibers of $T^*M$. Using this identification, the fiberwise exponential maps assembles to a surjective, symplectic, submersion
\[ \exp:T^*M\to \T, \quad \exp^*\Omega=\Omega_\can. \]
Since the identity section is Lagrangian, it follows that its preimage
\[ \Lambda:=\ker(\exp)\subset T^*M, \]
satisfies the two properties in the following definition.

\begin{definition}
\label{defnintegral:aff:manifold}
An {\bf integral affine structure} on a manifold $M$ is a closed submanifold $\Lambda\subset T^*M$ satisfying:
\begin{enumerate}[(i)]
    \item $\Lambda$ is a Lagrangian submanifold of $(T^*M,\Omega_\can)$;
    \item $\Lambda_x:=\Lambda\cap T_x^*M$ is a full rank lattice for all $x\in M$.
\end{enumerate}
\end{definition}

\begin{remark}
One usually defines an integral affine structure on a manifold $M$ as a maximal atlas for which the coordinate changes are (restrictions of) maps of the form 
\[ x\mapsto Ax+b,\quad A\in \GL_m(\Z),\ b\in \R.\] 
Such atlases are in bijective correpondence with submanifolds $\Lambda\subset T^*M$ as in Definition \ref{defnintegral:aff:manifold}. Given $\Lambda$ one defines the integral affine charts to be any chart $(U,x^1,\dots,x^n)$ such that
\[ \Lambda|_U=\Z\d x^1+\cdots+\Z\d x^n. \]
Conversely, given such an integrable affine atlas the last expression defines $\Lambda$. Henceforth, we shall make use of this correspondence with no further notice.  We refer to \cite{CFT19,AM55,GH84} for details on integral affine structures.
\end{remark}

Hence, a symplectic torus bundle $p:(\T,\Omega)\to M$ determines a canonical integral affine structure $\Lambda$ on $M$ and the exponential map induces an equivalence
\[ T^*M/\Lambda\diffto \T. \]
This diffeomorphism pulls back $\Omega$ to the unique symplectic form ${\Omega}_\Lambda\in\Omega^2(T^*M/\Lambda)$
such that
\[ q^*{\Omega}_\Lambda=\Omega_\can, \]
where $q:T^*M\to T^*M/\Lambda$ is the projection.

Conversely, given any integral affine structure $\Lambda$ on a manifold $M$, we obtain a symplectic torus bundle $(\T_\Lambda,\Omega_\Lambda)$ where
\[ \T_\Lambda:=T^*M/\Lambda,\quad q^*\Omega_\Lambda=\Omega_\can, \]
and the projection $q:T^*M\to\T_\Lambda$ becomes the exponential map. It follows that there is a 1:1 correspondence:
\[ 
\left\{\txt{symplectic torus bundles\\ over $M$\\up to equivalence \,}\right\}
\quad \tilde{\longleftrightarrow}\quad 
\left\{\txt{integral affine structures\\ 
$\Lambda\subset T^*M$\\ \,}\right\}
\]

\begin{example}
    On $\R^n$ we have the standard integral affine structure
    \[ \Lambda_{\mathrm{st}}=\Z\{\d x^1,\dots,\d x^n\}, \]
    which corresponds to the trivial symplectic torus bundle
    \[ \pr_2:\Tt^n\times\R^n\to\R^n,\quad \Omega_{\mathrm{st}}=\sum_{i=1}^n \d\theta_i\wedge \d x^i.\]
    Similarly, on the torus $\Tt^n=\R^n/\Z^n$ we have the integral affine structure
    \[ \Lambda_{\mathrm{st}}:=\Z\{\d x^1,\dots,\d x^n\}, \]
    where now $x^i$ are standard coordinates on the torus (so defined mod $\Z$). The corresponding symplectic torus bundle is
    \[ \pr_2:\Tt^n\times\Tt^n\to \Tt^n,\quad \Omega_{\mathrm{st}}=\sum_{i=1}^n \d\theta_i\wedge \d x^i.\]
\end{example}

\begin{remark}
\label{rm:local:sympl:torus:bundle}
    Every symplectic torus $p:(\T,\Omega)\to M$ is locally equivalent to the standard one $(\Tt^n\times\R^n,\Omega_\st)\to\R^n$. Indeed, an integral affine chart $(U,x^i)$ induces  an isomorphism
\[ \T|_U\simeq \Tt^n\times U,  \]
 which identifies $\Omega$ and $\Omega_\st$.
\end{remark}

\begin{example}
\label{ex:KodairaThurston1}
    When $n=2$ we have an exotic integral affine structure on $\Tt^2$ defined by
    \[ \Lambda_{\mathrm{ex}}=\Z\{\d x^1,\d x^2-x^1\d x^1\}. \]
    The corresponding symplectic torus bundle can be described as follows. Consider the discrete subgroup $\Gamma\subset\Aff_\Z(\R^{4})$ generated by the translations
    \begin{align*}
        \gamma_1(\theta_1,\theta_2,x^1,x^2)=(\theta_1,\theta_2,x^1+1,x^2), &\quad \gamma_2(\theta_1,\theta_2,x^1,x^2)=(\theta_1,\theta_2,x^1,x^2+1),\\ \gamma_3(\theta_1,\theta_2,x^1,x^2)=(\theta_1+1,\theta_2,x^1,x^2),
    \end{align*}
    and by the element
    \[ 
        \gamma_4(\theta_1,\theta_2,x^1,x^2)=(\theta_1-x^1,\theta_2+1,x^1,x^2).
    \]
    This group acts proper and free, by symplectic transformations of the canonical symplectic form
    \[ \Omega_\can=\d \theta_1\wedge \d x^1+\d \theta_2\wedge \d x^2. \]
    The sought after symplectic torus bundle has total space $\T:=\R^4/\Gamma$, with symplectic form $\Omega$ induced by $\Omega_\can$, and projection 
    \[ p:\T\to \Tt^2,\quad [\theta_1,\theta_2,x^1,x^2]
    \mapsto [x^1,x^2]. \] 
    Indeed, this map has Lagrangian fibers and admits the Lagrangian section
    \[ s(x^1,x^2)=[0,0,x^1,x^2]. \]
    This is the zero section for the multiplication
    \[ m([\theta_1,\theta_2,x^1,x^2],[\bar{\theta}_1,\bar{\theta}_2,x^1,x^2])=[\theta_1+\bar{\theta}_1,\theta_2+\bar{\theta}_2,x^1,x^2]. \]
    Note that $(\T,\Omega)$ is the Kodaira-Thurston symplectic manifold.
\end{example}
\medskip 

Let $p:(\T,\Omega)\to M$ be a symplectic torus bundle inducing an integral affine structure $\Lambda\subset T^*M$. Then $M$ carries a {\bf canonical flat connection} $\nabla$, namely the unique torsion free connection satisfying
\[ \nabla_{\partial_{x^i}}\partial_{x^j}=0, \]
for any integral affine chart $(U,x^i)$.
We will denote by the same symbol $\nabla$ the dual connection on $T^*M$: it is the unique linear connection for which every local section of $\Lambda$ is flat.  The corresponding {\bf horizontal distribution} will be denoted by $H^\nabla\subset T(T^*M)$.

At the level of the torus bundle $\T\to M$ the existence of this flat connection is reflected in the existence of a Lagrangian foliation transverse to the fibers.

\begin{proposition}
\label{prop:connection}
    Let $p:(\T,\Omega)\to M$ be a symplectic torus bundle inducing an integral affine structure $\Lambda\subset T^*M$. Then
    \begin{equation}
        \label{eq:multiplicative:connection}
        H:=(\d \exp)(H^\nabla)\subset T(\T)
    \end{equation}
    is a flat Ehresmann connection by Lagrangian subspaces, hence defines a Lagrangian foliation transverse to the fibers of $p:\T\to M$.
\end{proposition}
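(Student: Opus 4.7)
The plan is to reduce all four claims---that $H$ is a well-defined, rank $n$, Lagrangian, involutive distribution on $\T$---to properties of the flat connection $\nabla$ on $T^*M$, and then transport them through $\exp$. Crucially, since $\Lambda$ is discrete fiberwise, the quotient map $\exp:T^*M\to\T$ is a local diffeomorphism; in particular $(\d\exp)$ is pointwise invertible, and by construction it preserves the symplectic form on the nose, since $\exp^*\Omega=\Omega_\can$.

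The first and only substantive step is to prove that $H^\nabla$ descends through $\exp$, or equivalently that it is invariant under the translation action of $\Lambda$ on $T^*M$. Locally, an integral affine chart $(U,x^1,\dots,x^n)$ trivializes $\Lambda$ as $\Z\d x^1+\cdots+\Z\d x^n$, so every local section of $\Lambda$ is a $\Z$-linear combination of the flat sections $\d x^i$. For any such local flat section $\lambda$, the translation $T_\lambda:\alpha\mapsto\alpha+\lambda(\pi(\alpha))$ maps flat local sections of $T^*M$ to flat local sections (since $\nabla(s+\lambda)=\nabla s$), and hence $(\d T_\lambda)_\alpha$ carries $H^\nabla_\alpha$ onto $H^\nabla_{T_\lambda(\alpha)}$. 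The identity $\exp\circ T_\lambda=\exp$ then gives
\[
(\d\exp)_\alpha(H^\nabla_\alpha) = (\d\exp)_{T_\lambda(\alpha)}(H^\nabla_{T_\lambda(\alpha)}),
\]
establishing well-definedness of $H$ as a distribution on $\T$.

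Complementarity to the vertical distribution on $\T$ follows immediately: on $T^*M$, $H^\nabla$ is complementary to the vertical distribution of $T^*M\to M$ by construction, and both distributions are preserved under the pointwise isomorphism $(\d\exp)$, so $H$ is complementary to the vertical distribution of $p:\T\to M$ and has rank $n$. For the Lagrangian property, since $(\d\exp)$ is pointwise a linear symplectomorphism, it suffices to verify that $H^\nabla$ is Lagrangian in $(T^*M,\Omega_\can)$. In the dual coordinates $(x^i,p_i)$ coming from an integral affine chart, $\Omega_\can=\sum_i \d p_i\wedge\d x^i$ and $H^\nabla=\Span(\partial/\partial x^1,\dots,\partial/\partial x^n)$, so $\Omega_\can$ vanishes on $H^\nabla$; the dimension is the correct $n=\tfrac12\dim T^*M$, so $H^\nabla$ is Lagrangian. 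Finally, the local horizontal frame $\partial/\partial x^i$ is pairwise commuting, so $H^\nabla$ is involutive, and involutivity passes to $H$ through the local diffeomorphism $\exp$; Frobenius then produces the Lagrangian foliation transverse to the fibers.

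The only point requiring real care is the descent step, which reduces to the observation that translations by flat sections of $\Lambda$ commute with parallel transport of $\nabla$. Once that is in hand, the remaining verifications are straightforward local coordinate computations in an integral affine chart, using only that $\exp$ is a fiberwise symplectic local diffeomorphism.
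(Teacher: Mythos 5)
Your proof is correct and follows essentially the same route as the paper's: both reduce everything to the local picture given by an integral affine chart, where $\exp$ becomes the projection $\R^n\times U\to\Tt^n\times U$ and $H$ is visibly a flat Lagrangian complement to the fibers. The one point where you are more explicit than the paper is the well-definedness of $H$ (invariance of $H^\nabla$ under translation by flat local sections of $\Lambda$), which the paper's local-trivialization computation renders automatic; this extra care is welcome but does not change the substance of the argument.
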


\begin{proof}
Under the local trivialization induced by an integral affine chart (see Remark \eqref{rm:local:sympl:torus:bundle}), the exponential map $\exp:T^*M\to \T$ is just the projection
\[ \exp: \R^n\times U\to \Tt^n\times U,\]
and we find
\[ 
H=(\d \exp)(H^\nabla)=\bigcap_{i=1}^n\ker(\d\theta_i), 
\]
so the statement follows. 
\end{proof}

\begin{definition}
\label{defn:hor}
The distribution $H\subset T(\T)$ given by \eqref{eq:multiplicative:connection}
is called the {\bf canonical horizontal distribution} of the symplectic torus bundle.
\end{definition}

\begin{remark}
 \label{rem:multiplicative:distribuition}
 The distribution $H$ is an example of a \emph{multiplicative Ehresmann connection}. In general, a distribution in a Lie groupoid $\G$ is called \emph{multiplicative} if it is a subgroupoid of the tangent groupoid $T(\G)$. We refer to \cite{FM22a,FM22b} for more details on multiplicative Ehresmann connections.
 \end{remark}

\subsection{Morphisms of symplectic torus bundles}

We will make use of the following notion of morphism of symplectic torus bundles.

\begin{definition}
    \label{defn:morphism}
    A {\bf morphism} of symplectic torus bundles  
    \[ (\phi,\Phi):(\T_1,\Omega_1)\dto(\T_2,\Omega_2) \]
    consists of a map $\phi:M_1\to M_2$ between the bases and a bundle map 
    \[ 
    \xymatrix{
    \phi^*\T_2\ar[r]^{\Phi}\ar[d]_{\pr}& \T_1\ar[d]^{p_1}\\
    M_1\ar@{=}[r]_{\id} & M_1
    }
    \]
    satisfying:
    \begin{enumerate}[(i)]
    \item $\Phi$ restricts on each fiber to a group morphism;
    \item $\graph(\Phi)\subset \overline{\T_1}\times\T_2$ is a Lagrangian submanifold.
\end{enumerate}
\end{definition}

The graph in the last item in the previous definition is the relation defined by:
\[ \graph(\Phi):=\{(\Phi(g,x),g):(g,x)\in \T_2\times_{M_1} M_1\}. \] 
The notation $\overline{\T}$ means, as usual, the symplectic manifold $\T$ with the opposite symplectic structure. 

\begin{remark} 
The notion of morphism in the previous definition is actually an instance of the notion of a groupoid comorphism \cite{CDW13}. Since we do not consider groupoid morphisms or other notions of morphism for torus bundles, we will use the term ``morphism''.
\end{remark}

The notion of morphism of symplectic torus bundles is intimately related to the concept of integral affine map which we now recall (see \cite{CFT19,AM55,GH84} for more details).

\begin{definition}
\label{defn:morphism:IA:manifolds}
Let $(M_1,\Lambda_1)$ and $(M_2,\Lambda_2)$ be integral affine manifolds. A map
\begin{equation}\label{arrow:morphism:IA:manifolds} \phi:(M_1,\Lambda_1)\to (M_2,\Lambda_2)
    \end{equation} is called an \textbf{integral affine map} if $\phi^*(\Lambda_2)\subset \Lambda_1$. 
\end{definition}
\begin{remark}
A map $\phi:(M_1,\Lambda_1)\to (M_2,\Lambda_2)$ is integral affine if and only if in integral affine charts (with connected domains) it is of the form 
\[ x\mapsto Ax+b,  \] 
for some linear integral affine map $A:(\R^{m_1},\Z^{m_1})\to (\R^{m_2},\Z^{m_2})$ and some $b\in \R$. 
\end{remark}

The next proposition shows that the category $\IAMan$ of integral affine manifolds is equivalent to the category $\SympTB$.  

\begin{proposition} 
\label{prop:ia:morphisms}
The assignment $(M,\Lambda)\mapsto (\T_\Lambda,\Omega_\Lambda)$ extends to a functor
\[
\IAMan\to \SympTB
\]
by assigning to an integral affine map $\phi$ as in  (\ref{arrow:morphism:IA:manifolds}) the morphism
\[
    (\phi,\underline{\phi}^*):(\T_{\Lambda_1},\Omega_{\Lambda_1})\dto (\T_{\Lambda_2},\Omega_{\Lambda_2}), \quad \underline{\phi}^*([\alpha],x)=[(\d_x\phi)^*\alpha].
\] 
This functor is in fact an equivalence of categories.
\end{proposition}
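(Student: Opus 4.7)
The plan is to check in succession that (a) the assignment $\phi\mapsto(\phi,\underline{\phi}^*)$ is a well-defined functor, (b) it is essentially surjective on objects, and (c) it is fully faithful. For (a), the cotangent pullback $(\d_x\phi)^*:T^*_{\phi(x)}M_2\to T^*_x M_1$ carries $\Lambda_{2,\phi(x)}$ into $\Lambda_{1,x}$ precisely because $\phi^*\Lambda_2\subset\Lambda_1$, so it descends to a smooth bundle map $\underline{\phi}^*:\phi^*\T_{\Lambda_2}\to\T_{\Lambda_1}$, which is linear (hence a group homomorphism) on each fiber. Preservation of identities and composition then follows from the chain rule for cotangent pullbacks.

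The nontrivial part of (a) is the Lagrangian condition in Definition \ref{defn:morphism}(ii), which I would verify in local integral affine charts $(U_1,x^i)$ and $(U_2,y^j)$ in which $\phi(x)=Ax+b$ with $A\in M_{m_2\times m_1}(\Z)$. By Remark \ref{rm:local:sympl:torus:bundle} the trivializations $\T_{\Lambda_k}|_{U_k}\cong\Tt^{m_k}\times U_k$ identify $\Omega_{\Lambda_k}$ with $\sum_i\d\theta^{(k)}_i\wedge\d y^i$, and in them $\underline{\phi}^*([\theta^{(2)}],x)=([A^\top\theta^{(2)}],x)$. A direct computation shows that both $\pr_1^*\Omega_{\Lambda_1}$ and $\pr_2^*\Omega_{\Lambda_2}$ restrict to the same $2$-form $\sum_{i,j}A^j_i\,\d\theta^{(2)}_j\wedge\d x^i$ on the graph. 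Since the graph has dimension $m_1+m_2=\tfrac12\dim(\T_{\Lambda_1}\times\T_{\Lambda_2})$, it is Lagrangian in $\overline{\T_{\Lambda_1}}\times\T_{\Lambda_2}$.

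Essential surjectivity is already built into the 1:1 correspondence established just before the proposition, and faithfulness is clear since $\phi$ can be read off from the base component of $(\phi,\underline{\phi}^*)$, after which $\underline{\phi}^*$ is determined. For fullness, suppose $(\phi,\Phi):(\T_{\Lambda_1},\Omega_{\Lambda_1})\dto(\T_{\Lambda_2},\Omega_{\Lambda_2})$ is any morphism. Each fiberwise map is a Lie group homomorphism between tori, so it is determined by an integer-valued linear map $L(x)$ on their universal covers; discreteness of $M_{m_1\times m_2}(\Z)$ forces $L$ to be locally constant in $x$. In the chart trivializations, $\Phi$ thus takes the form $(\theta,x)\mapsto(L(x)\theta,\phi(x))$ with $\phi$ a priori only smooth, and computing $\pr_2^*\Omega_{\Lambda_2}-\pr_1^*\Omega_{\Lambda_1}$ on the graph yields
\[
\sum_{i,j}\Bigl(\pdd{\phi^j}{x^i}-L^j_i\Bigr)\,\d\theta_j\wedge\d x^i,
\]
which vanishes precisely when $L=(\d\phi)^\top$. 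Since $L$ has integer entries and is locally constant, $\phi$ is forced to be integral affine, and one reads off $\Phi=\underline{\phi}^*$.

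The main obstacle is exactly this rigidity step: the remaining verifications reduce to formal manipulations in the local model of Remark \ref{rm:local:sympl:torus:bundle}, but the Lagrangian-graph condition is what couples the fiberwise group-theoretic datum $L$ to the derivative of the base map $\phi$ and thereby produces a well-defined inverse to the functor on morphisms.
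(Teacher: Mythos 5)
Your proof is correct and follows essentially the same route as the paper: the heart of both arguments is that the Lagrangian-graph condition, checked in local integral affine trivializations, rigidly couples the fiberwise torus homomorphism to $(\d\phi)^*$ and thereby forces $\phi$ to be integral affine. The only cosmetic difference is that the paper first lifts $\Phi$ to a vector bundle map $\widetilde{\Phi}:\phi^*T^*M_2\to T^*M_1$ and identifies it with $\phi^*$ there, whereas you encode the fiberwise data as a locally constant integer matrix $L(x)$ and extract the same conclusion directly in the torus bundle trivializations.
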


\begin{proof} By the remark above the map $\underline{\phi}^*$ is well-defined. It is readily verified that $(\phi,\underline{\phi}^*)$ is a morphism, and that this construction defines a functor. To see that this is an equivalence of categories, let $(\phi,\Phi):(\T_1,\Omega_1)\dto (\T_2,\Omega_2)$ be any morphism between symplectic torus bundles. Then $\Phi$ has a unique lift to a vector bundle map
\[
\xymatrix{
\phi^*T^*M_2\ar[r]^{\widetilde{\Phi}}\ar[d]_{\id\times\exp} & T^*M_1\ar[d]^{\exp}\\
\phi^*\T_2\ar[r]_{\Phi} & \T_1
}
\]
whose graph
\[ \graph(\widetilde{\Phi})\subset \overline{T^*M_1}\times T^*M_2\]
is a Lagrangian submanifold. One checks (e.g., by working in local coordinates) that this last condition implies that
\[ \widetilde{\Phi}=\phi^*. \]
The commutativity of the diagram then forces 
\[ \phi^*(\Lambda_2)\subset \Lambda_1,\] 
where $\Lambda_1$ and $\Lambda_2$ are the integral affine structures induced by $(\T_1,\Omega_1)$ and $(\T_2,\Omega_2)$. By the remark above, we conclude that $\phi$ is an integral affine map.
\end{proof}

\subsection{Hamiltonian $\T$-spaces}
One of our main objects of study are Hamiltonian spaces for symplectic torus bundles. An {\bf action of a torus bundle} $p:\T\to M$ on a map $\mu:S\to M$ is given by a map
\[ \Act:\T\times_M S\to S, \quad (g,p)\mapsto g\cdot p,\]
satisfying the usual properties: 
\begin{enumerate}[(i)]
    \item $g\cdot (h\cdot p)=(gh)\cdot p$;
    \item $u(\mu(p))\cdot p=p$.
\end{enumerate}
In other words, this is just a smoothly varying fiberwise action.

Symplectic torus bundles act on maps whose domain is a symplectic manifold:

\begin{definition}
Let $p:(\T,\Omega)\to M$ be a symplectic torus bundle. A {\bf Hamiltonian $\T$-space} consists of:
\begin{enumerate}[(i)]
    \item a symplectic manifold $(S,\omega)$;
    \item a map $\mu:S\to M$;
    \item an action $\Act:\T\times_M S\to S$ which is symplectic, i.e., that satisfies
    \[ \Act^*\omega=\pr_{\T}^*\Omega+\pr_S^*\omega. \]
\end{enumerate}
\end{definition}

Since a Hamiltonian $\T$-space consists fiberwise of ordinary abelian actions, we obtain an {\bf infinitesimal action} of the bundle of abelian Lie algebras $T^*M\to M$ on $\mu:S\to M$. In other words, we have a linear map $\act:\Omega^1(M)\to \X(S)$ satisfying
\[ \mu_*(\act(\al))=0, \qquad [\act(\al),\act(\be)]=0, \quad (\al,\be\in \Omega^1(M)). \]

We call $\mu:S\to M$ the {\bf moment map} of the action if it satisfies the moment map condition (see \cite{CFM21})
\begin{equation}
\label{eqn:mommapcond}
i_{\act(\al)}\omega=\mu^*\al\quad (\al\in\Omega^1(M)).
\end{equation}

\begin{example}[Classical Hamiltonian spaces]\label{ex:classical:toric}
A classical Hamiltonian $\Tt^n$-space with moment map $\mu:(S,\omega)\to\R^n$ is the same thing as  Hamiltonian $\T$-space for the symplectic torus bundle $\T:=(\Tt^n\times\R^n,\Omega_\can)\to\R^n$. The $\T$-action and is given in terms of the $\Tt^n$-action determine each other by
\[ \Act:(\Tt^n\times\R^n)\times_{\R^n} S\to S,\ (g,x,p)\mapsto gp,\quad\text{ if }\quad
x=\mu(p). 
\]
Conversely, any $\T$-action takes this form for a unique $\Tt^n$-action.
\end{example}

\begin{example}[The trivial Hamiltonian $\T$-space]
Let $(\T,\Omega)$ be any symplectic torus bundle. Then $\T$ acts on itself by left translations
\[ \Act:\T\times_M \T\to \T, (g,h)\mapsto gh, \]
with moment map the projection $p:\T\to M$. The multiplicativity of $\Omega$ shows that this is a Hamiltonian $\T$-space.
\end{example}

\begin{example}[The trivial $\mathbb{S}^2$-bundle over $\mathbb{T}^2$]
\label{ex:trivspherebun:stdIA:cylinder}
 Consider the symplectic manifold
 \[ 
 (S,\omega):=(\mathbb{T}^2,\omega_{\mathbb{T}^2})\times (\mathbb{S}^2,\omega_{\mathbb{S}^2}),
 \] where $\omega_{\mathbb{T}^2}$ and $\omega_{\mathbb{S}^2}$ denote the area forms with total area 1 and 2, respectively. So, 
 \[
     \omega_{\mathbb{T}^2}:=\d y\wedge \d x,\quad \omega_{\mathbb{S}^2}:=\d \phi \wedge \d h,
 \] 
 where $(\phi,h)$ are cylindrical coordinates on $\Ss^2$.
 This admits an action of the symplectic torus bundle $(\T,\Omega)$ corresponding to the standard integral affine cylinder 
 \[ (M,\Lambda):=(\mathbb{S}^1\times \R,\Z\d x\oplus \Z\d h).\] 
 Explicitly, the symplectic torus bundle
 \begin{align*}
 (\T,\Omega)&=(\mathbb{T}^2\times \mathbb{S}^1\times \R,\d y\wedge \d x+\d \theta\wedge \d h), \\ 
 & p:\T\to \mathbb{S}^1\times \R,\quad (y,\theta,x,h)\mapsto (x,h),
 \end{align*}
 acts along the map 
 \[ 
 \mu:\mathbb{T}^2\times \mathbb{S}^2\to \mathbb{S}^1\times \R, \quad (y,x,\phi,h)\to (x,h),
 \] 
 as follows
 \[ 
 (y,\theta,x,h)\cdot (\tilde{y},x,\phi,h)=(y+\tilde{y},x,\theta+\phi,h).   
 \] 
\end{example}

\begin{example}[The non-trivial $\mathbb{S}^2$-bundle over $\mathbb{T}^2$] 
\label{ex:nontrivspherebun:nonstdIA:cylinder} 

The non-trivial orientable $\mathbb{S}^2$-bundle over $\mathbb{T}^2$ can be obtained by taking the quotient of the trivial $\mathbb{S}^2$-bundle $\R^2\times \mathbb{S}^2\to \R^2$ by the proper and free action of $\Z^2$ given by
\[ 
(k_1,k_2)\cdot (y,x,\phi,h)=(y+k_1,x+k_2,\phi-k_2 y,h).
\]
Let $(S,\omega)$ be the resulting manifold $\mathbb{T}^2\twprod \mathbb{S}^2:=(\R^2\times \mathbb{S}^2)/\Z^2$ equipped with the symplectic form induced by the $\Z^2$-invariant symplectic form
\[ (h+2)\d y\wedge\d x +(x\d y+\d \phi)\wedge \d h.\]      
This symplectic manifold admits an action of the symplectic torus bundle $(\T,\Omega)$ associated to the integral affine manifold
\[ 
(M,\Lambda)=(\mathbb{S}^1\times ]-2,\infty[,\Z((h+2)\d x+x\d h)\oplus \Z\d h).
\]
Explicitly, this torus bundle is
\begin{align*} 
\T&=\left(\mathbb{T}^2\times\R\times ]-2,\infty[\right)/{\Z} ,
\\ 
 & p:\T\to \mathbb{S}^1\times ]-2,\infty[, \quad (y,\theta,x,h)\mapsto (x,h),
\end{align*} 
where $\Z$ acts as
\[ 
k\cdot (y,\theta,x,h)=(y,\theta-ky,x+k,h),
\] 
and the multiplication on the fibers is just addition. The symplectic form $\Omega$ is induced by the $\Z$-invariant symplectic form
\[ (h+2)\d y\wedge\d x +(x\d y+\d \theta)\wedge \d h.\]
The symplectic torus bundle $(\T,\Omega)$ acts on $(S,\omega)$ along the map
\[ \mu:\mathbb{T}^2\twprod \mathbb{S}^2\to \mathbb{S}^1\times ]-2,\infty[, \quad (y,x,\phi,h)\mapsto (x,h),\]
as follows
\[ 
(y,\theta,x,h)\cdot(\tilde{y},x,\phi,h)=(y+\tilde{y},x,\theta+\phi,h).
\]
\end{example}
\medskip

Let $\mu_1:(S_1,\omega_1)\to M$ and $\mu_2:(S_2,\omega_2)\to M$ be Hamiltonian $\T$-spaces. Then a morphism from $\mu_1$ to $\mu_2$ is a symplectic map 
\[
\xymatrix{
S_1\ar[r]^\Psi\ar[d]_{\mu_1} & S_2\ar[d]^{\mu_2}\\
M\ar@{=}[r]_{\id} & M}
\]
which is $\T$-equivariant:
\[ \Psi(g\cdot p)=g\cdot\Psi(p).\]

We need a generalization of this notion to Hamiltonian spaces of distinct symplectic torus bundles. 

\begin{definition}
\label{defn:morphism:Hamiltonian:spaces}
Let $(\phi,\Phi):(\T_1,\Omega_1)\dto (\T_2,\Omega_2)$ be a morphism of symplectic torus bundles, and 
let $\mu_i:(S_i,\omega_i)\to M_i$ be Hamiltonian $\T_i$-spaces. A {\bf morphism} from $\mu_1$ to $\mu_2$ covering $(\phi,\Phi)$ is a map
\[
\xymatrix{
S_1\ar[r]^\Psi\ar[d]_{\mu_1} & S_2\ar[d]^{\mu_2}\\
M_1\ar[r]_{\phi} & M_2}
\]
satisfying:
\begin{enumerate}[(i)]
    \item $\Psi$ is a Poisson map;
    \item $\Psi$ is $(\phi,\Phi)$-equivariant, i.e.,
    $\Psi(\Phi(g,\mu_1(p))\cdot p)=g\cdot\Psi(p)$.
\end{enumerate}
\end{definition}

The base map of a morphism of Hamiltonian spaces is an integral affine map by Proposition \ref{prop:ia:morphisms}. For an {\bf isomorphism} this map must be an integral affine isomorphism, while the the map $\Psi$ between the total spaces must be a symplectomorphism. An isomorphism for which the base map $\phi$ is the identity, and so $\T_1=\T_2$, will be called an \textbf{equivalence}.

\begin{example}
Let $\mu:(S,\omega)\to\R^n$ be a Hamiltonian $\Tt^n$-space and $K_{\theta}:\Tt^m\to \Tt^n$ a Lie group homomorphism. The induced Lie algebra homomorphism $\theta:\R^m\to \R^n$ is a linear integral affine map $(\R^m,\Z^m)\to (\R^n,\Z^n)$, and we have a morphism of symplectic torus bundles
\[ (\phi,\Phi):\Tt^n\times\R^n\dto \Tt^m\times\R^m, \phi=\theta^*,\ \Phi(g,x)=(K_{\theta}(g),x). \]

On the other hand, the induced $\Tt^m$-action on $S$
\[ h\cdot x:=K_{\theta}(h)\cdot x,\]
is Hamiltonian, with momentum map $\tilde{\mu}:=\theta^*\circ\mu:S\to\R^m$. Moreover,
\[
\xymatrix{
S\ar[r]^{\id}\ar[d]_\mu & S\ar[d]^{\tilde{\mu}}\\
\R^n\ar[r]_{\phi} & \R^m}
\]
is a a morphism of Hamiltonian spaces covering $(\phi,\Phi)$.
\end{example}

\subsection{$\T$-invariant tensors}
Let $p:(\T,\Omega)\to M$ be a symplectic torus bundle. Given a Hamiltonian $\T$-space $\mu:(S,\omega)\to M$, the torus bundle $\T$ acts on the $\mu$-fibers so each $g\in \T$ induces a map between the tangent spaces to the fibers:
\[ g_*:T\mu^{-1}(x)\to T\mu^{-1}(x), \text{ with }x=p(g). \]
In order to extend the action of $g$ to the tangent bundle $TS$ we use the canonical horizontal distribution $H$ (see Definition \ref{defn:hor}). We denote the corresponding horizontal lift of tangent vectors by: 
\[ \hor_g:T_x M\to T_g\T,\quad \hor_g(w):=(\d_g p)|_H^{-1}(w). \]

\begin{definition}
\label{defn:action}
Given a symplectic torus bundle $p:(\T,\Omega)\to M$ and a Hamiltonian $\T$-space $\mu:(S,\omega)\to M$ the {\bf tangent action} of $g\in\T$ is the map
\[ 
g_*:T_p S \to T_{gp}S,\quad g_*v:=\d\Act(\hor_g(\d\mu(v)),v).
\]
\end{definition}

The multiplicativity of the distribution $H$ ensures that this defines an action of the torus bundle $\T$ on the map $\mu\circ\pr:TS\to M$.

\begin{remark}
The distribution $H$ is an example of a Cartan connection on the torus bundle $p:\T\to M$. The construction of the action in Definition \ref{defn:action} is a special case of a more general construction valid for actions of Lie groupoids $\G\tto M$ on a map $\mu:S\to M$. Whenever the Lie groupoid $\G$ is equipped with a Cartan connection one obtains an action of $\G$ on $\mu\circ\pr:TS\to M$ (see, e.g., \cite{AC13}).
\end{remark}

\begin{definition}
We say that a tensor field on the total space of a Hamiltonian $\T$-space $\mu:(S,\omega)\to M$ is {\bf $\T$-invariant} if it is invariant under the tangent action of every $g\in\T$.
\end{definition}

Similarly, one can talk about $\T$-invariance of other objects. For example, a distribution $D\subset TS$ is $\T$-invariant if is preserved under the tangent action. A foliation of $S$ is $\T$-invariant if the corresponding distribution is $\T$-invariant.

\begin{example}
    For any symplectic torus bundle $p:(\T,\Omega)\to M$ the symplectic form $\Omega$ and the horizontal distribution $H$ are both $\T$-invariant. For any Hamiltonian $\T$-space $\mu:(S,\omega)\to M$ the symplectic form $\omega$ is also $\T$-invariant. 
\end{example}

Given a Hamiltonian $(\T,\Omega)$-space $\mu:(S,\omega)\to M$, fixing a integral affine chart $(U,x^i)$, one obtains a $\Tt^n$-action on $S|_U$. This follows from the fact that a choice of integral affine chart induces a local trivialization of the symplectic torus bundle (cf.~Remark \ref{rm:local:sympl:torus:bundle}). Although these locally defined $\Tt^n$-actions depend on the choice of integral affine chart, it turns out that $\T$-invariance is equivalent to $\Tt^n$-invariance, as stated in the next proposition. For the statement, we will use the fact that a local section $s:U\to \T$ of $\T\to M$ acts on $S|_U$ as  the fiber preserving diffeomorphism 
\[ p\mapsto s(\mu(p))\cdot p. \]

\begin{proposition}
\label{prop:invariance:condition}
Given a tensor field $K$ on the total space of a Hamiltonian $\T$-space $\mu:(S,\omega)\to M$ the following are equivalent:
\begin{enumerate}[(i)]
    \item $K$ is  $\T$-invariant;
    \item $K$ is invariant under the action of any horizontal local section of $\T$;
    \item $K$ is invariant under the infinitesimal action of any locally defined flat 1-form
    \begin{equation}
    \label{eq:invariance}
    \Lie_{\act(\al)}K=0 \text{ if }\nabla\al=0;
    \end{equation} 
    \item $K$ is $\Tt^n$-invariant relative to the locally defined $\Tt^n$-action induced by any integral affine chart.
\end{enumerate}
\end{proposition}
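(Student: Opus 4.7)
The plan is to prove the chain of implications (i) $\Leftrightarrow$ (ii) $\Leftrightarrow$ (iv) $\Leftrightarrow$ (iii), exploiting the fact that the canonical horizontal distribution $H$ is exactly what allows one to translate between pointwise (tangent-action) statements, section-level statements, and the infinitesimal statement in terms of $\act$. Throughout I would work locally in an integral affine chart $(U, x^i)$, which by Remark \ref{rm:local:sympl:torus:bundle} trivializes $(\T,\Omega)|_U \cong (\Tt^n \times U, \Omega_\st)$, identifies $H|_{\T|_U}$ with $\bigcap_i \ker(\d\theta_i)$, and identifies the flat 1-forms with the $\Z$-span of $\d x^1, \dots, \d x^n$ at the level of parallel sections.

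For (i) $\Leftrightarrow$ (ii) the key observation is that the tangent action of a fixed $g \in \T_x$ coincides pointwise with the derivative of the fiber-preserving diffeomorphism $\phi_s : p \mapsto s(\mu(p)) \cdot p$ induced by any local section $s$ of $\T$ through $g$ which is \emph{horizontal} at $x$. Indeed, $p \circ s = \id$ forces $\d p \circ \d s = \id$, so horizontality of $s$ means $\d s_x(w) = \hor_{s(x)}(w)$; plugging this into the chain rule for $\phi_s$ one finds $\d\phi_s(v) = \d\Act(\hor_g(\d\mu(v)), v) = g_* v$. Since every $g \in \T|_U$ lies on some horizontal local section (parallel transport of $g$ along the flat connection $H$), (i) and (ii) are equivalent.

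For (ii) $\Leftrightarrow$ (iv), in the trivialization above the horizontal sections of $\T|_U$ are precisely the constant $\Tt^n$-valued sections $x \mapsto (c, x)$, whose induced diffeomorphisms of $S|_U$ are exactly the action maps of the locally defined $\Tt^n$-action; thus invariance under all horizontal sections over $U$ coincides with $\Tt^n$-invariance on $S|_U$. For (iv) $\Leftrightarrow$ (iii), in the same chart the locally defined flat 1-forms are exactly the $\R$-linear combinations $\al = \sum a_i \d x^i$ with $a_i$ constant, and the infinitesimal action gives $\act(\al) = \sum a_i X_i$, where $X_i = \act(\d x^i)$ are the fundamental vector fields of the $\Tt^n$-action. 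Since $\Tt^n$ is connected, $\Tt^n$-invariance is equivalent to $\Lie_{X_i} K = 0$ for all $i$, hence to $\Lie_{\act(\al)} K = 0$ for all flat $\al$.

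The only genuinely non-formal step is the tangent-action computation in the second paragraph, so the main obstacle is simply checking that the horizontal lift appearing in Definition \ref{defn:action} is reproduced by differentiating a horizontal section; once that is done, the remaining equivalences reduce to unpacking the local trivialization and using connectedness of the fiber torus. Since all four conditions are local on $M$ (being invariance conditions), it is enough to verify them over integral affine charts, which is why the plan above suffices.
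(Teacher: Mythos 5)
Your proposal is correct and follows essentially the same route as the paper: localize to an integral affine chart, use the induced trivialization to identify horizontal sections with constant $\Tt^n$-valued sections, and relate flat $1$-forms to the fundamental vector fields of the local torus action. The only differences are cosmetic — you route the last equivalence through (iv) rather than directly from (ii) via the horizontal sections $\exp(t\al)$, and you spell out the chain-rule computation showing that the tangent action $g_*$ is the derivative of the diffeomorphism induced by a horizontal section through $g$, which the paper leaves implicit.
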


\begin{proof}
    Obviously, a tensor is $\T$-invariant if and only if it is $\T|_{U_a}$-invariant for some open cover $\{U_a\}$ of $M$. Hence, it is enough to check the statement on an integral affine chart $(U,x^i)$ with connected domain.

    The equivalence of (i) and (ii) follows from the fact that any element $g\in\T$ extends to a horizontal local section (see proof of Proposition \ref{prop:connection}).

    For the equivalence between (ii) and (iii), observe that if $\al\in\Omega^1(U)$ is any flat 1-form then $\exp(t\al):U\to\T$ is a horizontal section, and acting by this local section is given by the flow of the vector field $\act(\al)\in\X(S)$ (cf.~\eqref{eqn:mommapcond})
    \[ \Act(\exp(t\al),p)=\phi^t_{\act(\al)}(p). \]

    For the equivalence between (ii) and (iv), observe that the choice of integral affine chart gives an identification
    \[ \Tt^n\times U\simeq \T|_U. \]
    Under this identification, the horizontal sections of $\T$ become the constant sections of $\Tt^n\times U\to U$. Therefore, the action by $g\in\Tt^n$ coincides with the action by the horizontal section $x\mapsto (g,x)$, and the equivalence follows.
\end{proof}

\section{Principal Hamiltonian $\T$-spaces a.k.a.~Lagrangian fibrations}
\label{sec:principal}

A very important class of Hamiltonian $\T$-spaces are \emph{Lagrangian fibrations}. In this section we show that they correspond exactly to principal Hamiltonian $\T$-spaces and we recall their classification, from this point of view.

\subsection{Lagrangian fibrations}
\label{subsec:lagrangian:fibrations}

By a \emph{Lagrangian fibration} we mean the following.

\begin{definition}
A {\bf Lagrangian fibration} is a symplectic manifold $(S,\omega)$ together with a surjective submersion $\mu:S\to M$ satisfying the following properties:
\begin{enumerate}[(i)]
\item $\dim(S)=2\dim(M)$;
\item $\mu:(S,\omega)\to M$ is a Poisson map, where $M$ is equipped with the zero Poisson structure;
\item $\mu$ is a proper with connected fibers.
\end{enumerate}
\end{definition}

Notice that conditions (i) and (ii) can be replaced by the simpler statement that the fibers are Lagrangian submanifolds. We prefer this formulation since it is more natural from the perspective of Poisson geometry adopted here, specially having in mind generalizations to the non-commutative setting.

Given any Lagrangian fibration $\mu:(S,\omega)\to M$ one obtains a symplectic torus bundle $p:(\T,\Omega)\to M$ which acts in a Hamiltonian fashion on $\mu$ as follows. First, the Lagrangian fibration induces an integral affine structure $\Lambda$ on $M$ given by
\[ \Lambda:=\{\al\in T^*M: \phi^1_{\act(\al)}=\id\}. \]
Here, for $\al\in T^*_xM$ we denoted by $\phi^1_{\act(\al)}$ the time-1 flow of the vector field $\act(\al)$ on the fiber $\mu^{-1}(x)$ defined by \eqref{eqn:mommapcond}. Then the symplectic torus bundle $(\T_\Lambda,\Omega_\Lambda)$ associated with $\Lambda$ acts on the fibration by setting:
\[ [\al]\cdot p:=\phi_{\act(\al)}^1(p)\quad (\al\in T_{\mu(p)}^*M). \]
One checks that this is indeed a Hamiltonian $\T_\Lambda$-action (see, e.g., \cite[Chp. 12]{CFM21}). Moreover, this is a {\bf principal action}, i.e., the map
\[
\T_\Lambda\times_M S\to S\times_M S,\quad (g,p)\mapsto (gp,p),
\]
is a diffeomorphism. 

Conversely, given a symplectic torus bundle $p:(\T,\Omega)\to M$, every \emph{principal} Hamiltonian $\T$-space $\mu:(S,\omega)\to M$ is a Lagrangian fibration. In other words:
\[ 
\left\{\\ \txt{principal Hamiltonian $\T_\Lambda$-spaces\\ $\mu:(S,\omega)\to M$}\right\}\ 
=\ 
\left\{\\ \txt{Lagrangian fibrations\\ $\mu:(S,\omega)\to M$ inducing $\Lambda$}\right\}
\]

Two Lagrangian fibrations $\mu_i:(S_i,\omega_i)\to M$ are called {\bf equivalent} if there exists a symplectomorphism $\Psi:(S_1,\omega_1)\to (S_2,\omega_2)$ commuting with the projections:
\[
\xymatrix{
S_1\ar[r]^\Psi\ar[d]_{\mu_1} & S_2\ar[d]^{\mu_2}\\
M\ar@{=}[r]_{\id} & M}
\]
It is easy to see that equivalent Lagrangian fibrations induce the same integral affine structure $\Lambda\subset T^*M$ and $\Psi$ is an equivalence of Hamiltonian $\T_\Lambda$-spaces.

\begin{example}[Symplectic torus bundles]
A symplectic torus bundle $p:(\T,\omega)\to M$ is an example of a Lagrangian fibration satisfying a very special property: it admits a global Lagrangian section, namely the identity section. Conversely, if a principal Hamiltonian $\T$-space $\mu:(S,\omega)\to M$ admits a global Lagrangian section $s:M\to S$ then we obtain an equivalence
\[ (\T,\Omega)\diffto (S,\omega),\quad g\mapsto g\cdot s(p(g)). \]
Under this equivalence, the $\T$-action on $S$ becomes the action of $\T$ on itself by translations.
\end{example}

In general, a Lagrangian fibration does not admit a global Lagrangian section. However, locally such sections always exists, and this gives a local model for a Lagrangian fibrations, as discussed in the next example.

\begin{remark}[Action-angle coordinates]
\label{rem:action-angle}
Let $p:(\T,\Omega)\to M$ be a symplectic torus bundle. Any principal Hamiltonian $\T$-space $\mu:(S,\omega)\to M$ is locally equivalent to $p:(\T,\Omega)\to M$. Namely, choosing an open set $U\subset M$ where there exists a Lagrangian section $s:U\to S$ of $\mu$, we obtain an equivalence as in the previous example
\[ \Psi:\T|_U\diffto S|_U, \quad g\mapsto g\cdot s(p(g)). \]
It follows also from the previous discussion that any two Lagrangian fibrations inducing the same integral affine structure $\Lambda$ are locally equivalent.

Moreover, if we fix integral affine coordinates $(U,x^i)$ over which there exists a Lagrangian section $s:U\to S$ of $\mu$, we obtain an equivalence (cf.~Remark \eqref{rm:local:sympl:torus:bundle})
\[ S|_U\simeq \Tt^n\times U,\quad \omega|_U\simeq\sum_{i=1}^n \d\theta_i\wedge \d x^i. \]
This local form only depends on the choice of an integral affine chart and a Lagrangian section. This is sometimes referred to as the Arnold-Liouville Theorem, and the coordinates $(x^i,\theta_i)$ are called action-angle coordinates (see, e.g., \cite{Evans22,SV18}). 
\end{remark}

\subsection{Classification of principal Hamiltonian $\T$-spaces}
\label{subsec:Lagrange:Chern:class}

The obstruction for a Lagrangian fibration to admit a global Lagrangian section is given by the \emph{Lagrangian Chern class}, as we now recall.
This class also yields Duistermaat's global classification of Lagrangian fibrations \cite{Dui80}. In our language, it allows to classify principal Hamiltonian $\T$-spaces, up to equivalence.

Given a symplectic torus bundle $p:(\T,\Omega)\to M$ denote by $\TLag$ its sheaf of Lagrangian sections. The groupoid multiplication makes this into an abelian sheaf. Given a principal Hamiltonian $\T$-space $\mu:(S,\omega)\to M$, the usual construction of the Chern class using \v{C}ech cocycles can be performed using local \emph{Lagrangian} sections. This yields a \v{C}ech cohomology class
\[ c_1(S,\omega)\in\check{H}^1(M,\TLag), \]
called the {\bf Lagrangian Chern class}.

\begin{theorem}[Duistermaat]
\label{thm:Duistermaat}
Given a symplectic torus bundle $p:(\T,\Omega)\to M$ the Lagrangian Chern class gives a 1:1 correspondence:
\[ 
c_1:\left\{\\ \txt{principal Hamiltonian $\T$-spaces\\ $\mu:(S,\omega)\to M$\\ up to equivalence\,}\right\}\ 
\tilde{\longrightarrow}\ 
\check{H}^1(M,\TLag)
\]
\end{theorem}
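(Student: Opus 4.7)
The plan is to imitate the standard \v{C}ech-cocycle classification of principal bundles, adapted to the symplectic-Lagrangian setting. The key local input is the Arnold-Liouville trivialization in Remark~\ref{rem:action-angle}: every principal Hamiltonian $\T$-space is locally equivalent to $(\T,\Omega)$ itself over any open set admitting a local Lagrangian section. Hence the only obstruction to global triviality is the patching data, and the theorem amounts to identifying this data with $\check{H}^1(M,\TLag)$.

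To define $c_1(S,\omega)$, I would choose a cover $\{U_\alpha\}$ of $M$ by opens that admit Lagrangian sections $s_\alpha:U_\alpha\to S$ of $\mu$. By principality, on each overlap there is a unique smooth $g_{\alpha\beta}:U_\alpha\cap U_\beta\to \T$ with $s_\alpha = g_{\alpha\beta}\cdot s_\beta$. Combining the multiplicativity identity $\Act^*\omega=\pr_\T^*\Omega+\pr_S^*\omega$ with the fact that both $s_\alpha$ and $s_\beta$ are Lagrangian forces $g_{\alpha\beta}^*\Omega = 0$, so $\{g_{\alpha\beta}\}$ is a $\TLag$-valued \v{C}ech 1-cocycle. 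Routine checks show the resulting class in $\check{H}^1(M,\TLag)$ is independent of the cover and of the local Lagrangian sections chosen, and equivalent Hamiltonian $\T$-spaces yield the same class.

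For injectivity, if $(S_1,\omega_1)$ and $(S_2,\omega_2)$ have cohomologous cocycles, after refining the cover I can write $g^{(1)}_{\alpha\beta} = h_\alpha\, g^{(2)}_{\alpha\beta}\, h_\beta^{-1}$ with $h_\alpha\in\TLag(U_\alpha)$; the $\T$-equivariant symplectomorphisms $\Psi_\alpha: S_1|_{U_\alpha}\to S_2|_{U_\alpha}$ sending $s^{(1)}_\alpha(x)$ to $h_\alpha(x)\cdot s^{(2)}_\alpha(x)$ then agree on overlaps and glue to a global equivalence. For surjectivity, given a cocycle $\{g_{\alpha\beta}\}\in\check{Z}^1(\{U_\alpha\},\TLag)$, glue copies of $(\T|_{U_\alpha},\Omega|_{U_\alpha})$ along $U_\alpha\cap U_\beta$ by left translation by $g_{\alpha\beta}$; the $\T$-action on the resulting space is defined by left translation on each piece (well defined because the fibers are abelian), and properness and principality are inherited from the local models.

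The step I expect to require the most care, and the only place where the Lagrangian condition enters nontrivially, is the well-definedness of the glued symplectic form in the surjectivity part. Multiplicativity of $\Omega$ yields the identity $L_g^*\Omega = \Omega + p^*(g^*\Omega)$ for any local section $g$ of $\T$, so left translation by $g_{\alpha\beta}$ preserves $\Omega$ \emph{precisely when} $g_{\alpha\beta}$ is Lagrangian. This is exactly why the coefficient sheaf must be $\TLag$ rather than the sheaf of all smooth sections of $\T$, and once this identity is in hand the remaining verifications are routine.
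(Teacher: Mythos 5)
Your proposal is correct and follows exactly the \v{C}ech-cocycle argument that the paper relies on but does not spell out (it defers to Duistermaat and \cite{CFT19}, and the same cocycle construction via local Lagrangian sections reappears verbatim in the proof of Theorem \ref{thm:real:Lag-Chern:class}). The one point you rightly isolate as essential --- that multiplicativity gives $L_g^*\omega=\omega+\mu^*(g^*\Omega)$, so that Lagrangian transition sections are precisely those preserving the symplectic form in both the cocycle extraction and the gluing step --- is the same mechanism the paper uses, so there is nothing to add.
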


Since $\check{H}^1(M,\TLag)$ is an abelian group, one also has an abelian group structure on the equivalence classes of principal Hamiltonian $\T$-spaces. This group structure can be described explicitly using the fusion product of principal Hamiltonian $\T$-spaces (see, e.g., \cite{CFT19}).

An integral affine manifold $(M,\Lambda)$ is called \textbf{complete} if the canonical flat connection $\nabla$ is complete. Lagrangian fibrations over a complete integral affine manifold have the following simple description.

\begin{proposition}
\label{prop:classification:complete}
    Let $(M,\Lambda)$ be a complete, integral affine manifold so $M\simeq\R^n/\Gamma$. There is a 1:1 correspondence:
\[ 
\left\{\\ \txt{principal Hamiltonian $\T_\Lambda$-spaces\\ $\mu:(S,\omega)\to M$\,}\right\}\ 
\tilde{\longleftrightarrow}\ 
\left\{\\ \txt{symplectic actions\\ $\Gamma\ract (\Tt^n\times\R^n,\omega_\can)$\\ covering $\Gamma\ract\R^n$\,}\right\}
\]
\end{proposition}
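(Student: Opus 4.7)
The plan is to pass to the universal cover $p:\R^n\to M$ and exploit its contractibility to trivialize the Lagrangian fibration there, so that the original data is recovered as descent data for the deck-group action. Completeness of $(M,\Lambda)$ together with $M\simeq\R^n/\Gamma$ ensures that the universal cover inherits the standard integral affine structure $\Lambda_\st$ on $\R^n$ and that $\Gamma$ acts on $\R^n$ by integral affine automorphisms. By Proposition \ref{prop:ia:morphisms} it follows that $p^*\T_\Lambda\simeq (\Tt^n\times\R^n,\omega_\can)$; hence the pullback of a principal Hamiltonian $\T_\Lambda$-space $\mu:(S,\omega)\to M$ is a principal Hamiltonian $(\Tt^n\times\R^n)$-space $\tilde\mu:(\tilde S,\tilde\omega)\to\R^n$, and $\Gamma$ lifts to an action on $\tilde S$ by $\T$-equivariant symplectomorphisms covering $\Gamma\ract\R^n$.

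For the forward direction, since $\R^n$ is contractible the sheaf cohomology $\check H^1(\R^n,\TLag)$ vanishes, so Theorem \ref{thm:Duistermaat} furnishes an equivalence $\Psi:\tilde S\diffto (\Tt^n\times\R^n,\omega_\can)$ of principal Hamiltonian $\T$-spaces over $\R^n$. Transporting the $\Gamma$-action along $\Psi$ yields the desired symplectic action $\Gamma\ract(\Tt^n\times\R^n,\omega_\can)$ covering $\Gamma\ract\R^n$. Conversely, given such an action, one forms the quotient $S:=(\Tt^n\times\R^n)/\Gamma$ with its induced symplectic form and natural map $\mu:S\to M$; because any symplectomorphism covering an integral affine transformation is automatically a morphism of symplectic torus bundles (Proposition \ref{prop:ia:morphisms}), the standard $\T_{\Lambda_\st}$-action on $\Tt^n\times\R^n$ descends to a principal $\T_\Lambda$-action on $S$.

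The main obstacle is bookkeeping: the equivalence $\Psi$ in the forward direction is not canonical, and two choices differ by a $\T$-equivariant symplectomorphism of $(\Tt^n\times\R^n,\omega_\can)$ covering $\id_{\R^n}$, equivalently by translation by a global Lagrangian section $\R^n\to\Tt^n\times\R^n$. Consequently the $\Gamma$-action produced on the right-hand side is only well-defined up to conjugation by such translations, so to obtain a genuine bijection one must impose this conjugation as the equivalence relation on the right and verify that two principal Hamiltonian $\T_\Lambda$-spaces over $M$ are equivalent if and only if their associated $\Gamma$-actions are conjugate in this sense. This last step is a direct unwinding of the definitions, using that a $\T$-equivariant symplectomorphism over $M$ lifts uniquely to a $\Gamma$-equivariant one over $\R^n$.
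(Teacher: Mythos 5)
Your proposal is correct and follows essentially the same route as the paper: pull back along the universal covering $\R^n\to M$, use contractibility of $\R^n$ (equivalently, the existence of a global Lagrangian section over $\R^n$) to trivialize the pulled-back principal Hamiltonian space, transport the deck action, and conversely take the quotient of a symplectic $\Gamma$-action. The non-canonicity you flag in the last paragraph is real but is also present in the paper's argument, which addresses it only in the remark following the proposition by recasting the correspondence in terms of $\Gamma$-equivariant cohomology $\check{H}^1_\Gamma(\R^n,\TLagn)$, exactly along the lines you describe.
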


\begin{proof}
    Since $(M,\Lambda)$ is complete, we have $M\simeq \R^n/\Gamma$, with $\Gamma=\pi_1(M)\subset\Aff_\Z(\R^n)$ acting properly and free on $\R^n$. 
    
    Assume $\Gamma\ract (\Tt^n\times\R^n,\omega_\can)$ is a symplectic action covering $\Gamma\ract\R^n$. It is a proper and free action, and the induced symplectic form on the quotient
    \[ S:=(\Tt^n\times\R^n)/\Gamma\to M=\R^n/\Gamma,\]
    yields a Lagrangian fibration with base the integral affine manifold $(M,\Lambda)$. So it is a principal Hamiltonian $\T_\Lambda$-space.

    Conversely, given a principal Hamiltonian $\T_\Lambda$-space $\mu:(S,\omega)\to M$ we consider its pullback under the covering projection $q:\R^n\to M$
    \[ 
    \xymatrix{\widetilde{S} \ar[d]_{\tilde{\mu}}\ar[r]^{\tilde{q}} & S\ar[d]^{\mu}\\
    \R^n\ar[r]^{q} & M}.
    \]
    Here $\tilde{q}:\widetilde{S}\to S$ is also a $\Gamma$-covering where $\Gamma$ acts on the first factor of $\widetilde{S}=\R^n\times_M S$. This action makes $\tilde{\mu}$ equivariant and preserves the pullback symplectic form
    \[\tilde{\omega}:=\tilde{q}^*\omega. \]
    On the other hand, $\tilde{\mu}:(\widetilde{S},\tilde{\omega})\to \R^n$ 
    is a principal Hamiltonian $\Tt^n$-space. Any such space is isomorphic to the canonical Hamiltonian $\Tt^n$-space $(\Tt^n\times\R^n,\omega_\can)\to\R^n$, so the result follows. 
\end{proof}

\begin{remark}
    The previous result can be related to Duistermaat's classification Theorem \ref{thm:Duistermaat} as follows. 
    On the one hand, under the assumptions of Proposition \ref{prop:classification:complete},one has an identification:
    \[ 
    \check{H}^1(M,\TLag)\simeq
    \check{H}^1_\Gamma(\R^n,\TLagn)
    \]
    where the right-hand side denotes $\Gamma$-equivariant cohomology for the $\Gamma$-sheaf of Lagrangian sections of $(\Tt^n\times\R^n,\omega_\can)\to\R^n$. 
    
    On the other hand, one has a $\Gamma$-equivariant version of Duistermaat result, which gives a 1:1 correspondence:
\[ 
\left\{\\ \txt{$\Gamma$-equivariant \\
principal
Hamiltonian $\Tt^n$-spaces\\ $\tilde{\mu}:(\widetilde{S},\tilde{\omega})\to \R^n$, up to isomorphism \,}\right\}\ 
\tilde{\longrightarrow}\ 
\check{H}^1_\Gamma(\R^n,\TLagn)
\]
 Any such Hamiltonian space admits a global Lagrangian section and so is symplectomorphic to the canonical bundle $\pr:(\Tt^n\times\R^n,\omega_\can)\to\R^n$, recovering the correspondence in the proposition. 
 
 Notice that the induced symplectic action of $\Gamma$ on $\Tt^n\times\R^n$, in general, does not coincide with the canonical lift of the action $\Gamma\ract\R^n$. This happens precisely when $\tilde{\mu}:(\widetilde{S},\tilde{\omega})\to \R^n$ admits a $\Gamma$-invariant Lagrangian section, i.e., when it is represented by the trivial class in $\check{H}^1_\Gamma(\R^n,\TLagn)$.
\end{remark}

Proposition \ref{prop:classification:complete} is also useful to construct examples. 

\begin{example}
\label{ex:trivial:chern:class}
    The standard integral affine structure on the torus $M=\R^2/\Gamma$ is obtained from the action of $\Gamma=\Z^2$ on $\R^2$ by translations:
    \[ 
    (m,n)\cdot(x^1,x^2)=(x^1+m,x^2+n). 
    \]
    The canonical lift $\Gamma\ract\R^2\times\Tt^2$ is given by
    \[
    (m,n)\cdot(x^1,x^2,\theta_1,\theta_2)=(x^1+m,x^2+n,\theta_1,\theta_2).
    \]
    The zero section is a $\Gamma$-equivariant Lagrangian section and this agrees with the fact that the corresponding quotient is the trivial bundle $(\Tt^2\times\Tt^2,\omega_\can)\to \Tt^2$, which has representative the trivial class in $\check{H}^1(\Tt^2,\TLag)$
    
    More generally, fix $a\in [0,1[$ and consider the symplectic action $\Gamma\ract\R^2\times\Tt^2$ covering $\Gamma\ract\R^2$ given by
    \begin{equation}
        \label{eq:action:funny}
        (m,n)\cdot(x^1,x^2,\theta_1,\theta_2)=(x^1+m,x^2+n,\theta_1+a n,\theta_2).
    \end{equation}
    For this action, the bundle $\R^2\times\Tt^2\to \R^2$ still admits a $\Gamma$-equivariant section, namely
    \[ s(x^1,x^2)=(x^1,x^2,-a x^2,0). \]
    Hence, we obtain a family of Lagrangian fibrations, each isomorphic to the trivial fibration $\Tt^2\times\Tt^2\to \Tt^2$, but with a new symplectic form
    \[ \omega_a=\omega_\can+a\,\d x^1\wedge \d x^2. \]
    Notice that the integral affine structure induced on the base $\Tt^2$ is still the standard one. 
    If $s:\Tt^2\to \Tt^2\times\Tt^2 $ is any section, $s^*\omega_\can$ is an exact form and applying Stokes formula we find that
    \[ \int_{\Tt^2}s^*\omega_a=a\int_{\Tt^2}\d x^1\wedge \d x^2 =a. \]
    Therefore, if $a\in ]0,1[$ the fibration $(\Tt^2\times\Tt^2,\omega_a)\to \Tt^2$ does not admit any Lagrangian section. Equivalently, there is no $\Gamma$-equivariant section of $\R^2\times\Tt^2\to \R^2$ which is Lagrangian (this can also be checked directly). This gives examples of Lagrangian fibrations with trivial Chern class and non-trivial Lagrangian Chern class $c_1(S,\omega)\in\check{H}^1(\Tt^2,\TLag)$.
    
\end{example}

\subsection{Lagrangian connections} 
\label{sec:Lag:connections}

In the construction of the Chern class of a principal torus bundle one passes from 
integral to real coefficients using connections. One has a similar situation for principal Hamiltonian $\T$-spaces, for which the concept of a \emph{Lagrangian connection} is needed. 

\begin{definition} 
\label{def:connection}
A \textbf{connection} for a principal Hamiltonian $\T$-space $\mu:(S,\omega)\to M$ is a $\T$-invariant distribution $D\subset TS$ such that $TS=D\oplus \ker(\d\mu)$. We call such a connection \textbf{Lagrangian} if $D_p\subset (T_pS,\omega_p)$ is Lagrangian for all $p\in S$. 
\end{definition}

\begin{example}
\label{example:hordistr=connection}
    For $p:(\T,\Omega)\to M$, viewed as a principal Hamiltonian $\T$-space with the action by translations, the canonical horizontal distribution (see Definition \ref{defn:hor}) is a Lagrangian connection. 
\end{example}

\begin{example}
    Let $\mu:(S,\omega)\to \R^n$ be a classical Hamiltonian $\Tt^n$-space, viewed as Hamiltonian $\T$-space for the symplectic torus bundle $\T=(\Tt^n\times\R^n,\Omega_\can)\to\R^n$. If the action is principal, then a connection in the sense of Definition \ref{def:connection} is just an ordinary principal bundle connection. It is Lagrangian if and only if its horizontal spaces are Lagrangian.
\end{example}

Connections can also be described by \emph{connection $1$-forms}, which are defined as follows. For a Hamiltonian $\T$-space $\mu:(S,\omega)\to M$ and a vector bundle $E\to M$, let $\Omega^k(S,E)$ be the space of $k$-forms with values in $\mu^*E$.

\begin{definition} 
\label{def:connection:form}
A \textbf{connection 1-form} for a principal Hamiltonian $\T$-space $\mu:(S,\omega)\to M$ is a form $\theta\in \Omega^1(S,T^*M)$ which satisfies:
\begin{itemize}
\item[(i)] $\T$-invariance: $\theta_{gp}(g\cdot v)=\theta_p(v)$, for all $v\in T_pS$, $g\in\T_{\mu(p)}$;
\item[(ii)] $\theta_p(\act_p(\alpha))=\alpha$, for all $\alpha\in T_{\mu(p)}^*M$ and $p\in S$.
\end{itemize}
\end{definition}
Note that in the previous definition, in (i) we have used the tangent action of $\T$ on $S$ and in (ii) its infinitesimal version $\act:\mu^*T^*M\to TS$. 

Similar to ordinary principal connections, there is a bijective correspondence between connections and connection $1$-forms for a principal Hamiltonian $\T$-space $\mu:(S,\omega)\to M$, in which the distribution corresponding to a connection $1$-form $\theta$ is $D=\ker\theta$. The following lemma characterizes those connection 1-forms giving rise to Lagrangian connections. 

\begin{lemma} 
\label{lem:Lagrangian:connection}
Let $\theta$ be a connection $1$-form for a principal Hamiltonian $\T$-space $\mu:(S,\omega)\to M$. Consider the $\T$-invariant, non-degenerate, $2$-form $\omega_\theta\in \Omega^2(S)$ defined by
\[
    \omega_\theta(v,w):=\langle \theta(v),\d\mu(w)\rangle - \langle \theta(w),\d\mu(v)\rangle. 
\] 
The connection $\ker\theta$ is Lagrangian if and only if $\omega=\omega_\theta$.
\end{lemma}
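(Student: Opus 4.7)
The plan is to use the connection-induced splitting $TS=\ker\theta\oplus\ker\d\mu$ and check that $\omega$ and $\omega_\theta$ agree block by block. The ``if'' direction is essentially tautological: from the defining formula, whenever $v,w\in\ker\theta$ one has $\omega_\theta(v,w)=0$, so $\ker\theta$ is automatically isotropic for $\omega_\theta$; since $\ker\theta$ has half-dimension (it is a connection), it is Lagrangian. Thus $\omega=\omega_\theta$ forces $\ker\theta$ to be Lagrangian for $\omega$.

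For the ``only if'' direction, I would first record two facts that hold with no assumption on $\theta$. The vertical subspace $\ker\d\mu$ is Lagrangian for $\omega$ because $\mu:(S,\omega)\to M$ is a Lagrangian fibration (Section \ref{subsec:lagrangian:fibrations}), and it is Lagrangian for $\omega_\theta$ directly from the formula (both terms have a factor $\d\mu(\cdot)$). Hence $\omega$ and $\omega_\theta$ already agree on $\ker\d\mu\times\ker\d\mu$. Under the hypothesis that $\ker\theta$ is Lagrangian for $\omega$, both forms also vanish on $\ker\theta\times\ker\theta$.

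The substantive step is to verify agreement on mixed pairs $(v,w)$ with $v\in\ker\theta$ and $w\in\ker\d\mu$. Since the $\T$-action is principal, the infinitesimal action $\act_p:T^*_{\mu(p)}M\to T_pS$ is an isomorphism onto $\ker\d_p\mu$, so I can write $w=\act(\alpha)$ for a unique $\alpha\in T^*_{\mu(p)}M$. The moment map identity \eqref{eqn:mommapcond} then gives
\[
\omega(v,w)=-\omega(\act(\alpha),v)=-\mu^*\alpha(v)=-\langle\alpha,\d\mu(v)\rangle,
\]
while condition (ii) of Definition \ref{def:connection:form} yields $\theta(w)=\alpha$, and $v\in\ker\theta$ gives $\theta(v)=0$, so
\[
\omega_\theta(v,w)=-\langle\theta(w),\d\mu(v)\rangle=-\langle\alpha,\d\mu(v)\rangle.
\]
The two forms therefore agree on mixed pairs, and combined with the previous paragraph they agree on all of $TS\times TS$.

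No step is really an obstacle: the argument is a bilinear check on the three blocks of the splitting $TS=\ker\theta\oplus\ker\d\mu$, and the only input beyond the definitions is the isomorphism $\act_p:T^*_{\mu(p)}M\to\ker\d_p\mu$ coming from principality of the $\T$-action. One minor point worth stating explicitly in the write-up is that the non-degeneracy of $\omega_\theta$ (used implicitly in the statement) follows from the same block analysis, as $\omega_\theta$ realizes a non-degenerate pairing between $\ker\theta$ and $\ker\d\mu$ via the isomorphisms $\d\mu|_{\ker\theta}$ and $\theta|_{\ker\d\mu}$.
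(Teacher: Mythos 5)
Your proof is correct and follows essentially the same route as the paper: the paper also notes the backward direction is immediate and reduces the forward direction to checking $\omega=\omega_\theta$ on vectors of the form $v+\act_p(\alpha)$ with $v\in\ker\theta_p$, using the moment map condition \eqref{eqn:mommapcond} and property (ii) of the connection $1$-form. Your block-by-block verification simply spells out the details the paper leaves as ``readily done,'' and the signs all check out.
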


\begin{proof} The backward direction is immediate. For the forward direction, note that $\im(\act)=\ker(\d\mu)$ by principality of the action. So, it suffices to verify the equality $\omega=\omega_\theta$ on tangent vectors of the form $v+\act_p(\alpha)$, with $v\in \ker(\theta_p)$ and $\alpha\in T^*_{\mu(p)}M$. This is readily done using the moment map condition (\ref{eqn:mommapcond}).
\end{proof}

\begin{proposition} \label{prop:existenceLagrconn:principalcase}
Every principal Hamiltonian $\T$-space admits a Lagrangian connection. 
\end{proposition}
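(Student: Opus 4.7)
The plan is to reduce the statement, via Lemma \ref{lem:Lagrangian:connection}, to constructing a global connection $1$-form $\theta \in \Omega^1(S, T^*M)$ that satisfies $\omega = \omega_\theta$, and then to build such a $\theta$ by patching local Arnold-Liouville models with a partition of unity on the base $M$.

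For the local step, Remark \ref{rem:action-angle} provides a cover of $M$ by integral affine charts $(U, x^i)$ over which $\mu$ admits a Lagrangian section; on the resulting equivalence $S|_U \simeq \Tt^n \times U$ one has $\omega|_U = \sum_{i=1}^n \d\theta_i \wedge \d x^i$ and the $\T|_U$-action becomes the standard fiberwise action of $\Tt^n$. On this model I define
\[
\theta_U := \sum_{i=1}^n \d\theta_i \otimes \d x^i \in \Omega^1(S|_U, T^*M|_U).
\]
The moment map condition \eqref{eqn:mommapcond} forces $\act(\d x^j) = \partial_{\theta_j}$, from which $\theta_U(\act(\alpha)) = \alpha$; translation invariance of $\d\theta_i$ and $\d x^i$ gives $\T|_U$-invariance; and a direct evaluation of $\omega_{\theta_U}$ on the coordinate basis $\{\partial_{\theta_i}, \partial_{x^j}\}$ returns $\omega_{\theta_U} = \omega|_U$.

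For the global step, choose a locally finite subcover $\{U_a\}$ of such charts and a subordinate partition of unity $\{\rho_a\}$ on $M$, and set
\[
\theta := \sum_{a} (\mu^*\rho_a)\, \theta_{U_a},
\]
each summand extended by zero outside $S|_{U_a}$, which is legitimate since $\mu^*\rho_a$ is supported there. The structural observation is that both defining conditions of a connection $1$-form and the bilinear expression $\omega_\theta$ are $C^\infty(M)$-linear in $\theta$ when the coefficient is basic, i.e.\ of the form $\mu^*f$. Since $\mu$ is constant along $\T$-orbits, each $\mu^*\rho_a$ is $\T$-invariant, whence $\theta$ is $\T$-invariant; and for $\alpha \in T^*_{\mu(p)}M$,
\[
\theta_p(\act_p(\alpha)) = \sum_a \rho_a(\mu(p))\, \alpha = \alpha, \qquad \omega_\theta = \sum_a (\mu^*\rho_a)\, \omega_{\theta_{U_a}} = \omega.
\]
Lemma \ref{lem:Lagrangian:connection} then declares $\ker\theta$ a Lagrangian connection.

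The argument presents no serious obstacle; the only point requiring care is that the partition of unity must be pulled back from the base, since using arbitrary cutoff functions on $S$ would break both the $\T$-invariance of $\theta$ and the identity $\theta(\act(\alpha)) = \alpha$. The proof ultimately works because the Lagrangian and connection conditions are affine in $\theta$ with basic coefficients, so Arnold-Liouville local solutions glue without any cohomological obstruction.
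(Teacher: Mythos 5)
Your proposal is correct and follows essentially the same route as the paper: local existence via the Arnold--Liouville local model (the paper invokes Remark \ref{rem:action-angle} together with Example \ref{example:hordistr=connection}, which is exactly your $\theta_U=\sum_i \d\theta_i\otimes\d x^i$), followed by patching with a partition of unity pulled back from $M$ and the identity $\omega_\theta=\sum_a(\mu^*\rho_a)\,\omega_{\theta_{U_a}}=\omega$ from Lemma \ref{lem:Lagrangian:connection}. Your explicit verification of the local model and the remark on why the cutoffs must be basic are just a more detailed write-up of the same argument.
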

\begin{proof} 
Remark \ref{rem:action-angle} and Example \ref{example:hordistr=connection} show that over a sufficiently small integral affine chart a Lagrangian connection always exists. Consider an open cover $\{U_i\}_{i\in I}$ of $M$ together with connection $1$-forms $\theta_i$ for $\mu:S|_{U_i}\to U_i$, such that $\ker(\theta_i)$ is Lagrangian. Choosing a partition of unity $\{\rho_i\}_{i\in I}$ subordinated to this cover, we can construct a global connection $1$-form $\theta:=\sum_{i\in I}(\rho_i\circ\mu)\theta_i$. In view of the lemma above, $\ker(\theta)$ is Lagrangian, since
\[
    \omega_\theta=\sum_{i\in I}(\rho_i\circ\mu)\omega_{\theta_i}=\sum_{i\in I}(\rho_i\circ\mu)\omega=\omega.
\] 
\end{proof}

Next we define the curvature 2-form of a connection on a principal Hamiltonian $\T$-space $\mu:(S,\omega)\to M$. Pulling back the canonical flat connection $\nabla$ on $T^*M$ along the projection $\mu:S\to M$ we obtain a flat connection $\nabla$ on $\mu^*T^*M$ and this gives a differential $\d^\nabla:\Omega^\bullet(S,T^*M)\to \Omega^{\bullet+1}(S,T^*M)$. We let $\Omega^\bullet_\partial(M,T^*M)$ denote the kernel of the anti-symmetrization map
\begin{equation} 
\label{eqn:antisymmap}
\partial:\Omega^\bullet(M,T^*M)\to \Omega^{\bullet+1}(M),
\end{equation} 
which is a subcomplex of $(\Omega^\bullet(M,T^*M),\d^\nabla)$.

\begin{proposition}
\label{prop:curvature}
    Let $D$ be a connection on $\mu:(S,\omega)\to M$ with connection 1-form $\theta\in\Omega^1(S,T^*M)$. There is a unique 2-form $K_{\theta}\in \Omega^2(M,T^*M)$ such that $\mu^*K_{\theta}=\d^\nabla\theta$.
    This satisfies the following properties:
    \begin{enumerate}[(i)]
        \item $K_{\theta}$ is $\d^\nabla$-closed;
        \item $K_{\theta}=0$ if and only if $D$ is involutive.
    \end{enumerate}
    Moreover, if $D$ is a Lagrangian connection then $K_{\theta}\in \Omega^2_\partial(M,T^*M)$, meaning
    \[\partial K_{\theta}(v_1,v_2,v_3):=\sum_{\sigma\in S_3}(-1)^{|\sigma|}\langle K _{\theta}(v_{\sigma(1)},v_{\sigma(2)}),v_{\sigma(3)}\rangle=0. \]
\end{proposition}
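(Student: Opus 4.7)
The plan is to first establish that $\d^\nabla\theta$ is basic with respect to $\mu$ (which then produces $K_\theta$), and then verify each of the three stated properties. Uniqueness of $K_\theta$ is automatic, since $\mu$ is a surjective submersion and so $\mu^*$ is injective on forms. For existence, each $\mu$-fiber is a $\T_x$-torsor, hence connected, so it suffices to show that $\d^\nabla\theta$ is horizontal and $\T$-invariant. By the moment map condition and principality, $\ker(\d\mu)$ is spanned pointwise by the vectors $\act(\alpha)$ with $\alpha \in T^*M$, and working in an integral affine chart we may take $\alpha$ locally flat. Then the vector-valued Cartan formula
\[
\mathcal{L}_{\act(\alpha)}\theta = \d^\nabla i_{\act(\alpha)}\theta + i_{\act(\alpha)}\d^\nabla\theta,
\]
together with $i_{\act(\alpha)}\theta = \mu^*\alpha$, $\d^\nabla(\mu^*\alpha) = \mu^*(\nabla\alpha) = 0$, and $\mathcal{L}_{\act(\alpha)}\theta = 0$ (from $\T$-invariance of $\theta$ via Proposition \ref{prop:invariance:condition}), yields $i_{\act(\alpha)}\d^\nabla\theta = 0$; applying Cartan once more to $\d^\nabla\theta$ and using $(\d^\nabla)^2 = 0$ similarly gives $\mathcal{L}_{\act(\alpha)}\d^\nabla\theta = 0$, so $\d^\nabla\theta$ descends uniquely to $K_\theta$.

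Next I would verify (i) and (ii). Property (i) is immediate from flatness of $\nabla$: $\mu^*(\d^\nabla K_\theta) = (\d^\nabla)^2\theta = 0$, and injectivity of $\mu^*$ forces $\d^\nabla K_\theta = 0$. For (ii), the Koszul formula for $\d^\nabla$ applied to two horizontal fields $X, Y \in \Gamma(D)$ gives
\[
\mu^*K_\theta(X, Y) = \d^\nabla\theta(X, Y) = -\theta([X, Y])
\]
since $\theta(X) = \theta(Y) = 0$. As $\d\mu|_D$ is a fiberwise isomorphism onto $TM$, horizontal lifts span $D$ pointwise, so $K_\theta = 0$ is equivalent to $[X, Y] \in D$ for all horizontal $X, Y$, i.e., to involutivity of $D$.

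Finally, for the Lagrangian case, Lemma \ref{lem:Lagrangian:connection} gives $\omega = \omega_\theta$. Evaluating $\d\omega = 0$ on a triple of horizontal fields $X_1, X_2, X_3$ via the Koszul formula, the terms $X_i(\omega_\theta(X_j, X_k))$ vanish because $\omega_\theta$ is zero on pairs of horizontal vectors, while the bracket terms reduce to $\omega_\theta([X_i, X_j], X_k) = \langle \theta([X_i, X_j]), \d\mu(X_k)\rangle$. Substituting the identity $\theta([X_i, X_j]) = -\mu^*K_\theta(\d\mu(X_i), \d\mu(X_j))$ from the previous step identifies the resulting alternating sum (up to a nonzero constant) with $\partial K_\theta(\d\mu(X_1), \d\mu(X_2), \d\mu(X_3))$; since $\d\mu$ sends horizontal lifts onto $TM$, $\partial K_\theta = 0$. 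I expect the main difficulty to lie in the existence step: handling the vector-valued Cartan formulas for $\d^\nabla$ and verifying the precise compatibility of $\mathcal{L}_{\act(\alpha)}$ with the pulled-back flat connection requires care, but once basic-ness is established the remaining claims follow essentially formally.
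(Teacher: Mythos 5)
Your proposal is correct and follows essentially the same route as the paper: establish that $\d^\nabla\theta$ is horizontal and $\T$-invariant (hence basic), note that (i) and (ii) are formal consequences, and derive $\partial K_\theta=0$ in the Lagrangian case from $\omega=\omega_\theta$ via the identity $\d\omega_\theta=\tfrac{1}{2}\mu^*(\partial K_\theta)$, which is exactly what your horizontal-triple computation amounts to. The paper states these steps more tersely; your Cartan-formula argument for basic-ness and the explicit Koszul computations are the standard fillings-in of those details.
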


\begin{proof} The existence of $K_\theta$ follows from the remark that $\d^\nabla\theta$ is horizontal and $\T$-invariant, and its uniqueness is clear. Properties (i) and (ii) are immediate, whilst the last property follows by observing that \[ \d\omega_\theta=\frac{1}{2}\mu^*(\partial K_\theta), \] 
    so that (by Lemma \ref{lem:Lagrangian:connection}) $\partial K_\theta$ must vanish if $\theta$ is Lagrangian.
\end{proof}

According to the previous proposition, a choice of Lagrangian connection on $\mu:(S,\omega)\to M$ gives rise to a cohomology class
\[ [K_\theta]\in H^2(\Omega^\bullet_\partial(M,T^*M),\d^\nabla). \]
This class is independent of the choice of Lagrangian connection and, as for principal $\mathbb{S}^1$-bundles, it is related to the Lagrangian Chern class by a map that `passes from integer to real coefficients'. To make this precise, consider the composite
 \begin{equation}
        \label{eq:map:Lag:chern:class}
        \xymatrix@C=15pt{
        \check{H}^1(M,\TLag)\ar[r]^\simeq& \check{H}^2(M,\O_\Lambda)\ar[r] & \check{H}^2(M,\O_{\Aff}) \ar[r]^---\simeq &  H^2(\Omega^\bullet_\partial(M,T^*M),\d^\nabla)},
    \end{equation} where $\O_{\Aff}$ and $\O_\Lambda$ denote respectively the sheaves of locally defined functions on $M$ that are affine and integral affine with respect to $\Lambda$. The first map in this sequence is the connecting homomorphism induced by the short exact sequence
    \[ 
    \xymatrix{
    0\ar[r] & \O_\Lambda\ar[r] &  \mathcal{C}^\infty\ar[r]^--{\d}& \TLag\ar[r] & 0,}
    \]
    while the last map is the isomorphism induced by the fine resolution
    \begin{equation}\label{eqn:fineresafffnc} 
    \xymatrix{
    0\ar[r] & \O_{\Aff}\ar[r] &\mathcal{C}^\infty\ar[r]^---{\d^\nabla\circ \d} & \Omega^1_\partial(-,T^*M)\ar[r]^{\d^\nabla}& \Omega^2_\partial(-,T^*M)\ar[r]^--{\d^\nabla}& \dots}
    \end{equation}
\begin{theorem}
    \label{thm:real:Lag-Chern:class} 
    If $\theta$ is a Lagrangian connection for a principal Hamiltonian $\T$-space $\mu:(S,\omega)\to M$, then \eqref{eq:map:Lag:chern:class} maps the Lagrangian Chern class $c_1(S,\omega)$ to the class $[K_\theta]$. 
\end{theorem}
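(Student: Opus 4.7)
The plan is to represent both sides of (\ref{eq:map:Lag:chern:class}) by \v{C}ech cocycles on a common good cover and verify the representatives coincide. First, choose a good open cover $\{U_i\}$ of $M$ by integral affine charts small enough that each $U_i$ admits a Lagrangian section $s_i:U_i\to S$ of $\mu$, as in the discussion preceding Theorem \ref{thm:Duistermaat}. Set $g_{ij}:=s_i\cdot s_j^{-1}\in \TLag(U_{ij})$; by construction this \v{C}ech $1$-cocycle represents $c_1(S,\omega)$. Pick local lifts $F_{ij}\in \mathcal{C}^\infty(U_{ij})$ with $g_{ij}=\exp(\d F_{ij})$. The connecting homomorphism of $0\to \O_\Lambda\to \mathcal{C}^\infty\to \TLag\to 0$ followed by the inclusion $\O_\Lambda\hookrightarrow \O_\Aff$ then sends $c_1(S,\omega)$ to the class in $\check{H}^2(M,\O_\Aff)$ represented by $c_{ijk}:=F_{ij}+F_{jk}-F_{ik}$.

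Next, form the local pullbacks $\alpha_i:=s_i^*\theta\in \Omega^1(U_i,T^*M)$. Since $s_i$ is Lagrangian and $\mu\circ s_i=\id$, the identity $\omega=\omega_\theta$ from Lemma \ref{lem:Lagrangian:connection} gives $(\partial \alpha_i)(v,w)=s_i^*\omega_\theta(v,w)=s_i^*\omega(v,w)=0$, so that $\alpha_i\in \Omega^1_\partial(U_i,T^*M)$. Pulling back the defining identity $\mu^*K_\theta=\d^\nabla\theta$ along $s_i$ yields $K_\theta|_{U_i}=\d^\nabla \alpha_i$, so that $\alpha_i$ is a local primitive for $K_\theta$ in the resolution complex.

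The heart of the argument is the local identity
\begin{equation*}
    \alpha_i-\alpha_j=\d^\nabla \d F_{ij}\qquad \text{on } U_{ij}.
\end{equation*}
To verify it, I would work in action-angle coordinates on $S|_{U_{ij}}$ adapted to $s_j$, producing an identification $S|_{U_{ij}}\simeq \Tt^n\times U_{ij}$ with $\omega=\sum_k \d\theta^k\wedge \d x^k$, $s_j(x)=(0,x)$ and $s_i(x)=(\d F_{ij}(x),x)$. The tautological $1$-form $\theta_0:=\sum_k \d\theta^k\otimes \d x^k$ is a Lagrangian connection, and any other Lagrangian connection in these coordinates differs from $\theta_0$ by a horizontal term $\mu^*\beta$ with $\beta\in \Omega^1_\partial(U_{ij},T^*M)$ (the symmetry of $\beta$ again coming from Lemma \ref{lem:Lagrangian:connection}). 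Since $\mu\circ s_i=\mu\circ s_j=\id$, the contribution of $\mu^*\beta$ pulls back to $\beta$ on both sides and cancels in $\alpha_i-\alpha_j$; meanwhile a direct calculation gives $s_j^*\theta_0=0$ and $s_i^*\theta_0=\mathrm{Hess}(F_{ij})=\d^\nabla \d F_{ij}$.

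To conclude, observe that the data $(K_\theta;\,\alpha_i;\,F_{ij})$ is precisely the output of the standard zigzag computing the isomorphism $H^2(\Omega^\bullet_\partial(M,T^*M),\d^\nabla)\to \check{H}^2(M,\O_\Aff)$ from the fine resolution (\ref{eqn:fineresafffnc}) applied to $[K_\theta]$; the associated \v{C}ech cocycle is exactly $c_{ijk}=F_{ij}+F_{jk}-F_{ik}$. Since $c_{ijk}$ also represents the image of $c_1(S,\omega)$ under (\ref{eq:map:Lag:chern:class}), the theorem follows. The main obstacle is the local calculation in the third paragraph: one has to simultaneously trivialize the integral affine structure, the symplectic torus bundle, and the Lagrangian section $s_j$, and then verify that the ambiguity in the Lagrangian connection (the symmetric tensor $\beta$) drops out in the cocycle comparison.
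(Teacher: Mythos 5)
Your proof is correct and follows essentially the same route as the paper: both represent $c_1(S,\omega)$ by a \v{C}ech cocycle built from local Lagrangian sections, use the cochain $\sigma_i^*\theta$ (your $\alpha_i$) as the intermediate step in the \v{C}ech--de Rham zigzag, and invoke Lemma \ref{lem:Lagrangian:connection} to see that $\partial(\sigma_i^*\theta)=0$. The only difference is presentational: the paper packages your three identities ($\check{\d}$ of the lifts, $\alpha_i-\alpha_j=\d^\nabla\d F_{ij}$, and $K_\theta=\d^\nabla\alpha_i$) into the single total-complex equation $(\check{\d}f,0,0)-(0,0,K_\theta)=\d_{\textrm{Tot}}(f,\sigma^*\theta)$, whereas you verify the middle identity explicitly in action-angle coordinates.
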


\begin{proof} Let $\mathcal{U}=\{U_i\}_{i\in I}$ be a good open cover of $M$ such that $\mu$ admits a Lagrangian section $\sigma_i$ over $U_i$ for each $i\in I$. The class $c_1(S,\omega)\in \check{H}^1(M,\TLag)$ is represented by the $1$-cocycle $\tau=[\tau_{ij}]\in \check{C}^1(\mathcal{U},\TLag)$, defined by the relation:
\[
    \tau_{ij}(x)\cdot \sigma_i(x)=\sigma_j(x), \quad x\in U_{ij}. 
\]
The connecting homomorphism in \eqref{eq:map:Lag:chern:class} sends $c_1(S,\omega)$ to the class represented by the $2$-cocycle $\check{\d}f\in \check{C}^2(\mathcal{U},\O_\Lambda)$, where $f=[f_{ij}]\in \check{C}^1(\mathcal{U},\mathcal{C}^\infty)$ is any $1$-cochain lifting $\tau$ (which exists since $U_{ij}$ is contractible for all $i$ and $j$). The last map in \eqref{eq:map:Lag:chern:class} is induced by the `\v{C}ech-de Rham'-type double complex
\[
\xymatrix{ \vdots & \vdots & \vdots & 
\\
\check{C}^2(\mathcal{U},\mathcal{C}^\infty)\ar[r]\ar[u] & \check{C}^2(\mathcal{U},\Omega^1_\partial(-,T^*M))\ar[r]\ar[u] & \check{C}^2(\mathcal{U},\Omega^2_\partial(-,T^*M)) \ar[r]\ar[u] & \dots \\
\check{C}^1(\mathcal{U},\mathcal{C}^\infty)\ar[r]\ar[u] & \check{C}^1(\mathcal{U},\Omega^1_\partial(-,T^*M))\ar[r]\ar[u] & \check{C}^1(\mathcal{U},\Omega^2_\partial(-,T^*M))\ar[r]\ar[u] & \dots \\
\check{C}^0(\mathcal{U},\mathcal{C}^\infty)\ar[r]\ar[u] & \check{C}^0(\mathcal{U},\Omega^1_\partial(-,T^*M))\ar[r]\ar[u] & \check{C}^0(\mathcal{U},\Omega^2_\partial(-,T^*M))\ar[r]\ar[u] & \dots 
}
\] 
In this complex the vertical differentials are those of the \v{C}ech complex and the horizontal differentials are induced by \eqref{eqn:fineresafffnc}. Notice that both the rows and columns are exact at the $(p,q)^\textrm{th}$ entry whenever $p,q>0$. We need to show that $(\check{\d}f,0,0)$ and $(0,0,K_\theta)$ represent the same class in the cohomology of the total complex. This follows because 
\[(\check{\d}f,0,0)-(0,0,K_\theta)=\d_\textrm{Tot}(f,\sigma^*\theta),
\] 
where $\sigma^*\theta\in \check{C}^0(\mathcal{U},\Omega^1_\partial(M,T^*M))$ is given by $(\sigma^*\theta)_i:=\sigma_i^*\theta$. Note that $\sigma_i^*\theta$  belongs to the kernel of $\partial$ because $\sigma_i$ and $\theta$ are Lagrangian and, by Lemma \ref{lem:Lagrangian:connection}, one has
\[ 
\partial(\sigma_i^*\theta)=\sigma_i^*\omega_\theta=\sigma_i^*\omega=0. 
\]
\end{proof}

\subsection{Flat Lagrangian connections} 
\label{sec:Lag:connections:flat}

Flat Lagrangian connections will play a key role in the study of invariant K\"ahler metrics. In this subsection we show how Lagrangian fibrations with such connections can be classified in terms of a sheaf cohomology group, much like principal $\Tt^n$-bundles with flat connection. 

\begin{definition}
A \textbf{flat Lagrangian fibration} is a Lagrangian fibration equipped with a flat Lagrangian connection.
\end{definition}

Note that, by Proposition \ref{prop:curvature}, a flat Lagrangian connection is the same thing as an invariant Lagrangian foliation transverse to the fibers.

\begin{remark}
    In the literature on T-duality and mirror symmetry flat Lagrangian fibrations in our sense are sometimes called \emph{semi-flat} (see, e.g., \cite{ABCDGMSSW}).
\end{remark}

Given an integral affine manifold $(M,\Lambda)$ we denote by $\TFlat$ the sheaf of $\nabla$-flat sections of $\T_\Lambda$. If $(U,x^i)$ an integral affine chart then the flat sections of $\T_\Lambda|_U$ take the form
\[ s=\sum_{i=1}^n c_i\d x^i \pmod{\Lambda}, \]
where the coefficients $c_i$ are locally constant. This shows that every flat section is automatically a Lagrangian section. Hence, $\TFlat$ is a subsheaf of $\TLag$ and we have a map in cohomology
\[
\Check{H}^1(M,\TFlat)\to \Check{H}^1(M,\TLag).
\]
This map is part of an exact sequence, which will allow us to express the first obstruction.

\begin{proposition}
\label{prop:L:flat}
Given a symplectic torus bundle $(\T,\Omega)\to M$ the sequence
\begin{equation}
\label{eq:image:flat:cohomology}
\xymatrix{\Check{H}^1(M,\TFlat)\ar[r] & \Check{H}^1(M,\TLag) \ar[r] & H^2(\Omega^\bullet_\partial(M,T^*M),\d^\nabla),}
\end{equation}
where the second map is given by {\eqref{eq:map:Lag:chern:class}}, is exact.
\end{proposition}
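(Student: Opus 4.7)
The plan is to realize both the image of $\Check{H}^1(M,\TFlat)\to\Check{H}^1(M,\TLag)$ and the kernel of the composite (\ref{eq:map:Lag:chern:class}) as the same subgroup of $\Check{H}^2(M,\O_\Lambda)$ via the connecting homomorphism of the short exact sequence $0\to\O_\Lambda\to\mathcal{C}^\infty\to\TLag\to 0$ that was used in the proof of Theorem \ref{thm:real:Lag-Chern:class}.

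First I would set up an auxiliary short exact sequence of sheaves
\[
0\to \O_\Lambda\to \O_{\Aff}\to \TFlat\to 0,
\]
in which the first arrow is the inclusion and the second sends an affine function $\ell$ to the class of $\d\ell$ modulo $\Lambda$. Exactness is checked on a connected integral affine chart $(U,x^i)$: affine functions are of the form $\sum_i c_i x^i+c_0$ and lie in $\O_\Lambda$ precisely when $c_i\in\Z$, while any flat section of $\T_\Lambda|_U$ has the form $[\sum_i c_i\,\d x^i]$, which is visibly the image of $\sum_i c_i x^i\in\O_{\Aff}(U)$. Together with the short exact sequence from the proof of Theorem \ref{thm:real:Lag-Chern:class}, this forms the commutative ladder
\[
\xymatrix@C=16pt@R=14pt{
0\ar[r] & \O_\Lambda\ar[r]\ar@{=}[d] & \O_{\Aff}\ar[r]\ar[d] & \TFlat\ar[r]\ar[d] & 0\\
0\ar[r] & \O_\Lambda\ar[r] & \mathcal{C}^\infty\ar[r] & \TLag\ar[r] & 0
}
\]
with the remaining vertical arrows being the natural inclusions.

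Next, by naturality of the connecting homomorphism in \v{C}ech cohomology, I obtain the commutative square
\[
\xymatrix@R=14pt{
\Check{H}^1(M,\TFlat)\ar[r]^{\delta_1}\ar[d] & \Check{H}^2(M,\O_\Lambda)\ar@{=}[d]\\
\Check{H}^1(M,\TLag)\ar[r]^{\delta_2} & \Check{H}^2(M,\O_\Lambda)
}
\]
Since $M$ is paracompact and $\mathcal{C}^\infty$ is fine, $\Check{H}^i(M,\mathcal{C}^\infty)=0$ for $i\geq 1$ and hence $\delta_2$ is an isomorphism. By the long exact sequence of the top row, $\im(\delta_1)$ equals the kernel of the inclusion-induced map $\Check{H}^2(M,\O_\Lambda)\to \Check{H}^2(M,\O_{\Aff})$. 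On the other hand, unwinding (\ref{eq:map:Lag:chern:class}) shows that it coincides with the composite
\[
\Check{H}^1(M,\TLag)\xrightarrow{\delta_2,\,\sim}\Check{H}^2(M,\O_\Lambda)\to \Check{H}^2(M,\O_{\Aff})\xrightarrow{\sim} H^2(\Omega^\bullet_\partial(M,T^*M),\d^\nabla),
\]
whose kernel is therefore the $\delta_2$-preimage of $\ker(\Check{H}^2(M,\O_\Lambda)\to\Check{H}^2(M,\O_{\Aff}))$. Combining the two descriptions and using that $\delta_2$ is bijective, both sides of the claimed exactness coincide with this common subgroup.

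The main subtlety will be verifying the short exact sequence $0\to \O_\Lambda\to \O_{\Aff}\to \TFlat\to 0$ — specifically the surjectivity of the right-hand map at the sheaf level — and then confirming that the composite (\ref{eq:map:Lag:chern:class}) really does factor through $\delta_2$ as above rather than some unrelated map. Both points amount to tracking local coordinate representatives; after they are settled, everything else reduces to a purely formal diagram chase.
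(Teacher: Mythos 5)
Your proof is correct, but it reaches the conclusion by a different (in some sense dual) route from the paper's. The paper introduces the quotient sheaf $\TLag/\TFlat$ and the short exact sequence $0\to\O_{\Aff}\to\mathcal{C}^\infty\to\TLag/\TFlat\to 0$; mapping $0\to\O_\Lambda\to\mathcal{C}^\infty\to\TLag\to 0$ into it and invoking naturality of the connecting homomorphisms identifies the composite \eqref{eq:map:Lag:chern:class} with the map $\Check{H}^1(M,\TLag)\to\Check{H}^1(M,\TLag/\TFlat)$, so that \eqref{eq:image:flat:cohomology} becomes literally a segment of the long exact sequence of $0\to\TFlat\to\TLag\to\TLag/\TFlat\to 0$. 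You instead work at the other corner of the underlying $3\times 3$ diagram of sheaves: your key lemma is the short exact sequence $0\to\O_\Lambda\to\O_{\Aff}\to\TFlat\to 0$, and you transfer the exactness statement through the isomorphism $\delta_2\colon\Check{H}^1(M,\TLag)\to\Check{H}^2(M,\O_\Lambda)$, identifying both the image of $\Check{H}^1(M,\TFlat)$ and the kernel of \eqref{eq:map:Lag:chern:class} with $\delta_2^{-1}\bigl(\ker(\Check{H}^2(M,\O_\Lambda)\to\Check{H}^2(M,\O_{\Aff}))\bigr)$. Both arguments are formal diagram chases of essentially equal cost; the paper's version has the small advantage of exhibiting \eqref{eq:image:flat:cohomology} as part of an explicit long exact sequence (hence also describing the cokernel of the first map), while yours avoids introducing the quotient sheaf and stays entirely within the groups already appearing in \eqref{eq:map:Lag:chern:class}. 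The two points you flag as the remaining subtleties do go through: in a connected integral affine chart the kernel of $\ell\mapsto[\d\ell]$ on affine functions is exactly $\O_\Lambda$ and surjectivity onto $\TFlat$ is clear on stalks, and the factorization of \eqref{eq:map:Lag:chern:class} through $\delta_2$ is immediate from the paper's definition of that composite.
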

\begin{proof}[Proof of Proposition \ref{prop:L:flat}] Consider the map of short exact sequences:

\[ 
\xymatrix{ 
0\ar[r] & \O_\Lambda\ar[r]\ar[d] & \mathcal{C}^\infty\ar[r]^{\d}\ar@{=}[d] & \TLag\ar[r]\ar[d] & 0\\
0 \ar[r]& \O_{\Aff}\ar[r] & \mathcal{C}^\infty\ar[r] & \TLag/\TFlat\ar[r] & 0 
}
\] By naturality of the connecting homomorphism, the induced square:
\[ 
\xymatrix{ 
\check{H}^1(M,\TLag)\ar[r]^{\sim}\ar[d] & \check{H}^2(M,\O_\Lambda)\ar[d] \\
 \check{H}^1(M,\TLag/\TFlat) \ar[r]^{\sim}& \check{H}^2(M,\O_{\Aff})  
}
\] commutes. So, up to isomorphism, \eqref{eq:image:flat:cohomology} is  part of the long exact sequence of:
\[ 0\to \TFlat\to \TLag\to \TLag/\TFlat\to 0. 
\]


\end{proof} 
Combining this result, Proposition \ref{prop:curvature} and Theorem \ref{thm:real:Lag-Chern:class} we obtain the following corollary.

\begin{corollary}
    \label{cor:L:flat}
    Fix an integral affine manifold $(M,\Lambda)$. Given a Lagrangian fibration $\mu:(S,\omega)\to M$ inducing $\Lambda$ the following conditions are equivalent:
    \begin{enumerate}[(i)]
        \item $\mu:(S,\omega)\to M$ admits a flat Lagrangian connection;
        \item the Lagrangian Chern class $c_1(S,\omega)\in\Check{H}^1(M,\TLag)$ lies in the image of \eqref{eq:image:flat:cohomology}.
    \end{enumerate}
\end{corollary}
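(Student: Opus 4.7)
The plan is to deduce the corollary directly from the exactness of the sequence in Proposition \ref{prop:L:flat}, combined with the curvature criterion for flatness from Proposition \ref{prop:curvature} and the identification of $[K_\theta]$ with the image of the Lagrangian-Chern class provided by Theorem \ref{thm:real:Lag-Chern:class}. Throughout, I write $\iota:\check{H}^1(M,\TFlat)\to \check{H}^1(M,\TLag)$ and $\kappa:\check{H}^1(M,\TLag)\to H^2(\Omega^\bullet_\partial(M,T^*M),\d^\nabla)$ for the two arrows in \eqref{eq:image:flat:cohomology}.

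For the implication (i)$\Rightarrow$(ii), suppose $\mu:(S,\omega)\to M$ admits a flat Lagrangian connection with $1$-form $\theta$. Proposition \ref{prop:curvature}(ii), together with flatness of $\theta$, gives $K_\theta=0$. By Theorem \ref{thm:real:Lag-Chern:class} one has $\kappa(c_1(S,\omega))=[K_\theta]=0$, and exactness of the sequence in Proposition \ref{prop:L:flat} places $c_1(S,\omega)$ in the image of $\iota$.

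For the converse (ii)$\Rightarrow$(i), I would start from any Lagrangian connection $\theta_0$ on $\mu:(S,\omega)\to M$, which exists by Proposition \ref{prop:existenceLagrconn:principalcase}. Assuming $c_1(S,\omega)\in\im(\iota)$, exactness forces $\kappa(c_1(S,\omega))=0$, and Theorem \ref{thm:real:Lag-Chern:class} then gives $[K_{\theta_0}]=0$ in $H^2(\Omega^\bullet_\partial(M,T^*M),\d^\nabla)$. Hence there exists $\eta\in \Omega^1_\partial(M,T^*M)$ with $K_{\theta_0}=\d^\nabla\eta$. The natural candidate for a flat Lagrangian connection is
\[
    \theta:=\theta_0-\mu^*\eta \in \Omega^1(S,T^*M).
\]
One has $\mu^*\eta$ vanishing on $\ker(\d\mu)$, so the moment map normalization condition (ii) of Definition \ref{def:connection:form} is preserved; its $\T$-invariance follows from $\T$-invariance of $\mu$ and the fact that $\eta$ lives on the base, so $\theta$ is again a connection $1$-form. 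Flatness is immediate: $\d^\nabla\theta=\d^\nabla\theta_0-\mu^*\d^\nabla\eta=\mu^*(K_{\theta_0}-\d^\nabla\eta)=0$, and by uniqueness in Proposition \ref{prop:curvature} this says $K_\theta=0$.

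The main point to check, and the only place that uses the condition $\eta\in\Omega^1_\partial(M,T^*M)$ rather than merely $\eta\in\Omega^1(M,T^*M)$, is that $\theta$ is still Lagrangian. By Lemma \ref{lem:Lagrangian:connection} this amounts to verifying $\omega_\theta=\omega$. A short computation from the definition of $\omega_\theta$ gives
\[
    \omega_\theta(v,w)=\omega_{\theta_0}(v,w)-\bigl(\langle \eta(\d\mu\, v),\d\mu\, w\rangle-\langle \eta(\d\mu\, w),\d\mu\, v\rangle\bigr)=\omega_{\theta_0}(v,w)-(\partial\eta)(\d\mu\, v,\d\mu\, w),
\]
and the correction term vanishes precisely because $\eta\in\ker\partial=\Omega^1_\partial(M,T^*M)$; since $\theta_0$ was Lagrangian this gives $\omega_\theta=\omega_{\theta_0}=\omega$. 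This symmetry step is the heart of the argument: it is what ensures that the correction $\mu^*\eta$ used to kill the curvature is compatible with the Lagrangian condition, and thereby bridges the abstract exact sequence in Proposition \ref{prop:L:flat} with the geometric statement about existence of a flat Lagrangian connection.
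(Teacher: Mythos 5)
Your proof is correct and is exactly the argument the paper intends: the corollary is stated there as following from "combining" Proposition \ref{prop:L:flat}, Proposition \ref{prop:curvature} and Theorem \ref{thm:real:Lag-Chern:class}, and your write-up supplies precisely that combination, including the one nontrivial detail left implicit — that modifying $\theta_0$ by $\mu^*\eta$ with $\eta\in\ker\partial$ kills the curvature while preserving the Lagrangian condition via Lemma \ref{lem:Lagrangian:connection}. No gaps.
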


Flat Lagrangian fibrations are classified by their \textbf{flat Chern class}. This is the element in $\check{H}^1(M,\TFlat)$ constructed as the usual Chern class but using only flat local sections.
More precisely, two flat Lagrangian fibrations $\mu_i:(S_i,\omega_i,\theta_i)\to (M,\Lambda)$ are said to be {\bf equivalent} if there exists an equivalence of Lagrangian fibrations preserving the connections:
\[
\vcenter{\xymatrix{
S_1\ar[r]^\Psi\ar[d]_{\mu_1} & S_2\ar[d]^{\mu_2}\\
M\ar@{=}[r]_{\id} & M}}\qquad \Psi^*\theta_2=\theta_1.
\]
Then one has the following classification result.

\begin{theorem}
\label{thm:bi-lagrangian:classification}
Fix an integral affine manifold $(M,\Lambda)$. Assigning to a flat Lagrangian fibration its flat Chern class yields a canonical 1:1 correspondence
\[ 
\left\{\\ \txt{flat Lagrangian fibrations\\ $\mu:(S,\omega,\theta)\to M$\\ up to equivalence \,}\right\}\ 
\tilde{\longleftrightarrow}\ 
\check{H}^1(M,\TFlat)
\]
Moreover, the flat Chern class is mapped to the Lagrangian Chern class by \eqref{eq:image:flat:cohomology}.
\end{theorem}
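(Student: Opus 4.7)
The plan is to build on Duistermaat's Theorem \ref{thm:Duistermaat} but to insist on flatness at every step. The key preliminary is a local triviality statement: every flat Lagrangian fibration $\mu:(S,\omega,\theta)\to M$ is locally equivalent, as a flat Lagrangian fibration, to $(\T_\Lambda|_U,\Omega_\Lambda|_U,H|_U)$, where $H$ is the canonical horizontal distribution from Proposition \ref{prop:connection}. I would prove this first. By Proposition \ref{prop:curvature} the distribution $D:=\ker\theta$ is involutive; being Lagrangian of dimension $\dim M$ and transverse to $\ker\d\mu$, each of its leaves is locally a Lagrangian section of $\mu$. Picking such a leaf $\sigma$ through a given point and applying Remark \ref{rem:action-angle} produces action-angle coordinates $S|_U\simeq \T_\Lambda|_U\simeq\Tt^n\times U$ identifying $\sigma$ with the identity section. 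In these coordinates $D$ is a $\T$-invariant Lagrangian distribution containing the identity section as one of its leaves; by Proposition \ref{prop:invariance:condition}, translation by any flat local section of $\T|_U$ preserves $D$, and such translations carry the identity section to every constant section of $\Tt^n\times U$, i.e.~to every leaf of $H$. Since these leaves already partition $S|_U$, the distributions $D$ and $H$ must coincide.

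With the local model in hand, the flat Chern class is constructed as in the classical case, but with all sections taken flat. Choose a good cover $\{U_i\}$ of $M$ together with flat Lagrangian sections $\sigma_i:U_i\to S$. On overlaps define $\tau_{ij}:U_{ij}\to \T_\Lambda$ by $\sigma_j=\tau_{ij}\cdot\sigma_i$. In the local trivialization, both $\sigma_i$ and $\sigma_j$ correspond to constant sections of $\Tt^n\times U_{ij}$, so $\tau_{ij}$ is $\nabla$-flat, producing a \v{C}ech $1$-cocycle valued in $\TFlat$. A different choice of flat Lagrangian sections modifies $(\tau_{ij})$ by a \v{C}ech coboundary, so one obtains a well-defined class in $\check{H}^1(M,\TFlat)$ depending only on the equivalence class of $(\mu,\omega,\theta)$. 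The image of this class under the first map of \eqref{eq:image:flat:cohomology}, induced by the inclusion $\TFlat\hookrightarrow\TLag$, is represented by the very same cocycle viewed as a $\TLag$-cocycle; this is by definition the Lagrangian Chern class of $(S,\omega)$, settling the last assertion of the theorem.

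It remains to prove the bijection. For surjectivity, given a cocycle $(\tau_{ij})$ in $\TFlat$ on a cover $\{U_i\}$, glue the local pieces $(\T_\Lambda|_{U_i},\Omega_\Lambda|_{U_i},H|_{U_i})$ along $\tau_{ij}$ acting by left translation: multiplicativity of $\Omega_\Lambda$ ensures that the symplectic forms glue, and $\nabla$-flatness of $\tau_{ij}$ ensures that the canonical flat Lagrangian connections glue as well, producing a flat Lagrangian fibration with flat Chern class $[\tau_{ij}]$. For injectivity, suppose two flat Lagrangian fibrations give cohomologous cocycles $(\tau_{ij})$ and $(\tau'_{ij})$ with respect to a common choice of local flat Lagrangian sections, differing by flat Lagrangian sections $\mu_i$. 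Modify the local equivalences provided by the local model using the $\mu_i$; by flatness of $\mu_i$ this patches to a global equivalence of flat Lagrangian fibrations. The main obstacle in the whole argument is the local model, and specifically the identification $D=H$, which depends crucially on the rigidity imposed by $\T$-invariance of $D$ together with the fiberwise transitivity of the action by flat local sections.
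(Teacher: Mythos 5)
Your proposal is correct and follows exactly the route the paper intends: the paper's proof is a one-line reference to repeating Duistermaat's argument with flat sections, and your write-up supplies precisely those details, including the key local rigidity lemma that $\ker\theta$ must coincide with the canonical horizontal distribution $H$ in action-angle coordinates adapted to a leaf (which is what guarantees the transition cocycle is $\nabla$-flat). The well-definedness, surjectivity-by-gluing, and injectivity steps are all sound and match the standard Duistermaat/\v{C}ech scheme the paper invokes.
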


\begin{proof}
Using flat sections, the proof of the theorem is entirely similar to the proof of Theorem \ref{thm:Duistermaat} (see, e.g.,\cite{Dui80} or \cite{CFT19}).
\end{proof}

\begin{remark}\label{rem:cohomology:TFlat}
Note that $\TFlat$ is the locally constant sheaf associated to the representation of $\pi_1(M)$ on $\T_\Lambda$ given by the holonomy of the integral affine manifold $(M,\Lambda)$. In particular, if the holonomy representation if trivial then the cohomology of $\TFlat$ is that of $M$ with values in the abelian group $\mathbb{T}^m$, where $m=\dim(M)$. For example, when $M$ is simply-connected one obtains that the cohomology vanishes in degree one.
\end{remark}

Finally, we observe that one can specialize the correspondence given in Proposition \ref{prop:classification:complete} to obtain the following useful description of flat Lagrangian fibrations over complete integral affine manifolds.

\begin{corollary}
    \label{cor:classification:complete:flat}
    Let $(M,\Lambda)$ be a complete integral affine manifold (meaning that $M\simeq\R^n/\Gamma$) and let $H$ be the canonical horizontal distribution of $\Tt^n\times\R^n\to\R^n$. There is a 1:1 correspondence
    \[ 
    \left\{\\ \txt{flat Lagrangian fibrations\\ $\mu:(S,\omega,\theta)\to M$\\ up to equivalence \,}\right\}\quad 
    \tilde{\longleftrightarrow}\quad
    \left\{\\ \txt{symplectic actions\\ $\Gamma\ract (\Tt^n\times\R^n,\omega_\can)$\\ covering $\Gamma\ract\R^n$\\
    preserving $H$ \,}\right\}
    \]
\end{corollary}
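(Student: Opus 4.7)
The plan is to specialize Proposition \ref{prop:classification:complete} by tracking the extra datum of a flat Lagrangian connection, using Theorem \ref{thm:bi-lagrangian:classification} (and Remark \ref{rem:cohomology:TFlat}) to show that over the simply-connected base $\R^n$ there is essentially only one flat Lagrangian fibration, namely $(\Tt^n\times \R^n,\omega_\can,H)$. The correspondence then becomes the same as in Proposition \ref{prop:classification:complete}, but with the additional constraint that the $\Gamma$-action preserves $H$.

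For the forward direction, start with a flat Lagrangian fibration $\mu:(S,\omega,\theta)\to M$ and pull it back along the universal cover $q:\R^n\to M$ to obtain a $\Gamma$-equivariant flat Lagrangian fibration $\tilde{\mu}:(\tilde{S},\tilde{\omega},\tilde{\theta})\to \R^n$ with $\tilde{\theta}=\tilde{q}^*\theta$, exactly as in the proof of Proposition \ref{prop:classification:complete}. Since $\R^n$ is simply-connected, Remark \ref{rem:cohomology:TFlat} yields $\check{H}^1(\R^n,\TFlat)=0$, so by Theorem \ref{thm:bi-lagrangian:classification} there exists an equivalence of flat Lagrangian fibrations $\Psi:(\tilde{S},\tilde{\omega},\tilde{\theta})\to (\Tt^n\times \R^n,\omega_\can,\theta_H)$, where $\theta_H$ is the connection $1$-form for $H$. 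Transferring the $\Gamma$-action along $\Psi$ gives a symplectic action of $\Gamma$ on $(\Tt^n\times\R^n,\omega_\can)$ covering $\Gamma\ract \R^n$. Because $\tilde{\theta}$ was $\Gamma$-invariant and $\Psi$ intertwines $\tilde{\theta}$ with $\theta_H$, the transferred action preserves $\theta_H$, hence preserves $H$.

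Conversely, given a symplectic action of $\Gamma$ on $(\Tt^n\times\R^n,\omega_\can)$ covering $\Gamma\ract\R^n$ which preserves $H$, Proposition \ref{prop:classification:complete} produces the principal Hamiltonian $\T_\Lambda$-space $(S,\omega)=((\Tt^n\times\R^n)/\Gamma,\omega_\can/\Gamma)$. The $\Gamma$-invariance of $H$ guarantees that $H$ descends to a distribution on $S$ transverse to the fibers of $\mu$, which is automatically Lagrangian, $\T_\Lambda$-invariant, and involutive — that is, a flat Lagrangian connection $\theta$ on $S$.

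The main point to verify is that these two constructions are well defined on equivalence classes and are mutually inverse. Two choices of $\Psi$ in the forward direction differ by an automorphism of $(\Tt^n\times\R^n,\omega_\can,\theta_H)$, i.e.\ by an $H$-preserving symplectomorphism covering $\id_{\R^n}$; the induced $\Gamma$-actions therefore differ by conjugation by such a symplectomorphism, which is exactly the equivalence relation implicit on the right-hand side. Similarly, an equivalence of flat Lagrangian fibrations over $M$ lifts to a $\Gamma$-equivariant, $H$-preserving symplectomorphism of $(\Tt^n\times\R^n,\omega_\can)$, and conversely any such equivariant symplectomorphism descends. Checking that the two constructions compose to the identity on equivalence classes then follows in a routine manner from the arguments in the proofs of Proposition \ref{prop:classification:complete} and Theorem \ref{thm:bi-lagrangian:classification}; the only non-formal input is the vanishing of $\check{H}^1(\R^n,\TFlat)$ used above, and that is where I expect to focus the verification.
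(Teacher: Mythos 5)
Your proposal is correct and is exactly the intended elaboration of the paper's argument: the paper itself offers no details beyond the remark that the corollary is obtained by "specializing Proposition \ref{prop:classification:complete}", and your route — pulling back along the universal cover, using $\check{H}^1(\R^n,\TFlat)=0$ together with Theorem \ref{thm:bi-lagrangian:classification} to identify the lifted flat fibration with $(\Tt^n\times\R^n,\omega_\can,H)$, and then transferring the $\Gamma$-action — is precisely that specialization. Your explicit discussion of well-definedness on equivalence classes is, if anything, more careful than the paper's statement, which leaves the equivalence relation on the right-hand side implicit.
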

\smallskip

\begin{example} 
\label{ex:complextori1}
A simple example illustrating the above correspondences can be obtained by considering the integral affine circle 
$(M,\Lambda)=(\mathbb{S}^1,b\,\Z\,\d x)$, where $b>0$ is a parameter. Then $H^1(M,\TLag)=0$, so any Lagrangian fibration inducing $(M,\Lambda)$ takes the form
\[ 
\pr_{2}:(\mathbb{S}^1\times\mathbb{S}^1,b\,\d y\wedge\d x)\to \mathbb{S}^1.
\] 
By the remark above, we have $H^1(M,\TFlat)=\mathbb{S}^1$. A class $e^{2\pi i a}\in\Ss^1$ can be realized as the Lagrangian connection
\[ D_a=\langle\partial_x-a\partial_y\rangle. \]
\end{example}

\begin{example} 
\label{ex:trivial:chern:class:2}
Consider the second Lagrangian fibration 
\[ (\Tt^2\times\Tt^2,\omega_a)\to \Tt^2 \] 
in Example \ref{ex:trivial:chern:class}. We saw there that for $a\in]0,1[$ this fibration has non-trivial Lagrangian Chern class. On the other hand, the canonical horizontal distribution of $\Tt^2\times\R^2\to\R^2$ is invariant under the action \eqref{eq:action:funny}, so it follows from Corollary \ref{cor:classification:complete:flat} that this Lagrangian fibration is flat.
\end{example}

\section{Toric Hamiltonian $\T$-spaces a.k.a.~toric Lagrangian fibrations}
\label{sec:toricspaces}

The main class of Hamiltonian $\T$-spaces that we study in this paper is the following one.

\begin{definition} 
\label{def:toric:Hamiltonian:space}
A Hamiltonian $\T$-space $\mu:(S,\omega)\to M$ is called \textbf{toric} if it satisfies the following conditions:
\begin{enumerate}[(i)]
\item The $\T$-action is free on a dense subset of $S$;
\item $\dim(S)=2\dim(M)$;
\item The map $\mu$ has connected fibers and it is proper as map onto its image. 
\end{enumerate} 
\end{definition}

{ Toric Hamiltonian $\T$-spaces include as special cases 
both principal Hamiltonian $\T$-spaces (when the action is free everywhere and $\mu$ is surjective) and classical symplectic toric manifolds (when $\T=\Tt^n\times \R^n$ -- see Example \ref{ex:classical:toric}).}

In the rest of this section we will see that the theory of principal Hamiltonian $\T$-spaces, studied in the previous section, extends to the toric setting. In particular, in section \ref{subsec:toric:lagrangian:fibrations}, we extend the correspondence between principal Hamiltonian $\T$-spaces and Lagrangian fibrations by showing that toric Hamiltonian $\T$-spaces are essentially the same thing as toric Lagrangian fibrations, i.e., singular Lagrangian fibrations with only elliptic singularities.


\subsection{Toric Lagrangian fibrations}
\label{subsec:toric:lagrangian:fibrations}

We consider the following special class of singular Lagrangian fibrations.

\begin{definition}
By a {\bf singular Lagrangian fibration} we mean a symplectic manifold $(S,\omega)$ together with a smooth map $\mu:S\to M$ satisfying:
\begin{enumerate}[(i)]
\item $\dim(S)=2\dim(M)$;
\item $\mu:(S,\omega)\to M$ is a Poisson map, where $M$ is equipped with the zero Poisson structure;
\item $\mu$ has connected fibers, is proper as a map onto its image and its regular points form a dense open set in $S$.
\end{enumerate}
A {\bf toric Lagrangian fibration} is a singular Lagrangian fibration whose only singularities are of elliptic type.
\end{definition}

A \textit{singularity} or \textit{singular point} of a Lagrangian fibration is a point $p_0\in S$ at which its differential is not surjective. An \emph{elliptic type} singularity is the simplest non-degenerate singularity that a singular Lagrangian fibration can possess -- see, e.g., \cite{DM90,Zung96}. Instead of giving its definition, we recall the following characterization.

\begin{theorem}[Dufour and Molino \cite{DM90}]
\label{thm:Dufour-Molino}
Let $\mu:(S,\omega)\to M$ be a singular Lagrangian fibration. A singular point $p_0\in S$ of $\mu$ is elliptic if and only if there is a neighborhood $U$ of $x_0:=\mu(p_0)$ and an embedding $\phi:U\to\R^n$, such that $\phi\circ\mu:(\mu^{-1}(U),\omega)\to \R^n$ is the moment map of an effective $\Tt^n$-action. 
\end{theorem}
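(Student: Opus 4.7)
My plan is to treat the two implications separately, relying on a Eliasson-type symplectic linearization for the forward direction and on the local normal form for toric moment maps for the backward direction.

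For the easier implication ($\Leftarrow$), suppose such an embedding $\phi:U\to\R^n$ exists and let $\tilde{\mu}:=\phi\circ\mu$ be the moment map of an effective Hamiltonian $\Tt^n$-action on $\mu^{-1}(U)$. Then $(\mu^{-1}(U),\omega,\tilde\mu)$ is a classical symplectic toric manifold. By the Marle--Guillemin--Sternberg local normal form, around any point $p$ with stabilizer of dimension $k$ there exist Darboux coordinates $(x_1,\dots,x_{n-k},z_1,\dots,z_k)$ in which
\[
\tilde\mu(x,z)=\tilde\mu(p)+\bigl(x_1,\dots,x_{n-k},\tfrac{1}{2}|z_1|^2,\dots,\tfrac{1}{2}|z_k|^2\bigr).
\]
This is precisely the standard purely elliptic normal form of a Lagrangian fibration of rank $n-k$ at $p$. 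Since $\phi$ is an embedding, the singularities of $\mu$ agree with those of $\tilde{\mu}$, and in particular $p_0$ is an elliptic singular point.

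For the harder implication ($\Rightarrow$), I would invoke the symplectic linearization theorem of Eliasson, extended to the fibration setting by Dufour and Molino, for purely elliptic singular points of a Lagrangian fibration. This provides Darboux coordinates $(x_1,\dots,x_n,y_1,\dots,y_n)$ on a neighborhood $V$ of $p_0$ together with a local diffeomorphism $\phi$ defined on a neighborhood of $x_0$ such that, after writing $z_j=x_j+i y_j$ on the elliptic directions,
\[
\phi\circ\mu=\bigl(x_1,\dots,x_{n-k},\tfrac{1}{2}|z_1|^2,\dots,\tfrac{1}{2}|z_k|^2\bigr).
\]
This standard expression is manifestly the moment map of a standard effective Hamiltonian $\Tt^n$-action on $V$: translations in the $y_j$ on the regular directions combined with rotations $z_j\mapsto e^{i\theta_j}z_j$ on the elliptic ones. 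This furnishes the sought-after action near $p_0$.

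The main obstacle is to promote this germ of a $\Tt^n$-action to an honest action on $\mu^{-1}(U)$ for some neighborhood $U$ of $x_0$. The strategy relies on three ingredients: (i) the fiber $\mu^{-1}(x_0)$ is compact and connected, since $\mu$ is proper onto its image and has connected fibers; (ii) over the regular part of $\mu$, the Arnold--Liouville theorem (cf.~Remark \ref{rem:action-angle}) provides canonical local $\Tt^n$-actions on the preimages of integral affine charts; and (iii) any two such Hamiltonian $\Tt^n$-actions sharing the same moment map must agree up to translation by a locally constant element of $\Tt^n$. Covering $\mu^{-1}(x_0)$ by the Eliasson chart at $p_0$ together with action-angle charts on the regular locus, one glues the corresponding torus actions by absorbing these locally constant translations; the key structural fact that makes the gluing obstruction vanish is that in a purely elliptic fiber the singular stratum is itself a lower-dimensional torus, on which the local model acts transitively. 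Shrinking $U$ so that this globally defined action persists on the entire preimage then yields the desired effective $\Tt^n$-action on $\mu^{-1}(U)$ with moment map $\phi\circ\mu$.
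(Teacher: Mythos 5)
First, a caveat on the comparison: the paper contains no proof of this statement. It is imported wholesale from Dufour--Molino \cite{DM90}, and the authors explicitly decline to define ``elliptic singularity'', using this theorem as their working characterization. So there is nothing in the paper to measure your argument against; I can only assess it on its own terms. Your backward direction is correct and standard: the local normal form for effective Hamiltonian $\Tt^n$-actions does exactly what you say, and ellipticity is insensitive to post-composing $\mu$ with the embedding $\phi$.

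The forward direction, however, has a genuine gap precisely at what you call ``the main obstacle''. Eliasson linearization gives a $\Tt^n$-action only on a small, non-$\mu$-saturated neighborhood of $p_0$; to act on all of $\mu^{-1}(U)$ you must first control the entire fiber $\mu^{-1}(x_0)$, and three things are missing. (1) The hypothesis concerns the single point $p_0$; the other points of $\mu^{-1}(x_0)$ could a priori be regular, degenerate, or elliptic of a different corank. The ``key structural fact'' you invoke --- that the singular stratum of the fiber is a lower-dimensional torus acted on transitively by the local model --- presupposes that the whole fiber is purely elliptic of constant corank, which is exactly the hard part: the set of corank-$k$ elliptic points is open in the fiber by the local model, but its closedness (ruling out degenerations along the fiber) is the delicate step in \cite{DM90} and in Zung's work, not a given. (2) Your ingredient (ii) is not available: Arnold--Liouville provides action-angle charts over regular \emph{values} of $\mu$, i.e., near compact regular fibers, not at regular \emph{points} lying on the singular fiber $\mu^{-1}(x_0)$; at such a point one only has a local $\R^n$-action, and extracting a period lattice (hence a $\Tt^n$-action) already requires global information about that fiber. (3) Even once the fiber is covered by elliptic charts, two local $\Tt^n$-actions with the same moment map differ not merely by a locally constant translation: the identification of the acting torus with $\R^n/\Z^n$ depends on the choice of local embedding of the base, so the transition involves an element of $\GL_n(\Z)$ as well, and one must produce a single $\phi$ on $U$ compatible with all charts. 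None of this is unfixable --- it is essentially the actual proof in \cite{DM90} --- but as written your sketch assumes the conclusion of the hard part.
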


Note that the \emph{regular fibers} of a singular Lagrangian fibration are indeed Lagrangian submanifolds. For a general singular Lagrangian fibration a fiber of $\mu:S\to M$ can contain both regular and singular points. However, the previous result implies that for a \emph{toric} Lagrangian fibration $\mu:S\to M$ this cannot happen {and:
\begin{enumerate}[(i)]
\item the rank of $\d\mu$ is the same for all points in the same fiber;
\item the image $\Delta:=\mu(S)$ is a codimension 0 submanifold of $M$ with corners;
\item the set of regular points $\mathring{S}\subset S$ is the pre-image of the interior $\mathring{\Delta}$ of $\Delta$.
\end{enumerate} }
Therefore, the restriction $\mu:(\mathring{S},\omega)\to \mathring{\Delta}$ is a (regular) Lagrangian fibration so, as we saw in Section \ref{subsec:lagrangian:fibrations}, there is an induced integral affine structure $\mathring{\Lambda}$ on $\mathring{\Delta}$ and $\mu:(\mathring{S},\omega)\to \mathring{\Delta}$ is a principal Hamiltonian $\T_{\mathring{\Lambda}}$-space. 

The following result shows that the integral affine structure on $\mathring{\Delta}$ extends essentially uniquely to an open set $M'\subset M$ containing $\Delta$, so $\mu:(S,\omega)\to M'$ becomes a toric Hamiltonian $\T$-space. { In the proof we use integral affine structures on manifolds with corners, which can be defined either by an atlas whose transition functions are (restrictions of) integral affine transformations or by a Lagrangian lattice in the cotangent bundle that around boundary points is generated by coordinate coframes.}

\begin{theorem}
\label{thm:equiv:toric:Lagrangian}
Let $\mu:(S,\omega)\to M$ be a toric Lagrangian fibration. Then there is an open set $M'\subset M$ containing 
 $\mu(S)$ and an integral affine structure $\Lambda$ on $M'$ such that $\mu:(S,\omega)\to M'$ is a toric Hamiltonian $\T_\Lambda$-space. The germ of $(M',\Lambda)$ is unique up to an isomorphism that is the identity on $\mu(S)$. Conversely, every toric Hamiltonian $\T$-space is a toric Lagrangian fibration.
\end{theorem}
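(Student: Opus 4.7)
The plan is to build the integral affine structure $\Lambda$ on an open neighborhood $M'$ of $\Delta:=\mu(S)$ by combining the structure $\mathring{\Lambda}$ already available on the regular part $\mathring{\Delta}:=\mu(\mathring{S})$ with local charts supplied by Theorem \ref{thm:Dufour-Molino} near each elliptic singularity, and then to glue the locally defined Hamiltonian $\Tt^n$-actions into a global Hamiltonian $\T_\Lambda$-action on $S$. First I would observe that, since all singularities are elliptic, Theorem \ref{thm:Dufour-Molino} forces the rank of $\d\mu$ to be constant along each fiber, so $\mathring{S}$ is open and saturated by $\mu$-fibers and $\mathring{\Delta}$ is open and dense in $\Delta$. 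The restriction $\mu:(\mathring{S},\omega)\to\mathring{\Delta}$ is then a genuine Lagrangian fibration, and by Section \ref{subsec:lagrangian:fibrations} it induces a unique integral affine structure $\mathring{\Lambda}\subset T^*\mathring{\Delta}$ for which it is a principal Hamiltonian $\T_{\mathring{\Lambda}}$-space.

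Next, for each $x_0\in\Delta\setminus\mathring{\Delta}$ I would pick, via Theorem \ref{thm:Dufour-Molino}, an open neighborhood $U_{x_0}\subset M$ and an embedding $\phi_{x_0}:U_{x_0}\to\R^n$ such that $\phi_{x_0}\circ\mu$ is the moment map of an effective Hamiltonian $\Tt^n$-action on $\mu^{-1}(U_{x_0})$, and set $\Lambda_{x_0}:=\phi_{x_0}^*(\Z\{\d t^1,\dots,\d t^n\})$. The key compatibility claim is that $\Lambda_{x_0}$ coincides with $\mathring{\Lambda}$ on $U_{x_0}\cap\mathring{\Delta}$ and that two such $\Lambda_{x_0},\Lambda_{x_1}$ agree on $U_{x_0}\cap U_{x_1}$. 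Both reduce to the observation that on the regular locus a $1$-form $\al$ lies in $\mathring{\Lambda}$ iff $\phi^1_{\act(\al)}=\id$, which for the Dufour-Molino action is exactly the condition that $\al$ belongs to the pullback of the integer lattice; and since the $\Lambda_{x_i}$ are closed submanifolds of $T^*(U_{x_0}\cap U_{x_1})$ with full-rank lattice fibers, agreement on the dense subset $U_{x_0}\cap U_{x_1}\cap\mathring{\Delta}$ forces agreement everywhere.

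Setting $M':=\mathring{\Delta}\cup\bigcup_{x\in\Delta\setminus\mathring{\Delta}}U_x$, the $\Lambda_x$ then glue into an integral affine structure $\Lambda$ on $M'$. By the same closedness-plus-density argument the local $\Tt^n$-actions assemble into a single Hamiltonian $\T_\Lambda$-action on $S$: on overlaps they already coincide with the principal $\T_{\mathring{\Lambda}}$-action on the regular part, hence everywhere by continuity. Freeness on a dense set, fiber-connectedness, properness onto the image and the dimension equality are inherited from the hypotheses on $\mu$, so $\mu:(S,\omega)\to M'$ becomes a toric Hamiltonian $\T_\Lambda$-space. Uniqueness of the germ of $(M',\Lambda)$ is then automatic: any alternative structure must restrict to $\mathring{\Lambda}$ on $\mathring{\Delta}$ and is pinned down on each Dufour-Molino neighborhood by the same closedness argument.

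For the converse, I would take a toric Hamiltonian $\T$-space $\mu:(S,\omega)\to M$ and check the four conditions defining a toric Lagrangian fibration. The Poisson property of $\mu$ follows from the moment map condition \eqref{eqn:mommapcond} applied to closed $1$-forms; the dimension equality, fiber-connectedness and properness onto image are built into Definition \ref{def:toric:Hamiltonian:space}; density of regular points is condition (i) there. For ellipticity of singularities I would pick an integral affine chart near any point of $M$, use Remark \ref{rm:local:sympl:torus:bundle} to trivialize $\T|_U\simeq\Tt^n\times U$, observe that $\mu|_{\mu^{-1}(U)}$ then presents as the moment map of an effective Hamiltonian $\Tt^n$-action (effectivity coming from the free-on-a-dense-set hypothesis), and invoke Theorem \ref{thm:Dufour-Molino} to conclude. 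The hardest step in the whole argument is the gluing in the forward direction; it hinges precisely on the uniqueness of the period lattice of the regular fibration together with the fact that full-rank lattices in $T^*M'$ are determined by their restriction to any dense open subset.
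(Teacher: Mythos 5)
Your converse direction matches the paper's argument essentially verbatim, and your analysis of the regular locus and the use of Theorem \ref{thm:Dufour-Molino} to produce local charts near the elliptic singularities is also how the paper begins. The gap is in the gluing step of the forward direction. You define $\Lambda_{x_0}:=\phi_{x_0}^*(\Z\{\d t^1,\dots,\d t^n\})$ on the whole open set $U_{x_0}\subset M$ and claim that two such lattices agree on $U_{x_0}\cap U_{x_1}$ because they agree on $U_{x_0}\cap U_{x_1}\cap\mathring{\Delta}$, "hence everywhere by density and closedness." But $\mathring{\Delta}$ is dense in $\Delta$, not in $M$: the sets $U_{x}$ are neighborhoods of boundary points of the corner region $\Delta$, so $U_{x_0}\cap U_{x_1}$ contains an open piece lying entirely outside $\Delta$, where the period-lattice characterization says nothing. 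The closedness-plus-density argument does pin the lattices down over all of $U\cap\Delta$ (including $\partial\Delta$), but over $U\setminus\Delta$ the Dufour--Molino embedding $\phi_{x}$ is only constrained by the condition that $\phi_x\circ\mu$ be a moment map, i.e.\ only by its restriction to $\Delta=\mu(S)$; two admissible choices can be modified off $\Delta$ by any extension, producing lattices that genuinely disagree there. (Already on $\R$: $\Z\,\d x$ and $\Z\,\d f(x)$ with $f=\mathrm{id}$ on a half-line but not globally are distinct integral affine structures agreeing on an open set.) So your $M'$ does not obviously carry a well-defined $\Lambda$, and the same issue invalidates your "automatic" germ-uniqueness claim, which can at best assert uniqueness of the structure on $\Delta$ itself.

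The paper avoids this by splitting the problem differently: it first shows that $\Delta$, with its boundary, is an integral affine manifold \emph{with corners} (the density argument is valid there), and then invokes a separate abstract result, Lemma \ref{lem:extend:manifolds:corners}, asserting that any codimension-0 embedding of an integral affine manifold with corners into a manifold without corners admits an extension of the integral affine structure to an open neighborhood, with unique germ. That lemma is proved by a Whitney--Bruhat complexification-style argument (embed a model extension $N$ in $\R^p$, extend the identity of $\Delta$ to a map $F$ on a neighborhood $W\subset M$, retract onto $N$, and pull back), and it is precisely the ingredient your gluing argument is implicitly trying to reprove on overlaps. To repair your proof you would either need to import that extension lemma, or choose the extensions $\phi_x$ off $\Delta$ compatibly from the start, which amounts to the same thing.
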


\begin{proof}
{Let $\mu:(S,\omega)\to M$ be a toric Lagrangian fibration. In order to construct an integral affine structure $\Lambda$ on an open neighborhood $M'$ of $\mu(S)$ in $M$ we first observe that the integral affine structure on $\mathring{\Delta}$ extends to the boundary $\partial\Delta$, so $\Delta$ is an integral affine manifold with corners. Around a boundary point $x_0\in\partial\Delta$ an integral affine chart is provided by the restriction $\phi:V\cap\Delta\to\R^n$ of the embedding given by Theorem \ref{thm:Dufour-Molino}.}

Next, we recall that any manifold with corners embeds as a codimension 0 submanifold of a manifold without corners and the germ of such embeddings is unique. Similarly, for embeddings of integral affine manifolds with corners we have the following lemma, which proves the existence of $(M',\Lambda)$ and ensures the uniqueness of its germ.

\begin{lemma}
\label{lem:extend:manifolds:corners}
    If $(\Delta,\Lambda)$ is an integral affine manifold with corners:
        \begin{enumerate}[(i)]
        \item Given a codimension 0 embedding of $\Delta$ in a manifold without corners $M$, the integral affine structure $\Lambda$ extends to a neighborhood of $\Delta$.
        \item The germ of codimension 0 integral affine embeddings of $(\Delta,\Lambda)$ in integral affine manifolds without corners is unique. 
    \end{enumerate}
\end{lemma}

\begin{proof}[Proof of the Lemma \ref{lem:extend:manifolds:corners}]
    Any integral affine manifold with corners can be embedded as a codimension 0 integral affine submanifold of an integral affine manifold without corners. The proof is entirely similar to the proof given in \cite{WB59} of the fact that any real analytic manifold admits a complexification (see also \cite{DH73}). Fix such an embedding $\phi:(\Delta,\Lambda)\to (N,\Lambda_N)$ and let $\psi:\Delta\to M$ be any codimension 0 embedding in a manifold without corners. Using Whitney's embedding theorem we can assume that $N\subset \R^p$ and, by a partition of unity argument, that there is a smooth map $F:W\to \R^p$ extending the identity on $\Delta$, defined on some open neighborhood of $\Delta$ in $M$. Choosing a retraction $r:U\to N$ of some neighborhood of $N$ in $\R^p$, the composition $r\circ F:F^{-1}(U)\to N$ restricts to the identity on $\Delta$. Hence, possibly after shrinking $W$, we obtain a local diffeomorphism $r\circ F:W\to N$, that extends the identity on $\Delta$. Pulling back $\Lambda_N$ by this local diffeomorphism, we obtain an extension of $\Lambda$ from $\Delta$ to an integral affine structure on the neighborhood $W$. This proves item (i). 
    
    The proof of item (ii) is along the same lines as that of the analogous statement in \cite{WB59}.
\end{proof}

It remains to prove the converse statement in the theorem. Given a toric Hamiltonian $\T$-space $\mu:(S,\omega)\to M$ and a point $p_0\in S$, fixing an integral affine chart $(U,\phi)$ around $x_0:=\mu(p_0)$ we have that $\phi\circ\mu:(\mu^{-1}(U),\omega)\to \R^n$ is the moment map of an effective $\Tt^n$-action.
\end{proof}

It follows from the previous results that the standard local model for toric symplectic manifolds is also a local model for toric Hamiltonian $\T$-spaces. Let $\mu:(S,\omega)\to M$ be a toric $(\T,\Omega)$-space with $\Delta:=\mu(S)$ and let $x\in \Delta$. As we saw before, a choice of integral affine chart $(U,\phi=(x^1,\dots,x^n))$ centered at $x$ gives a trivialization of the symplectic torus bundle
\[
\Phi:(\T,\Omega)\vert_U\simeq (\mathbb{T}^n\times U,\sum_{i=1}^n\d\theta_i\wedge \d x^i). 
\]
Via this local isomorphism the toric $\T$-action corresponds to a toric $\Tt^n$-action with moment map 
\[ \phi\circ\mu:(\mu^{-1}(U),\omega)\to \R^n. \] 
Since $\mu$ has connected fibers and is proper onto its image, it follows from 
the standard normal form for toric manifolds (see, e.g., \cite[Lemma B.3]{KL15}) that, around $x$, there is a local isomorphism between this Hamiltonian $\mathbb{T}^n$-space and the standard local model. The latter is the Hamiltonian $\Tt^n$-space consisting of: 
\begin{itemize}
\item the symplectic manifold
\[ \left(S_{k,n},\omega_{k,n}\right):=(\C^k\times \mathbb{T}^{n-k}\times \R^{n-k},\sum_{j=1}^k\frac{1}{2\pi i}\d z_j\wedge \d \overline{z}_j+\sum_{j=1}^{n-k}\d\theta_j\wedge \d x^j);
\]
\item the moment map is given by
\[
    \mu_{k,n}:\C^k\times \mathbb{T}^{n-k}\times \R^{n-k}\to \R^n, \quad (z,\theta,x)\mapsto (|z_1|^2,\dots,|z_k|^2,x),
\] with image $\Delta=\R^n_k:=[0,\infty[^k\times \R^{n-k}$;
\item the $\mathbb{T}^n$-action given by
\begin{equation}\label{eqn:torusaction:localmodel}
    (\tau_1,\tau_2)\cdot (z,\theta,x):=(e^{2\pi i\tau_1} z,\tau_2+\theta,x), \quad (\tau_1,\tau_2)\in \mathbb{T}^k\times \mathbb{T}^{n-k}=\mathbb{T}^n.
\end{equation}
\end{itemize}
So, after possibly changing the integral affine chart $\phi$, we obtain an isomorphism $(\phi,\Phi,\Psi)$ from the Hamiltonian $\T\vert_U$-space $\mu:(\mu^{-1}(U),\omega)\to U$ to the Hamiltonian $\Tt^n$-space $\mu:(\mu_{k,n}^{-1}(V),\omega_{k,n})\to V$, where $V:=\phi(U)\subset \R^n$. Henceforth, we will refer to such an isomorphism, or to $(z,\theta,x)$, as \textbf{standard toric coordinates} for $\mu:(S,\omega)\to M$ around $x$. {Notice that the case $k=n$ corresponds to the fiber over $x$ being a fixed point of the $\Tt^n$-action, while $k=0$ corresponds to the $\Tt^n$-action on the fiber being free. For general $k$, the fiber over $x$ is a torus of dimension $n-k$.}


For future reference, we state the following result concerning invariant functions on the local model.

\begin{proposition}\label{prop:invsmoothfunctions:localmodel}
    Consider $S_{k,n}$ equipped with the $\mathbb{T}^n$-action (\ref{eqn:torusaction:localmodel}), and let $U$ be an open set in $\Delta$. For any $\mathbb{T}^n$-invariant smooth function  $f$ on $\mu^{-1}(U)$, the following hold:
    \begin{enumerate}[(a)]
        \item The unique function $\underline{f}$ on $U$ such that $f=\mu^*\underline{f}$ is smooth, i.e., it is the restriction of a smooth function defined on some open set in $\R^n$;
        \item If $f$ vanishes on $\mu^{-1}(U)\cap \{z_j=0\}$ for some $j\in \{1,\dots,k\}$, then it is of the form $f=|z_j|^2\cdot \widehat{f}$ for a unique $\mathbb{T}^n$-invariant smooth function $\widehat{f}$ on $\mu^{-1}(U)$.
    \end{enumerate}
\end{proposition}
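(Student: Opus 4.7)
The overall strategy is to reduce both statements to two classical ingredients: (1) the Whitney--Schwarz lemma, which says that every $\Ss^1$-invariant smooth function on $\C$ is of the form $h(|z|^2)$ for a unique function $h$ that is smooth on $\R$; and (2) Hadamard's lemma in $\R^n$.

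For (a), I would first use invariance under the subgroup $\{1\}\times \Tt^{n-k}\subset \Tt^n$ to conclude that $f$ is independent of the angle coordinates $\theta$, so there is a unique function $f_0$ on $\C^k\times\mathrm{pr}_{\R^{n-k}}(U)$ with $f(z,\theta,x)=f_0(z,x)$, and $f_0$ is clearly smooth in $(z,x)$ and $\Tt^k$-invariant in $z$. Then, applying the Whitney--Schwarz lemma coordinate by coordinate in $z_1,\dots,z_k$ (viewing $f_0$ at each step as a smooth $\Ss^1$-invariant function of one complex variable with smooth parameters), I obtain a function $\underline{f}$ of $(s_1,\dots,s_k,x)$, defined and smooth on an open neighborhood of $U$ in $\R^n$, such that $f_0(z,x)=\underline{f}(|z_1|^2,\dots,|z_k|^2,x)$. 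This is exactly the required smooth extension, and uniqueness is automatic since $\mu$ is surjective onto its image.

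For (b), I would combine part (a) with Hadamard's lemma. Writing $f=\mu^*\underline{f}$ with $\underline{f}$ smooth on a neighborhood of $U$ in $\R^n$, the hypothesis that $f$ vanishes on $\mu^{-1}(U)\cap\{z_j=0\}$ translates to $\underline{f}$ vanishing on $\{s_j=0\}$, where $(s_1,\dots,s_k,x)$ are the coordinates on $\R^n\supset U$. Hadamard's lemma then gives a smooth function $\underline{\widehat{f}}$ on the same neighborhood with $\underline{f}=s_j\cdot \underline{\widehat{f}}$, and setting $\widehat{f}:=\mu^*\underline{\widehat{f}}$ yields the desired decomposition $f=|z_j|^2\widehat{f}$. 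Uniqueness follows because $|z_j|^2$ is not a zero divisor in the ring of smooth functions on $\mu^{-1}(U)$.

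The only genuine obstacle is the Whitney--Schwarz lemma in one variable, which is a delicate but standard result (it can be proved by Taylor expanding in $(\mathrm{Re}\,z,\mathrm{Im}\,z)$ at the origin, using $\Ss^1$-invariance to kill odd-degree terms, and applying Borel's lemma to reconstruct a smooth radial function; alternatively it is a consequence of G.~Schwarz's theorem on smooth invariants of compact group actions). Once this is invoked, the iteration over the $k$ complex coordinates and the parameter dependence on $(\theta,x)$ are routine, as is the final application of Hadamard's lemma.
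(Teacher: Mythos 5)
Your proposal is correct and follows essentially the same route as the paper, which disposes of (a) by citing Schwarz's theorem on smooth invariants and of (b) by combining (a) with the Hadamard-type factorization of a smooth function vanishing on $\{x^j=0\}$. Your version merely spells out the reduction to the one-variable Whitney--Schwarz lemma with parameters and the (correct) non-zero-divisor argument for uniqueness, so there is nothing to object to.
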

    
\begin{proof}
Part (a) is well-known: it readily follows from Schwartz' theorem. Part (b) is immediate from part (a) and the fact that if $h\in C^\infty(U)$ vanishes on $U\cap \{x^j=0\}$, then $h=x^j\widehat{h}$, for some $\widehat{h}\in C^\infty(U)$.
\end{proof}

\begin{remark}
\label{rem:other:settings}
    In the literature singular Lagrangian fibrations admitting local effective $\Tt^n$-actions have been study by several authors \cite{Boucetta91,Yoshida11,Zung96,Zung03}. The result above shows that this form of symmetry can be encoded globally by toric Hamiltonian $\T$-actions. Moreover, the results to follow will show that this approach gives an effective, coordinate free way of studying the geometry of such fibrations.
\end{remark}

\subsection{Classification of toric Hamiltonian $\T$-spaces} 
Delzant's classification of compact symplectic toric manifolds in terms of polytopes (\cite{De88}) was extended to toric Hamiltonian $\T$-spaces in \cite{Maarten}. In this more general setup, Delzant's classification and Duistermaat's classification of Lagrangian fibrations (cf. Theorem \ref{thm:Duistermaat}) are combined, as we will now explain.

In Delzant's classification, the polytope corresponding to a symplectic toric manifold is the image of the moment map and lies in the integral affine manifold $(\R^n,\Z^n)$. These Delzant polytopes satisfy certain integrality conditions. For toric Hamiltonian $\T$-spaces, the image of $\mu$ is a more general object called a \textit{Delzant domain} of the integral affine manifold $(M,\Lambda)$, the base of the symplectic torus bundle $(\T,\Omega)$.

\begin{definition} 
A \textbf{Delzant domain} $\Delta$ of an $m$-dimensional integral affine manifold $(M,\Lambda)$ is a subset of $M$ with the property that for each $x\in \Delta$ there is an integral affine chart $(U,\phi)$ for $(M,\Lambda)$ centered at $x$ such that $\phi(U\cap \Delta)$ is an open set in $\R^n_k$ for some $k\in \{0,\dots,n\}$ (depending on $x$). 
\end{definition}

In other words, a Delzant domain of $(M,\Lambda)$ is a codimension zero integral affine submanifold with corners. It follows from the definition that each open face of a Delzant domain inherits an integral affine structure. 


\begin{example}[Compact symplectic toric manifolds]
For any compact symplectic toric manifold $\mu_0:(S,\omega)\to \R^n$, the image of the moment map is a Delzant polytope in $\R^n$. Delzant polytopes are precisely the compact and connected Delzant domains of $(\R^n,\Z^n)$. {Notice that this does not require the vertices to be integral points. Moreover, there is no notion of ``integral point" in a general integral affine manifold.}

\end{example}

\begin{example}[Non-compact symplectic toric manifolds]
\label{ex:non-compact:toric}
Non-compact symplectic toric manifolds $\mu_0:(S,\omega)\to \R^n$ also fit into our framework. It was observed in \cite{KL15} that the orbit space $S/\Tt^n$ is a manifold with corners and the induced map
\[ \phi:S/\Tt^n\to \R^n \]
is a local embedding. The pullback along $\phi$ of the standard integral affine structure in $\R^n$ makes $\Delta:=S/\Tt^n$ an integral affine manifold with corners. As in the proof of Lemma \ref{lem:extend:manifolds:corners}, $\Delta$ embeds as a Delzant domain of an integral affine manifold $(M,\Lambda)$ without corners. Note that the composition of the quotient map with this embedding
\[ \mu: S\to \Delta\hookrightarrow M, \]
yields a toric Hamiltonian $\T_\Lambda$-space $\mu:(S,\omega)\to M$. When $\phi$ is an embedding we can take $(M,\Lambda)=(\R^n,\Z^n)$. 
\end{example}

\begin{example}
The Hamiltonian $\T$-spaces in examples \ref{ex:trivspherebun:stdIA:cylinder} and \ref{ex:nontrivspherebun:nonstdIA:cylinder} are toric and the image of their moment maps are Delzant spaces. In fact:
\begin{itemize}
    \item For the standard integral affine cylinder $\R\times \Ss^1$, appearing in Example \ref{ex:trivspherebun:stdIA:cylinder}, $\Delta=[a,b]\times \Ss^1$ is a Delzant domain, for any  $a<b$. 
    \item Similarly, for the exotic integral affine cylinder $]-2,\infty[\times \Ss^1$ of Example \ref{ex:nontrivspherebun:nonstdIA:cylinder}, $\Delta=[a,b]\times \Ss^1$ is a Delzant domain, for any $-2<a<b$.  
\end{itemize}
\end{example}

A version of the classification theorem for toric Hamiltonian $\T$-spaces can be stated as follows. For a complete proof and details we refer to \cite{Maarten}.

\begin{theorem}[\cite{Maarten}]
\label{thm:toric:classification}
Let $(\T,\Omega)\to (M,\Lambda)$ be a symplectic torus bundle. For any toric Hamiltonian $\T$-space $\mu:(S,\omega)\to M$, the image $\Delta:=\mu(S)$ is a Delzant domain of $(M,\Lambda)$. Moreover, such Hamiltonian $\T$-spaces are classified by their:
\begin{enumerate}[(i)]
    \item Delzant domain $\Delta:=\mu(S)$, and
    \item Lagrangian Chern class $c_1(S,\omega)\in \check{H}^1(\Delta,\TLag)$.
\end{enumerate}
\end{theorem}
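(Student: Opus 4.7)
The plan is to follow the general blueprint of Delzant's classification, adapted to this setting by means of Theorem \ref{thm:equiv:toric:Lagrangian} and the standard toric coordinates.

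The first step is to verify that $\Delta := \mu(S)$ is a Delzant subspace of $(M,\Lambda)$. By Theorem \ref{thm:equiv:toric:Lagrangian}, $\mu:(S,\omega)\to M$ is a toric Lagrangian fibration, so every singular point is of elliptic type. Applying Theorem \ref{thm:Dufour-Molino} together with the normal form for toric $\Tt^n$-spaces (the discussion of standard toric coordinates preceding this theorem), around each $x\in\Delta$ there is an integral affine chart $(U,\phi)$ centered at $x$ such that $\phi\circ\mu$ identifies $(\mu^{-1}(U),\omega)$ with an open piece of the local model $\mu_{k,n}:(S_{k,n},\omega_{k,n})\to\R^n_k$. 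Taking the image gives $\phi(U\cap\Delta)\subset \R^n_k$ open, which is exactly the definition of a Delzant subspace.

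Next I would define the Lagrangian-Chern class $c_1(S,\omega)\in \check{H}^1(\Delta,\TLag)$, where $\TLag$ denotes the sheaf of Lagrangian sections of $\T$ restricted to a neighborhood of $\Delta$ (unambiguous by Lemma \ref{lem:extend:manifolds:corners}). Crucially the local model $\mu_{k,n}$ admits the canonical Lagrangian section $(z,\theta,x)=(0,0,x)$, so one can cover $\Delta$ by opens $\{U_i\}$ over which Lagrangian sections $\sigma_i:U_i\to S$ of $\mu$ exist. On overlaps, fiberwise transitivity of the $\T$-action (on regular fibers, extended by continuity to elliptic ones) yields a unique section $\tau_{ij}$ of $\T$ with $\sigma_j=\tau_{ij}\cdot \sigma_i$; multiplicativity of $\Omega$ together with the fact that both $\sigma_i,\sigma_j$ are Lagrangian forces $\tau_{ij}$ to be a Lagrangian section. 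The \v{C}ech cocycle $\{\tau_{ij}\}$ defines $c_1(S,\omega)$, and the standard argument shows it is independent of the choices made.

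For the classification I would argue injectivity and surjectivity of the assignment $(S,\omega,\mu)\mapsto (\Delta,c_1)$. For injectivity, suppose $\mu_a:(S_a,\omega_a)\to M$ ($a=1,2$) have the same $\Delta$ and $c_1$. A germ-uniqueness statement for the standard toric model (obtained via an equivariant Moser-type argument between two toric $\Tt^n$-spaces with identical moment map on the local model) provides local equivalences $\Psi_i:\mu_1^{-1}(U_i)\to \mu_2^{-1}(U_i)$ over a common cover of $\Delta$. The obstruction to gluing these into a global equivalence is precisely the difference of the corresponding cocycles in $\check{Z}^1(\Delta,\TLag)$; since $c_1$ agrees, this difference is a coboundary, and adjusting the $\Psi_i$ by the appropriate 0-cochain produces a global equivalence. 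For surjectivity, given $\Delta\subset M$ and a class represented by a cocycle $\{\tau_{ij}\}$ relative to a cover $\{U_i\}$ on which standard coordinates exist, one glues the local models using $\tau_{ij}$ as transition maps; the cocycle condition, together with multiplicativity of $\Omega$, guarantees that the symplectic forms and $\T$-actions descend to a toric Hamiltonian $\T$-space with moment image $\Delta$ and Lagrangian-Chern class the given one.

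The main obstacle is the germ-uniqueness of the local model used in the injectivity step, which plays the role of the equivariant Darboux theorem in classical toric geometry. Once standard toric coordinates are fixed, this reduces to showing that two toric $\Tt^n$-structures on $S_{k,n}$ inducing $\omega_{k,n}$ and the same moment map are equivariantly symplectomorphic; this is where the effectiveness and properness hypotheses in Definition \ref{def:toric:Hamiltonian:space} are essential. Given this local rigidity, everything else is sheaf-theoretic bookkeeping entirely parallel to the proof of Theorem \ref{thm:Duistermaat}.
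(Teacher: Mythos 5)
First, note that the paper does not actually prove Theorem \ref{thm:toric:classification}: it states the result and defers the proof entirely to \cite{Maarten}. So the comparison can only be against the strategy the paper indicates elsewhere (the local normal form in Section \ref{subsec:toric:lagrangian:fibrations}, the construction of the canonical space $S_\Delta$, and the proof of Theorem \ref{thm:real:Lag-Chern:class:2}). Your overall blueprint --- local normal form gives the Delzant subspace, a \v{C}ech cocycle gives the class, local rigidity plus sheaf bookkeeping gives the bijection --- is the right one and matches that indicated strategy.

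There is, however, a concrete error at the heart of your construction of $c_1$. The map $x\mapsto(z,\theta,x)=(0,0,x)$ is \emph{not} a section of $\mu_{k,n}$: its image under the moment map is $\{0\}^k\times\R^{n-k}$, not an open subset of $\R^n_k$. The actual canonical section of the local model is $x\mapsto(\sqrt{x^1},\dots,\sqrt{x^k},0,\dots,0,x^{k+1},\dots,x^n)$, and this is only \emph{continuous} at the boundary of $\Delta$, smooth and Lagrangian only over $\mathring{\Delta}$. This is precisely the technical issue the paper flags in the proof of Theorem \ref{thm:real:Lag-Chern:class:2}: local Lagrangian sections of $\mu$ in the naive sense do not exist near $\partial\Delta$, so the cocycle $\tau_{ij}$ cannot be defined by comparing smooth Lagrangian sections. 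The fix (and the route taken in the paper and in \cite{Maarten}) is to define $\tau_{ij}$ by comparing local \emph{equivalences} $\Psi_i:\mu^{-1}(U_i)\to\mu_\Delta^{-1}(U_i)$ with the canonical model, via $\tau_{ij}(\mu(p))\cdot\Psi_j(p)=\Psi_i(p)$; the resulting $\tau_{ij}$ is then an honest smooth Lagrangian section of $\T|_{U_{ij}}$, with uniqueness over singular fibers recovered by continuity from the dense regular part (your appeal to ``fiberwise transitivity extended by continuity'' glosses over the nontrivial stabilizers there). With that substitution your injectivity and surjectivity arguments go through as sketched, with the local rigidity step supplied by the normal form of \cite[Lemma B.3]{KL15} already invoked in Section \ref{subsec:toric:lagrangian:fibrations}.
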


Here $\TLag$ denotes the sheaf of Lagrangian sections of $\T|_\Delta$ and the Lagrangian Chern class is defined similarly to the regular case (see \cite{Maarten} and the proof of Theorem \ref{thm:real:Lag-Chern:class:2}). For a Delzant polytope the cohomology group in item (ii) always vanishes, because the polytope is contractible. 

\begin{remark} As for classical toric symplectic manifolds \cite{De88}, the map 
\begin{equation}
\label{eqn:transversemommap}
\underline{\mu}:S/\T\to M
\end{equation}
is a homeomorphism onto $\Delta:=\mu(S)$. Moreover, it respects the natural stratifications on $S/\T$ and $\Delta$, namely:
\begin{itemize}
\item $\Delta$ is naturally stratified, being a manifold with corners;
\item $S/\T$ is naturally stratified, being the orbit space of a proper Lie groupoid action \cite{PflaumPosthumaTang}.
\end{itemize} 
We refer to \cite{Maarten} for details.
\end{remark} 

\begin{remark}
    The classification result above, beside generalizing Duistermaat and Delzant classification results, is also closely related to classification results in various other settings, as in Remark \ref{rem:other:settings}. For example, the classification of non-compact symplectic toric manifolds due to Karshon and Lerman \cite{KL15} follows from this theorem.
\end{remark}

\subsection{Classical symplectic toric manifolds}
\label{sec:classical:toric}

Motivated by Example \ref{ex:non-compact:toric} we introduce the following definition.

\begin{definition}
    A toric Hamiltonian $\T$-space $\mu:(S^{2n},\omega)\to M$ is called a \textbf{classical symplectic toric manifold} if there exists an effective Hamiltonian $\Tt^n$-action on $(S,\omega)$ with moment map $\mu_0:S\to \R^n$ and a local embedding $\phi:\Delta\to \R^n$, with $\Delta:=\mu(S)$, such that
    \[ \mu_0=\phi\circ\mu. \]
\end{definition}

The toric Hamiltonian $\T$-spaces that are classical symplectic toric manifolds can be easily characterized in terms of the integral affine geometry of their Delzant domains.

\begin{theorem}
    \label{thm:classical:toric:spaces}
    A toric $\T$-space $\mu:(S^{2n},\omega)\to M$ is a classical symplectic toric manifold if and only if the associated Delzant space $\Delta=\mu(S)$ has trivial affine holonomy.
\end{theorem}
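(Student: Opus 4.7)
My approach hinges on the standard dictionary from integral affine geometry: a connected integral affine manifold (with corners) has trivial affine holonomy if and only if its developing map $\dev:\tilde{\Delta}\to \R^n$ descends to a globally defined integral affine local embedding $\phi:\Delta\to (\R^n,\Z^n)$. I will combine this with the functor of Proposition \ref{prop:ia:morphisms}, which translates such a developing map into a morphism of symplectic torus bundles, and then use Definition \ref{defn:morphism:Hamiltonian:spaces} to transport the $\T$-action on $S$ to a classical $\Tt^n$-action.

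\textbf{Forward direction.} Suppose $\mu_0=\phi\circ\mu$ with $\mu_0$ a classical effective Hamiltonian $\Tt^n$-moment map and $\phi:\Delta\to \R^n$ a local embedding. I would first check that $\phi$ is necessarily integral affine. Working on the dense open set $\mathring{S}$ where both actions are free, the restrictions $\mu:\mathring{S}\to\mathring{\Delta}$ and $\mu_0:\mathring{S}\to \phi(\mathring{\Delta})$ are both Lagrangian fibrations on the same symplectic manifold with fiber $\Tt^n$. Since the integral affine structure on the base of a Lagrangian fibration is intrinsically determined by the periods of the action vector fields (Section \ref{subsec:lagrangian:fibrations}), both $\Lambda|_{\mathring{\Delta}}$ and $\phi^*(\Z^n)$ must agree. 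By continuity (and the local standard-toric-coordinate description around boundary points) this extends to all of $\Delta$, showing $\phi$ is an integral affine developing map, hence the affine holonomy of $\Delta$ is trivial.

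\textbf{Backward direction.} Suppose $\Delta$ has trivial affine holonomy, and let $\phi:\Delta\to\R^n$ be the resulting integral affine developing map (possibly after shrinking $M$ to a tubular neighborhood of $\Delta$, using Lemma \ref{lem:extend:manifolds:corners}, so that $\phi$ extends to the ambient open $M'\subset M$). Since $\phi$ is a local diffeomorphism between equidimensional manifolds, Proposition \ref{prop:ia:morphisms} furnishes a morphism of symplectic torus bundles
\[
(\phi,\Phi):(\T,\Omega)\dto (\Tt^n\times \R^n,\Omega_\st),
\]
in which $\Phi:\phi^*(\Tt^n\times\R^n)\to \T$ is a fiberwise group isomorphism. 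I would then define a $\Tt^n$-action on $S$ by
\[
g\cdot p := \Phi(g,\mu(p))\cdot p,
\]
where the right-hand side uses the given $\T$-action on $S$. The fact that $\Phi$ is fiberwise a group morphism ensures this is a well-defined $\Tt^n$-action; Definition \ref{defn:morphism:Hamiltonian:spaces} applied to $\Psi=\id_S$ confirms that it is symplectic and that $\mu_0:=\phi\circ\mu$ is its moment map. Effectiveness follows because the $\T$-action is free on the dense open set $\mathring{S}$ and $\Phi$ is a fiberwise isomorphism, so any $g\in\Tt^n$ acting trivially on $\mathring{S}$ would force $\Phi(g,x)=\bu(x)$ for $x\in\mathring{\Delta}$, hence $g=e$.

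\textbf{Main obstacle.} I expect the technical subtlety to be the bookkeeping in the forward direction: showing that the abstract local embedding $\phi$ must coincide with an integral affine map, which in turn requires matching the two Lagrangian-fibration descriptions on $\mathring{S}$ and understanding what happens at the corner strata of $\Delta$. Once integrality of $\phi$ is in hand, both implications are essentially formal consequences of the equivalence $\IAMan\simeq \SympTB$ of Proposition \ref{prop:ia:morphisms}.
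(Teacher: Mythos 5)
Your proposal is correct and follows essentially the same route as the paper: both reduce the statement to the equivalence between trivial affine holonomy and the existence of a global integral affine developing map $\phi:\Delta\to\R^n$, together with the observation that $\mu_0=\phi\circ\mu$ is the moment map of an effective Hamiltonian $\Tt^n$-action precisely when $\phi$ is integral affine. The paper states these two facts without elaboration, whereas you supply the natural justifications (the period-lattice comparison on $\mathring{S}$ for the forward direction, and the construction of the $\Tt^n$-action via Proposition \ref{prop:ia:morphisms} for the converse), so your write-up is a more detailed version of the same argument.
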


\begin{proof}
    The Delzant space $\Delta=\mu(S)$ has trivial affine holonomy if and only if the developing map $\dev_{x_0}:\widetilde{\Delta}\to \R^n$  factors through the covering map to an integral affine local embedding $\phi:\Delta\to \R^n$ (see, e.g., \cite[Section 2.4]{HSSS18}).

    On the other hand, a map $\mu_0:S\to \R^n$ which factors through a local embedding $\phi:\Delta\to \R^n$ such that
    \[ \mu_0=\phi\circ\mu, \]
    is the moment map of an effective Hamiltonian $\Tt^n$-action if and only if $\phi$ is integral affine. Hence, the result follows.
\end{proof}

Recall that a compact toric symplectic manifold is always 1-connected. In this respect we also have the following result.

\begin{corollary}
    A toric $\T$-space $\mu:(S,\omega)\to M$ whose associated Delzant space $\Delta=\mu(S)$ is 1-connected is a classical symplectic toric manifold. In particular this holds whenever $S$ is 1-connected.
\end{corollary}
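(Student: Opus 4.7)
The plan is to reduce everything to Theorem~\ref{thm:classical:toric:spaces}, which characterizes classical symplectic toric manifolds among toric Hamiltonian $\T$-spaces in terms of the triviality of the affine holonomy of the Delzant space $\Delta=\mu(S)$.

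For the first part, observe that the affine holonomy of $\Delta$ is by definition a representation of $\pi_1(\Delta)$ in $\Aff_\Z(\R^n)$. If $\Delta$ is 1-connected, then $\pi_1(\Delta)=0$ and this representation is automatically trivial, so Theorem~\ref{thm:classical:toric:spaces} immediately gives that $\mu:(S,\omega)\to M$ is a classical symplectic toric manifold.

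For the ``in particular'' statement, it suffices to show that if $S$ is 1-connected, then $\Delta$ is 1-connected. The strategy is to pull back the universal cover of $\Delta$ along $\mu$. For this I would first record three topological properties of $\mu:S\to\Delta$: it is surjective (by definition of $\Delta$); it has connected fibers, which coincide with the $\T$-orbits (possibly degenerate tori, as one reads off the standard local model of Theorem~\ref{thm:Dufour-Molino}); and it is open. The openness follows from the factorization $\mu=\underline{\mu}\circ q$, where $q:S\to S/\T$ is the orbit projection (which is always open for a proper groupoid action) and $\underline{\mu}:S/\T\diffto \Delta$ is the homeomorphism of \eqref{eqn:transversemommap}.

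Let $\pi:\widetilde{\Delta}\to\Delta$ be the universal cover. Since $\pi$ is a covering map, the pullback $\widetilde{S}:=S\times_\Delta\widetilde{\Delta}\to S$ is a covering of $S$ with deck group $\pi_1(\Delta)$. The projection $\widetilde{S}\to\widetilde{\Delta}$ inherits the three properties above: it is surjective, open, and has connected fibers. A standard connectedness argument then shows that $\widetilde{S}$ is connected: any partition of $\widetilde{S}$ into nonempty open sets would, by connectedness of the fibers, consist of saturated sets, and by openness of the map their images would form a partition of $\widetilde{\Delta}$, contradicting its connectedness. Since $S$ is 1-connected, every connected cover of $S$ must be a homeomorphism, so $\pi_1(\Delta)$ is trivial and the first part of the corollary applies. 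The main technical point to double-check is openness of $\mu$ together with the identification of fibers with $\T$-orbits at singular points; both follow transparently from the structural results established earlier in this section.
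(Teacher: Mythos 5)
Your proposal is correct, but the second half takes a genuinely different route from the paper. For the ``in particular'' statement the paper restricts to the regular part: it uses that $\mu:\mathring{S}\to\mathring{\Delta}$ is a principal $\Tt^n$-bundle (so $\pi_1(\mathring{S})\to\pi_1(\mathring{\Delta})$ is surjective, the fiber being connected) together with the fact that the inclusion $\mathring{\Delta}\hookrightarrow\Delta$ is a homotopy equivalence, and reads off from the resulting commutative square that $\mu_*:\pi_1(S)\to\pi_1(\Delta)$ is surjective. You instead prove the same surjectivity by pulling back the universal cover of $\Delta$ and showing the total space is connected, using only that $\mu$ is a surjective open map with connected fibers (openness via the factorization through the homeomorphism $\underline{\mu}:S/\T\to\Delta$, connectedness of fibers being part of the definition of a toric $\T$-space, or equivalently the identification of fibers with orbits). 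Your argument is more elementary and self-contained: it bypasses both the principal bundle structure on $\mathring{S}$ and the deformation retraction of $\Delta$ onto its interior, at the cost of a short point-set/covering-space digression; the paper's version is shorter because it leans on structural results already established. Both arguments are complete, and the only hypotheses you should make explicit are that $S$ (hence $\Delta$) is connected and that $\Delta$, being a manifold with corners, admits a universal cover --- both of which hold here.
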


\begin{proof}
    The first part of the statement follows immediately from the theorem. For the second part, consider the inclusions
     \[ \xymatrix{ S \ar[r]^{\mu} & \Delta\\
    \mathring{S} \ar@{^{(}->}[u] \ar[r]_{\mu|_{\mathring{S}}} & \mathring{\Delta}\ \ar@{^{(}->}[u]}\]
    The map $\mu:\mathring{S}\to \mathring{\Delta}$ is a principal $\Tt^n$-bundle while $\mathring{\Delta}$ and $\Delta$ have the same homotopy type. Therefore, passing to fundamental groups we obtain a commutative diagram
    \[ 
    \xymatrix{
    \pi_1(S)\ar[r]^{\mu_*}& \pi_1(\Delta)\\
    \pi_1(\mathring{S})\ar[u]\ar[r]& 
    \pi_1(\mathring{\Delta})\ar[u]_{\simeq}
    }
    \]
    where the bottom horizontal arrow is surjective. It follows that $\mu_*:\pi_1(S)\to \pi_1(\Delta)$
    is also surjective. Hence, if $S$ is 1-connected, so is $\Delta$.
\end{proof}

\begin{corollary}
    A toric $\T$-space $\mu:(S,\omega)\to M$ with $S$ compact is a classical symplectic toric manifold if and only if $S$ is 1-connected.
\end{corollary}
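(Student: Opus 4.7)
The plan is to prove the two directions separately, with the reverse implication being an immediate consequence of the previous corollary, and the forward implication reducing to the well-known fact that compact symplectic toric manifolds in the classical sense are simply connected.

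For the ``if'' direction, suppose $S$ is $1$-connected. Then the previous corollary applies directly and yields that $\mu:(S,\omega)\to M$ is a classical symplectic toric manifold. No compactness is needed here.

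For the ``only if'' direction, suppose $S$ is compact and that $\mu:(S,\omega)\to M$ is a classical symplectic toric manifold. By definition, there exists an effective Hamiltonian $\Tt^n$-action on $(S,\omega)$ with moment map $\mu_0:S\to\R^n$ and a local embedding $\phi:\Delta\to \R^n$ such that $\mu_0=\phi\circ\mu$. Since $S$ is compact, $\Delta=\mu(S)$ is compact and $\mu_0(S)=\phi(\Delta)$ is a compact Delzant subspace of $(\R^n,\Z^n)$, i.e., a Delzant polytope. Hence $(S,\omega,\mu_0)$ is a compact classical symplectic toric manifold in the sense of Delzant \cite{De88}. It is a classical result (essentially due to Delzant's construction exhibiting $S$ as a symplectic quotient of $\C^d$ by a subtorus of $\Tt^d$, where $d$ is the number of facets) that every compact symplectic toric manifold is simply connected. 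Therefore $S$ is $1$-connected.

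I expect no real obstacle here: the only non-trivial ingredient is the classical fact that compact symplectic toric manifolds are simply connected, which is standard. The only thing to double-check is that the ``classical symplectic toric manifold'' terminology used here coincides, under the compactness assumption, with the usual notion employed by Delzant, so that the classical simply-connectedness result genuinely applies. This is immediate from the definition above, since an effective Hamiltonian $\Tt^n$-action on a compact connected symplectic $2n$-manifold whose moment image is a Delzant polytope is exactly the data of a compact symplectic toric manifold in Delzant's sense.
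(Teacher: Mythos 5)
Your proposal is correct and follows exactly the route the paper intends: the paper states this corollary without a separate proof, relying on the preceding corollary for the ``if'' direction and on the recalled classical fact that compact symplectic toric manifolds are $1$-connected (via Delzant's classification) for the ``only if'' direction. Your extra check that the paper's notion of ``classical symplectic toric manifold'' reduces, under compactness, to Delzant's usual notion is the right point to verify and is handled adequately.
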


Note that non-compact symplectic toric manifolds may fail to be 1-connected (e.g., $S=T^*\Tt^n$). Hence, the last corollary does not hold if one removes the compactness assumption.

\newpage
\subsection{The elliptic and $b$-tangent bundle}

\subsubsection{Background on the $b$-tangent bundle}
\label{sec:btangent}
Let $\Delta$ be a manifold with corners. Recall that a \textbf{$b$-vector field} in $\Delta$ is a smooth vector field that is tangent to all open faces (or strata) of $\Delta$ \cite{Mel93}. The sheaf of $b$-vector fields, denoted $\tensor[^b]{\X}{}:=\tensor[^b]{\X}{_\Delta}$ is a locally free sheaf of $\mathcal{C}^\infty$-modules, with local bases 
\begin{equation}
\label{eqn:basis:btngtbun} 
x^1{\partial}_{x^1},\dots,x^k{\partial}_{x^k},{\partial}_{x^{k+1}},\dots,{\partial}_{x^n},
\end{equation} 
for any coordinates $(U,x^i)$ that map onto an open set in $\R^n_k$. Hence, the vector spaces
\[
    \tensor[^b]{T_x\Delta}{}:=\frac{\tensor[^b]{\X}{_x}}
    {\mathfrak{m}_x\cdot \tensor[^b]{\X}{_x}} 
    \quad (x\in \Delta),
\] 
fit together into a rank $n$ vector bundle $\tensor[^b]{T\Delta}{}$ -- the \textbf{$b$-tangent bundle} or \textbf{log tangent bundle} of $\Delta$ --  with smooth local frames given by (\ref{eqn:basis:btngtbun}). Here $\tensor[^b]{\X}{_x}$ denotes the stalk of $\tensor[^b]{\X}{}$ at $x$ and $\mathfrak{m}_x$ is the ideal in $\mathcal{C}^\infty_x$ of germs of functions that vanish at $x$. The map
\begin{equation}
\label{eqn:anchor:btngntbun} 
\rho:\tensor[^b]{T\Delta}{}\to T\Delta
\end{equation} 
induced by the evaluation maps $\X_x\to T_x\Delta$, identifies the sheaf of smooth sections of $\tensor[^b]{T\Delta}{}$ with $\tensor[^b]{\X}{_\Delta}$. The $b$-tangent bundle has the structure of a Lie algebroid over $\Delta$, with bracket the usual Lie bracket of vector fields and anchor map (\ref{eqn:anchor:btngntbun}). 

The smooth sections of the \textbf{$b$-cotangent bundle}
\[ 
\tensor[^b]{T^*\Delta}{}:=(\tensor[^b]{T\Delta}{})^*
\] 
are determined by their restriction to the interior $\mathring{\Delta}$, which are just ordinary
smooth $1$-forms. In view of this, sections of the dual bundle are often simply denoted by their underlying $1$-forms on $\mathring{\Delta}$. The local frame of $\tensor[^b]{T^*\Delta}{}$ dual to (\ref{eqn:basis:btngtbun}) consists of those smooth sections that restrict to the $1$-forms
\[
    \frac{1}{x^1}\d x^1,\dots,\frac{1}{x^k}\d x^k,\d x^{k+1},\dots,\d x^n, 
\] on $\mathring{\Delta}$. In summary, smooth sections of the $b$-cotangent bundle can be represented by smooth $1$-forms on $\mathring{\Delta}$ with poles of order at most $1$ at the boundary.

Next, we turn to residues. The open faces of $\Delta$ are the leaves of the Lie algebroid $\tensor[^b]{T\Delta}{}$. Accordingly, for each open face $F$ of $\Delta$ we have a short exact sequence
\[ 
\xymatrix{ 0\ar[r] & \ker(\rho)\vert_F\ar[r] &\tensor[^b]{T^*\Delta}{}\vert_F\ar[r]^{\rho} & TF\ar[r] & 0.}
\] 
When $F$ is a facet, i.e., a codimension $1$ face in $\Delta$, $\ker(\rho)\vert_F$ is a line bundle that admits a canonical non-vanishing section. 

\begin{proposition}
\label{prop:eulersection:manwithcorners}
Let $F$ be an open facet of a manifold with corners $\Delta$. There is a unique non-vanishing section
\[
    \mathcal{E}_F\in \Gamma^\infty(\ker(\rho)\vert_F)
    \] 
    with the property that any $X\in \tensor[^b]{\X}{}(U)$ that is Euler-like with respect to $F$ extends $\mathcal{E}_F\vert_{U\cap F}$. 
\end{proposition}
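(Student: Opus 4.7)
The plan is to argue locally. I would first fix a point $p \in F$ and choose a boundary chart $(U, x^1, \ldots, x^n)$ around $p$ adapted to the codimension-$1$ face $F$, so that $U \cap \Delta \subset \{x^1 \geq 0\}$ and $U \cap F = \{x^1 = 0\}$. In such a chart the frame $(x^1\partial_{x^1}, \partial_{x^2}, \ldots, \partial_{x^n})$ trivializes $\tensor[^b]{T\Delta}{}|_U$, and $\ker(\rho)|_{U \cap F}$ is the line subbundle spanned by the restriction of $x^1\partial_{x^1}$. The natural candidate is therefore
\[
\mathcal{E}_F|_{U \cap F} := x^1\partial_{x^1}\big|_{U \cap F},
\]
which is tautologically a non-vanishing local section of $\ker(\rho)|_F$.

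The heart of the argument is to show that this section is characterized by the extension property, so that it is automatically independent of the chosen chart. Given any $X \in \tensor[^b]{\X}{}(U)$ that is Euler-like with respect to $F$ --- meaning that $\rho(X)$ vanishes on $U\cap F$ and has identity normal linearization there --- I would expand $X$ in the standard $b$-frame as
\[
X = f\cdot(x^1\partial_{x^1}) + \sum_{i=2}^{n} g_i\cdot\partial_{x^i},
\]
and check that the two parts of the Euler-like condition translate exactly into $g_i|_F = 0$ for all $i \geq 2$ (vanishing of $\rho(X)$ along $F$) and $f|_F = 1$ (identity linearization on the normal line). Restricting to $U \cap F$ then gives $X|_{U\cap F} = (x^1\partial_{x^1})|_{U\cap F} = \mathcal{E}_F|_{U\cap F}$, verifying the extension property.

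Coordinate independence then follows immediately from this characterization: if $(V, y^1, \ldots, y^n)$ is another adapted chart with $V \cap F = \{y^1 = 0\}$, then $y^1\partial_{y^1}$ is itself an Euler-like $b$-vector field on $V$, so by the step just carried out its restriction to $U \cap V \cap F$ must agree with the section defined using the $x$-chart. The locally defined sections therefore patch to a well-defined global non-vanishing section $\mathcal{E}_F \in \Gamma^\infty(\ker(\rho)|_F)$ with the required extension property, and the extension property itself pins down $\mathcal{E}_F$ uniquely on every chart, hence globally.

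The only step requiring care --- but not genuine difficulty --- is the translation between the standard notion of Euler-likeness (a condition on the ordinary vector field $\rho(X)$ and its linearization on the normal bundle $\nu_F$) and the corresponding conditions on the coefficients $f, g_2, \ldots, g_n$ in the $b$-frame. Once this dictionary is set up cleanly, everything else is immediate from the explicit local form of $\tensor[^b]{T\Delta}{}$; no analytic or global obstacle arises.
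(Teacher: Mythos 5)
Your argument is correct and is exactly the standard one: the paper omits the proof as well-known, but the coordinate dictionary you set up (Euler-like $\Leftrightarrow$ $f|_F=1$ and $g_i|_F=0$ in the $b$-frame, with $x^1\partial_{x^1}$ as the local model) is precisely what the paper records in Remark \ref{rem:eulerlikevf:manwithcorncoord}, and your chart-independence step via the characterization is the right way to globalize. Nothing is missing.
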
 

Here by an \emph{Euler-like vector field} $X\in\X(\Delta)$ relative to the submanifold $F\subset \Delta$ we mean a vector field such that $X\vert_{F}=0$ and the linearization of $X$ along $F$ induces the identity map on the normal bundle to $F$ in $\Delta$. This proposition is well-known (see, e.g., \cite{CaGu,GuiMirPires}) so we omit the proof.
 
\begin{remark}
\label{rem:eulerlikevf:manwithcorncoord}
Let $(U,x^1,\dots,x^n)$ be coordinates for $\Delta$ onto an open set in $\R^n_k$. Writing a given $b$-vector field $X\in \tensor[^b]{\X}{}(U)$ as
\[ 
X=\sum_{i=1}^k f_i\,x^i\, {\partial}_{x^i} + \sum_{i=k+1}^n f_i\,{\partial}_{x^i}, \quad f_i\in C^\infty_\Delta(U),
\] 
then $X$ is Euler-like with respect to $F$ if and only if for each $x\in U\cap F$
\[ f_i(x)=\begin{cases}1 \quad&\textrm{ if }x^i=0 \text{ and }i\leq k,\\
0 \quad &\textrm{ otherwise}.
\end{cases}
\] In particular, if $k=1$, then $x^1\partial_{x^1}\in \tensor[^b]{\X}{}(U)$ is an extension of $\mathcal{E}_F\vert_{U\cap F}$.
\end{remark}

Using the previous proposition the residue of a vector-valued 1-form can be defined as follows.

\begin{definition}
\label{def:residue:form}
   Let $\Delta$ be a manifold with corners, $F$ an open facet of $\Delta$ and $E\to\Delta$ a vector bundle. The \textbf{residue along $F$} of a vector-valued $1$-form $\alpha\in \Omega^1(\tensor[^b]{T\Delta}{},E)$ is the section 
   \[
   \mathrm{res}_F(\alpha):=\alpha(\mathcal{E}_F)\in \Gamma^\infty(E|_F).
   \]
\end{definition}

\subsubsection{Background on the elliptic tangent bundle}

The elliptic tangent bundle $\tensor[^\mu]{TS}{}$ of a toric $(\T,\Omega)$-space $\mu:(S,\omega)\to M$ is a Lie algebroid over $S$, similar to the $b$-tangent bundle introduced in the previous subsection. This Lie algebroid was first introduced in \cite{CaGu} in the case of a single elliptic divisor and then generalized for several divisors, including the case of a classical symplectic toric action, in \cite{CKW22,CKW23}.  In our case, we can define the elliptic tangent bundle more directly in terms of its sheaf of sections as follows. 

Recall that $\Delta:=\mu(S)$ is a submanifold with corners of $M$, so that it comes with a natural sheaf of smooth functions $\mathcal{C}^\infty_\Delta$. Consider the sheaf $\mu_*(\X_S)^\T$ on $\Delta$ that assigns to an open set $V$ the vector space of $\T$-invariant vector fields on the invariant open set $\mu^{-1}(V)$ in $S$. This is naturally a sheaf of $\mathcal{C}^\infty_\Delta$-modules, which is locally free of rank $2n$, where $n=\dim(\Delta)=\frac{1}{2}\dim(S)$. Indeed, in the standard toric coordinates  $(u_j+iv_j,\theta_l,x^l)$ around a point $x\in\Delta$ of depth $k$, i.e, in a open face of codimension $k$, as a consequence of Proposition \ref{prop:invsmoothfunctions:localmodel} a basis for $\mu_*(\X_S)^\T$ around $x$ is given by the vector fields
\begin{equation}\label{eqn:basis:elltngtbun}
    \partial_{\phi_1},\dots,\partial_{\phi_k},r_1\partial_{r_1},\dots,r_k\partial_{r_k},\dots,\partial_{\theta_1},\dots,\partial_{\theta_{n-k}},\partial_{x^1},\dots,\partial_{x^{n-k}}.
\end{equation} 
Here our convention for the angular and radial vector fields on $\mathbb{C}^k$ is so that
\begin{align*}
    \tfrac{1}{2\pi}\partial_{\phi_j}&=u_j\partial_{v_j}-v_j\partial_{u_j}, \\  r_j\partial_{r_j}&=u_j\partial_{u_j}+v_j\partial_{v_j}.
\end{align*}
The sheaf of $\mathcal{C}^\infty_\Delta$-modules $\mu_*(\X_S)^\T$ pulls back along $\mu$ to a sheaf $\tensor[^\mu]{\X}{}$ of $\mathcal{C}^\infty_S$-modules, which is locally free of the same rank. More concretely, $\tensor[^\mu]{\X}{}$ can be viewed as the subsheaf of $\X_S$ consisting of those vector fields on $S$ locally generated by $\T$-invariant vector fields. We will call $\tensor[^\mu]{\X}{}$ the \textbf{sheaf of elliptic vector fields} on $S$ relative to $\mu$. The \textbf{elliptic tangent bundle} $\tensor[^\mu]{TS}{}$ is the corresponding vector bundle over $S$. Like the $b$-tangent bundle, $\tensor[^\mu]{TS}{}$ has a natural Lie algebroid structure. The anchor is the canonical map
\[
    \rho: \tensor[^\mu]{TS}{}\to TS,
\] 
which identifies its sheaf of sections with $\tensor[^\mu]{\X}{}$, and the Lie bracket is the usual Lie bracket of vector fields.  By construction, the vector fields (\ref{eqn:basis:elltngtbun}) form a smooth local frame of $\tensor[^\mu]{TS}{}$. The elliptic tangent bundle allows to resolve the singularities of the differential of $\mu$ and of the infinitesimal action $\act$ (the ranks of which drop at points outside of $\mathring{S}$), in the following sense.

\begin{proposition}
\label{prop:elliptictngbun:liftmomentummap} 
The differential of $\mu$ lifts uniquely to a fiberwise surjective Lie algebroid map $\mu_*$
\[
\xymatrix{
\tensor[^\mu]{TS}{}\ar@{-->}[r]^{\mu_*}\ar[d]_{\rho} & \tensor[^b]{T\Delta}{}\ar[d]^{\rho}\\
TS\ar[r]_{\d\mu} & T\Delta}
\]
Moreover, the infinitesimal action lifts uniquely to a vector bundle map $\act_*$
\[
\xymatrix{
 & \tensor[^\mu]{TS}{} \ar[d]^{\rho}\\
\mu^*(T^*M)\ar@{-->}[ru]^{\act_*}\ar[r]_{\text{ }\text{ }\text{ }\act}  & TS}
\] 
that is fiberwise injective and has image $\ker(\mu_*)$.
\end{proposition}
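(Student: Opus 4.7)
The plan is to construct both lifts explicitly in standard toric coordinates using the local frame \eqref{eqn:basis:elltngtbun} of $\tensor[^\mu]{TS}{}$, and to invoke density of the regular locus $\mathring{S}$ together with fiberwise injectivity of the anchors over this locus to prove uniqueness and to globalize the local constructions. For the lift $\mu_*$ in the first diagram, I would compute $d\mu$ on the local frame around a point of depth $k$. Using $\mu(z,\theta,x)=(|z_1|^2,\dots,|z_k|^2,x)$ together with $r_j^2=|z_j|^2$, a direct calculation gives
\[
    d\mu(\partial_{\phi_j})=d\mu(\partial_{\theta_l})=0,\quad d\mu(r_j\partial_{r_j})=2x^j\partial_{x^j},\quad d\mu(\partial_{x^l})=\partial_{x^l},
\]
and all four outputs are anchors of elements of the local frame \eqref{eqn:basis:btngtbun} of $\tensor[^b]{T\Delta}{}$. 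This forces the local definition $\mu_*(\partial_{\phi_j})=\mu_*(\partial_{\theta_l})=0$, $\mu_*(r_j\partial_{r_j})=2(x^j\partial_{x^j})$, $\mu_*(\partial_{x^l})=\partial_{x^l}$, where the right-hand sides are read as sections of $\tensor[^b]{T\Delta}{}$. Fiberwise surjectivity is then clear (the image contains the full local frame), and the kernel at each point is spanned by the angular elements $\partial_{\phi_j},\partial_{\theta_l}$. Uniqueness is automatic: since $\rho:\tensor[^b]{T\Delta}{}\to T\Delta$ is injective over $\mathring{\Delta}$, the commutativity $\rho\circ\mu_*=d\mu\circ\rho$ determines $\mu_*$ on the dense open $\mathring{S}=\mu^{-1}(\mathring{\Delta})$, and hence everywhere by continuity; in particular local definitions patch to a global morphism. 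For the Lie algebroid morphism condition, the local frame \eqref{eqn:basis:elltngtbun} is pointwise commuting and its image in $\tensor[^b]{T\Delta}{}$ likewise pairwise commutes, so combined with anchor compatibility the Leibniz rule handles brackets on arbitrary sections. Equivalently, at the sheaf level $\mu_*$ is just the $\mu$-pushforward of $\T$-invariant vector fields (well-defined because $\mu$ factors through the orbit space, by density), which is automatically a Lie algebra morphism.

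For the lift $\act_*$ in the second diagram, the moment map condition \eqref{eqn:mommapcond} together with the local normal form shows that, up to fixed nonzero constants depending on sign conventions, $\act$ sends the local basic forms $\d x^j$ (for $j\leq k$) to $\partial_{\phi_j}$ and the remaining $\d x^l$ to $\partial_{\theta_{l-k}}$, both of which are smooth sections of $\tensor[^\mu]{TS}{}$ by \eqref{eqn:basis:elltngtbun}. Taking $\act_*(\d x^j)$ to be this section produces the lift, which is fiberwise injective because the $n$ images form part of a local frame of $\tensor[^\mu]{TS}{}$; a direct comparison with the kernel computed above yields $\im(\act_*)=\ker(\mu_*)$. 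Uniqueness of $\act_*$ again follows from injectivity of the anchor $\rho:\tensor[^\mu]{TS}{}\to TS$ on the dense open subset $\mathring{S}$, where the $\T$-action is free and $\act$ itself is fiberwise injective with image $\ker(d\mu)$.

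The main obstacle is really a bookkeeping one: expressions such as $r_j\partial_{r_j}$, $\partial_{\phi_j}$, or $x^j\partial_{x^j}$ look either degenerate or singular on the face $\{z_j=0\}$ (respectively $\{x^j=0\}$) when read as ordinary vector fields on $S$ or $\Delta$, yet they define nowhere-vanishing smooth sections of the Lie algebroids $\tensor[^\mu]{TS}{}$ and $\tensor[^b]{T\Delta}{}$ by design of these two algebroids. Once this identification is absorbed into the local frames \eqref{eqn:basis:btngtbun}, \eqref{eqn:basis:elltngtbun}, the lifts are essentially forced by the commutativity requirements and the Lie algebroid axioms follow from the computation above.
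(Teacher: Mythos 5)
Your proposal is correct and takes essentially the same route as the paper: the paper's proof simply observes that every $\T$-invariant vector field on $S$ is $\mu$-related to a unique $b$-vector field on $\Delta$ (verified on the frames \eqref{eqn:basis:elltngtbun} and \eqref{eqn:basis:btngtbun}, exactly your local computation with $\mu_*(r_j\partial_{r_j})=2x^j\partial_{x^j}$), and that $\act(\alpha)$ is an invariant vector field tangent to $\ker(\d\mu)$. Your added details on uniqueness via density of $\mathring{S}$ and on the bracket compatibility are the implicit content of the paper's shorter argument.
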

\begin{proof}
    The first statement follows from the remark that any invariant vector field on $S$ is $\mu$-related to a unique $b$-vector field on $\Delta$, which is readily verified using the bases (\ref{eqn:basis:elltngtbun}). The second statement follows from the fact that $\act(\alpha)$ is an invariant vector field tangent to $\ker(\d \mu)$ for every $1$-form $\alpha$.  
\end{proof}
Finally, we recall the notion of radial residue. As for classical symplectic toric manifolds, for a toric $(\T,\Omega)$-space $\mu:(S,\omega)\to M$ the stratification of $S$ induced by the $\T$-action consists of the submanifolds of $S$ of the form $S_F:=\mu^{-1}(F)$, with $F$ an open face of $\Delta:=\mu(S)$. These strata are the leaves of the Lie algebroid $\tensor[^\mu]{TS}{}$. In particular, for any open face $F$ of $\Delta$, we have a short exact sequence
\[
\xymatrix{ 0\ar[r] & \ker(\rho)\vert_{S_F}\ar[r] & \tensor[^\mu]{TS}{}\vert_{S_F}\ar[r]^{\rho} & TS_F\ar[r] & 0.}
\] 
If $F$ is an open facet of $\Delta$, the toric local model shows that $\ker(\rho)\vert_{S_F}$ is a rank $2$ vector bundle that admits a canonical non-vanishing section, which can characterized as follows.
\begin{proposition} Let $\mu:(S,\omega)\to M$ be a toric $(\T,\Omega)$-space and let $F$ be an open facet of $\Delta:=\mu(S)$. There is a unique non-vanishing section
\[ 
\mathcal{E}_{S_F}\in \Gamma^\infty(\ker(\rho)\vert_{S_F})
    \] 
    such that any $\T$-invariant vector field on an invariant open set $U$ in $S$ that is Euler-like with respect to $S_F$ extends $\mathcal{E}_{S_F}\vert_{U\cap S_F}$. Moreover, $\mathcal{E}_{S_F}$ is $\mu_*$-related to $2\mathcal{E}_{F}$.
\end{proposition}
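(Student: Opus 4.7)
The statement is local along $S_F$, so I fix $p_0\in S_F$ and work in standard toric coordinates around $p_0$. Since $F$ is an open facet, the local model has $k=1$; write the coordinates as $(z_1,\theta_1,\dots,\theta_{n-1},x^1,\dots,x^{n-1})\in \C\times\Tt^{n-1}\times\R^{n-1}$, so that $S_F=\{z_1=0\}$ and the local frame of $\tensor[^\mu]{TS}{}$ is $\partial_{\phi_1},\ r_1\partial_{r_1},\ \partial_{\theta_i},\ \partial_{x^i}$. Using Proposition \ref{prop:invsmoothfunctions:localmodel}, every $\T$-invariant vector field on an invariant open $U$ can be written as
\[
X=a\,\partial_{\phi_1}+b\,r_1\partial_{r_1}+\sum_i c_i\,\partial_{\theta_i}+\sum_i d_i\,\partial_{x^i},
\]
where $a,b,c_i,d_i$ are smooth functions of $(r_1^2,x)$.

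Next I would translate the Euler-like condition into conditions on these coefficients. Vanishing of $X$ on $U\cap S_F$ forces $c_i(0,x)=d_i(0,x)=0$, since $\partial_{\phi_1}$ and $r_1\partial_{r_1}$ already vanish as vector fields at $r_1=0$. For the linearization, use $r_1\partial_{r_1}=u_1\partial_{u_1}+v_1\partial_{v_1}$ and $\partial_{\phi_1}=2\pi(u_1\partial_{v_1}-v_1\partial_{u_1})$: on the normal bundle $N(S_F)=\Span\{\partial_{u_1},\partial_{v_1}\}$, the linearization of $X$ at a point of $U\cap S_F$ reads $b(0,x)\,\mathrm{Id}+2\pi\,a(0,x)\,J$, where $J$ is the standard rotation. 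Requiring this to equal the identity gives $b(0,x)=1$ and $a(0,x)=0$. Hence the restriction of $X$ to $U\cap S_F$, viewed as a section of $\tensor[^\mu]{TS}{}|_{U\cap S_F}$, equals $r_1\partial_{r_1}|_{U\cap S_F}$, independently of the choice of $X$. In particular this proves both local uniqueness and the fact that the local section $r_1\partial_{r_1}|_{S_F}$ is canonically attached to $S_F$ and the chart-independent. These local sections therefore glue to a global non-vanishing section $\mathcal{E}_{S_F}\in \Gamma^\infty(\ker(\rho)|_{S_F})$, and local existence of an Euler-like $\T$-invariant vector field is immediate since $r_1\partial_{r_1}$ itself is one.

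For the last assertion I would compute $\mu_*$ in the same chart. The target chart on $\Delta$ is $(y^1,x^1,\dots,x^{n-1})$ with $y^1\geq 0$ and $F=\{y^1=0\}$; the moment map is $\mu(z_1,\theta,x)=(|z_1|^2,x)$, so $(r_1\partial_{r_1})(y^1)=2r_1^2=2y^1$ and $(r_1\partial_{r_1})(x^i)=0$. Under the lift $\mu_*:\tensor[^\mu]{TS}{}\to \tensor[^b]{T\Delta}{}$ of Proposition \ref{prop:elliptictngbun:liftmomentummap}, this means $\mu_*(r_1\partial_{r_1})=2y^1\partial_{y^1}$. By Remark \ref{rem:eulerlikevf:manwithcorncoord}, the $b$-vector field $y^1\partial_{y^1}$ is Euler-like with respect to $F$ and therefore extends $\mathcal{E}_F$, so $\mu_*(\mathcal{E}_{S_F})=2\mathcal{E}_F$ on $F\cap \mu(U)$, and this identity is global by the local-to-global nature of the construction.

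The main subtlety that needs care is the interplay between the angular direction $\partial_{\phi_1}$ and the Euler-like condition: $\partial_{\phi_1}$ vanishes as a vector field along $S_F$ but has non-trivial linearization in the normal directions. The argument relies on separating the symmetric and antisymmetric parts of the normal linearization to force $a(0,x)=0$ in addition to $b(0,x)=1$, and this is exactly what pins down the extra rank-$2$ freedom in $\ker(\rho)|_{S_F}$ and ensures that only the radial section $r_1\partial_{r_1}|_{S_F}$ is allowed.
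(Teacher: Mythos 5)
Your proof is correct and follows essentially the route the paper intends: the paper omits a formal proof, but Remark \ref{rem:eulerlikevf:toriccoord} records exactly the coordinate conditions you derive ($f^\phi=0$, $f^\theta=f^x=0$, $f^r=1$ on $\{z_1=0\}$), and your decomposition of the normal linearization into $b\,\mathrm{Id}+2\pi a\,J$ is the right way to justify them. The computation $\mu_*(r_1\partial_{r_1})=2y^1\partial_{y^1}$ likewise matches the paper's assertion that $\mathcal{E}_{S_F}$ is $\mu_*$-related to $2\mathcal{E}_F$.
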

\begin{remark}
\label{rem:eulerlikevf:toriccoord}
In standard toric coordinates, as in \eqref{eqn:basis:elltngtbun}, a $\T$-invariant vector field $X$ on $U$ can be written as
\[
    X=\sum_{i=1}^kf^r_ir_i\partial_{r_i}+f^\phi_i\partial_{\phi_i}+\sum_{j=1}^{n-k}f^\theta_j\partial_{\theta_j}+f^x_j{\partial}_{x^j},
\] 
for some invariant functions $f^r_i,f^\phi_i,f^\theta_j,f^x_j\in \mathcal{C}^\infty(U)^{\T}$. Then $X$ is Euler-like with respect to $S_F$ if and only if for each $p=(z,t,x)\in U\cap S_F$ one has
\[
   f_i^\phi(p)=0,\quad f_j^\theta(p)=0, \quad f_j^x(p)=0\quad\text{and}\quad f_i^r(p)=\begin{cases}1 \quad&\textrm{ if } z_i=0,\\
0 \quad &\textrm{ otherwise},
\end{cases}
\] 
for all $i\in \{1,\dots,k\}$ and $j\in \{1,\dots,n-k\}$. In particular, if $k=1$, then $r\partial_{r}$ is an extension of $\mathcal{E}_{S_F}\vert_{U\cap S_F}$.
\end{remark}

Similar to the definition of residues for vector-valued $b$-forms, we can define residues of vector-valued elliptic 1-forms as follows.

\begin{definition} Let $\mu:(S,\omega)\to M$ be a toric Hamiltonian $\T$-space, $F$ an open facet of $\Delta:=\mu(S)$ and $E$ a vector bundle over $\Delta$. The \textbf{radial residue along $F$} of a vector-valued $1$-form $\alpha\in \Omega^1(\tensor[^\mu]{TS}{},E)$ is the section
\[
\mathrm{res}_{F,\mathrm{Rad}}(\alpha):=\alpha(\mathcal{E}_{S_F})\in \Gamma^\infty(E|_{S_F}).
\]
\end{definition}
\subsection{Elliptic Lagrangian connections}\label{sec:elliptic:connections}
We will now show that Theorem \ref{thm:real:Lag-Chern:class} extends to toric Hamiltonian $\T$-spaces. For that, we need an appropriate notion of connection $1$-form on a toric $\T$-space $\mu:(S,\omega)\to M$. First, notice that there is a canonical fiberwise linear $\T$-action on $\tensor[^\mu]{TS}{}$ along the composition of the bundle projection with $\mu$, because $\tensor[^\mu]{TS}{}$ is the pull-back along $\mu$ of a vector bundle on $\Delta:=\mu(S)$. For any $p\in S$ the action by an element $g\in\T_{\mu(p)}$ is the canonical isomorphism $\tensor[^\mu]{T_{p}S}{}\to \tensor[^\mu]{T_{g\cdot p}S}{}$. The anchor map of the elliptic tangent bundle is $\T$-equivariant with respect to this action and the tangent action of $\T$. 


\begin{definition}
    An \textbf{elliptic connection $1$-form} for a toric Hamiltonian $\T$-space $\mu:(S,\omega)\to M$ is a Lie algebroid form $\theta\in \Omega^1(\tensor[^\mu]{TS}{},T^*M)$ which satisfies:
    \begin{itemize}
\item[(i)] $\T$-invariance: $\theta_{gp}(g\cdot v)=\theta_p(v)$, for all $v\in \tensor[^\mu]{T_pS}{}$, $g\in\T_{\mu(p)}$;
\item[(ii)] $\theta_p((\act_*)_p(\alpha))=\alpha$, for all $\alpha\in T_{\mu(p)}^*M$ and $p\in S$.
\end{itemize} We call such a connection $1$-form \textbf{Lagrangian} if $\ker(\theta)_p\subset (T_pS,\omega_p)$ is Lagrangian for every $p\in \mathring{S}:=\mu^{-1}(\mathring{\Delta})$. 
\end{definition}

\begin{remark}
   In the case of torus actions the notion of elliptic connection already appears in \cite{CW22}. 
\end{remark}

\begin{example}\label{example:ellconnection:stdlocmodel}
An elliptic Lagrangian connection for the standard toric local model $\mu_{k,n}:(S_{k,n},\omega_{k,n})\to \R^n$ is given by 
\[
\theta(r_i\partial_{r_i})=0,\quad \theta(\partial_{\phi_i})=\d x^i,\quad \theta(\partial_{x^i})=0, \quad \theta(\partial_{\theta_i})=\d x^{i+k}.
\] 
Note that this has zero radial residue along all open facets of $\R^n_k$. 
\end{example}

\begin{proposition} 
\label{prop:existence:conn1form:toric}
Every toric Hamiltonian $\T$-space admits an elliptic Lagrangian connection $1$-form with zero radial residue along all open facets. 
\end{proposition}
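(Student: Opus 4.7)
The plan is to mimic the proof of Proposition \ref{prop:existenceLagrconn:principalcase}, now using the standard toric local model to produce local elliptic Lagrangian connections with zero radial residue, and then averaging with a $\T$-invariant partition of unity pulled back from $\Delta := \mu(S)$.

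First, I would build local models. For any point $p \in S$, choose standard toric coordinates $(z,\theta,x)$ on an invariant open $U = \mu^{-1}(V)$ with $V \subset \Delta$. On this chart the toric Hamiltonian $\T$-space is isomorphic to the standard local model $\mu_{k,n} : (S_{k,n}, \omega_{k,n}) \to \R^n$, and Example \ref{example:ellconnection:stdlocmodel} then supplies an elliptic Lagrangian connection $1$-form whose radial residue along every open facet of $V$ vanishes. This produces a cover $\{V_i\}$ of $\Delta$ by such opens, with corresponding elliptic Lagrangian connection $1$-forms $\theta_i$ on $U_i := \mu^{-1}(V_i)$, each with zero radial residue along every open facet of $V_i$.

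Next, I would globalize by averaging. Choose a partition of unity $\{\rho_i\}$ on $\Delta$ subordinate to $\{V_i\}$; since $\Delta \subset M$ is embedded as a codimension zero submanifold with corners, such partitions exist in the usual way. The pullbacks $\{\rho_i \circ \mu\}$ form a $\T$-invariant partition of unity on $S$, since $\mu$ is $\T$-invariant. Define
\[
\theta := \sum_i (\rho_i \circ \mu)\, \theta_i \in \Omega^1(\tensor[^\mu]{TS}{}, T^*M).
\]
A convex combination of $\T$-invariant elliptic connection $1$-forms is again a $\T$-invariant elliptic connection $1$-form: condition (ii) is $\R$-affine, and the pulled-back weights are $\T$-invariant, so condition (i) is preserved as well.

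Finally, I would check the Lagrangian and residue conditions. The Lagrangian condition is required only on $\mathring{S} = \mu^{-1}(\mathring{\Delta})$, where $\rho : \tensor[^\mu]{TS}{} \to TS$ is an isomorphism and $\mu|_{\mathring{S}}$ is a principal Hamiltonian $\T|_{\mathring{\Delta}}$-space. There, $\theta|_{\mathring{S}}$ is an ordinary Lagrangian connection $1$-form: by Lemma \ref{lem:Lagrangian:connection}, $\omega_{\theta_i} = \omega$ on each $\mathring{U}_i := \mu^{-1}(V_i \cap \mathring{\Delta})$, so $\omega_\theta = \sum_i (\rho_i \circ \mu)\omega_{\theta_i} = \omega$ on $\mathring{S}$, which gives the Lagrangian property. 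For the residue, fix an open facet $F$ of $\Delta$ and a point $p \in S_F$. The section $\mathcal{E}_{S_F}$ of $\ker(\rho)|_{S_F}$ at $p$ only gets paired with those $\theta_i$ for which $p \in U_i$, and by construction each such $\theta_i$ satisfies $\theta_i(\mathcal{E}_{S_F}|_p) = 0$; since $\sum_i \rho_i(\mu(p)) = 1$, we get $\theta(\mathcal{E}_{S_F}|_p) = 0$, as required. The only delicate point is ensuring that the local models produced by Example \ref{example:ellconnection:stdlocmodel} indeed have zero radial residue along every facet meeting $V_i$ (not just the one adapted to the coordinates), which is automatic since in standard toric coordinates the Euler-like vector field along any facet passing through the chart is $r_j \partial_{r_j}$ for some $j$, and Example \ref{example:ellconnection:stdlocmodel} kills all such radial directions.
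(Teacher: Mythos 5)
Your proposal is correct and follows exactly the route of the paper's proof: local existence from the standard model of Example \ref{example:ellconnection:stdlocmodel}, followed by a convex combination using a partition of unity on $\Delta$ pulled back along $\mu$, with the Lagrangian property verified on $\mathring{S}$ via Lemma \ref{lem:Lagrangian:connection} and the residue condition preserved by linearity. The paper states this more tersely, but your spelled-out verification of each condition matches what is implicitly required.
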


\begin{proof} 
In view of the above example such connection $1$-forms exist locally. A global such $1$-form can be constructed from these using a partition of unity for the manifold with corners $\Delta:=\mu(S)$, as in the proof of Proposition \ref{prop:existenceLagrconn:principalcase}.
\end{proof}

Next, we turn to the curvature of such connection $1$-forms. Let $\mu:(S,\omega)\to M$ be a toric Hamiltonian $\T$-space. There is a flat $\tensor[^\mu]{TS}{}$-connection on the bundle $\mu^*T^*M$ obtained by first pulling-back the canonical flat connection $\nabla$ on $T^*M$ along $\mu$ to a $TS$-connection on $\mu^*T^*M$ and then composing with the anchor of $\tensor[^\mu]{TS}{}$. This gives a differential 
\[
\d^\nabla:\Omega^\bullet(\tensor[^\mu]{TS}{},T^*M)\to \Omega^{\bullet+1}(\tensor[^\mu]{TS}{},T^*M).
\] 
It follows from Proposition \ref{prop:invsmoothfunctions:localmodel}, the standard local normal form and the properties of a connection 1-form $\theta$ that there is a  unique $2$-form 
\[
K_\theta\in \Omega^2(\tensor[^b]{T\Delta}{},T^*\Delta)
\] such that
\[
\mu^*K_\theta=\d^\nabla\theta \in \Omega^2(\tensor[^\mu]{TS}{},T^*M).
\]
We call $K_\theta$ the \textbf{curvature of the elliptic connection} $\theta$.

Notice that $\nabla$ also induces a flat $\tensor[^b]{T\Delta}{}$-connection on $T^*\Delta$, with corresponding differential
\[
\d^\nabla:\Omega^\bullet(\tensor[^b]{T\Delta}{},T^*\Delta)\to \Omega^{\bullet+1}(\tensor[^b]{T\Delta}{},T^*\Delta).
\]
The curvature $K_\theta$ is $\d^\nabla$-closed and its cohomology class in $H^2(\tensor[^b]{T\Delta}{},T^*\Delta)$ does not depend on the choice of $\theta$. If $\theta$ has zero radial residues, then by the proposition below the curvature is an honest smooth $2$-form $K_\theta\in \Omega^2(\Delta,T^*\Delta)$. So, in view of Proposition \ref{prop:existence:conn1form:toric}, the connection-curvature construction actually defines a cohomology class $H^2(\Delta,T^*\Delta)$. In the case of torus actions this was also observed in \cite{CW22}.

\begin{proposition}
\label{prop:curvature2}
    Let $\theta\in \Omega^1(\tensor[^\mu]{TS}{},T^*M)$ be an elliptic connection $1$-form for a toric Hamiltonian $\T$-space $\mu:(S,\omega)\to M$ and suppose that $\theta$ has zero radial residues. Then $\d^\nabla\theta\in \Omega^2(S,T^*M)$ and $K_{\theta}\in \Omega^2(\Delta,T^*\Delta)$.
    Moreover, if $\theta$ is Lagrangian, then $K_{\theta}\in \Omega^2_\partial(\Delta,T^*\Delta)$, meaning that
    \[\partial K_{\theta}(v_1,v_2,v_3):=\sum_{\sigma\in S_3}(-1)^{|\sigma|}\langle K _{\theta}(v_{\sigma(1)},v_{\sigma(2)}),v_{\sigma(3)}\rangle=0. \]
\end{proposition}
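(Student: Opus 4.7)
\emph{Plan.} Because the statement is local on $S$, I work in standard toric coordinates around an arbitrary point $p_0\in S$ of depth $k$, as furnished by the local normal form $\mu_{k,n}:(S_{k,n},\omega_{k,n})\to\R^n$, together with the frame \eqref{eqn:basis:elltngtbun} of $\tensor[^\mu]{TS}{}$ and its dual. By $\T$-invariance and Proposition~\ref{prop:invsmoothfunctions:localmodel}(a), the scalar coefficients of $\theta$ are pull-backs of smooth functions on $\Delta$. A direct computation in the local model gives $\act_*(dx^j)=\partial_{\phi_j}$ for $j\leq k$ and $\act_*(dx^{l+k})=\partial_{\theta_l}$ for $l\leq n-k$, so condition (ii) forces $\theta(\partial_{\phi_j})=dx^j$ and $\theta(\partial_{\theta_l})=dx^{l+k}$. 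By Remark~\ref{rem:eulerlikevf:toriccoord}, the zero radial residue assumption reads $\theta(r_j\partial_{r_j})\vert_{z_j=0}=0$, so Proposition~\ref{prop:invsmoothfunctions:localmodel}(b) yields $\theta(r_j\partial_{r_j})=x^j\widetilde{B}_j$ for a smooth $T^*M$-valued function $\widetilde{B}_j$, where $x^j:=|z_j|^2$. Setting $D_l:=\theta(\partial_{x^l})$, we obtain
\[
    \theta=\sum_{j=1}^k \bigl( dx^j\otimes d\phi_j + x^j\widetilde{B}_j\otimes \tfrac{dr_j}{r_j} \bigr) + \sum_{l=1}^{n-k}\bigl( dx^{l+k}\otimes d\theta_l + D_l\otimes dx^l \bigr).
\]

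Next, I compute $d^\nabla\theta$. The elliptic $1$-forms $d\phi_j,\tfrac{dr_j}{r_j},d\theta_l,dx^l$ are dual to a commuting frame, hence all $d^{\mathrm{ell}}$-closed, and the target $1$-forms $dx^a$ are $\nabla$-flat. Therefore
\[
    d^\nabla\theta=\sum_{j=1}^k d(x^j\widetilde{B}_j)\wedge\tfrac{dr_j}{r_j}+\sum_{l=1}^{n-k} dD_l\wedge dx^l.
\]
The key identity $2x^j\tfrac{dr_j}{r_j}=dx^j$ (valid on $\mathring S$ and extending to all of $S$ since both sides are smooth elliptic $1$-forms) gives $x^j d\widetilde{B}_j\wedge\tfrac{dr_j}{r_j}=\tfrac{1}{2}d\widetilde{B}_j\wedge dx^j$, while $\widetilde{B}_j\, dx^j\wedge\tfrac{dr_j}{r_j}$ is a smooth elliptic $2$-form that vanishes on $\mathring S$ (using $dx^j=2r_j\,dr_j$ so that $dx^j\wedge\tfrac{dr_j}{r_j}$ becomes $2\,dr_j\wedge dr_j=0$) and hence vanishes on $S$ by continuity. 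This proves $d^\nabla\theta\in\Omega^2(S,T^*M)$, and since all coefficients are $\T$-invariant pull-backs from $\Delta$, there exists a unique $K_\theta\in\Omega^2(\Delta,T^*\Delta)$ with $\mu^*K_\theta=d^\nabla\theta$.

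For the Lagrangian case, define the $\T$-invariant elliptic $2$-form
\[
    \omega_\theta(v,w):=\langle\theta(v),\mu_*(w)\rangle-\langle\theta(w),\mu_*(v)\rangle\in\Omega^2(\tensor[^\mu]{TS}{}),
\]
where the pairing uses $\rho:\tensor[^b]{T\Delta}{}\to T\Delta$. Using $\im(\act_*)=\ker(\mu_*)$ from Proposition~\ref{prop:elliptictngbun:liftmomentummap} together with condition (ii), the argument of Lemma~\ref{lem:Lagrangian:connection} transplanted to the elliptic setting shows that the Lagrangian hypothesis is equivalent to $\rho^*\omega=\omega_\theta$ on $\mathring S$, and hence on all of $S$ by density and smoothness. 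Applying $d^{\mathrm{ell}}$ and using the identity
\[
    d^{\mathrm{ell}}\omega_\theta=\tfrac{1}{2}\mu^*(\partial K_\theta),
\]
which is a direct elliptic-Leibniz computation analogous to the principal case, one obtains $\mu^*(\partial K_\theta)=\rho^* d\omega=0$. Since $\mu\vert_{\mathring S}$ is a surjective submersion onto $\mathring\Delta$, it follows that $\partial K_\theta$ vanishes on $\mathring\Delta$, and hence on $\Delta$ by continuity.

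The most delicate step is establishing the elliptic Leibniz identity $d^{\mathrm{ell}}\omega_\theta=\tfrac{1}{2}\mu^*(\partial K_\theta)$ and the elliptic analogue of Lemma~\ref{lem:Lagrangian:connection}. Both, however, are essentially algebraic consequences of the Leibniz rule for $d^{\mathrm{ell}}$ and the compatibilities $\rho\circ\act_*=\act$ and $\mu_*\circ\act_*=0$ from Proposition~\ref{prop:elliptictngbun:liftmomentummap}, and reduce in the standard toric frame of Step~1 to the same manipulations as in the principal case of Proposition~\ref{prop:curvature}.
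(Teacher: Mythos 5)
Your proof is correct, but it reaches the smoothness claim by a genuinely different route than the paper. The paper argues coordinate-freely: it shows that the elliptic $2$-form $\d^\nabla\theta$ annihilates the generators of $\ker\rho$ --- $\iota_{\act(\alpha)}(\d^\nabla\theta)=0$ because $\theta$ is a connection $1$-form, and $\iota_{\mathcal{E}_{S_F}}(\d^\nabla\theta)=0$ because the radial residues vanish --- and concludes that $\d^\nabla\theta$ lies in the image of $\rho^*$, with $\mu_*(\mathcal{E}_{S_F})=2\,\mathcal{E}_F$ then giving smoothness of $K_\theta$ on $\Delta$. You instead solve for $\theta$ explicitly in the standard toric frame (condition (ii) pins down $\theta(\partial_{\phi_j})$ and $\theta(\partial_{\theta_l})$, while the residue hypothesis plus Proposition \ref{prop:invsmoothfunctions:localmodel}(b) and Remark \ref{rem:eulerlikevf:toriccoord} give $\theta(r_j\partial_{r_j})=x^j\widetilde{B}_j$) and compute $\d^\nabla\theta$ term by term, the identities $2x^j\tfrac{\d r_j}{r_j}=\d x^j$ and $\d x^j\wedge\tfrac{\d r_j}{r_j}=0$ doing the work that the contraction argument does in the paper. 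Your version is longer but makes explicit the division step that the paper's abstract argument leaves implicit (namely, why an elliptic form killing $\ker\rho$ descends to a smooth form); the paper's version is shorter and avoids coordinates entirely. For the Lagrangian statement the two arguments are ultimately the same --- $\partial K_\theta=0$ on $\mathring{\Delta}$ follows from the principal case, and extends to $\Delta$ by density --- except that the paper simply cites Proposition \ref{prop:curvature} on $\mathring{S}$, whereas you take the unnecessary detour of extending $\omega_\theta$ and the identity $\d\omega_\theta=\tfrac12\mu^*(\partial K_\theta)$ to all of $S$ before restricting back to the interior; that extra generality is harmless but buys you nothing here.
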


\begin{proof} 
Note that $\iota_{\act(\alpha)}(\d^\nabla\theta)=0$ for all $1$-forms $\alpha$, since $\theta$ is a connection $1$-form. Also $\iota_{\mathcal{E}_{S_F}}(\d^\nabla\theta)=0$ for each open facet $F$, since $\theta$ has zero radial residues. Hence, $\d^\nabla\theta$ is a smooth $2$-form on $S$. Since $\mu_*(\mathcal{E}_{S_F})=2\,\mathcal{E}_F$, it follows that $K_\theta$ is a smooth $2$-form as well. Finally, $\partial K_\theta=0$ if $\theta$ is Lagrangian, since this holds on $\mathring{\Delta}$ by Proposition \ref{prop:curvature}, so it holds on all of $\Delta$ by density of $\mathring{\Delta}$.
\end{proof}

This proposition shows that a choice of elliptic Lagrangian connection with zero radial residue gives rise to a cohomology class
\[
[K_\theta]\in H^2(\Omega_\partial^\bullet(\Delta,T^*\Delta),\d^\nabla), 
\] 
with the complex $(\Omega_\partial^\bullet(\Delta,T^*\Delta),\d^\nabla)$ defined like in Section \ref{sec:Lag:connections}. In order to extend Theorem \ref{thm:real:Lag-Chern:class}, consider the composite
 \begin{equation}  \label{eq:map:Lag:chern:class:2}
        \xymatrix@C=15pt{
        \check{H}^1(\Delta,\TLag)\ar[r]^\simeq& \check{H}^2(\Delta,\O_\Lambda)\ar[r] & \check{H}^2(\Delta,\O_{\Aff}) \ar[r]^---\simeq &  H^2(\Omega^\bullet_\partial(\Delta,T^*\Delta),\d^\nabla)},
    \end{equation} 
where $\O_{\Aff}$ and $\O_\Lambda$ denote respectively the sheaves of locally defined functions on $\Delta$ that are affine and integral affine with respect to $\Lambda$. The first map in this sequence is the connecting homomorphism induced by the short exact sequence
    \[ 
    \xymatrix{
    0\ar[r] & \O_\Lambda\ar[r] &  \mathcal{C}^\infty\ar[r]^--{\d}& \TLag\ar[r] & 0,}
    \]
    while the last map is the isomorphism induced by the fine resolution
    \[
    \xymatrix{
    0\ar[r] & \O_{\Aff}\ar[r] &\mathcal{C}^\infty\ar[r]^---{\d^\nabla\circ \d} & \Omega^1_\partial(-,T^*\Delta)\ar[r]^{\d^\nabla}& \Omega^2_\partial(-,T^*\Delta)\ar[r]^--{\d^\nabla}& \dots}
    \]
\begin{theorem}
    \label{thm:real:Lag-Chern:class:2} 
   Let $\mu:(S,\omega)\to M$ be a toric Hamiltonian $\T$-space. If $\theta$ is an elliptic Lagrangian connection for $\mu$ with zero radial residue, then \eqref{eq:map:Lag:chern:class:2} maps the Lagrangian Chern class $c_1(S,\omega)$ to the class $[K_\theta]$. 
\end{theorem}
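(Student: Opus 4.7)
The plan is to mimic the \v{C}ech--de Rham double complex computation used in the proof of Theorem \ref{thm:real:Lag-Chern:class}, carefully accounting for the corner behavior via the zero radial residue hypothesis on $\theta$. First, I would choose a good open cover $\mathcal{U} = \{U_i\}_{i \in I}$ of the manifold with corners $\Delta$ together with equivariant isomorphisms $\Psi_i$ identifying $\mu^{-1}(U_i)$ with an open in a standard toric local model $(S_{k_i, n}, \omega_{k_i,n}, \mu_{k_i,n})$; such a cover exists by the normal form discussion following Theorem \ref{thm:equiv:toric:Lagrangian}. Over $U_i \cap \mathring{\Delta}$, the explicit Lagrangian section
\[
y \mapsto (\sqrt{y^1}, \ldots, \sqrt{y^{k_i}}, 0, y^{k_i+1}, \ldots, y^n)
\]
of the standard local model pulls back via $\Psi_i^{-1}$ to a Lagrangian section $\sigma_i$ of $\mu$. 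These sections define a \v{C}ech 1-cocycle $\tau = [\tau_{ij}] \in \check{C}^1(\mathcal{U}, \TLag)$ via the relation $\tau_{ij} \cdot \sigma_i = \sigma_j$ on $U_{ij} \cap \mathring{\Delta}$; the Lagrangian section $\tau_{ij}$ of $\T|_{U_{ij}}$ extends smoothly across the boundary since $\Psi_i \circ \Psi_j^{-1}$ does, and by the classification of \cite{Maarten} this cocycle represents $c_1(S, \omega)$.

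Lifting each $\tau_{ij}$ to a smooth function $f_{ij}$ (possible because $U_{ij}$ is contractible), the image of $c_1(S, \omega)$ under the connecting homomorphism in \eqref{eq:map:Lag:chern:class:2} is represented by the \v{C}ech 2-cocycle $\check{\d}f \in \check{C}^2(\mathcal{U}, \O_\Lambda)$. The second isomorphism in \eqref{eq:map:Lag:chern:class:2} is computed via the \v{C}ech--de Rham bicomplex built from the fine resolution $(\Omega^\bullet_\partial(-, T^*\Delta), \d^\nabla)$, and following the proof of Theorem \ref{thm:real:Lag-Chern:class} it suffices to verify the identity
\[
(\check{\d}f, 0, 0) - (0, 0, K_\theta) = \d_{\mathrm{Tot}}(f, \sigma^*\theta)
\]
in the total complex, where $(\sigma^*\theta)_i := \sigma_i^*\theta$.

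The main obstacle is that $\sigma_i$ is singular along the boundary of $U_i$ (the section uses square roots), so it is not a priori clear that $\sigma_i^*\theta$ extends to an element of $\Omega^1_\partial(U_i, T^*\Delta)$. Here the zero radial residue hypothesis is essential. A direct computation in standard toric coordinates shows that, for $j \leq k_i$ and $y \in U_i \cap \mathring{\Delta}$, one has $\d\sigma_i(\partial_{y^j}) = \tfrac{1}{2y^j}\,(r_j \partial_{r_j})|_{\sigma_i(y)}$. The $\T$-invariant, $T^*M$-valued function $\theta(r_j \partial_{r_j})$ descends by Proposition \ref{prop:invsmoothfunctions:localmodel}(a) to a smooth section over $U_i$; it vanishes along the facet $\{y^j = 0\}$ by the zero radial residue hypothesis together with Remark \ref{rem:eulerlikevf:toriccoord}, so by Proposition \ref{prop:invsmoothfunctions:localmodel}(b) it equals $y^j \cdot \alpha_j$ for some smooth $T^*M$-valued $\alpha_j$ on $U_i$. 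The singular factor $\tfrac{1}{y^j}$ is thereby absorbed, yielding the sought-after smooth extension of $\sigma_i^*\theta$; vanishing under the anti-symmetrization $\partial$ follows from Lemma \ref{lem:Lagrangian:connection} on $\mathring{\Delta}$ and extends by continuity. With this smoothness established, the chain homotopy computation is identical to that in the proof of Theorem \ref{thm:real:Lag-Chern:class}.
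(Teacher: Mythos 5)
Your proposal is correct and follows essentially the same route as the paper: reduce to the \v{C}ech--de Rham chain-homotopy computation of Theorem \ref{thm:real:Lag-Chern:class}, with the new content being that the Lagrangian sections $\sigma_i$ built from the square-root sections of the local models are singular at the boundary, and that $\sigma_i^*\theta$ nevertheless extends smoothly because the zero radial residue lets Proposition \ref{prop:invsmoothfunctions:localmodel}(b) absorb the $\tfrac{1}{y^j}$ factor coming from $\d\sigma_i(\partial_{y^j})=\tfrac{1}{2y^j}\,r_j\partial_{r_j}$. The paper phrases this by expanding $(\psi_i^{-1})^*\theta$ in the elliptic coframe of the canonical toric space $S_\Delta$ rather than evaluating on $\partial_{y^j}$, but the two computations are the same argument.
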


For the proof it will be useful to recall from \cite{Maarten} how, given a Delzant domain $\Delta$ of an integral affine manifold $(M,\Lambda)$, one constructs the \textbf{canonical toric $\T$-space} $\mu_\Delta:(S_\Delta,\omega_\Delta)\to M$ with momentum map image $\Delta$ and $c_1(S_\Delta,\omega_\Delta)=0$. This construction consists of the following steps:
\begin{enumerate}[(i)]
\item As a topological space, $S_\Delta$ is the quotient of $\T_\Lambda\vert_\Delta$ by the normal subgroupoid with isotropy group at $x\in \Delta$ given by the subtorus with Lie algebra $(T_xF)^0$, the annihilator in  $T_x^*M$ of the tangent space to the open face $F$ through $x$. 
\item The bundle projection of $\T_\Lambda$ descends to a continuous map $\mu_\Delta:S_\Delta\to M$ and the action of $\T_\Lambda$ along its bundle projection descends to a continuous action of $\T_\Lambda$ along $\mu_\Delta$. 
\item There are a unique smooth and symplectic structures on $S_\Delta$ with the following property: for any integral affine chart $(U,\phi=(x^1,...,x^n))$ onto a connected open set $\phi(U)$ around the origin in $\R^n$ such that $\phi(U\cap\Delta)=\phi(U)\cap \R^n_k$, the induced homeomorphism
\begin{align}
\label{eqn:toriccoord:S_Delta}
&\xymatrix{
\mu_\Delta^{-1}(U)\ar[r]^-{\sim} & \mu_{k,n}^{-1}(\phi(U)) 
}\\
\Big[\sum_{j=1}^n \theta_j\d x^j\Big]& \mapsto \left(e^{2\pi i\theta_1}\sqrt{x^1},\dots,e^{2\pi i\theta_k}\sqrt{x^k},\theta_{k+1},\dots, \theta_n,x^{k+1},\dots,x^n\right) \notag
\end{align}
is a symplectomorphism with respect to $\omega_{k,n}$. 
\end{enumerate}
All together, this gives a toric $\T_\Lambda$-space
\[ 
\mu_\Delta:(S_\Delta,\omega_\Delta)\to M
\] 
with moment map image $\Delta$ and trivial Lagrangian Chern class (by definition). Note that this comes with a canonical symplectic open embedding
\[
\T_\Lambda\vert_{\mathring{\Delta}}\hookrightarrow S_\Delta
\]
onto the dense subset $\mathring{S}_\Delta=\mu_\Delta^{-1}(\mathring{\Delta})$. 
\begin{proof}[Proof of Theorem \ref{thm:real:Lag-Chern:class:2}] Consider $\mathcal{U}=\{U_i\}_{i\in I}$ a good open cover of $\Delta$ together with equivalences of Hamiltonian $\T$-spaces 
\[
\xymatrix{
(\mu^{-1}(U_i),\omega)\ar[rr]_{\sim}^{\Psi_i}\ar[dr]_{\mu} & & (\mu_\Delta^{-1}(U_i),\omega_\Delta) \ar[dl]^{\mu_\Delta}\\
& M}
\] 
For any two $i,j\in I$ there is a unique Lagrangian section $\tau_{ij}\in \TLag(U_{ij})$ such that
\[
\tau_{ij}(\mu(p))\cdot \Psi_j(p)=\Psi_i(p),\quad p\in \mu^{-1}(U_{ij}).
\] 
The class $c_1(S,\omega)$ is represented by the \v{C}ech $1$-cocycle $\tau=[\tau_{ij}]\in \check{C}^1(\mathcal{U},\TLag)$. The zero-section of $\T_\Lambda$ induces a continuous section $\sigma_\Delta:\Delta\to S_\Delta$ of $\mu_\Delta$, which is smooth and Lagrangian on $\mathring{\Delta}$. We set 
\begin{equation}
\label{eq:sections:singular}
    \sigma_i:=\Psi_i^{-1}\circ \sigma_\Delta:U_i\to S.
\end{equation}
The fact that $c_1(S,\omega)$ is mapped to $[K_\theta]$ by \eqref{eq:map:Lag:chern:class:2} follows from an argument identical to that in the proof of  Theorem \ref{thm:real:Lag-Chern:class}, where the Lagrangian sections $\sigma_i$ are now given by \eqref{eq:sections:singular}. There is however one technical issue to overcome: now the sections $\sigma_i$ are not smooth on all of $U_i$, only on  $\mathring{U}_i=U_i\cap\mathring{\Delta}$, so \emph{a priori} we can only consider 
\[ \sigma_i^*\theta\in \Omega_\partial^1(\mathring{U}_i,T^*\Delta).\] 
We need to show that the $1$-forms $\sigma_i^*\theta$ extend smoothly to all of $U_i$ for each $i\in I$. Since this is a local property, it can be checked in the standard toric coordinates for $\mu_\Delta$ induced by integral affine coordinates $(U,x^i)$ as in \eqref{eqn:toriccoord:S_Delta}. In such toric coordinates $(\Psi_i^{-1})^*\theta$ is of the form
\[ 
(\Psi_i^{-1})^*\theta=\sum_{j=1}^k\d\phi_j\otimes \alpha_j+\mu_{k,n}^*(\d x^j)\otimes \beta_j+\sum_{j=1}^{n-k}\d\theta_{j}\otimes\alpha_{j+k}+\mu_{k,n}^*(\d x^{j+k})\otimes\beta_{j+k} 
\]
for $\alpha_j,\beta_j\in \Omega^1(U_i)$, because $\theta$ has zero radial residues. Moreover, $\sigma_\Delta$ is given by
\[
\sigma_\Delta(x)=\left(\sqrt{x^1},\dots,\sqrt{x^k},0,\dots,0,x^{k+1},\dots,x^n\right).
\]
Therefore, $\sigma_i^*\theta=\sum_{j=1}^n\d x^j\otimes \beta_j$ on $U\cap \mathring{U}_i$, so it extends smoothly to $U_i$. 
\end{proof}

\subsection{Flat elliptic Lagrangian connections} 
\label{sec:Lag:connections:flat2}
Flat Lagrangian elliptic connections with zero radial residue will play a key role in the study of invariant K\"ahler metrics. In this subsection we extend the results of Section \ref{sec:Lag:connections:flat} to such connections.  

\begin{definition}
A \textbf{flat toric Hamiltonian $\T$-space} is a  toric Hamiltonian $\T$-space equipped with a flat Lagrangian elliptic connection that has zero radial residues. 
\end{definition}

Given an integral affine manifold $(M,\Lambda)$ and a Delzant domain $\Delta$, we denote by $\TFlat$ the sheaf on $\Delta$ of $\nabla$-flat sections of $\T_\Lambda\vert_\Delta$. This coincides with the restriction of the sheaf of flat sections of $\T$ to $\Delta$ and, as before, it is a subsheaf of $\TLag$. Also, the induced map in cohomology fits in an exact sequence
\begin{equation}
\label{eq:image:flat:cohomology2}
\xymatrix{\Check{H}^1(\Delta,\TFlat)\ar[r] & \Check{H}^1(\Delta,\TLag) \ar[r] & H^2(\Omega^\bullet_\partial(\Delta,T^*\Delta),\d^\nabla),}
\end{equation}
where the second map is given by {\eqref{eq:map:Lag:chern:class:2}}. Therefore, Corollary \ref{cor:L:flat} extends. The same goes for Theorem \ref{thm:bi-lagrangian:classification}. 

\begin{theorem}
\label{thm:bi-lagrangian:classification2}
Fix a Delzant domain $\Delta$ of an integral affine manifold $(M,\Lambda)$. There is a canonical 1:1 correspondence
\[ 
\left\{\\ \txt{flat toric Hamiltonian $\T_\Lambda$-spaces\\ $\mu:(S,\omega,\theta)\to M$\\ with $\mu(S)=\Delta$, up to equivalence \,}\right\}\ 
\tilde{\longleftrightarrow}\ 
\check{H}^1(\Delta,\TFlat)
\] 
The cohomology class corresponding to a flat toric Hamiltonian $\T$-space is mapped to its Lagrangian Chern class by the first map in \eqref{eq:image:flat:cohomology2}.
\end{theorem}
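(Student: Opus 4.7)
The plan is to adapt the strategy of Theorem \ref{thm:bi-lagrangian:classification} to the toric setting by using the canonical toric $\T_\Lambda$-space $\mu_\Delta:(S_\Delta,\omega_\Delta)\to M$ recalled in the proof of Theorem \ref{thm:real:Lag-Chern:class:2} as the local model. The first preparatory step is to equip $S_\Delta$ with a \emph{canonical} flat Lagrangian elliptic connection $\theta_\Delta$ with zero radial residues. Over a chart of the form \eqref{eqn:toriccoord:S_Delta}, define $\theta_\Delta$ by the formulas in Example \ref{example:ellconnection:stdlocmodel}; since the canonical horizontal distribution $H$ on $\T_\Lambda$ is intrinsically defined by the integral affine structure (Proposition \ref{prop:connection}), the local definition is independent of the integral affine chart and glues to a global flat Lagrangian elliptic connection with zero radial residues.

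The main technical step is local rigidity: given a flat toric $\T_\Lambda$-space $\mu:(S,\omega,\theta)\to M$ with $\mu(S)=\Delta$ and any $x_0\in \Delta$, there is an open neighborhood $U\subset \Delta$ admitting an equivalence of flat toric $\T$-spaces
\[
    \Psi:(\mu^{-1}(U),\omega,\theta)\stackrel{\sim}{\to}(\mu_\Delta^{-1}(U),\omega_\Delta,\theta_\Delta).
\]
By Theorem \ref{thm:toric:classification} applied to a contractible chart, the Lagrangian Chern class vanishes locally, so there exists an equivalence of Hamiltonian $\T$-spaces $\Psi_0$ between $(\mu^{-1}(U),\omega)$ and $(\mu_\Delta^{-1}(U),\omega_\Delta)$. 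Under $\Psi_0$ the connection $\theta$ is transported to another flat Lagrangian elliptic connection $\theta'$ on $\mu_\Delta^{-1}(U)$ with zero radial residues. The difference $\theta'-\theta_\Delta$ vanishes on the image of $\act_*$, is $\T$-invariant, and has zero radial residue; using Proposition \ref{prop:elliptictngbun:liftmomentummap} and Proposition \ref{prop:invsmoothfunctions:localmodel} it descends to a smooth (honest, not just $b$) $T^*M$-valued 1-form $\alpha$ on $U$ whose anti-symmetrization vanishes (by the Lagrangian condition) and which is $\d^\nabla$-closed (by flatness). On the contractible chart $U$ one then finds a smooth function $f$ with $\d^\nabla \d f=\alpha$, and translation by the flat Lagrangian section $\exp(\d f)$ of $\T_\Lambda$ yields the desired flat equivalence $\Psi$. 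This step is where I expect the most care to be needed, since one must check that the $b$-singularities disappear and that the primitive can be chosen so that translation by it is an automorphism of the \emph{flat} structure.

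Once local rigidity is established, the classification proceeds in standard \v{C}ech fashion. Cover $\Delta$ by opens $\{U_i\}$ with flat equivalences $\Psi_i$ to the canonical model. On overlaps, $\Psi_i\circ\Psi_j^{-1}$ is an automorphism of $(\mu_\Delta^{-1}(U_{ij}),\omega_\Delta,\theta_\Delta)$, which must be translation by a flat Lagrangian section $\tau_{ij}\in\TFlat(U_{ij})$ (flatness of the automorphism pins down flatness of the section because $\theta_\Delta$ is preserved). The resulting cocycle defines $c_1^{\flt}(S,\omega,\theta)\in\check{H}^1(\Delta,\TFlat)$, independent of choices. Injectivity follows by gluing local equivalences whose defect is a coboundary; surjectivity follows by gluing copies of the canonical flat local model along a given cocycle, a standard descent argument.

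Finally, compatibility with the Lagrangian Chern class under \eqref{eq:image:flat:cohomology2} is immediate from the construction: the sections $\sigma_i:=\Psi_i^{-1}\circ\sigma_\Delta$ used to represent $c_1(S,\omega)$ in the proof of Theorem \ref{thm:real:Lag-Chern:class:2} are exactly the images of the zero section of the canonical model under $\Psi_i^{-1}$, and these produce the same \v{C}ech cocycle $[\tau_{ij}]$, now viewed in $\TLag$ via the inclusion $\TFlat\hookrightarrow\TLag$.
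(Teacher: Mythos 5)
Your proposal is correct and follows essentially the same route as the paper: the paper first establishes the canonical flat Lagrangian elliptic connection $\theta_\Delta$ on $\mu_\Delta:(S_\Delta,\omega_\Delta)\to M$ (by extending the canonical flat connection of $\T_\Lambda$ across the dense open $\mathring{S}_\Delta$ in toric coordinates) and then declares the proof "virtually the same" as the Duistermaat-style \v{C}ech argument for Lagrangian fibrations, with this flat canonical model as the zero element. Your local-rigidity step --- descending $\theta'-\theta_\Delta$ to a $\d^\nabla$-closed form in $\Omega^1_\partial(U,T^*\Delta)$ via the zero-radial-residue condition and killing it by translation by $\exp(\d f)$ --- is exactly the content the paper leaves implicit, and it is sound.
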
 

For the proof of this, we use the following observation. 

\begin{proposition} 
The canonical toric $\T$-space $\mu_\Delta:(S_\Delta,\omega_\Delta)\to M$ admits a unique elliptic connection $\theta_\Delta$ that restricts to the canonical flat Lagrangian connection on $\T_\Lambda$ via the canonical open embedding
\begin{equation}\label{eqn:canopenemb:S_Delta}
\T_\Lambda\vert_{\mathring{\Delta}}\hookrightarrow S_\Delta.
\end{equation}
The connection $\theta_\Delta$ is flat, Lagrangian and has zero radial residues. 
\end{proposition}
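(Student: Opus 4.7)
The plan is to define $\theta_\Delta$ on the dense open subset $\mathring{S}_\Delta = \mu_\Delta^{-1}(\mathring{\Delta})$ by transporting, via the canonical open embedding \eqref{eqn:canopenemb:S_Delta}, the canonical flat Lagrangian connection $1$-form $\theta_{\mathrm{can}} = \sum_j \d\theta_j \otimes \d x^j$ on $\T_\Lambda\vert_{\mathring{\Delta}}$, and then to show that the resulting form extends smoothly across $S_\Delta \setminus \mathring{S}_\Delta$ as an elliptic $1$-form on $S_\Delta$. Uniqueness is automatic once smoothness is established, because a smooth section of $\tensor[^\mu]{T^*S_\Delta}{} \otimes \mu_\Delta^* T^*M$ is determined by its restriction to the dense open $\mathring{S}_\Delta$.

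The heart of the argument is a coordinate computation in the standard toric chart \eqref{eqn:toriccoord:S_Delta}. For an integral affine chart $(U, x^1,\dots,x^n)$ around a boundary point with $\phi(U\cap \Delta)\subset \R^n_k$, the angular coordinates satisfy $\phi_j = 2\pi\theta_j$ for $j\leq k$ (hence $\d\theta_j = \tfrac{1}{2\pi}\d\phi_j$), while the remaining $n-k$ angles of $\T_\Lambda$ coincide with the $\theta_l$ on $S_{k,n}$. Substituting into $\theta_{\mathrm{can}}$ yields, on $\mathring{S}_\Delta\cap \mu_\Delta^{-1}(U)$,
\[
\theta_\Delta \;=\; \sum_{j=1}^k \tfrac{1}{2\pi}\,\d\phi_j \otimes \d x^j \;+\; \sum_{l=1}^{n-k} \d\theta_l \otimes \d x^{l+k}.
\]
Since $\d\phi_j$ and $\d\theta_l$ belong to the canonical elliptic coframe dual to \eqref{eqn:basis:elltngtbun}, this expression manifestly defines a smooth elliptic $1$-form valued in $\mu_\Delta^* T^*M$ on all of $\mu_\Delta^{-1}(U)$, and hence globally on $S_\Delta$. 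Evaluating on the frame \eqref{eqn:basis:elltngtbun} gives
\[
\theta_\Delta(\partial_{\phi_j}) = \tfrac{1}{2\pi}\,\d x^j,\quad \theta_\Delta(r_j\partial_{r_j}) = 0,\quad \theta_\Delta(\partial_{\theta_l}) = \d x^{l+k},\quad \theta_\Delta(\partial_{x^l}) = 0.
\]

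Once the smooth extension is in hand, the remaining properties follow by density from the corresponding properties on $\T_\Lambda\vert_{\mathring{\Delta}}$. The $\T$-equivariance and the splitting identity $\theta_\Delta((\act_*)_p(\alpha)) = \alpha$ hold on the dense open $\mathring{S}_\Delta$ by construction and extend by continuity; the Lagrangian condition $\omega_\Delta = (\omega_\Delta)_{\theta_\Delta}$ holds on $\mathring{S}_\Delta$ via Lemma \ref{lem:Lagrangian:connection} and on all of $S_\Delta$ by density; flatness $\d^\nabla\theta_\Delta=0$ is inherited from $\theta_{\mathrm{can}}$ in the same manner. The zero-radial-residue condition is immediate from the explicit vanishing $\theta_\Delta(r_j\partial_{r_j}) = 0$ together with Remark \ref{rem:eulerlikevf:toriccoord}, which identifies $r_j\partial_{r_j}$ as an extension of the canonical section $\mathcal{E}_{S_F}$ along each open facet $F = \{x^j = 0\}$.

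The only genuinely subtle step is the smooth extension: in the $\T_\Lambda$-picture $\d\theta_j$ appears to blow up as $r_j \to 0$, and one has to observe that, after the identification $\phi_j = 2\pi\theta_j$ coming from \eqref{eqn:toriccoord:S_Delta}, it becomes $\tfrac{1}{2\pi}\d\phi_j$, which is a smooth element of the elliptic coframe precisely because the elliptic cotangent bundle is designed to accommodate exactly this type of logarithmic singularity. Everything else is a short density or continuity argument.
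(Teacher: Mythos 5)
Your argument follows the paper's proof essentially verbatim: uniqueness and the propagation of flatness and the Lagrangian condition by density of $\mathring{S}_\Delta$, and existence by computing the pullback of $\sum_j \d\theta_j\otimes\d x^j$ in the standard toric chart \eqref{eqn:toriccoord:S_Delta} and recognizing it as a smooth elliptic $1$-form with vanishing radial residues (the paper identifies it with the connection of Example \ref{example:ellconnection:stdlocmodel}). The one thing to fix is a normalization slip: with the paper's convention $\tfrac{1}{2\pi}\partial_{\phi_j}=u_j\partial_{v_j}-v_j\partial_{u_j}$, the coordinate $\phi_j$ measures the angle in full turns, so under the chart $z_j=e^{2\pi i\theta_j}\sqrt{x^j}$ one has $\phi_j=\theta_j$ rather than $\phi_j=2\pi\theta_j$; hence $\d\theta_j$ pulls back to $\d\phi_j$ with no factor and $\theta_\Delta(\partial_{\phi_j})=\d x^j$, in agreement with Example \ref{example:ellconnection:stdlocmodel}. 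As written, your formula $\theta_\Delta(\partial_{\phi_j})=\tfrac{1}{2\pi}\d x^j$ would violate axiom (ii) of an elliptic connection $1$-form, since $\act_*(\d x^j)=\partial_{\phi_j}$ in this normalization. The slip does not affect the structure of the proof --- smoothness of the extension, the zero-residue computation, and the density arguments all go through --- but the constant matters.
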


\begin{proof} 
Uniqueness holds since the image of \eqref{eqn:canopenemb:S_Delta} is dense in $S_\Delta$. This also implies that for existence, it is enough to show that locally the canonical flat connection on $\T_\Lambda\vert_{\mathring{\Delta}}$ extends to an elliptic connection $1$-form for $\mu_\Delta$ with zero radial residues. So we can work in the toric coordinates induced by an integral affine chart $(U,x^i)$, as in \eqref{eqn:toriccoord:S_Delta}, over which we also have an induced trivialization of the symplectic torus bundle $\T_\Lambda\vert_{\mathring{\Delta}}$. The embedding \eqref{eqn:canopenemb:S_Delta} becomes the map $\mathbb{T}^n\times\mathring{\R}^n_k\to S_{k,n}$ given by
\[
\left(\theta_1,...,\theta_n,x^1,...,x^n\right)\mapsto \left(e^{2\pi i \theta_1}\sqrt{x^1},\dots,e^{2\pi i \theta_k}\sqrt{x^k},\theta_{k+1},...,\theta_n,x^{k+1},\dots,x^n\right). 
\] 
On the other hand, the canonical flat connection on $\T_\Lambda|_{\mathring{\Delta}}$ becomes $\sum_{i=1}^n\d\theta_i\otimes \d x^i$ so corresponds under \eqref{eqn:canopenemb:S_Delta} with the restriction to $\mathring{S}_\Delta$ of the connection given in Example \ref{example:ellconnection:stdlocmodel}. Therefore, the canonical flat connection on $\T_\Lambda\vert_{\mathring{\Delta}}$ indeed extends to an elliptic connection $1$-form for $\mu_\Delta$ with zero radial residues. Since the canonical connection on $\T_\Lambda$ is flat and Lagrangian and $\mathring{S}_\Delta$ is dense in $S_\Delta$, it follows that $\theta_\Delta$ is also flat and Lagrangian. 
\end{proof}

\begin{proof}[Proof of Theorem \ref{thm:bi-lagrangian:classification2}] The proof is virtually the same as that in the case of Lagrangian fibrations, where we now take the equivalence class of the flat toric Hamiltonian $\T_\Lambda$-space of the above proposition as the equivalence class corresponding to the zero element of the group $\check{H}^1(\Delta,\TFlat)$. 
\end{proof}

\begin{example}
\label{ex:trivspherebun:stdIA:cylinder:1}
Consider the Delzant domain $\Delta=\Ss^1\times [-1,1]$ of the standard cylinder $(\Ss^1\times\R,\Z\d x\oplus \Z\d h)$. Notice that
\[ \check{H}^1(\Delta,\TLag)=\check{H}^2(\Delta,\mathcal{O}_\Lambda)=0, \]
so every toric $\T_\Lambda$-space is isomorphic to the space $\mu:(\Tt^2\times\Ss^2,\omega)\to \Ss^1\times\R$ of Example \ref{ex:trivspherebun:stdIA:cylinder}. 

Since $\Lambda$ has trivial holonomy representation it follows -- see Remark \ref{rem:cohomology:TFlat} -- that the equivalence classes of flat $\T_\Lambda$-toric fibrations are in 1-1 correspondence with
\[ \check{H}^1(\Delta,\TFlat)=\check{H}^1(\Delta,\Tt^2)=\Tt^2. \]
For $a,b\in \R$, the classes $(e^{2\pi i a},e^{2\pi i b})\in \mathbb{T}^2$ can be realized by the elliptic connections on $\mu:\Tt^2\times\Ss^2\to \Delta$ given by the connection $1$-form
\[
\theta_{a,b}=\left(-a\d x-b\d h+\d y\right)\otimes \d 
x+\left(-b\d x+\d\phi\right)\otimes \d h.
\] 
This corresponds to the Lagrangian distribution on $\mu^{-1}(]-1,1[)$ given by
\[
D_{a,b}:=\left\langle \partial_{x}+a\partial_{y}+b\partial_\phi,\partial_h+b\partial_y \right\rangle. 
\]
\end{example}

\begin{example}\label{ex:nontrivspherebun:nonstdIA:cylinder:1}
Consider the Delzant domain $\Delta=\Ss^1\times [-1,1]$ of the non-standard cylinder $(\Ss^1\times ]-2,\infty[,\Z((h+2)\d x+x\d h)\oplus \Z\d h)$, as in Example \ref{ex:nontrivspherebun:nonstdIA:cylinder}. We still have $\check{H}^1(\Delta,\TLag)=0$ so it follows that every toric $\T_\Lambda$-space is isomorphic to the space $\mu:(\Tt^2\twprod\Ss^2,\omega)\to \Ss^1\times ]-2,\infty[$ appearing in Example \ref{ex:nontrivspherebun:nonstdIA:cylinder}. 

A computation using the Mayer-Vietoris sequence shows that 
\[ \check{H}^1(\Delta,\TFlat)=\Ss^1. \]
So, we conclude that equivalence classes of flat $\T_\Lambda$-toric fibrations are parameterized by $e^{2\pi i a}\in \Ss^1$, with $a\in \R$. These can be realized by the elliptic connections on $\mu:(\Tt^2\twprod\Ss^2,\omega)\to \Ss^1\times]-2,\infty[$ given by the connection $1$-form 
\[
\theta_{a}=(h+2)\left(-a\d x+\d y\right)\otimes \d x+\left(\d\phi+x\d y\right)\otimes \d h.
\]  
This corresponds to the Lagrangian distribution on $\mu^{-1}(]-1,1[)$ given by
\[
D_{a}:=\left\langle \partial_{x}+a(\partial_{y}-x\partial_\phi),\partial_h \right\rangle. 
\]
\end{example}

\section{Delzant construction}
\label{sec:delzant}

The proof of the classification theorem of toric Hamiltonian $\T$-spaces (Theorem \ref{thm:toric:classification}) requires the construction of a toric Hamiltonian $\T$-space $\mu:(S,\omega)\to M$ with moment map image $\mu(S)$ a given Delzant domain $\Delta\subset (M,\Lambda)$. 

In the case of a Delzant polytope $\Delta\subset (\R^n,\Z^n)$ Delzant gave a construction of a compact symplectic toric manifold with moment map image $\Delta$ by symplectic reduction of a linear Hamiltonian torus action on $(\C^d,\omega_\st)$, where $d$ is the number of facets. For a general Delzant domain, as we recalled in Section \ref{sec:elliptic:connections}, the construction of canonical toric $\T$-space uses a different method. In this section we will give a generalization of Delzant's construction for a large class of Delzant domains. Although this construction does not apply to all Delzant domains, it does cover many natural examples and has the advantage of realizing the desired toric space via symplectic reduction of a rather simple symplectic manifold.

\subsection{The primitive boundary defining functions}
\label{subsec:Delzant:normal:vectors}

The aim of this section is to show for a Delzant domain $\Delta$ in a 1-connected integral affine manifold each facet of $\Delta$ admits a natural defining integral affine function. 

In order to state a precise result, we recall that for a manifold with corners $\Delta$, given $x\in\partial \Delta$ the cone $C_x(\Delta)$ consists of all inward-pointing tangent vectors at $x$. We further denote by $\mathcal{S}^1(\Delta)$ the collection of codimension $1$ strata of $\Delta$ (i.e., the open facets of $\Delta$).

\begin{proposition}
\label{prop:Delzant:subspace:faces:functions} 
Let $(M,\Lambda)$ be a $1$-connected integral affine manifold and let $\Delta$ be a Delzant domain. For each open facet $\Sigma$ of $\Delta$, there is a unique integral affine function $\ell=\ell_\Sigma\in \O_\Lambda(M)$ satisfying the following properties:
\begin{itemize}
    \item[(i)] $\ell\vert_\Sigma=0$ and $\ker(\d \ell)\vert_{\Sigma}=T\Sigma$;
    \item[(ii)] $\ell$ is outward-pointing, i.e., $\d_x \ell(v)\leq 0$ for all $v\in C_x(\Delta)$ and all $x\in \Sigma$;
    \item[(iii)] for any $f\in \O_\Lambda(M)$ with property (i), there is a $k\in \Z$ such that $f=k\, \ell$.
\end{itemize} 
Moreover, any $x\in \Delta$ admits an open neighbourhood $U$ in $M$ such that
\begin{equation}
\label{eq:Delzant:local:halfspace:description} 
\Delta\cap U=\bigcap_{\{\Sigma\in\mathcal{S}^1(\Delta)\colon x\in \overline{\Sigma}\}} \{\ell_\Sigma\leq 0\}\cap U.
\end{equation}
The number of open facets of $\Delta$ with $x$ in their closure equals the depth of $x$ in $\Delta$.
\end{proposition}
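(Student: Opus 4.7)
The plan is to first establish the statement locally inside a Delzant chart, then patch local primitive outward-pointing defining functions over an open neighborhood of $\overline{\Sigma}$ in $M$, and finally extend globally using the $1$-connectedness of $M$; this last step is the main obstacle.

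First, at any $x_0\in\overline{\Sigma}$, by the Delzant condition there is an integral affine chart $(U,\phi=(x^1,\dots,x^n))$ centered at $x_0$ with $\phi(U\cap\Delta)$ an open subset of $\R^n_k$, where $k=\depth(x_0)$. In this chart there are exactly $k$ open facets of $\Delta$ whose closure contains $x_0$, one for each $i\in\{1,\dots,k\}$, whose intersection with $U$ lies in $\{x^i=0\}$ and for which $-x^i$ is a primitive integral affine outward-pointing function vanishing on it with kernel equal to the tangent space of the facet. Using primitivity of the $\d x^j$ in the lattice $\Lambda_{x_0}$ and the sign condition (ii), one checks that any local $f\in\O_\Lambda(U)$ satisfying (i) along such a facet is of the form $-a\, x^i$ for a unique $a\in\Z$; this yields local existence, the local version of (iii), the halfspace description \eqref{eq:Delzant:local:halfspace:description} restricted to $U$, and the identification of $\depth(x_0)$ with the number of facets whose closure contains $x_0$.

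Next I would patch. For a fixed open facet $\Sigma$, any two local primitive outward-pointing defining functions $\ell^{(1)},\ell^{(2)}$ for $\Sigma$ on overlapping Delzant neighborhoods of points of $\overline{\Sigma}$ satisfy $\ell^{(1)}=\pm\ell^{(2)}$ on the overlap by the local version of (iii), and the outward-pointing condition forces the $+$ sign. Since $\Sigma$ is a connected stratum of $\Delta$, its closure $\overline{\Sigma}$ is connected, and these local functions assemble unambiguously to a single integral affine function $\ell$ on some connected open neighborhood $V$ of $\overline{\Sigma}$ in $M$.

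The main obstacle is the global extension of $\ell$ from $V$ to $M$, where the $1$-connectedness of $M$ is crucial. The differential $\d\ell$ is a flat section of $\Lambda\subset T^*M$ over $V$ with respect to the canonical flat connection $\nabla$. Since $M$ is simply connected, parallel transport of $\nabla$ is path-independent, so $\d\ell$ extends uniquely to a global flat section $\alpha\in\Gamma^\infty(\Lambda)$. The closed $1$-form $\alpha$ is exact because $H^1(M;\R)=0$ by $1$-connectedness, hence $\alpha=\d f$ for some $f\in C^\infty(M)$, and $f$ is automatically integral affine since $\d f=\alpha$ takes values in $\Lambda$. After adjusting $f$ by an additive constant so that $f|_V=\ell$, we set $\ell_\Sigma:=f$. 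Properties (i), (ii), the halfspace description and the equality $\depth(x)=\#\{\Sigma\colon x\in\overline{\Sigma}\}$ all follow from their local counterparts in the first paragraph, while (iii) globally reduces to its local version by the fact that an integral affine function on the connected manifold $M$ is determined by its value and differential at any single point.
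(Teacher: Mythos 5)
Your overall route is the same as the paper's: a local normal form in a Delzant chart producing the primitive outward-pointing local defining functions $-x^i$, then a globalization that uses $1$-connectedness twice (once to make parallel transport path-independent, once to integrate the resulting parallel $1$-form to a function). The final step --- extending a parallel section of $\Lambda$ from a point to all of $M$ and taking a primitive --- is exactly what the paper does.

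The gap is in your intermediate patching step. You claim that the local primitive outward-pointing defining functions for $\Sigma$ ``assemble unambiguously to a single integral affine function $\ell$ on some connected open neighbourhood $V$ of $\overline{\Sigma}$,'' justified by the local uniqueness statement (iii). But that uniqueness only compares two local defining functions on an overlap $U_a\cap U_b$ that actually meets $\Sigma$ (condition (i) is a condition along $\Sigma$, and an integral affine function on a connected open set is pinned down by its $1$-jet at one point, not by a vacuous boundary condition). Two charts centered at points of $\overline{\Sigma}$ can overlap entirely away from $\overline{\Sigma}$, and there the agreement $\ell^{(1)}=\ell^{(2)}$ does not follow from your argument; in fact it is essentially equivalent to the global statement you are trying to prove, so the step is circular as written. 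The fix is to drop the neighbourhood $V$ altogether: the outward-pointing primitive normal covector $\alpha_x\in\Lambda_x$ with $\ker\alpha_x=T_x\Sigma$ is defined pointwise along $\Sigma$ with no gluing (this is the paper's Lemma \ref{lem:Delzant:subspace:faces:covectors}), one checks that this family is invariant under parallel transport along paths \emph{in} $\Sigma$ (holonomy maps tangent cones to tangent cones), then one extends $\alpha_{x_0}$ for a single $x_0\in\Sigma$ to a global parallel $1$-form using $\pi_1(M)=1$, verifies it restricts to $\alpha_x$ at every $x\in\Sigma$ via the holonomy-invariance and connectedness of $\Sigma$, and finally takes the primitive vanishing on $\Sigma$. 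A second, more minor point: your claim that the chart at a depth-$k$ point sees exactly $k$ distinct open facets needs the global functions to be in hand (a priori two local boundary pieces $\{x^i=0\}$ and $\{x^j=0\}$ could lie in the same global facet; the paper rules this out because $\ell_{\Sigma_i}\vert_U=-x^i$ forces $\Sigma_i\neq\Sigma_j$), so this count should come after the construction of $\ell_\Sigma$, not before.
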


Given a Delzant domain $\Delta$ of a $1$-connected integral affine manifold $(M,\Lambda)$ we let
\begin{equation} 
\label{eq:Delzant:halfspace:functions} 
\H(\Delta):=\{\ell_\Sigma\mid\Sigma\in \mathcal{S}^1(\Delta)\} 
\end{equation} 
be the collection of integral affine functions $\ell_\Sigma$ associated to the open facets of $\Delta$ given by the previous proposition. We call the elements of $\H(\Delta)$ \textbf{primitive boundary defining (integral affine) functions}.

\begin{example} It can happen that the same integral affine function is associated to different codimension $1$ strata. An example of this is
\[ M:=\R^2-\bigsqcup_{k\in \Z} \{k\}\times [0,\infty[,\quad \Delta=M\cap (\R\times ]-\infty,0]),
    \] 
where $M$ is equipped with the integral affine structure inherited from $(\R^2,\Z^2)$. Then $\Delta$ in fact has infinitely many codimension $1$ strata, but $\H(\Delta)$ consist of a single function.
\end{example}

\begin{remark} There might not exist boundary defining functions as in Proposition \ref{prop:Delzant:subspace:faces:functions} when $M$ is not 1-connected (see Example \ref{ex:inftypeDelzantspace}). 
\end{remark}

For the proof of Proposition \ref{prop:Delzant:subspace:faces:functions}, we first recall the following fact. 


\begin{lemma}
\label{lemma:primitive:hyperplane:normal:covector} 
Let $(V^*,\Lambda)$ be an integral affine vector space. Suppose that $H\subset V$ is an hyperplane, which is primitive in the sense that $\Span_\R(H\cap \Lambda^\vee)=H$. Then there is an $\alpha \in \Lambda$ such that:
\begin{enumerate}[(i)]
    \item $H=\ker(\alpha)$;
    \item if $\beta\in \Lambda$ and $H=\ker(\be)$, then $\beta=k\,\alpha$ for some $k\in \Z$.
\end{enumerate}
Moreover, $\alpha$ is unique up to a sign.
\end{lemma}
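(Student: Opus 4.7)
The plan is to exploit the structure theory of finitely generated abelian groups applied to the dual lattice $\Lambda^\vee \subset V$. Throughout, set $n = \dim V$.

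First I would observe that $H \cap \Lambda^\vee$ is a \emph{saturated} sublattice of $\Lambda^\vee$ of rank $n-1$. Indeed, by hypothesis $H \cap \Lambda^\vee$ spans $H$ over $\R$, and since it is discrete (being a subgroup of a lattice) it has rank exactly $\dim H = n-1$. Saturation is the observation that if $v \in \Lambda^\vee$ and $kv \in H$ for some nonzero $k \in \Z$, then $v \in H$ because $H$ is a $\R$-subspace; hence $\Lambda^\vee/(H \cap \Lambda^\vee)$ has no torsion. Because this quotient is a finitely generated torsion-free abelian group of rank $1$, it is isomorphic to $\Z$, and the sequence
\[
0 \longrightarrow H \cap \Lambda^\vee \longrightarrow \Lambda^\vee \longrightarrow \Z \longrightarrow 0
\]
splits. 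Consequently, one can choose a basis $e_1, \dots, e_n$ of $\Lambda^\vee$ whose first $n-1$ elements form a basis of $H \cap \Lambda^\vee$.

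Next, I would define $\alpha \in \Lambda \subset V^*$ as the dual basis element to $e_n$, i.e.\ $\alpha(e_i) = \delta_{in}$. Then $\ker(\alpha)$ contains $e_1, \dots, e_{n-1}$, hence contains $\Span_\R(e_1,\dots,e_{n-1}) = H$; since $\alpha \neq 0$ and $H$ is already a hyperplane, equality $\ker(\alpha) = H$ follows, proving (i). For (ii), suppose $\beta \in \Lambda$ satisfies $\ker(\beta) = H$. Expanding $\beta = \sum_{i=1}^n c_i \alpha_i$ in the dual basis $(\alpha_1,\dots,\alpha_n)$ of $(e_1,\dots,e_n)$, the conditions $\beta(e_i) = 0$ for $i < n$ force $c_i = 0$ for $i < n$; since $c_n = \beta(e_n) \in \Z$, we obtain $\beta = c_n\,\alpha$ as required.

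Finally, for uniqueness up to sign: if $\alpha' \in \Lambda$ also satisfies (i) and (ii), then applying (ii) in both directions gives $\alpha' = k\,\alpha$ and $\alpha = k'\,\alpha'$ with $k, k' \in \Z$, so $k k' = 1$ and $k = \pm 1$. The only non-routine ingredient is the splitting step, but this is precisely the well-known fact that saturated sublattices of $\Z^n$ admit complements, so no genuine obstacle is expected.
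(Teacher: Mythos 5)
Your proof is correct. The paper does not actually prove this lemma---it is introduced with ``we first recall the following fact'' and no argument is given---so your write-up supplies the standard justification: saturation of $H\cap\Lambda^\vee$ in $\Lambda^\vee$, the resulting split exact sequence producing an adapted basis $e_1,\dots,e_n$ of $\Lambda^\vee$, and the identification of $\alpha$ as the last dual basis vector (which lies in $\Lambda$ because the dual basis of a basis of $\Lambda^\vee$ is a basis of $(\Lambda^\vee)^\vee=\Lambda$). All steps check out, including the integrality $c_n=\beta(e_n)\in\Z$ in part (ii) and the unit argument for uniqueness up to sign.
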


We call $\alpha\in \Lambda$, as in this lemma, a \textbf{primitive normal covector} to the hyperplane $H$.

\begin{lemma}
\label{lem:Delzant:subspace:faces:covectors}
Let $(M,\Lambda)$ be an integral affine manifold, let $\Delta$ be a Delzant domain and let $\Sigma$ be a codimension $1$ stratum of $\Delta$. For each $x\in \Sigma$, there is a unique outward-pointing primitive normal covector $\alpha_x\in \Lambda_x$ to the hyperplane $T_x\Sigma\subset (T_xM,\Lambda^{\vee}_x)$. The family of all such outward-pointing primitive normals is holonomy-invariant.
\end{lemma}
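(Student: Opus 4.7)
The plan is to reduce the lemma to a standard Delzant chart, use Lemma \ref{lemma:primitive:hyperplane:normal:covector} for pointwise existence and uniqueness, and observe that the resulting section is constant in integral affine coordinates, which gives holonomy-invariance for free.

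First I would fix $x\in\Sigma$. Since $\Sigma$ has codimension $1$ in $\Delta$ and $x\in\Sigma$, the point $x$ has depth $1$ in $\Delta$. Invoking the Delzant condition, there is therefore an integral affine chart $(U,\phi=(x^1,\dots,x^m))$ centered at $x$ with $\phi(U\cap\Delta)$ an open subset of $\R^m_1=[0,\infty[\times\R^{m-1}$. After shrinking $U$, we may assume that every point of $\partial\Delta\cap U$ has depth at most $1$, and consequently that $\phi(\Sigma\cap U)\subset\{x^1=0\}$.

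For pointwise existence and uniqueness: in this chart, $T_x\Sigma$ corresponds to the coordinate hyperplane $\{x^1=0\}\subset T_xM$, which is primitive in $(T_xM,\Lambda_x^\vee)$ because it contains the full-rank sublattice spanned by $\partial_{x^2},\dots,\partial_{x^m}$. Lemma \ref{lemma:primitive:hyperplane:normal:covector} then gives primitive normals $\pm(\d x^1)_x\in\Lambda_x$ with kernel $T_x\Sigma$, and these are the only integral covectors with kernel $T_x\Sigma$ up to sign. The inward-pointing cone $C_x(\Delta)$ is the $\phi$-preimage of $[0,\infty[\times\R^{m-1}$, so the outward-pointing condition $\alpha_x(v)\leq 0$ for all $v\in C_x(\Delta)$ selects the unique covector $\alpha_x:=-(\d x^1)_x$.

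For the holonomy-invariance: repeating the pointwise argument at each $y\in\Sigma\cap U$ produces $\alpha_y=-(\d x^1)_y$, so in the integral affine chart $(U,\phi)$ the assignment $y\mapsto\alpha_y$ is a constant section of $\Lambda$. Hence $\alpha$ is $\nabla$-flat on $\Sigma\cap U$ with respect to the canonical flat connection on $T^*M$. Since such a chart exists around every point of $\Sigma$, the section $\alpha$ is globally $\nabla$-flat along $\Sigma$, and in particular preserved by the integral affine holonomy. I do not expect any serious obstacle here: the real content is that the uniqueness up to sign from Lemma \ref{lemma:primitive:hyperplane:normal:covector}, together with the sign fixed by the outward-pointing condition, is exactly what forces the local expressions $-(\d x^1)_y$ in overlapping Delzant charts to agree and hence to be parallel.
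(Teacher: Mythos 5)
Your proposal is correct and follows essentially the same route as the paper: both reduce to an adapted integral affine chart, obtain pointwise existence and uniqueness from Lemma \ref{lemma:primitive:hyperplane:normal:covector} together with primitivity of the facet, and deduce holonomy-invariance from the flatness of the coordinate (co)frame in that chart. The only cosmetic difference is that the paper argues via holonomy preserving the tangent cone $C_x(\Delta)$ and then invokes uniqueness, whereas you directly exhibit the normal as the parallel section $-\d x^1$; these are the same observation.
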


In this lemma, by 'holonomy invariant' we mean that for any path $\gamma$ in $\Sigma$, the holonomy representation  $\hol([\gamma]):T_{\gamma(0)}M\to T_{\gamma(1)}M$ pulls $\alpha_{\gamma(1)}$ back to $\alpha_{\gamma(0)}$. 

\begin{proof} 
The existence and uniqueness of the covectors $\alpha_x$ follows from the previous lemma and the fact that open facets of Delzant domains are primitive codimension $1$ submanifolds of $(M,\Lambda)$. By the uniqueness of primitive normal covectors, to prove holonomy-invariance it suffices to show that for any path $\gamma$ in $\Sigma$, the holonomy representation $\hol([\gamma]):T_{\gamma(0)}M\to T_{\gamma(1)}M$ maps the tangent cone $C_{\gamma(0)}(\Delta)$ onto the tangent cone $C_{\gamma(1)}(\Delta)$. It is enough to check this for a path contained in the domain of an integral affine chart.

Let $(U,\phi=(x^1,\dots,x^m))$ be an integral affine chart adapted to $\Delta$ in the sense that $\phi(U\cap \Delta)$ is an open set in $[0,\infty[\times \R^{m-1}$ around the origin. Then for any point $x\in\phi^{-1}(\{0\}\times \R^{m-1})$ we have
\[ 
C_{x}(\Delta)=\textrm{Cone}\left(\partial_{x^1},\pm\partial_{x^2}\dots,\pm\partial_{x^m}\right).
\] 
If $\gamma$ is a path lying in an open facet $\Sigma\cap U\subset\phi^{-1}(\{0\}\times \R^{m-1})$, then $\hol([\gamma])$ maps $C_{\gamma(0)}(\Delta)$ onto $C_{\gamma(1)}(\Delta)$, since the frames induced by integral affine coordinate charts are flat and the coordinate vector fields $\partial_{x^i}$ are invariant under the action of $\hol([\gamma])$.
\end{proof}


\begin{proof}
[Proof of Proposition \ref{prop:Delzant:subspace:faces:functions}] 
Let $\Sigma$ be a codimension $1$ stratum of $\Delta$. Since $M$ is $1$-connected, for any $x_0\in \Sigma$ there is a unique parallel $1$-form $\alpha\in\Omega^1(M)$ that evaluates at $x_0$ to the outward-pointing primitive normal covector. By the holonomy-invariance in Lemma \ref{lem:Delzant:subspace:faces:covectors}, for any $x\in \Sigma$ the value $\al_x$ is the outward-pointing primitive normal covector at $x$. As $M$ is $1$-connected, there is a unique function $\ell=\ell_\Sigma\in C^\infty(M)$ such that $\d \ell=\alpha$ and $\ell\vert_\Sigma=0$. Since $\alpha$ is parallel and $\alpha_{x_0}\in \Lambda_{x_0}$, $\alpha$ takes values in $\Lambda$. Hence, $\ell$ is an integral affine function. It satisfies the required properties (i)--(iii) and is the unique such integral affine function, due to the properties of the covectors in Lemma \ref{lem:Delzant:subspace:faces:covectors} and the fact that the differential of an integral affine function is a parallel $1$-form.

Next, fix $x\in \Delta$ of depth $k$. Pick an integral affine chart $(U,\phi=(x^1,\dots,x^n))$ centered at $x$ such that
$W:=\phi(U\cap\Delta)$ is a convex open set in $[0,\infty[^k\times\R^{n-k}$. The set of points in $U\cap\Delta$ of depth $1$ corresponds to the disjoint union of the convex subsets 
\[ 
W_i:=\left\{(x^1,\dots,x^m)\in W\mid x^i=0 \textrm{ and } x^j>0 \textrm{ for } 1\leq j\leq k,\text{ }j\neq i\right\},
\] 
for $i=1,\dots,k$. Since each $W_i$ is connected, $
\phi^{-1}(W_i)$ is contained in a single open facet $
\Sigma_i$ of $\Delta$, with defining function satisfying $\ell_{\Sigma_i}
|_U=-x^i$. The latter shows that $\Sigma_i\neq \Sigma_j$ if $i\neq j$. Therefore, the number of open facets with $x$ in their closure is indeed equal to $k$. It is now also clear that \eqref{eq:Delzant:local:halfspace:description} holds for this choice of $U$.
\end{proof}

\subsection{Delzant domains of finite type} 

Let $(M,\Lambda)$ be a connected integral affine manifold and let $\Delta$ be a connected Delzant domain. Consider the canonical map
\[ 
(i_\Delta)_*:\widetilde{\Delta}\to \widetilde{M},
\] 
from the universal cover of $\Delta$ into that of $M$, relative to some choice of base-point. The image $\widehat{\Delta}:=(i_\Delta)_*(\widetilde{\Delta})$ is a Delzant domain of $\widetilde{M}$. For the Delzant domains to which our construction applies, the collection $\H(\widehat{\Delta})$ of primitive boundary defining functions is finite and determines the Delzant domain in the following sense.
\begin{definition}
\label{defnDelzant:subspace:finite:half-space:type} A connected Delzant domain $\Delta$ is of \textbf{finite type} if $\H(\widehat{\Delta})$ is finite and   
    \begin{equation}
    \label{eq:Delzant:half-space:type} 
    \widehat{\Delta}=\bigcap_{\ell\in \H(\widehat{\Delta})} \{\ell\leq 0\}.
    \end{equation} 
\end{definition}  

Below we give two examples illustrating how each of these conditions may fail.
 \begin{example} Let 
 \[
 M:=\R^2-\{(0,y)\mid y\geq 0\}
 \]
 with the standard integral affine structure $\Lambda=\Z\d x\oplus \Z\d y$. Consider the Delzant domain
 \[
 \Delta=\left\{ (x,y)\in M\mid y\leq |x|\right\}. 
 \]
 This is closed in $M$ and has two open facets, with primitive boundary defining functions $\ell_{\pm}(x,y)=y\pm x$. However, $\Delta$ does not satisfy \eqref{eq:Delzant:half-space:type}. 
 \end{example}

\begin{example}\label{ex:inftypeDelzantspace} 
Consider the non-standard integral affine cylinder 
\[
(M,\Lambda):=(\Ss^1\times\R,\Z\d x\oplus \Z(\d h -x\d x))
\]
and the Delzant domain
\[
\Delta=\left\{(x,h)\in \Ss^1\times\R\mid h\leq \tfrac{1}{2}x^2-\tfrac{1}{8},\ -\tfrac{1}{2}\leq x\leq \tfrac{1}{2} \right\}.
\] 
The lift $\widehat{\Delta}\subset\R^2=\widetilde{M}$ is the region bounded from above by the curves
\[
\left\{h=\tfrac{1}{2}(x+n)^2-\tfrac{1}{8}\right\}\subset\R^2,\quad n\in \Z.
\] 
In this example, even though $\Delta$ has only one open facet, the collection $\H(\widehat{\Delta})$ is infinite, as it consists of the functions 
\[
\ell_n(x,h)=h-\tfrac{1}{2}(x+n)^2+\tfrac{1}{8},\quad n\in \Z.
\]
\end{example}
\medskip

In both of these examples the Delzant domain is not compact. We do not know of an example of a compact Delzant domain that is not of finite type.

In the coming subsection, the following property of Delzant domains of finite type will be important. 

\begin{proposition}
\label{prop:Delzant:half-space:depth:intersection:number}
Let $(M,\Lambda)$ be a $1$-connected integral affine manifold, $\Delta$ a Delzant domain of finite type and $x\in \Delta$. Consider the primitive boundary defining functions $\ell\in \H(\Delta)$ for which $\ell(x)=0$.
\begin{enumerate}[(i)]
\item The number of such functions equals the depth of $x$.
\item Their differentials at $x$ can be extended to a basis of $\Lambda_x$ and span $(T_xF)^0$.
\end{enumerate}
\end{proposition}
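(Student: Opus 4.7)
The strategy combines the local model around $x$ given by Proposition \ref{prop:Delzant:subspace:faces:functions} with the global description $\Delta=\bigcap_{f\in\H(\Delta)}\{f\leq 0\}$ coming from the finite type hypothesis. I fix an integral affine chart $(U,\phi=(x^1,\dots,x^n))$ centered at $x$ with $\phi(U\cap\Delta)$ a connected open neighbourhood of the origin in $[0,\infty[^k\times\R^{n-k}$. From the proof of Proposition \ref{prop:Delzant:subspace:faces:functions}, each local facet $\{x^i=0\}$ ($i=1,\dots,k$) extends uniquely to a global open facet $\Sigma_i$ of $\Delta$ with $x\in\overline{\Sigma_i}$ and $\ell_{\Sigma_i}|_U=-x^i$. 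The $k$ covectors $d\ell_{\Sigma_i}|_x=-dx^i$ are therefore primitive and linearly independent in $\Lambda_x$, span $(T_xF)^0=\Span(dx^1,\dots,dx^k)$, and are completed to the $\Z$-basis $\{-dx^1,\dots,-dx^k,dx^{k+1},\dots,dx^n\}$ of $\Lambda_x$. This yields $k$ distinct elements of $\H(\Delta)$ vanishing at $x$ with the properties required in (ii); both (i) and (ii) follow once I show these are the only ones.

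Let $\ell\in\H(\Delta)$ satisfy $\ell(x)=0$. The finite type hypothesis gives $\ell\leq 0$ on $\Delta$, so in the chart $\ell|_U=\sum_{i=1}^n a_i x^i$ (no constant term), and the sign constraints on $[0,\infty[^k\times\R^{n-k}$ force $a_i=0$ for $i>k$ and $a_i\leq 0$ for $i\leq k$; primitivity of $d\ell|_x\in\Lambda_x$ further gives $\gcd\{|a_i|:a_i\neq 0\}=1$. Set $I=\{i\leq k:a_i<0\}$. If $|I|=1$, say $I=\{i_0\}$, then $a_{i_0}=-1$ and $\ell|_U=-x^{i_0}=\ell_{\Sigma_{i_0}}|_U$; since $M$ is $1$-connected, two integral affine functions agreeing on a non-empty open set agree globally, so $\ell=\ell_{\Sigma_{i_0}}$.

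The main obstacle is ruling out the case $|I|\geq 2$. In that case $\{\ell=0\}\cap\Delta\cap U=\bigcap_{i\in I}\{x^i=0\}\times\R^{n-k}$ has codimension $|I|\geq 2$ in $\Delta$, so no codimension one stratum of $\Delta$ lies on $\{\ell=0\}$ near $x$, forcing the open facet $\Sigma$ with $\ell_\Sigma=\ell$ to sit at positive distance from $x$; equivalently, the half-space $\{\ell\leq 0\}$ is locally redundant at $x$ in the finite type description. To derive a contradiction I reduce to a convex polyhedron picture in $\R^n$: passing to the developing map $\dev:M\to\R^n$ of the $1$-connected $M$ and restricting to a suitable neighbourhood of $\Delta$ on which $\dev$ is an integral affine embedding, $\dev(\Delta)=\bigcap_{f\in\H(\Delta)}\{f\circ\dev^{-1}\leq 0\}$ is a convex polyhedron. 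Standard convex polyhedral theory identifies its primitive boundary defining functions with the irredundant outward facet normals, and those vanishing at $\dev(x)$ are precisely the $k$ extreme rays $-dx^1,\dots,-dx^k$ of the local tangent cone. Pulling back along $\dev$, the only primitive boundary defining functions of $\Delta$ vanishing at $x$ are $\ell_{\Sigma_1},\dots,\ell_{\Sigma_k}$, completing the argument.
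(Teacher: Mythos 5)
Your first two paragraphs are correct and follow the same route as the paper: the $k$ local facets through $x$ give functions with $\ell_{\Sigma_i}|_U=-x^i$, which already yields part (ii), and the finite type condition \eqref{eq:Delzant:half-space:type} forces any $\ell\in\H(\Delta)$ with $\ell(x)=0$ to have local form $\sum_{i\le k}a_ix^i$ with $a_i\le 0$; the case $|I|=1$ is handled correctly via primitivity. The genuine gap is in your treatment of $|I|\ge 2$. The reduction to ``standard convex polyhedral theory'' via the developing map is unjustified: $\dev$ is only a local diffeomorphism and need not be injective on any neighbourhood of $\Delta$, and even when it is injective, $\Delta=\dev^{-1}\bigl(\bigcap_j\{\lambda_j+c_j\le 0\}\bigr)$ gives $\dev(\Delta)=\dev(M)\cap\bigcap_j\{\lambda_j+c_j\le 0\}$, which is in general a \emph{proper} subset of the convex polyhedron because $\dev(M)$ need not contain it. The paper's own example $M=\R^2-\bigsqcup_{k\in\Z}\{k\}\times[0,\infty[$ with $\Delta=M\cap(\R\times\,]-\infty,0])$ is of finite type and has injective $\dev$, yet $\Delta$ has infinitely many open facets while the polyhedron $\{y\le 0\}$ has one; so the dictionary ``elements of $\H(\Delta)$ $=$ irredundant facet normals of a convex polyhedron'', on which your final step rests, is simply not available. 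Relatedly, the observation that $\Sigma$ must ``sit at positive distance from $x$'' is not by itself a contradiction: distinct facets far from $x$ can share a defining function, as that same example shows.

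The fix is short, stays inside $M$, and is what the paper does; you already have every ingredient. From $\ell|_U=\sum_{i\in I}a_ix^i=\sum_{i\in I}|a_i|\,\ell_{\Sigma_i}|_U$ and the fact that two affine functions on a $1$-connected affine manifold agreeing on a nonempty open set agree everywhere, you obtain the \emph{global} identity $\ell=\sum_{i\in I}|a_i|\,\ell_{\Sigma_i}$ on $M$. By \eqref{eq:Delzant:half-space:type} each $\ell_{\Sigma_i}$ is nonpositive on all of $\Delta$, so $\{\ell=0\}\cap\Delta\subset\{\ell_{\Sigma_{i_1}}=\ell_{\Sigma_{i_2}}=0\}$ for two distinct $i_1,i_2\in I$; the latter set has codimension at least $2$, since the differentials are parallel $1$-forms that are independent at $x$, hence everywhere. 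But the open facet $\Sigma$ with $\ell_\Sigma=\ell$ satisfies $\Sigma\subset\{\ell=0\}\cap\Delta$ and has codimension $1$ --- contradiction. The essential point your proposal misses is that the contradiction comes from this global containment inside $M$, obtained by globalizing the local coefficient identity, not from any statement about the image of $\Delta$ in $\R^n$.
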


\begin{proof}[Proof of Proposition \ref{prop:Delzant:half-space:depth:intersection:number}] 
Let $x\in \Delta$. By Proposition  \ref{prop:Delzant:subspace:faces:functions} the number of $\ell\in \H(\Delta)$ for which $\ell(x)=0$ is at least $k:=\depth_\Delta(x)$. To show that these are in fact equal, given $\ell_0:=\ell_{\Sigma_0}\in \H(\Delta)$ such that $\ell_0(x)=0$, we need to prove that $\ell_0$ is one of the functions $\ell_i$ associated to one of the $k$ open facets $\Sigma_i$ with $x$ in their closure. Choose an integral affine chart $(U,\phi=(x^1,\dots,x^n))$ centered at $x$, as in the second part of the proof of Proposition \ref{prop:Delzant:subspace:faces:functions}. Since $\ell_0$ is affine and vanishes at $x$, its restriction to $U$ is of the form:  
\[
\ell_0\vert_U=-\sum_{i=1}^nt_ix^i, \quad t_i\in \R.
\]
In view of \eqref{eq:Delzant:half-space:type}, $\ell_0$ is non-positive on $\Delta$ and so $t_i\geq 0$ for all $i=1,\dots,k$ and $t_i=0$ for all $i=k+1,\dots,n$. 
As in the proof of Proposition \ref{prop:Delzant:subspace:faces:functions}, $\ell_i|_U=-x^i$ for each $i=1,..,k$. So, 
\[
\ell_0=\sum_{i=1}^kt_i\ell_i, \quad t_i\geq 0,
\] since this equality holds on $U$ and the functions on both sides are affine. It follows from property (iii) in Proposition \ref{prop:Delzant:subspace:faces:functions} that, if $\ell_0\neq \ell_i$ for all $i=1,\dots,k$, then there are at least two distinct $i_1,i_2\in\{1,\dots,k\}$ for which $t_{i_1},t_{i_2}\neq 0$. In that case, since $\Delta$ satisfies \eqref{eq:Delzant:half-space:type}, $\Sigma_0$ must be contained in the codimension $2$ submanifold $\{\ell_{i_1}=\ell_{i_2}=0\}$, which is a contradiction. So, our first claim follows. Moreover, the fact that $\ell_i\vert_U=-x^i$ for all $i=1,...,k$ shows that the differentials of these functions at $x$ extend to a basis of $\Lambda_x$ and span $(T_xF)^0$. 
\end{proof}


\subsection{The 1-connected case} 

In this section we give a Delzant type construction for a finite type Delzant domain of a $1$-connected integral affine manifold. Treating the 1-connected case first is convenient for the proof of the general case, and moreover it is interesting on its own, as it already covers a class of non-compact toric symplectic manifolds (cf. Section \ref{sec:classical:toric}).

Let $(M,\Lambda)$ be a 1-connected integral affine manifold and let $\Delta$ be a Delzant domain of finite type, so that
\[ \Delta=\bigcap_{i=1}^d\{\ell_i\leq 0\}, \]
where $\ell_1,\dots,\ell_d$ are the primitive boundary defining functions. We consider the map
\[
\phi:=(\ell_1,...,\ell_d):(M,\Lambda)\to (\R^d,\Z^d).
\]
Since this map is integral affine, it induces a morphism of symplectic torus bundles
\[
    (\phi,\underline{\phi}^*):(\T_{\Lambda},\Omega_{\Lambda})\dto (\Tt^d\times \R^d,\sum_{i=1}^d\d\theta_i\wedge \d x^i)
\] as in Proposition \ref{prop:ia:morphisms}. This, in turn, induces a Hamiltonian $\Tt^d$-action on $(\T_\Lambda,\Omega_\Lambda)$ with moment map the composition of $\phi$ with the bundle projection $\pi$ and action given by
\[
(\theta_1,...,\theta_d)\cdot[\alpha]=\Big[\alpha+\sum_{i=1}^d \theta_i\,\d \ell_i\Big].
\]

\begin{theorem}
\label{thm:Delzant:1:connected}
Consider the diagonal Hamiltonian $\Tt^d$-action with moment map 
\[ 
\mu_\phi:=\phi+(|z_1|^2,...,|z_d|^2):(\T_\Lambda,\Omega_\Lambda)\times (\C^d,\omega_\textrm{st})\to \R^d. 
\]
The $\Tt^d$-action is free on $\mu_\phi^{-1}(0)$ and the symplectic quotient 
\[ ((\T_\Lambda\times \C^d)\sslash\Tt^d,\omega_\red) \] 
is a toric Hamiltonian $\T_\Lambda$-space for the action induced by translations on the first factor. Moreover, this space is equivalent to the canonical toric Hamiltonian $\T_\Lambda$-space $\mu_\Delta:(S_\Delta,\omega_\Delta)\to M$.
\end{theorem}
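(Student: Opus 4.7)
I would prove the theorem in three steps: freeness of the $\Tt^d$-action on $\mu_\phi^{-1}(0)$, the toric $\T_\Lambda$-structure on the symplectic quotient, and the equivalence with the canonical space $S_\Delta$. For freeness, any $([\alpha], z) \in \mu_\phi^{-1}(0)$ forces $x := \pi(\alpha) \in \Delta$ with $|z_i|^2 = -\ell_i(x)$. Setting $I := \{i : \ell_i(x) = 0\}$, Proposition \ref{prop:Delzant:half-space:depth:intersection:number} gives $|I| = \depth_\Delta(x)$ and that $\{\d\ell_i(x)\}_{i \in I}$ extends to a $\Z$-basis of $\Lambda_x$. A stabilizer element $\theta\in\Tt^d$ must then satisfy $\theta_i \in \Z$ for $i\notin I$ (since $z_i\neq 0$) and $\sum_{i \in I}\theta_i\,\d\ell_i(x) \in \Lambda_x$, which by the basis property forces $\theta_i \in \Z$ for $i \in I$ as well. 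The analogous linear computation shows that $\d\mu_\phi$ is surjective on the level set, so $0$ is a regular value.

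\emph{Toric structure.} The $\T_\Lambda$-action on itself by translations commutes with the diagonal $\Tt^d$-action and preserves $\mu_\phi$, so it descends to a Hamiltonian $\T_\Lambda$-action on $((\T_\Lambda\times\C^d)\sslash\Tt^d,\omega_\red)$, with moment map $\mu_\red$ induced by $\pi\circ\pr_1$. The toric axioms are straightforward to verify: the moment image equals $\Delta$ (as the level set fibers over $\Delta$); the dimension is $2n = 2\dim M$; $\mu_\red$ is proper onto $\Delta$ (using that $\{|z_i|^2 \leq -\ell_i|_K\}$ is bounded over any compact $K\subset\Delta$) with connected fibers, since the fiber over a depth-$k$ point $x$ is the free quotient $(\T_\Lambda|_x\times(\Ss^1)^{d-k})/\Tt^d$ of a connected space; and the $\T_\Lambda$-action is free on $\mu_\red^{-1}(\mathring{\Delta})$, where all $z_i\neq 0$.

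\emph{Equivalence with $S_\Delta$.} By the classification Theorem \ref{thm:toric:classification}, since both spaces have moment image $\Delta$, it suffices to exhibit a $\T_\Lambda$-equivariant symplectomorphism $\Psi:S_\Delta\to(\T_\Lambda\times\C^d)\sslash\Tt^d$ covering $\id_M$. I would build $\Psi$ locally using the standard toric coordinates \eqref{eqn:toriccoord:S_Delta}: near a depth-$k$ point $x_0$, pick an integral affine chart $(V,(x^1,\ldots,x^n))$ such that $\ell_i|_V = -x^i$ for $i\leq k$ and $\ell_i|_V<0$ for $i>k$, together with a local trivialization $\T_\Lambda|_V\cong\Tt^n\times V$. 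The last $d-k$ factors of $\Tt^d$ act freely already before imposing the level set condition; reducing them by fixing the gauge $z_i\in\R_{>0}$ for $i>k$, compensated by the corresponding shift by the local Lagrangian section $\sum_{i>k}\theta_i\,\d\ell_i$ of $\T_\Lambda|_V$, leaves the standard Delzant reduction of $(\Tt^n\times V)\times\C^k$ by $\Tt^k$, which matches the standard model $S_{k,n}$ via \eqref{eqn:toriccoord:S_Delta}.

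\emph{Main obstacle.} The delicate part is globalizing these local equivalences: different trivializations of $\T_\Lambda|_V$ and different branch choices could produce incompatible local $\Psi$'s. The strategy is to observe that the continuous section
\[
\sigma:\Delta\to(\T_\Lambda\times\C^d)\sslash\Tt^d,\qquad \sigma(x)=[u(x),(\sqrt{-\ell_1(x)},\ldots,\sqrt{-\ell_d(x)})],
\]
with $u$ the unit section of $\T_\Lambda$, is manifestly independent of any trivialization, and by direct computation in the above local model corresponds to the canonical continuous section $\sigma_\Delta$ of $S_\Delta$ induced by the zero section of $\T_\Lambda$. Normalizing every local $\Psi$ by the requirement $\Psi\circ\sigma_\Delta=\sigma$ pins the gauge down globally and glues the local equivalences into the sought global $\Psi$, which equivalently says that the Lagrangian-Chern class of the reduced space vanishes.
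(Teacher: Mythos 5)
Your proposal is correct and follows the paper's proof very closely: the freeness argument via Proposition \ref{prop:Delzant:half-space:depth:intersection:number} is the same, and the equivalence with $S_\Delta$ ultimately rests on the same canonical section $x\mapsto[\alpha_x,(\sqrt{-\ell_1(x)},\dots,\sqrt{-\ell_d(x)})]$. The one organizational difference is in how that equivalence is established: the paper writes down a single global $\T_\Lambda$-equivariant bijection $\Psi$ by this formula (together with an explicit inverse involving the phases $z_i/|z_i|$), then verifies smoothness by computing $\Psi$ in the standard toric coordinates \eqref{eqn:toriccoord:S_Delta} near a depth-$k$ point and verifies symplecticity on $\mathring{S}_\Delta$ plus density; you instead build local equivalences out of the local normal form and glue them, using the continuous section to fix the gauge. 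Your route works, but it carries a small extra burden you should make explicit: the normalization $\Psi\circ\sigma_\Delta=\sigma$ determines each local $\Psi$ uniquely only because a $\T_\Lambda$-equivariant map is determined by its values on a set meeting every orbit \emph{and} because the stabilizers at corresponding points of the two spaces agree (both are the subtori with Lie algebra $(T_xF)^0$); once that is said, the local maps agree on overlaps and glue. The paper's global formula avoids this gluing discussion entirely, at the cost of having to check smoothness of an explicitly written but coordinate-dependent-looking expression. Both approaches reduce symplecticity to the interior by density in the same way.
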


\begin{proof}
We start by checking the diagonal $\Tt^d$-action on $\mu_\phi^{-1}(0)$ is free, so the symplectic quotient is well-defined. Let $([\alpha_x],z)\in \mu_\phi^{-1}(0)$ and $\theta=(\theta_1,...,\theta_d)\in \Tt^d$ be such that $\theta$ fixes $([\alpha_x],z)$. Since $\theta$ fixes $z$, it follows that $\theta_i=0 \mod \Z$ whenever $z_i\neq 0$. On the other hand, because $\ell_i(x)=-|z_i|^2$, the remaining $i\in \{1,...,d\}$, say $i_1,...,i_k$, are those for which $\ell_i(x)=0$. So, by Proposition \ref{prop:Delzant:half-space:depth:intersection:number}, $(\d \ell_{i_1})_x,\dots, (\d \ell_{i_k})_x$ extends to a basis of $\Lambda_x$. Hence, the fact that $\theta$ fixes $[\alpha_x]$ implies that $\theta_{i_1}=...=\theta_{i_k}=0 \mod \Z$ as well. 

Next, observe that the action of $\T_\Lambda$ on itself by translation descends to a Hamiltonian $\T_\Lambda$-action along
\[
\mu:\big((\T_\Lambda\times \C^d)\sslash\Tt^d,\omega_\red\big)\to M, \quad [\alpha,z]\mapsto \pi(\alpha),
\] 
which has moment map image 
\[
\bigcap_{i=1}^d\{\ell_i\leq 0\}=\Delta.
\]

Finally, we claim that we have an equivalence of Hamiltonian $\T_\Lambda$-spaces
\[
\xymatrix{
(S_\Delta,\omega_\Delta) \ar[dr]^{\mu_\Delta}\ar[rr]_---{\sim}^---{\Psi} & & ((\T_\Lambda\times \C^d)\sslash\Tt^d,\omega_\red\big)\ar[dl]_{\mu} \\
& M}
\]
given by
\[ [\alpha_x]\longmapsto [\alpha_x,(\sqrt{-\ell_1(x)},\dots,\sqrt{-\ell_d(x)})]. \]
This map is a $\T_\Lambda$-equivariant bijection with inverse induced by
\[ 
(\alpha_x,z)\longmapsto [t^{-1}\cdot\alpha_x],\]
where $t=(t_1,\dots,t_d)$ is given by
\[
t_i=t_i(z,x):=\begin{cases}
    \tfrac{z_i}{|z_i|},\quad \text{if $\ell_i(x)\ne 0$},\\ \\
    \ 0, \quad \text{if $\ell_i(x)= 0$}.
\end{cases}
\]
To see that $\Psi$ is a smooth map, we note that around a point $x_0$ of depth $k$ its expression in the chart \eqref{eqn:toriccoord:S_Delta} is
\[ 
(z_1,\dots,z_k,\theta_{k+1},\dots,\theta_n,x^{k+1},\dots,x^n)\mapsto \big[\sum_{i=k+1}^n \theta_i\d x^i,(w_1,\dots,w_d)\big],
\]
where
\[
w_i:=\begin{cases}
    \quad z_i,\quad \text{if $\ell_i(x_0)= 0$},\\ \\
    \sqrt{-\ell_i(x)}, \quad \text{if $\ell_i(x_0)\ne 0$}.
\end{cases}
\]
Finally, we observe that this map is a symplectomorphism. Indeed, over $\mathring{\Delta}$, where $k=0$, it is given by the symplectic map
\[ (\theta_{1},\dots,\theta_n,x^1,\dots,x^n)\mapsto \big[\sum_{i=1}^n \theta_i\d x^i,(\sqrt{-\ell_1(x)},\dots,\sqrt{-\ell_d(x)})\big]. \]
By density, we conclude that $\Phi$ is a symplectic map. Since it is a bijection between manifolds of the same dimension, it is a symplectomorphism.
\end{proof}

\begin{example}[The standard Delzant construction]
\label{ex:standard:Delzant}
One can recover the usual Delzant construction of a compact symplectic manifold from the construction in Theorem \ref{thm:Delzant:1:connected} via reduction by stages as follows. 

Fix a Delzant polytope $\Delta\subset \R^n$ with $d$ facets. In this case, Theorem \ref{thm:Delzant:1:connected} gives a symplectic toric manifold 
\begin{equation}
    \label{eq:not:Delzant}
    \mu:\big((\Tt^n\times\R^n\times \C^d)\sslash\Tt^d,\omega_\red\big)\to \R^n.
\end{equation}
If $\ell_1,\dots,\ell_d$ are the primitive boundary defining functions, one has a linear map
\[ \R^d\to \R^n, \quad e_a\mapsto \d\ell_a,\]
where $\{e_a\}$ is the standard basis of $\R^d$. The kernel of this map is a Lie subalgebra $\mathfrak{n}\subset \R^n$, which integrates to a subtorus $N\subset\Tt^d$. Choosing a subtorus $\Tt^n\subset \Tt^d$ complementary to $N$, one can first perform symplectic reduction by this subtorus. The resulting reduced space is canonically isomorphic to the $d$-dimensional complex vector space with its canonical symplectic form via the map
\[ 
(\Tt^n\times\R^n\times \C^d)\sslash\Tt^d,\omega_\red)\to (\C^d,\omega_\st), \quad (x,\theta,z)\mapsto (-\theta)\cdot z. 
\]
Therefore, the symplectic toric manifold  \eqref{eq:not:Delzant} is canonically isomorphic to 
\begin{equation}
    \label{eq:Delzant}
    \mu: (\C^d\sslash N,\omega_\red)\to \R^n.
\end{equation}
This is the compact toric symplectic manifold constructed by Delzant \cite{De88}.

\end{example}

\subsection{The general construction} 
Let $(M,\Lambda)$ be a connected integral affine manifold and let $\Delta$ be a Delzant domain of finite type. Consider the canonical map
\[ 
(i_\Delta)_*:\widetilde{\Delta}\to \widetilde{M},
\] 
from the universal cover of $\Delta$ into that of $M$, relative to a fixed base-point in $\Delta$. The image $\widehat{\Delta}:=(i_\Delta)_*(\widetilde{\Delta})$ is a Delzant domain of $\widetilde{M}$. Let $\Gamma$ be the image of the group homomorphism $(i_\Delta)_*:\pi_1(\Delta)\to \pi_1(M)$. Note that the canonical $\Gamma$-action on $\widetilde{M}$ is by integral affine isomorphisms and preserves $\widehat{\Delta}$. Therefore, it induces an action on the set of primitive boundary defining functions $\H(\widehat{\Delta})$. Fixing an ordering $\H(\Delta)=\{\ell_1,...,\ell_d\}$, this induces a homomorphism $\Gamma\to \text{Sym}(1,\dots,d)$ and, therefore, actions on $\C^d$ and $\Tt^d$ by coordinate permutations. The group $\Gamma$ also acts on $\T_{\widetilde{\Lambda}}$ via the cotangent lift of the action on $\widetilde{M}$. All together, we obtain a Hamiltonian action of the semi-direct product $\Gamma\ltimes \Tt^d$, with moment map 
\[ 
\mu_\phi:=\phi+(|z_1|^2,...,|z_d|^2):(\T_{\widetilde{\Lambda}},\Omega_{\widetilde{\Lambda}})\times (\C^d,\omega_\textrm{st})\to \R^d,
\] 
where $(\T_{\widetilde{\Lambda}},\Omega_{\widetilde{\Lambda}})$ is the symplectic torus bundle over $(\widetilde{M},\widetilde{\Lambda})$. 

\begin{theorem} 
\label{thm:Delzant}
The $\Gamma\ltimes \Tt^d$-action is free on $\mu_\phi^{-1}(0)$ and the symplectic quotient 
\[ 
((\T_{\widetilde{\Lambda}}\times \C^d)\sslash(\Gamma\ltimes \Tt^d),\,\omega_\red) 
\] 
is a toric Hamiltonian $\T_\Lambda$-space, which is equivalent to the canonical toric Hamiltonian $\T_\Lambda$-space $\mu_\Delta:(S_\Delta,\omega_\Delta)\to M$.
\end{theorem}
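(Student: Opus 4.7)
The plan is to reduce to Theorem \ref{thm:Delzant:1:connected} applied to the universal cover, and then take an additional quotient by $\Gamma$. Concretely, the symplectic reduction by $\Gamma \ltimes \Tt^d$ will be performed in two stages: first by $\Tt^d$, producing the canonical toric $\T_{\widetilde{\Lambda}}$-space over $(\widetilde{M},\widetilde{\Lambda})$ with moment map image $\widehat{\Delta}$, and then by $\Gamma$, descending this to a $\T_\Lambda$-space with moment map image $\Delta$.

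Applying Theorem \ref{thm:Delzant:1:connected} to $(\widetilde{M},\widetilde{\Lambda})$ and the finite-type Delzant subspace $\widehat{\Delta}$ (whose primitive boundary defining functions are precisely $\ell_1,\dots,\ell_d$) shows that $\Tt^d$ acts freely on $\mu_\phi^{-1}(0)$ and that the symplectic quotient is equivalent, as a toric $\T_{\widetilde{\Lambda}}$-space, to $\mu_{\widehat{\Delta}}:(S_{\widehat{\Delta}},\omega_{\widehat{\Delta}})\to \widetilde{M}$. Next, I would check that the $\Gamma$-action on $\T_{\widetilde{\Lambda}}\times \C^d$---by the cotangent lift on the first factor and by coordinate permutation on the second---is symplectic, preserves $\mu_\phi^{-1}(0)$, and combines with the $\Tt^d$-action into an action of the semidirect product $\Gamma \ltimes \Tt^d$; the semidirect structure is precisely dictated by the way $\Gamma$ permutes the functions $\ell_i$. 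Under the equivalence above, the induced $\Gamma$-action on $S_{\widehat{\Delta}}$ covers the deck-transformation action on $\widetilde{M}$. This action is free, since $\Gamma$ acts freely on $\widetilde{M}$ and $\mu_{\widehat{\Delta}}$ is $\T_{\widetilde{\Lambda}}$-equivariant, and proper, because $\Gamma$ acts properly on $\widetilde{M}$ and $\mu_{\widehat{\Delta}}$ is proper onto its image. It follows that $\Gamma \ltimes \Tt^d$ acts freely on $\mu_\phi^{-1}(0)$ and that the symplectic quotient is identified with the smooth symplectic manifold $S_{\widehat{\Delta}}/\Gamma$, which inherits a toric Hamiltonian $\T_\Lambda$-action with moment map image $\Delta$.

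The main obstacle is the final identification of $S_{\widehat{\Delta}}/\Gamma$ with the canonical toric space $\mu_\Delta:(S_\Delta,\omega_\Delta)\to M$. The cleanest route is to observe that the local construction of $S_\Delta$ recalled in Section \ref{sec:elliptic:connections} is intrinsic, and thus natural under local isomorphisms of integral affine manifolds. Since the covering $\widetilde{M}\to M$ is such a local isomorphism that maps $\widehat{\Delta}$ onto $\Delta$, the same standard toric charts \eqref{eqn:toriccoord:S_Delta} exhibit $S_{\widehat{\Delta}}$ canonically as the pullback of $S_\Delta$ along the universal cover, the $\Gamma$-action agreeing with the natural one on this pullback; taking the quotient then yields the desired $\T_\Lambda$-equivariant symplectomorphism. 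Alternatively, one could attempt to verify vanishing of the Lagrangian Chern class of $S_{\widehat{\Delta}}/\Gamma$ by descending the canonical flat Lagrangian elliptic connection on $S_{\widehat{\Delta}}$ (which is $\Gamma$-invariant by naturality) and invoking the classification Theorem \ref{thm:toric:classification}.
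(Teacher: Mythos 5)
Your proposal is correct and follows essentially the same route as the paper: reduction by stages using the normal subgroup $\Tt^d\subset\Gamma\ltimes\Tt^d$, an application of Theorem \ref{thm:Delzant:1:connected} to $\widehat{\Delta}\subset\widetilde{M}$ to identify the first-stage quotient with $S_{\widehat{\Delta}}$ $\Gamma$-equivariantly, and the identification $S_{\widehat{\Delta}}/\Gamma\simeq S_\Delta$ via naturality of the canonical toric space construction. The paper isolates this last identification as a separate lemma but justifies it exactly as you do.
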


\begin{proof} 
The fact that the $\Gamma\ltimes \Tt^d$-action on $\mu^{-1}_\phi(0)$ is free and proper, follows using that the $\Tt^d$-action on this set is free and the $\Gamma$-action on $\widetilde{M}$ is free and proper. 

Next, note that the symplectic torus bundle $(\T_{\widetilde{\Lambda}},\Omega_{\widetilde{\Lambda}})$ is canonical isomorphic to the pull-back of $(\T_\Lambda,\Omega_\Lambda)$ along the covering map $\widetilde{M}\to M$. The action of $(\T_{\widetilde{\Lambda}},\Omega_{\widetilde{\Lambda}})$ on the first factor in $\T_{\widetilde{\Lambda}}\times\C^d$ descends to a Hamiltonian $\T_\Lambda$-action on the symplectic quotient. This makes it a toric $\T_\Lambda$-space. 

For the last statement, we use the following lemma.
\begin{lemma} 
Let $(M,\Lambda)$ be a connected integral affine manifold, let $\Delta$ be a connected Delzant domain and consider $\widehat{\Delta}$ and $\Gamma$ as above. The $\Gamma$-action on $\widehat{\Delta}$ lifts to a symplectic action on $(S_{\widehat{\Delta}},\omega_{\widehat{\Delta}})$ and there is an equivalence of Hamiltonian $\T_\Lambda$-spaces
\[
\xymatrix{
(S_{\widehat{\Delta}},\omega_{\widehat{\Delta}})/\Gamma\ar[dr]_{\underline{\mu_{\widetilde{\Delta}}}}\ar[rr]
& & (S_\Delta,\omega_\Delta)\ar[dl]^{\mu_\Delta}\\
&M
}
\]
where $\underline{\mu_{\widetilde{\Delta}}}$ is the composition of the map $S_{\widehat{\Delta}}/\Gamma\to\widetilde{M}/\Gamma$ induced by $\mu_{\widetilde{\Delta}}$ with the quotient map $\widetilde{M}/\Gamma\to M$.
\end{lemma}
\begin{proof} 
This readily follows from the naturality of the construction of the canonical toric space associated to a Delzant domain (see \cite{Maarten}). 
\end{proof}

Since $\Tt^d$ is a normal subgroup of $\Gamma\ltimes \Tt^d$, there is an induced $\Gamma$-action on the symplectic quotient $
((\T_{\widetilde{\Lambda}}\times \C^d)\sslash \Tt^d,\,\omega_\red)$. The equivalence between $\T_{\widetilde{\Lambda}}$-spaces
\[ (S_{\widehat{\Delta}},\omega_{\widehat{\Delta}})\to (\T_{\widetilde{\Lambda}}\times \C^d)\sslash \Tt^d,\,\omega_\red) \]
given by Theorem \ref{thm:Delzant:1:connected} is $\Gamma$-equivariant. Passing to the quotients, the equivalence in the statement follows from the previous lemma. 
\end{proof}

\begin{remark}[Realizing Lagrangian Chern classes]
Theorem \ref{thm:Delzant} gives a Delzant type construction for toric $\T$-spaces with trivial Lagrangian Chern class. This construction can be extended to the case of non-trivial Lagrangian Chern class. After possible shrinking $M$, one can assume that any class in $\check{H}^1(\Delta,\TLag)$ can be realized as the restriction of the Lagrangian Chern class of a principal $\T_\Lambda$-space $\mu_P:(P,\omega_P)\to M$. Then a toric $\T$-space with a non-trivial class, can be realized as a symplectic quotient of the form
\[ 
((q^*P\times \C^d)\sslash(\Gamma\ltimes \Tt^d),\,\omega_\red) 
\] 
where $q:\widetilde{M}\to M$ is the covering map.    
\end{remark}

\section{Invariant K\"ahler metrics I: Lagrangian fibrations}
\label{sec:inv:metrics:non-singular}
In this section we study $\T$-invariant K\"ahler structures on \emph{principal} Hamiltonian $\T$-spaces, i.e., Lagrangian fibrations. In the next section we will consider toric Hamiltonian $\T$-spaces.

\begin{definition}
Given a Hamiltonian $\T$-space $\mu:(S,\omega)\to M$, a \textbf{$\T$-invariant K\"ahler structure} is a $\T$-invariant K\"ahler metric $G$ on $S$ with K\"ahler form $\omega$.
\end{definition}

Given a Lagrangian fibration $\mu:(S,\omega)\to M$ we will call a compatible K\"ahler metric $g$ on $S$ \textbf{invariant} if it is $\T_\Lambda$-invariant, where $\Lambda\subset T^*M$ is the induced integral affine structure on the base. Invariant K\"ahler metrics on a Lagrangian fibration are intimately related to Hessian metrics on the base of the fibration. We recall that a \textbf{Hessian metric} on an (integral) affine manifold $M$ is a Riemannian metric $g$ satisfying
\begin{equation}
    \label{eq:Hessian}
    \d^\nabla g^\flat=0.
\end{equation}
where $g^\flat:TM\to T^*M$ denotes the $T^*M$-valued form on $M$ given by contraction with $g$. This condition is equivalent to the existence of a potential for the metric in each (integral) affine chart $(U,x^i)$ with contractible domain, i.e., a function $\phi\in C^\infty(U)$ such that
\[ g|_U=\sum_{i,j}\frac{\partial^2 \phi}{\partial x^i\partial x^j}\d x^i\otimes\d x^j. \]
We refer to \cite{Shima2007} for details on Hessian metrics.

The main theorem of this section is the following. We use the symbol $\perp$ to denote the orthogonal space relative to a Riemannian metric.
\begin{theorem}\label{thm:kahlmetrlagrfib}
Let $\mu:(S,\omega)\to M$ be a a Lagrangian fibration. A choice of invariant K\"ahler metric $G$ on $S$ makes $\mu:S\to M$ into a Riemannian submersion, with the following properties:
\begin{enumerate}[(i)]
    \item $(\ker\d\mu)^\perp$ is a flat Lagrangian connection;
    \item The metric $g$ induced on the base $M$ is a Hessian metric.
\end{enumerate}
Conversely, given a flat Lagrangian connection $D$ and a Hessian metric $g$ on $M$, there is unique invariant K\"ahler metric $G$ on $S$ inducing $g$ such that $D=(\ker\d\mu)^\perp$.
\end{theorem}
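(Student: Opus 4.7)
The plan is to reduce the theorem to a local calculation in action-angle coordinates (Remark \ref{rem:action-angle}), since both kinds of data in the statement glue from local pieces.

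\textbf{Forward direction.} I first verify that $D := (\ker\d\mu)^\perp$ is a $\T$-invariant Lagrangian distribution. The $\T$-invariance is automatic from the invariance of $G$ and of the fibration. For the Lagrangian condition: since the fibers of $\mu$ are Lagrangian, $\omega|_{\ker\d\mu} = 0$, and the compatibility $\omega(X,Y) = G(JX,Y)$ forces $J(\ker\d\mu) = D$; then $\omega|_D = J^*\omega|_{\ker\d\mu} = 0$. The $\T$-invariance of $G$ implies the induced inner product on $D_p$ is independent of the point $p$ in the fiber, yielding a well-defined metric $g$ on $M$ for which $\mu$ is a Riemannian submersion.

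To establish flatness of $D$ and the Hessian condition on $g$, I work in action-angle coordinates $(x^i, \theta_i)$ on $S|_U \simeq \Tt^n \times U$ with $\omega = \sum \d\theta_i \wedge \d x^i$. By Proposition \ref{prop:invariance:condition}, the coefficients of $G$ depend only on $x$, so
\[
G = A^{ij}(x)\,\d\theta_i\,\d\theta_j + 2 B^j_i(x)\,\d\theta_i\,\d x^j + C_{ij}(x)\,\d x^i\,\d x^j.
\]
A direct computation shows that $D$ is the graph of a 1-form $\eta = E^j_i(x)\,\d x^i\otimes\partial_{\theta_j}$ with $E = A^{-1}B$, and that the compatible almost complex structure $J$ is explicitly computable from $A, B, C$. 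Imposing $N_J = 0$ produces two systems of PDEs: one asserting that the 1-form $E$ is $\d^\nabla$-closed (equivalently, $D$ is involutive, hence flat by Proposition \ref{prop:curvature}); the other asserting the Hessian identity $\partial_{x^k}g_{ij} = \partial_{x^i}g_{kj}$ for the induced base metric.

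\textbf{Converse direction.} Given a Hessian metric $g$ on $M$ and a flat Lagrangian connection $D$, define $G$ by declaring $D \perp_G \ker\d\mu$, setting $G|_D$ to be the pull-back of $g$ via the isomorphism $\d\mu|_D: D \to \mu^*TM$, and setting $G|_{\ker\d\mu}$ to be the pull-back of the dual metric $g^*$ via the isomorphism $\act : \mu^*T^*M \to \ker\d\mu$ (principality of the action). This $G$ is manifestly $\T$-invariant and positive-definite. Since $D$ is flat, locally one can choose action-angle coordinates adapted to $D$, so that $D = \Span(\partial_{x^i})$. In such coordinates $G$ is block-diagonal with
\[
G = g_{ij}(x)\,\d x^i\,\d x^j + g^{ij}(x)\,\d\theta_i\,\d\theta_j,
\]
and a straightforward calculation gives $J\partial_{\theta_i} = g^{ij}\partial_{x^j}$, $J\partial_{x^i} = -g_{ij}\partial_{\theta_j}$; the Nijenhuis tensor $N_J$ then vanishes precisely by the Hessian identity $\partial_{x^i}g_{jk} = \partial_{x^j}g_{ik}$. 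Hence $G$ is K\"ahler. Uniqueness: any K\"ahler $G'$ with horizontal distribution $D$ inducing $g$ agrees with $G$ on $D$ (both equal $\mu^*g$), and on $\ker\d\mu$ the metric is then pinned down by compatibility with $\omega$, since the isomorphism $J: \ker\d\mu \to D$ is determined by $G'|_D$ and $\omega$.

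\textbf{Main obstacle.} The technical heart is the forward computation. Because $D$ is not known \emph{a priori} to align with the coordinate directions $\partial_{x^i}$, the formula for $J$ in terms of $A, B, C$ is non-diagonal and the Nijenhuis equations couple the connection data $E$ to the base-metric data. Sorting out that $N_J = 0$ decouples into (a) closedness of $E$, hence flatness of $D$, and (b) the Hessian condition on $g$, is the main calculation. A cleaner organization is to work invariantly via the identifications $D \cong \mu^*TM$ and $\ker\d\mu \cong \mu^*T^*M$: the Nijenhuis tensor becomes an obstruction whose horizontal part yields the Hessian equation and whose mixed part yields the curvature equation for $D$, making the decoupling transparent.
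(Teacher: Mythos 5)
Your proposal is correct, and your converse direction is essentially the paper's: the metric is defined by the same three conditions (orthogonality of $D$ and $\ker\d\mu$, isometry onto $g$ on $D$, isometry onto the dual metric on $\ker\d\mu$), and integrability is reduced to the Hessian identity. The forward direction, however, is organized differently. You propose to write $G$ in a general action-angle chart as a matrix triple $(A,B,C)$, compute $J$ and $D$ explicitly, and extract from $N_J=0$ two coupled PDE systems whose decoupling into ``$E$ closed'' and ``$g$ Hessian'' you correctly identify as the crux --- but you leave that computation unexecuted. The paper bypasses it entirely with a two-line invariant argument: for flat $1$-forms $\alpha,\beta$, the $\T$-invariance of $J$ gives $[\act(\alpha),J(\act(\beta))]=(\Lie_{\act(\alpha)}J)(\act(\beta))+J([\act(\alpha),\act(\beta)])=0$, and substituting this into $N_J(\act(\alpha),\act(\beta))=0$ yields $[J(\act(\alpha)),J(\act(\beta))]=0$. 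This single identity is simultaneously the flatness of $D$ (the vector fields $J(\act(\alpha))$ generate $D$, so this is involutivity, hence flatness by Proposition \ref{prop:curvature}) and, after applying $\mu_*$ and using $\mu_*(J(\act(\alpha)))=g^\sharp(\alpha)$, the commutation $[g^\sharp(\alpha),g^\sharp(\beta)]=0$, which is exactly $\d^\nabla g^\flat=0$. So the ``cleaner organization'' you gesture at in your final sentence is in fact the paper's proof, and it establishes the decoupling you assert without any matrix computation; if you keep the coordinate route you still owe the reader that computation. A minor point: in your adapted coordinates your formulas for $J$ differ from the paper's by an overall sign, which is only a compatibility convention and harmless.
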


\begin{remark}
    This theorem shows that the existence problem for invariant K\"ahler metric on a Lagrangrian fibration reduces to two independent obstruction problems. As discussed in Section \ref{sec:Lag:connections:flat}, the obstruction to the existence of a flat Lagrangian connection for a Lagrangian fibration can be expressed in terms of its Lagrangian Chern class. The obstruction to the existence of a Hessian metric on an (integral) affine manifold are more subtle and have been studied by several authors (see, e.g., \cite{Shima2007} and the references therein).  
\end{remark}

\begin{example} 
\label{ex:KodairaThurston1'}
Consider the symplectic torus bundle in Example \ref{ex:KodairaThurston1}. Its base $(\Tt^2,\Z\d x^1\oplus \Z(\d x^2-x^1\d x^1))$ is an integral affine manifold which does not admit a Hessian metric. Indeed, this being a symplectic torus bundle, it admits a flat Lagrangian connection. However, its total space $(\T,\Omega)$ does not admit any K\"ahler structure, as it is the Kodaira-Thurston symplectic manifold. 

Notice that the symplectic manifold $(\T,\Omega)$ admits another Lagrangian fibration
\[ 
\mu:\T \to \Tt^2,\quad [\theta_1,\theta_2,x^1,x^2]\mapsto [\theta_2,x^1].
\]
The induced integral affine structure on the base $\Tt^2$ is now the standard one.  This fibration does not admit a Lagrangian section. In fact, due to Theorem \ref{thm:kahlmetrlagrfib}, it does not even admit a flat Lagrangian connection, since its base $(\Tt^2,\Lambda_\st)$ admits a Hessian metric, but its total space is not K\"ahler. 
\end{example}

Two equivalent K\"ahler Lagrangian fibrations induce the same metric on their base. Moreover, given two K\"ahler Lagrangian fibrations that induce the same Hessian metric on $M$, an equivalence of the underlying Lagrangian fibrations intertwines the K\"ahler structures if and only if it intertwines the orthogonal complements to the fibers. Therefore, we conclude the following from Theorems \ref{thm:bi-lagrangian:classification} and \ref{thm:kahlmetrlagrfib}.

\begin{corollary}
    \label{cor:equivofflatfib:preservesKahler}
    Given an integral affine manifold $(M,\Lambda)$, there is a canonical 1:1 correspondence  
\[ 
\left\{\\ \txt{K\"ahler Lagrangian fibrations\\ $\mu:(S,\omega,G)\to M$ \\ up to equivalence \,}\right\}\ 
\tilde{\longleftrightarrow}\ 
\left\{\\ \txt{Hessian metrics \\ $g$ on $M$\,}\right\}\ \times 
\check{H}^1(M,\TFlat)
\]
\end{corollary}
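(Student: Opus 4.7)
The plan is to build the asserted bijection by combining Theorem \ref{thm:bi-lagrangian:classification} with Theorem \ref{thm:kahlmetrlagrfib}; these encode orthogonal pieces of data, so the corollary is essentially a bookkeeping exercise once equivalences are tracked carefully on both sides.

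First I would define the forward map. Given a K\"ahler Lagrangian fibration $\mu:(S,\omega,G)\to M$, Theorem \ref{thm:kahlmetrlagrfib} extracts a Hessian metric $g$ on $M$ (the Riemannian-submersion metric) and a flat Lagrangian connection $D:=(\ker\d\mu)^{\perp_G}\subset TS$. By Theorem \ref{thm:bi-lagrangian:classification}, the flat Lagrangian fibration $(S,\omega,D)$ determines a flat Chern class $c\in\check{H}^1(M,\TFlat)$, so we get a map $(S,\omega,G)\mapsto(g,c)$. This descends to equivalence classes: if $\Psi:(S_1,\omega_1,G_1)\to(S_2,\omega_2,G_2)$ is a K\"ahler equivalence over $\id_M$, then $\Psi$ preserves $\omega$, $G$, and $\ker\d\mu$, hence also $D$, so $\Psi$ is an equivalence of the associated flat Lagrangian fibrations and the induced base metrics agree.

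For surjectivity, given a pair $(g,c)$ I would first invoke Theorem \ref{thm:bi-lagrangian:classification} to realize $c$ as the flat Chern class of some flat Lagrangian fibration $\mu:(S,\omega,D)\to M$, and then apply the converse direction of Theorem \ref{thm:kahlmetrlagrfib} to produce the unique invariant K\"ahler metric $G$ on $S$ inducing $g$ on $M$ and with $(\ker\d\mu)^{\perp_G}=D$. This $(S,\omega,G)$ maps to $(g,c)$ by construction. For injectivity, suppose two K\"ahler Lagrangian fibrations $(S_i,\omega_i,G_i)$, $i=1,2$, map to the same $(g,c)$. By Theorem \ref{thm:bi-lagrangian:classification} there exists an equivalence $\Psi:(S_1,\omega_1,D_1)\diffto(S_2,\omega_2,D_2)$ of the underlying flat Lagrangian fibrations over $\id_M$. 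Then $G_1$ and $\Psi^{*}G_2$ are both invariant K\"ahler metrics on $(S_1,\omega_1)$ that induce the same Hessian metric $g$ on the base (since $\mu_2\circ\Psi=\mu_1$) and share the same flat Lagrangian connection $D_1$ (since $\Psi^{*}D_2=D_1$); the uniqueness clause in Theorem \ref{thm:kahlmetrlagrfib} then forces $\Psi^{*}G_2=G_1$, so $\Psi$ is a K\"ahler equivalence.

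The subtle step is the uniqueness reduction in the injectivity argument: one must be sure that an invariant K\"ahler metric on $(S,\omega)$ is genuinely determined by the pair (base Hessian metric, horizontal distribution). This is really a pointwise statement on $TS=D\oplus\ker\d\mu$: along $D$ the metric is fixed because $\d\mu|_D:D\to TM$ must be an isometry onto $g$; along $\ker\d\mu$ the values are fixed by $\omega$-compatibility, since the K\"ahler-compatible complex structure $J$ must map $D$ isomorphically onto $\ker\d\mu$ in the unique way that makes $\omega(\cdot,J\cdot)$ symmetric positive with the prescribed restriction to $D$; and mixed terms vanish by $G$-orthogonality. This is precisely the content of the converse part of Theorem \ref{thm:kahlmetrlagrfib} and, once invoked, closes the proof.
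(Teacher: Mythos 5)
Your proposal is correct and follows essentially the same route as the paper: the paper's own argument is exactly the combination of Theorem \ref{thm:kahlmetrlagrfib} (an invariant K\"ahler metric is equivalent data to a flat Lagrangian connection together with a Hessian metric on the base, with the stated uniqueness) and Theorem \ref{thm:bi-lagrangian:classification} (classification of flat Lagrangian fibrations by the flat Chern class), plus the observation that an equivalence of the underlying Lagrangian fibrations intertwines the K\"ahler structures if and only if it intertwines the orthogonal complements to the fibers. You have merely spelled out the well-definedness, surjectivity, and injectivity checks that the paper leaves implicit, and each of these is handled correctly, including the key use of $\T$-equivariance of $\Psi$ to see that $\Psi^*G_2$ is again invariant before invoking the uniqueness clause.
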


Choosing an integral affine chart $(U,x^i)$ with contractible domain, there exist action-angle coordinates $(\mu^{-1}(U),x^i,\theta_j)$ (see Remark \ref{rem:action-angle}) in which the connection takes the form
\[
\theta=\sum_{i=1}^n\d\theta_i\otimes \d x^i.
\] 
In such coordinates the invariant K\"ahler metric $G$ can be written as
\begin{equation}
    \label{eq:metric:action:angle}
    G=\sum_{i,j=1}^n g_{ij}\, \d x^i\otimes\d x^j+\sum_{i,j=1}^n g^{ij}\, \d \theta_i\otimes\d \theta_j,
\end{equation}
where
\[ 
g_{ij}=\frac{\partial^2 \phi}{\partial x^i\partial x^j},
\]
for some function $\phi\in C^\infty(U)$, and $(g^{ij})$ denotes the inverse matrix of $(g_{ij})$. The corresponding complex structure takes the form
\[
J=\sum_{i,j=1}^n g_{ij}\, \d x^i\otimes \partial_{\theta_j}-\sum_{i,j=1}^n g^{ij}\d \theta_i\otimes \partial_{x^j}.
\]

\subsection{Proof of Theorem \ref{thm:kahlmetrlagrfib}}
Let $\Lambda$ be the induced integral affine structure on the base of $\mu:(S,\omega)\to M$. Denote by $G$ a $\T_\Lambda$-invariant K\"ahler metric on $S$. The distribution orthogonal to the fibers
\[ D:=(\ker\d\mu)^\perp \]
is $\T_\Lambda$-invariant and the restriction of $G$ to $D$ is also $\T_\Lambda$-invariant. It follows that there is a unique Riemannian metric $g$ on $M$ such that $\d_x\mu:D_x\to T_p M$ is an isometry for all $x\in\mu^{-1}(p)$, i.e., $\mu:(S,G)\to(M,g)$ is a Riemannian submersion.

Now observe that we can express $D$ as:
\[ D=J(\ker\d\mu), \]
where $J:TS\to TS$ denotes the complex structure. Since $\ker\d\mu$ is Lagrangian, we conclude that $D$ is Lagrangian. It follows also that the vector fields $J\act(\al)$, with $\al$ flat 1-forms, generate $D$. Now, the $\T_\Lambda$-invariance of $J$ amounts to
\[    \Lie_{\act(\al)}J=0\quad\text{ if }\quad\nabla\al=0, \]
so we find that
\[ [\act(\al),J(\act(\be))]=(\Lie_{\act(\al)}J)+J([\act(\al),\act(\be)])=0, \]
for any flat 1-forms $\al$ and $\be$. Then the vanishing of the Nijenhuis torsion of $J$ gives:
\[ [J(\act(\al)),J(\act(\be))]=J([J(\act(\al)),\act(\be)]+[\act(\al),J(\act(\be))])+[\act(\al),\act(\be)]=0. \]
This shows that $D$ is flat, so (i) holds.

To prove that $g$ is an Hessian metric we will check that \eqref{eq:Hessian} holds:
\[ 
\d^\nabla g^\flat(X,Y)=\nabla_X g^\flat(Y)-\nabla_Y g^\flat(X)-g^\flat([X,Y])=0,\quad (X,Y\in\X(M)). 
\]
To prove this it is enough to consider vector fields of the form $X=g^\sharp(\al)$ and $Y=g^\sharp(\be)$, with $\al$ and $\be$ flat, since these locally generate all vector fields. Therefore, we are left to check that:
\[ [g^\sharp(\al),g^\sharp(\be)]=0\quad \text{ if }\quad \nabla\al=\nabla\be=0. \]
Now observe that the definition of the metric $g$ yields:
\[ \mu_*(J(\act(\al)))=g^\sharp(\al). \]
Hence, if $\al$ and $\be$ are flat, this last relation and the computation above shows that:
\[ [g^\sharp(\al),g^\sharp(\be)]=\mu_*([J(\act(\al)),J(\act(\be))]=0. \]
This shows that (ii) holds.

To prove the converse, note that given a flat Lagrangian connection $D$ we can define a unique Riemannian metric $G$ on $S$ by requiring the following properties to hold:
\begin{enumerate}[(a)]
    \item $\ker\d\mu$ and $D$ are orthogonal;
    \item On $D$ the metric is the unique one that gives isometries
    \[ \d_p\mu:(D_p,G|_{D_p})\to (T_{\mu(p)}M,g_{\mu(p)});\] 
    \item On $\ker\d\mu$ the metric is the unique one that gives isometries
    \[ (T^*_{\mu(p)}M,g^{-1}_{\mu(p)})\to (\ker\d_p\mu,G|_{\ker\d_p\mu}),\quad\al\mapsto \act(\al)|_p. \] 
\end{enumerate}
The resulting Riemannian metric is clearly invariant and we are left to prove that it is a K\"ahler metric with K\"ahler form $\omega$. 

Given $p\in S$ and $\al\in T^*_{\mu(p)}M$ there is a unique horizontal vector  $\widetilde{g^\sharp(\al)}\in D_p$ such that
\[ 
\d_p \mu(\widetilde{g^\sharp(\al)})=g^\sharp(\al). 
\]
Since both sides of the previous equation are linear in $\al$, it follows that there is a unique vector bundle map $J:\ker\d\mu\to D$ such that
\begin{equation}
    \label{eq:hessian:cplx}
    J(\act(\al)):=\widetilde{g^\sharp(\al)}.
\end{equation} 
Since $TS=\ker\d\mu\oplus D$, we obtain a vector bundle map $J:TS\to TS$ by requiring $J^2=-I$. Notice that $J$ is an invariant almost complex structure
\[ 
\Lie_{\act(\al)}J=0,\quad \text{ if }\quad \nabla\al=0. \]
We claim that
\[ G(X,Y)=\omega(X,JY),\quad (X,Y\in TS). \]
To see this, we check that the bilinear form $\omega(\cdot,J\cdot)$ satisfy properties (a)-(c) above:
\begin{enumerate}[(a)]
    \item Since $J(\ker\d\mu)=D$ and $J(D)=\ker\d\mu$, and both distributions are Lagrangian, it follows that they are orthogonal relative to the bilinear form $\omega(\cdot,J\cdot)$.
    \item By \eqref{eq:hessian:cplx}, it follows that \begin{align*}
        \omega(\widetilde{g^\sharp(\al)},J\widetilde{g^\sharp(\be)})&=-\omega(\widetilde{g^\sharp(\al)},\act(\be))\\
        &=(\mu^*\be)(\widetilde{g^\sharp(\al)})\\
        &=\be(g^\sharp(\al))=g(g^\sharp(\al),g^\sharp(\be)).
    \end{align*}
    Hence, the restriction of $\d\mu$ to $D$ maps the bilinear form $\omega(\cdot,J\cdot)$ to $g$.
    \item Again by \eqref{eq:hessian:cplx}, it follows that 
    \begin{align*}
        \omega(\act(\al),J(\act(\be)))&=
        \omega(\act(\al),\widetilde{g^\sharp(\be)}))\\
        &=(\mu^*\al)(\widetilde{g^\sharp(\be)})\\
        &=\al(g^\sharp(\be))=g^{-1}(\al,\be).
    \end{align*}
    Hence, the last item is also satisfied.
\end{enumerate}

It remains to show that $J$ is integrable, i.e., that its Nijenhuis torsion vanishes. For this, note that if $\nabla\al=\nabla\be=0$ then 
\begin{equation}
    \label{eq:Nijenhuis:1}
    [\act(\al),J(\act(\be))]=(\Lie_{\act(\al)}J)(\act(\be))+J([\act(\al),\act(\be)])=0,
\end{equation}
since $J$ is invariant and $\act(\al)$ and $\act(\be)$ commute. On the other hand, since $g$ is Hessian, we have for flat $\al$ and $\be$:
\begin{align*}
g^\flat([g^\sharp(\al),g^\sharp(\be)])&=g^\flat([g^\sharp(\al),g^\sharp(\be)])-\nabla_{g^\sharp(\al)}\be+\nabla_{g^\sharp(\be)}\al\\
&=-\d^\nabla g^\flat(g^\sharp(\al),g^\sharp(\be))=0.
\end{align*}
It follows that $[g^\sharp(\al),g^\sharp(\be)]=0$, and using \eqref{eq:hessian:cplx}, we conclude that if :
\begin{equation}
    \label{eq:Nijenhuis:2}
    [J(\act(\al)),J(\act(\be))]=0,\quad \text{ if }\quad \nabla\al=\nabla\be=0. 
\end{equation}
From \eqref{eq:Nijenhuis:1} and \eqref{eq:Nijenhuis:2}, we find that the Nijenhuis torsion of $J$ satisfies:
\[ N_J(\act(\al),\act(\be))=N_J(\act(\al),J(\act(\be)))=N_J(J(\act(\al)),J(\act(\be)))=0, \]
for any flat sections $\al$ and $\be$. Since the vector fields $\act(\al)$ and $J(\act(\al))$ locally generate all vector fields, we conclude that $N_J\equiv 0$. This concludes the proof.
\qed
\subsection{Abreu's equation}
The scalar curvature of an invariant K\"ahler structure on a Lagrangian fibration is an invariant smooth function on the total space, and hence corresponds to a smooth function on the base of the fibration. In his work on classical symplectic toric manifolds \cite{Abr98}, Abreu gave an explicit expression for this function in terms of integral affine coordinates and observed that the K\"ahler metric is extremal in the sense of Calabi (see, e.g., \cite{Calabi82}) if and only if this function is affine. This generalizes to toric $\T$-spaces. In this section we consider the case of Lagrangian fibrations. 

Recall that for an affine manifold $(M,\nabla)$ with a Hessian metric:
\begin{itemize}\item the \textbf{first Koszul form} $\alpha$ is the $1$-form on $M$ defined by:
\[
    \alpha(X)\nu=\nabla_X\nu, \quad X\in \X(M),
\] with $\nu$ the volume density of the Hessian metric,
\item the \textbf{second Koszul form} $\beta:=\nabla\alpha$ is the symmetric $(0,2)$-tensor given by:
\[ (\nabla\alpha)(X,Y)=(\nabla_X\alpha)(Y).
\]
\end{itemize}
{ In particular, these Koszul forms are defined for every integral affine manifold $(M,\Lambda)$ with a Hessian metric.}

As will be clear from the proof below, the second Koszul form is an incarnation of the Ricci tensor in Hessian geometry. Further, recall that any integral affine manifold $(M,\Lambda)$ carries a canonical volume density $\nu_\Lambda$, which in any integral affine chart $(U,x^i)$ is given by
\[
\nu_\Lambda\vert_U=|\d x^1\wedge \cdots\wedge \d x^n|.
\]

\begin{theorem} 
\label{thm:lagrfib:metrics}
Let $\mu:(S,\omega,G)\to M$ be a K\"ahler Lagrangian fibration, inducing an integral affine structure $\Lambda$ on $M$ and a Hessian metric $g$ on $(M,\Lambda)$. Let $\alpha$ and $\beta$ be the first and second Koszul forms of $(M,\Lambda,g)$, $\nu_\Lambda$ the volume density of $\Lambda$ and $\nabla^g$  the Levi-Civita connection of $g$. The scalar curvature of $G$ is given by
\begin{equation}
    \label{eqn:scalcurvexpr:lagrfib}
    S_G=\mathrm{Tr}_{g}(2\nabla^{g}\alpha-\beta)\circ\mu=\mathrm{div}_{\nu_\Lambda}(g^\sharp(\alpha))\circ\mu.
\end{equation}
Moreover, $G$ is an extremal metric if and only if $S_G$ is affine, viewed as function on the integral affine manifold $(M,\Lambda)$.
\end{theorem}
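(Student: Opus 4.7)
The plan is to reduce the scalar curvature computation to the classical toric Kähler setting via action-angle coordinates, and then re-express the resulting local formula in coordinate-free Hessian-geometric terms. Since $G$ and $\omega$ are $\T_\Lambda$-invariant, so is the scalar curvature $S_G$, hence $S_G = \widetilde{S}\circ\mu$ for a smooth function $\widetilde{S}$ on $M$. Both statements being local on $M$, I would work in an integral affine chart $(U,x^i)$ over which $\mu$ admits a Lagrangian section, with the induced action-angle coordinates $(x^i,\theta_j)$. In such coordinates, Theorem~\ref{thm:kahlmetrlagrfib} and \eqref{eq:metric:action:angle} show that
\[
G = \sum_{i,j} g_{ij}\,\d x^i\,\d x^j + \sum_{i,j} g^{ij}\,\d\theta_i\,\d\theta_j, \qquad g_{ij} = \frac{\partial^2\phi}{\partial x^i\,\partial x^j},
\]
for a local Hessian potential $\phi$. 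This is precisely the local form of a classical $\Tt^n$-invariant toric Kähler metric, so Abreu's formula \cite{Abr98} gives $S_G$ as a universal expression involving second derivatives of $g^{ij}$, namely (up to the normalization fixed by \eqref{eq:metric:action:angle}) a multiple of $-\sum_{i,j}\partial_i\partial_j g^{ij}$.

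The second step is a purely Hessian-geometric computation showing that the two right-hand sides of \eqref{eqn:scalcurvexpr:lagrfib} coincide and match this local expression. The key coordinate identities, all verified in integral affine coordinates, are $\alpha_k = \tfrac{1}{2}g^{ij}\phi_{ijk}$ (which follows from $\nu = \sqrt{\det(\phi_{ij})}\,\nu_\Lambda$), the Cramer-rule identity $\partial_l g^{ij} = -g^{ia}g^{bj}\phi_{abl}$, and the expression $\Gamma^k_{ij} = \tfrac{1}{2}g^{kl}\phi_{ijl}$ of the Levi-Civita Christoffel symbols of $\nabla^g$ in integral affine coordinates. The first two combine into $\partial_j g^{ij} = -2\,g^{ij}\alpha_j$ and hence identify $-\partial_i\partial_j g^{ij}$ with $\mathrm{div}_{\nu_\Lambda}(g^\sharp(\alpha)) = g^{ij}\partial_i\alpha_j - 2|\alpha|_g^2$, whereas inserting the Christoffels into $\mathrm{Tr}_g(2\nabla^g\alpha - \beta)$ yields the same expression $g^{ij}\partial_i\alpha_j - 2|\alpha|_g^2$. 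In particular, the equality of the two coordinate-free expressions in \eqref{eqn:scalcurvexpr:lagrfib} is itself a Hessian-geometric identity, independent of the Kähler data.

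For the extremality statement I would use the standard characterization that a Kähler metric is extremal if and only if the Hamiltonian vector field $X_{S_G}$ is a real holomorphic (equivalently, Killing) vector field. The moment map condition \eqref{eqn:mommapcond} identifies $X_{S_G} = -\act(\d\widetilde{S})$, so extremality amounts to $\Lie_{\act(\d\widetilde{S})}G = 0$. The ``if'' direction is immediate from Proposition~\ref{prop:invariance:condition}: if $\widetilde{S}$ is affine on $(M,\Lambda)$ then $\d\widetilde{S}$ is $\nabla$-parallel, so $\act(\d\widetilde{S})$ preserves $G$. For the converse, in action-angle coordinates one has $\act(\d\widetilde{S}) = \sum_i \widetilde{S}_i\,\partial_{\theta_i}$ and a short calculation gives
\[
\Lie_{\act(\d\widetilde{S})}G = \sum_{k,l,m} g^{kl}\,\widetilde{S}_{km}\,\bigl(\d x^m \otimes \d\theta_l + \d\theta_l \otimes \d x^m\bigr),
\]
whose vanishing forces $\widetilde{S}_{km} = 0$ for all $k,m$, i.e., $\widetilde{S}$ is affine on $(M,\Lambda)$.

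The main obstacle I expect is pinning down the overall normalization in Abreu's scalar curvature formula and the Koszul forms so that the two Hessian-geometric reformulations match \eqref{eqn:scalcurvexpr:lagrfib} exactly; once this is in place, the remaining coordinate identities are routine, and the extremality argument reduces cleanly to Proposition~\ref{prop:invariance:condition} together with the small calculation above.
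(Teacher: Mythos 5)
Your proposal is correct, and the extremality argument is essentially the one the paper uses (the same Lie-derivative computation in action-angle coordinates, combined with the Killing-field characterization \eqref{eq:extremal:symp}). For the scalar curvature formula, however, you take a genuinely different route. The paper first reduces to the reference model $(\T_\Lambda,\Omega_\Lambda)$, then identifies its K\"ahler structure with the canonical one on $TM/\Lambda^\sharp$ built from the \emph{dual} Hessian structure $\Lambda^\sharp=g^\sharp(\Lambda)$, and quotes Shima's formula for the Ricci tensor of that K\"ahler metric in terms of the second Koszul form $\beta_\sharp$ of the dual structure; the first equality in \eqref{eqn:scalcurvexpr:lagrfib} then falls out of the duality relation $\beta_\sharp=\beta-2\nabla^g\alpha$, and only the second equality is checked in coordinates. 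You instead import Abreu's local formula $S_G=-\tfrac{1}{2}\sum\partial_j\partial_k g^{jk}$ and verify by direct manipulation of Christoffel symbols and the Koszul form that both coordinate-free expressions reduce to it; your identities $\alpha_k=\tfrac12 g^{ij}\phi_{ijk}$, $\partial_j g^{ij}=-2(g^\sharp\alpha)^i$ and $\Gamma^k_{ij}=\tfrac12 g^{kl}\phi_{ijl}$ are all correct, and they do yield $\mathrm{Tr}_g(2\nabla^g\alpha-\beta)=\mathrm{div}_{\nu_\Lambda}(g^\sharp\alpha)=g^{ij}\partial_i\alpha_j-2|\alpha|_g^2$, with the factor $-\tfrac12$ in Abreu's formula absorbing the factor $2$ from $-\sum\partial_i\partial_j g^{ij}=2\,\mathrm{div}_{\nu_\Lambda}(g^\sharp\alpha)$ — exactly the normalization point you flagged. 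The paper's approach buys the full Ricci tensor (not just its trace) and stays coordinate-free at the cost of the dual Hessian machinery; yours makes the equivalence of the two expressions in \eqref{eqn:scalcurvexpr:lagrfib} transparent as a pure Hessian-geometric identity and is self-contained modulo Abreu's computation. Both are complete proofs.
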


\begin{remark}
\label{rem:koszform:scalcurv:coordinates} $\quad$
\begin{enumerate}[(i)]
    \item Given a compact complex manifold $(S,J)$ and a K\"ahler class, the Calabi functional is defined on the space of $J$-compatible K\"ahler metrics $G$ in the given K\"ahler class and is given by
    \begin{equation}
        \label{eq:Calabi}
        G\mapsto \int_S S_G^2\, \d v_{G},
    \end{equation}
    where $v_G$ is the Riemannian volume form.
    A K\"ahler metric $G$ is called \textbf{extremal} if it is a critical point of this functional \cite{Calabi82}. Using the corresponding Euler-Lagrange equations one finds (see, e.g., \cite{Szekelyhidi14}) that a K\"ahler metric $G$ is extremal if and only if
    \begin{equation}
        \tag{E}\label{eq:extremal}
        (\grad S_G)^{(1,0)}\textrm{ is a holomorphic vector field.}
    \end{equation}
    We adopt condition \eqref{eq:extremal} as the definition of  extremal K\"ahler metric, whether $S$ is a compact manifold or not.
    
    \item In our approach, where we fix a symplectic manifold $(S,\omega)$, one has the following equivalent approach. The functional \eqref{eq:Calabi} is now defined on the space of $\omega$-compatible K\"ahler metrics $G$ whose complex structure belong to a fixed symplectomorphism class. The critical points are still the K\"ahler metrics satisfying  \eqref{eq:extremal}, which in symplectic terms can be reformulated as  
    \begin{equation}
        \tag{E'}\label{eq:extremal:symp}
        X_{S_G}\textrm{ is a Killing vector field.}
    \end{equation}
    \item  In an integral affine chart $(U,x^1,\dots,x^n)$ for $(M,\Lambda)$, the first Koszul form is given by
 \[
 \alpha|_U=\frac{1}{2}\sum_{j=1}^n\frac{\partial \log(\det(g))}{\partial x^j}
 \]
 and \eqref{eqn:scalcurvexpr:lagrfib} yields
    \[
   S_G|_U=-\frac{1}{2}\sum_{j,k=1}^n\frac{\partial^2g^{jk}}{\partial x^j\partial x^k},
    \]
where $g^{-1}=\sum_{j,k=1}^ng^{jk}\partial_{x^j}\partial_{x^k}$. The condition that the function $S_G$ is affine then becomes a 4th order PDE for the potential of the metric $g$, known as Abreu's equation (see \cite{Abr98}).
\end{enumerate}

\end{remark}
\begin{proof} 
The proof is essentially as in \cite{Abr98}. In view of Corollary \ref{cor:equivofflatfib:preservesKahler} and Remark \ref{rem:cohomology:TFlat}, $M$ can be covered by open sets $U$ over which the given K\"ahler Lagrangian fibration is equivalent to the Lagrangian fibration $(\T_\Lambda,\Omega_\Lambda)\to M$ equipped with the K\"ahler structure induced as in Theorem \ref{thm:kahlmetrlagrfib} by $g$ and the canonical horizontal distribution. So, we can assume that $(S,\omega)=(\T_\Lambda,\Omega_\Lambda)$ and $\mu$ is the bundle projection. 

Given an integral affine manifold $(M,\Lambda)$ with a Hessian metric $g$, there is another integral affine structure on $M$ defined by the lattice dual to
\[
\Lambda^\sharp:=g^\sharp(\Lambda)\subset TM.
\]
The metric $g$ is Hessian with respect to $\Lambda^\sharp$ as well. This defines the dual Hessian structure, in the terminology of \cite{Shima2007}. The second Koszul form of $(M,\Lambda^\sharp,g)$ is (see \cite[Lemma 3.1]{Shima2007})
\begin{equation}
\label{eqn:secondkoszul:dual}
\beta_\sharp=\beta-2\nabla^{g}\alpha.
\end{equation} 

Just like $\T_\Lambda=T^*M/\Lambda$, the torus bundle $\pi:TM/{\Lambda^\sharp}\to M$ comes with a canonical horizontal distribution and vector bundle isomorphism $
\act:\pi^*TM\xrightarrow{\sim}\ker(\d\pi)$. Moreover, $TM/{\Lambda^\sharp}$ comes with a natural complex structure $J$ given by
\[
J(\act(X))=\widetilde{X},\quad J(\widetilde{X})=-\act(X),
\] and the metric $g$ induces a metric $G_\sharp$ on $TM/{\Lambda^\sharp}$ given by
\[
G_\sharp(\act(X_1)+\widetilde{Y_1},\act(X_2)+\widetilde{Y_2})=g(X_1,X_2)+g(Y_1,Y_2),
\]
where $\widetilde{X}$ denotes the horizontal lift of $X\in TM$. These define a K\"ahler structure, which corresponds to the above K\"ahler structure on $\T_\Lambda$ via the torus bundle isomorphism: 
\begin{equation}\label{eqn:comptosymptorbuniso}
    TM/{\Lambda^\sharp}\xrightarrow{\sim} \T_\Lambda
\end{equation} induced by $g$ (see, e.g., \cite{ABCDGMSSW}). It follows from \cite[Proposition 3.5]{Shima2007} that the Ricci tensor $R_\sharp$ of $G_\sharp$ is given by:
\[
R_\sharp(\act(X_1)+\widetilde{Y_1},\act(X_2)+\widetilde{Y_2})=-\left(\beta_\sharp(X_1,X_2)+\beta_\sharp(Y_1,Y_2)\right),
\] with $\beta_\sharp$ the second Koszul form of $(M,\Lambda^\sharp,g)$. From this it readily follows that the scalar curvature $S_G$ (which  equals that of $G_\sharp$, since \eqref{eqn:comptosymptorbuniso} is an isometry) is equal to $-\textrm{Tr}_{g}(\beta_\sharp)\circ \mu$, which proves the first equality in \eqref{eqn:scalcurvexpr:lagrfib} in view of \eqref{eqn:secondkoszul:dual}. The second equality follows from Remark \ref{rem:koszform:scalcurv:coordinates}.

A straightforward computation using action-angle coordinates and expression \eqref{eq:metric:action:angle} for $G$ shows that for a function $f$, depending only on the action coordinates, the Hamiltonian vector field $X_f$ is a Killing vector field for $G$ if and only if $f$ is affine. This yields the equivalence of the extremal condition \eqref{eq:extremal:symp} with the condition for $S_G$.
\end{proof}

\subsection{Examples}

We will now illustrate how one can use Theorems \ref{thm:kahlmetrlagrfib} and \ref{thm:lagrfib:metrics} to construct extremal K\"ahler metrics on Lagrangian fibrations.

\begin{example}
\label{ex:complextori3}
 Any metric on $(\mathbb{S}^1,b\Z\d x)$, with $b>0$, is Hessian (for dimensional reasons) and of the form $f\d x^2$, for $f\in C^\infty(\mathbb{S}^1)$ strictly positive. It follows from \eqref{eqn:scalcurvexpr:lagrfib} that the scalar curvature of any invariant K\"ahler metric $G$ on the complex torus that induces this Hessian metric is given by
\begin{equation}\label{eqn:scalcurv:intaffcircle:example}
S_G=\frac{f''}{2f^2}-\frac{(f')^2}{f^3}.
\end{equation}
Since $\Ss^1$ is compact, if $S_G$ is affine, then it is in fact constant. Since $f$ is strictly positive and periodic, it follows that this constant must then be zero. The only solutions to the ODE
\[
ff''-2(f')^2=0,
\] are the constant functions on $\mathbb{S}^1$. Therefore, we conclude that the only extremal metrics are those for which $f$ is constant. 

As we saw in Example \ref{ex:complextori1}, any Lagrangian fibration inducing $(M,\Lambda)$ takes the form
\[ 
\pr_{2}:(\mathbb{S}^1\times\mathbb{S}^1,b\,\d y\wedge\d x)\to \mathbb{S}^1.
\] If we fix the Hessian metric $g=b^2\,\d x^2$ and choose any of the Lagrangian connections $D_a$ in that example, the resulting extremal K\"ahler metric on $S=\Tt^2$ is the flat metric
\[ G=(a^2+b^2)\,\d x^2+2a\,\d x\,\d y+\d y^2. \]
The $2$-torus equipped with corresponding complex structure is biholomorphic to the complex torus $\C/\langle 1,\tau\rangle$, where $\tau=a+ib$.
\end{example}

\begin{example} 
Consider the Lagrangian fibration $\pr_1:(\Tt^2\times\Tt^2,\omega_a)\to \Tt^2$ of Examples \ref{ex:trivial:chern:class} and \ref{ex:trivial:chern:class:2}, where $a\in \R$ and $\omega_a=\omega_\can+a\d x^1\wedge \d x^2$. This admits the flat Lagrangian connection
\[
D_a=\langle \partial_{x_1},\partial_{x_2}+a\partial_{\theta_1} \rangle.
\]
If we consider the flat Hessian metric $g=\d x^1\d x^1+\d x^2\d x^2$ on $\Tt^2$, we obtain from Theorem \ref{thm:kahlmetrlagrfib} the $\omega_a$-compatible extremal K\"ahler metric 
\[
G_a=\d x^1\d x^1+(1+a^2)\d x^2\d x^2-2a\d x^2\d\theta_1+\d \theta_1\d \theta_1+\d \theta_2\d \theta_2.
\]
\begin{align*}
J_a(\partial_{x^1})=\partial_{\theta_1}\ \ \quad &
J_a(\partial_{x^2})=a\partial_{x^1}+\partial_{\theta_2}\\
J_a(\partial_{\theta_1})=-\partial_{x^1}\quad &
J_a(\partial_{\theta_2})=-\partial_{x^2}-a\partial_{\theta_1}
\end{align*}
Notice that, when one varies $a\in [0,1[$, these give non-isomorphic K\"ahler metrics. In fact, the K\"ahler manifold $(\Tt^4,\omega_a,G_a,J_a)$ is isomorphic to the complex torus $\mathbb{C}^2/\Lambda_a$, where
\[
\Lambda_a=\langle(1,0),(i,0),(ia,1),(0,i) \rangle\subset \C^2,
\] via 
\[
\Tt^4\cong\C^2/\Lambda, \quad [x_1,x_2,\theta_1,\theta_2]\mapsto [x_1,\theta_1-ax_2,-x_2,-\theta_2].  
\]
\end{example}

\begin{example}\label{ex:trivspherebun:stdIA:cylinder:2}
Consider the standard integral affine cylinder 
\[
(M,\Lambda)=(\Ss^1\times \R,\Z\d x\oplus \Z\d h)
\]
of Example \ref{ex:trivspherebun:stdIA:cylinder}. Given a metric
\[
g=A\d h^2+2B\d h\d x+C\d x^2
\]
on this cylinder, We find that
\[
\d^\nabla g^\flat(\partial_h,\partial_x)=\left(\partial_hB-\partial_xA\right)\d h+\left(\partial_hC-\partial_xB\right)\d x.
\] Therefore, we have a family of Hessian metrics 
\[
g=\frac{1}{\tau(h)}\d h^2+\frac{1}{\epsilon(x)}\d x^2,
\] where both $\tau(h)$ and $\epsilon(x)$ are strictly positive. 

Note that $H^1(M,\TLag)=H^2(M,\mathcal{O}_\Lambda)=0$, so that any toric $\T_\Lambda$-space is isomorphic to the symplectic torus bundle over $(M,\Lambda)$ (see Example \ref{ex:trivspherebun:stdIA:cylinder}).
Applying \eqref{eqn:scalcurvexpr:lagrfib}, it follows that the scalar curvature of the lifted K\"ahler metric $G$ on this symplectic torus bundle is given by
\[
S_G=-\frac{1}{2}(\tau''(h)+\epsilon''(x)),
\] 
for any choice of flat Lagrangian connection. The condition for $G$ to be extremal, i.e., for $S_G$ to be affine, amounts to the  condition that $\epsilon(x)$ is constant and $\tau(h)$ is a cubic polynomial in $h$. Since $\tau(h)$ is strictly positive, this leads to the following two possibilities for $G$ to be extremal:
\begin{enumerate}[(i)]
\item $\tau(h)=bh^2+ch+d$ with $b>0$ and $c^2-4bd<0$, in which case $G$ has constant negative scalar curvature $-\tfrac{b}{2}$;
\item $\tau(h)$ is constant, in which case $G$ has zero scalar curvature. 
\end{enumerate}
\end{example}

\begin{example}\label{ex:nonstdcyl}
Consider the non-standard integral affine cylinder of Example \ref{ex:nontrivspherebun:nonstdIA:cylinder}. Given a general metric on $\Ss^1\times ]-2,\infty[$, written in the form
\[
g=A\,\d h^2+2B\,\d h\d x+C\,\d x^2.
\]
We find that
\[
\d^\nabla g^\flat(\partial_h,\partial_x)=\left(\partial_hB-\partial_xA+\tfrac{B}{h+2}\right)\d h+\left(\partial_hC-\partial_xB-\tfrac{C}{h+2} \right)\d x.
\] Therefore, we have a family of Hessian metrics 
\[
g=\frac{1}{\tau(h)}\d h^2+(h+2)\epsilon\,\d x^2,
\] 
where $\epsilon>0$ and $\tau(h)$ is a strictly positive smooth function on $]-2,\infty[$. 

Like in the previous example, $H^1(M,\TLag)=H^2(M,\mathcal{O}_\Lambda)=0$, so any toric $\T_\Lambda$-space is isomorphic to the symplectic torus bundle over $(M,\Lambda)$ (see Example \ref{ex:nontrivspherebun:nonstdIA:cylinder}). Applying \eqref{eqn:scalcurvexpr:lagrfib}, it follows that the scalar curvature of the lifted K\"ahler metric $G$ on this symplectic torus bundle is given by
\[
S_G=-\frac{1}{2}\tau''(h)-\frac{1}{h+2}\tau'(h),
\] 
for any choice of flat Lagrangian connection. Such a metric is extremal if 
\[ 
\frac{1}{2}\tau''(h)+\frac{1}{h+2}\tau'(h)=c_1(h+2)+c_0
\] for some constants $c_1$ and $c_0$. One finds that
\[
\tau(h)=\frac{c_1}{6}(h+2)^3+\frac{c_0}{3}(h+2)^2+\frac{c_2}{h+2}+c_3
\] 
where $c_2$ and $c_3$ are constants of integration. Therefore, choosing the constants such that $\tau(h)>0$, we obtain extremal K\"ahler metrics on the symplectic torus bundle. These include examples both with constant and non-constant scalar curvature.
\end{example}


\section{Invariant K\"ahler metrics II: toric Lagrangian fibrations}
\label{sec:inv:metrics:toric}

In this section we will generalize the characterization of compatible invariant K\"ahler metrics on Lagrangian fibrations given in Theorem \ref{thm:kahlmetrlagrfib} to toric $(\T,\Omega)$-spaces. 

Below, in Section \ref{sec:bmetrics}, we will introduce the notion of \textbf{hybrid $b$-metric} for manifolds with corners. These are metrics admitting a specific type of singularity at the boundary and with a well-defined notion of residue at the open facets. Using this notion, we can state the main result of this section as follows.

\begin{theorem}\label{thm:kahlmetrtoricsp}
Let $(M,\Lambda)$ be an integral affine manifold and let $\mu:(S,\omega)\to M$ be a toric $(\T_\Lambda,\Omega_\Lambda)$-space, with Delzant domain $\Delta:=\mu(S)$. Denote by $\mathring{\Delta}$ the interior of $\Delta$ and set $\mathring{S}:=\mu^{-1}(\mathring{\Delta})$. For any invariant K\"ahler metric $G$ on $S$ compatible with $\omega$, the following hold:
\begin{enumerate}[(i)]
    \item $(\ker\d\mu)^\perp|_{\mathring{S}}$ extends to an elliptic connection with zero radial residue over the open facets;
    \item The Hessian metric $g$ on $\mathring{\Delta}$, induced via Theorem \ref{thm:kahlmetrlagrfib}, extends to a hybrid $b$-metric on $\Delta$ with the residue at the open facets given by the primitive outward-pointing normals multiplied by $-\tfrac{1}{4\pi}$.
\end{enumerate} 
Conversely, given a flat Lagrangian elliptic connection $\theta$ with zero radial residues and a Hessian hybrid $b$-metric $g$ on $\Delta$ with residues as in (ii), there is a unique invariant compatible K\"ahler metric $G$ on $S$ inducing $g$ and such that $\ker\theta\vert_{\mathring{S}}=(\ker\d\mu)^\perp\vert_{\mathring{S}}$.
\end{theorem}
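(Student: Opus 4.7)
The plan is to reduce the problem to Theorem \ref{thm:kahlmetrlagrfib} by restriction to the open dense principal locus $\mathring{S}:=\mu^{-1}(\mathring{\Delta})$, and then analyze the behavior at the boundary strata using standard toric coordinates. Restricting $G$ to $\mathring{S}$ makes $\mu:\mathring{S}\to\mathring{\Delta}$ into a K\"ahler Lagrangian fibration, so by Theorem \ref{thm:kahlmetrlagrfib} it automatically produces a flat Lagrangian connection $D=(\ker\d\mu)^\perp\vert_{\mathring{S}}$ and a Hessian metric $g$ on $\mathring{\Delta}$. The whole game is to understand in what sense these extend across the open facets of $\Delta$ and to identify the residues.

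For the forward direction, I would work in standard toric coordinates $(z_1,\dots,z_k,\theta_{k+1},\dots,\theta_n,x^{k+1},\dots,x^n)$ around a depth-$k$ point, as provided by Theorem \ref{thm:Dufour-Molino} and the normal form discussed after Theorem \ref{thm:equiv:toric:Lagrangian}. By Proposition \ref{prop:invariance:condition}, $\T$-invariance of $G$ reduces to $\Tt^n$-invariance in these coordinates, so $G$ is determined by a single $\Tt^n$-invariant symmetric positive-definite tensor; combined with $\omega$-compatibility this forces $G$ to be block-diagonal, controlled by one symmetric matrix-valued function. The analysis then has two parts: (a) express $(\ker\d\mu)^\perp$ as an elliptic connection $1$-form on the full chart, and use Proposition \ref{prop:invsmoothfunctions:localmodel} to show that its angular components along $\partial_{\phi_j}$ and $\partial_{\theta_\ell}$ extend smoothly while its radial components along $r_j\partial_{r_j}$ vanish at $z_j=0$, which is precisely the zero radial residue condition; (b) express the potential of $g$ in action coordinates and read off its singular Guillemin-type behavior $\tfrac{1}{4\pi}\sum \ell_i \log|\ell_i|$ at each facet, which translates into the hybrid $b$-metric statement. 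The factor $\tfrac{1}{4\pi}$ is fixed by the conventions $\omega_{k,n}=\tfrac{1}{2\pi i}\sum \d z_j\wedge \d\bar z_j$ and $x^j=|z_j|^2$, and by the fact that $\mu_*(\mathcal{E}_{S_F})=2\mathcal{E}_F$ (so a factor of $2$ converts between $b$-residues on $\Delta$ and radial residues on $S$).

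For the converse, starting from a flat Lagrangian elliptic connection $\theta$ with zero radial residues and a hybrid $b$-metric $g$ on $\Delta$ with the prescribed residues, I would apply Theorem \ref{thm:kahlmetrlagrfib} over $\mathring{\Delta}$ to get a unique compatible invariant K\"ahler metric $G_{\mathring{S}}$ on $\mathring{S}$ inducing $g\vert_{\mathring{\Delta}}$ and $\ker\theta\vert_{\mathring{S}}$. Uniqueness of the extension to $S$ is then automatic by density, so the only task is smoothness. This again is local in standard toric coordinates, where the Hessian-potential presentation of $g$ together with the explicit form of $\theta$ (with zero radial residue) gives an explicit formula for $G_{\mathring{S}}$; the residue condition on $g$ is exactly what is needed to cancel the would-be $|z_j|^2\log|z_j|^2$ singularities, producing a smooth, positive-definite Hermitian form in the $z_j$ variables (this is the converse direction of the Abreu-Guillemin local computation, now in coordinate-free dress using the $b$- and elliptic tangent bundles).

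The main obstacle is the detailed boundary analysis: one must show that the coordinate-free notions of ``zero radial residue'' for the elliptic connection and ``residue $=\tfrac{1}{4\pi}\alpha_F$'' for the hybrid $b$-metric are the precise intrinsic avatars of smoothness of $G$ across the strata $S_F$. Two technical ingredients make this tractable. First, Proposition \ref{prop:invsmoothfunctions:localmodel} lets one move freely between smooth $\T$-invariant functions on $S$ and smooth functions on $\Delta$, and between functions vanishing on $\{z_j=0\}$ and functions of the form $|z_j|^2\widehat f$. Second, Propositions \ref{prop:curvature2} and \ref{prop:eulersection:manwithcorners} together with the compatibility $\mu_*(\mathcal{E}_{S_F})=2\mathcal{E}_F$ encode the residue normalizations in a way compatible with the potential-theoretic calculation. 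Once the forward direction is established in these local coordinates, the converse is essentially a matter of reading the Abreu-Guillemin construction backwards, and the uniqueness statement follows from the uniqueness in Theorem \ref{thm:kahlmetrlagrfib} together with density of $\mathring{S}\subset S$.
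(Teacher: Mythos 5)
Your proposal follows essentially the same route as the paper: restrict to $\mathring{S}$ and invoke Theorem \ref{thm:kahlmetrlagrfib}, then carry out the boundary analysis in standard toric coordinates, using Proposition \ref{prop:invsmoothfunctions:localmodel} to characterize when invariant tensors extend smoothly and pinning down the $\tfrac{1}{4\pi}$ residue via the normalization $x^j=|z_j|^2$ and $\mu_*(\mathcal{E}_{S_F})=2\,\mathcal{E}_F$. The paper packages the smoothness criterion into a lemma on invariant symmetric tensors in the local model (whose key output is the boundary constraint $g_{\phi_j,\phi_j}=(2\pi)^2 g_{r_j,r_j}$ over $\{x^j=0\}$) and works with the dual $b^*$-metric rather than a local potential, but these are presentational differences within the same argument.
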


\begin{remark}
The scalar curvature $S_G$ of an invariant K\"ahler metric $G$ descends to a smooth function on $\Delta$, because it is $\T$-invariant. It follows from the case of smooth Lagrangian fibrations (by restricting to $\mathring{\Delta}$) that $G$ is an extremal K\"ahler metric if and only if $S_G:\Delta\to\R$ is an affine function.
\end{remark}

By the same reasoning as for Corollary \ref{cor:equivofflatfib:preservesKahler}, we conclude the following from Theorems \ref{thm:bi-lagrangian:classification2} and \ref{thm:kahlmetrtoricsp}. 

\begin{corollary}
    \label{cor:equivofflatfib:preservesKahler2}
    Given an integral affine manifold $(M,\Lambda)$ and a Delzant domain $\Delta$, there is a canonical 1:1 correspondence
   \[ 
\left\{\\ \txt{K\"ahler toric $\T$-spaces\\ $\mu:(S,\omega,G)\to M$ with\\ \,$\mu(S)=\Delta$, up to equivalence \,}\right\}\ 
\tilde{\longleftrightarrow}\ 
\left\{\\ \txt{Hessian hybrid \\ \,$b$-metrics on $\Delta$ with\,\\ \txt{residues as in (ii)}}\right\}\ \times \check{H}^1(\Delta,\TFlat)
\] 
\end{corollary}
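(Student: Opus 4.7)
The proof will proceed exactly in parallel with that of Corollary~\ref{cor:equivofflatfib:preservesKahler}, but now combining Theorem~\ref{thm:kahlmetrtoricsp} with the classification of flat toric Hamiltonian $\T_\Lambda$-spaces given in Theorem~\ref{thm:bi-lagrangian:classification2}. The first step is to observe that any equivalence of K\"ahler toric $\T$-spaces $\Psi\colon (S_1,\omega_1,G_1)\to (S_2,\omega_2,G_2)$ is, by definition, fiberwise over $M$, intertwines the K\"ahler structures, and therefore yields the same Hessian hybrid $b$-metric on $\Delta$. Indeed, the induced metric on $\mathring{\Delta}$ is uniquely determined via Theorem~\ref{thm:kahlmetrlagrfib} from the K\"ahler structure on $\mathring{S_i}$ and the moment map $\mu_i$; since $\mu_2\circ\Psi=\mu_1$ and $\Psi^*G_2=G_1$, the two base metrics coincide on $\mathring{\Delta}$, hence on all of $\Delta$ by density.

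Second, suppose $(S_i,\omega_i,G_i)$ ($i=1,2$) are two K\"ahler toric $\T$-spaces which induce the same Hessian hybrid $b$-metric $g$ on $\Delta$, and let $\Psi\colon (S_1,\omega_1)\to (S_2,\omega_2)$ be an equivalence of the underlying toric $\T$-spaces. By Theorem~\ref{thm:kahlmetrtoricsp}, each $G_i$ is uniquely determined by the pair $(\theta_i,g)$, where $\theta_i$ is the flat Lagrangian elliptic connection with zero radial residues whose kernel over $\mathring{S_i}$ is the $G_i$-orthogonal complement of $\ker\d\mu_i$. On $\mathring{S_1}$ the condition $\Psi^*G_2=G_1$ is equivalent to $\Psi$ carrying the orthogonal of $\ker\d\mu_1$ to that of $\ker\d\mu_2$, i.e.\ to $\Psi^*(\ker\theta_2)=\ker\theta_1$, and by density of $\mathring{S_1}$ and the fact that both $\theta_i$ are flat elliptic connections with prescribed radial residues, this extends to the global identity $\Psi^*\theta_2=\theta_1$. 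Thus $\Psi$ is an equivalence of K\"ahler toric $\T$-spaces if and only if it is an equivalence of the associated flat toric Hamiltonian $\T_\Lambda$-spaces.

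Combining these two observations, the assignment
\[
(S,\omega,G)\;\longmapsto\;\bigl((S,\omega,\theta_G),\,g_G\bigr),
\]
with $\theta_G$ and $g_G$ provided by Theorem~\ref{thm:kahlmetrtoricsp}, descends to a well-defined bijection between equivalence classes of K\"ahler toric $\T$-spaces with moment map image $\Delta$ and pairs consisting of an equivalence class of flat toric Hamiltonian $\T_\Lambda$-spaces with image $\Delta$ together with a Hessian hybrid $b$-metric on $\Delta$. Surjectivity is immediate from the converse part of Theorem~\ref{thm:kahlmetrtoricsp}, applied to any chosen representative of a given flat equivalence class; injectivity is the content of the previous paragraph. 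The asserted bijection then follows by invoking Theorem~\ref{thm:bi-lagrangian:classification2} to identify equivalence classes of flat toric $\T_\Lambda$-spaces over $\Delta$ with $\check{H}^1(\Delta,\TFlat)$.

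The main subtlety, as in the smooth case, lies in ensuring that the elliptic connection $\theta_G$ behaves naturally under equivalences of toric $\T$-spaces and, in particular, that its behavior at the boundary of $\Delta$ is intrinsic. This is precisely the role played by the zero radial residue condition in Theorem~\ref{thm:kahlmetrtoricsp}: it guarantees that $\theta_G$ is the unique extension of $(\ker\d\mu)^{\perp_G}|_{\mathring{S}}$ to a flat Lagrangian elliptic connection with vanishing radial residues, and hence any equivalence that preserves the K\"ahler data on the dense open part $\mathring{S}$ automatically intertwines the global elliptic connections.
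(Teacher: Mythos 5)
Your proposal is correct and follows essentially the same route as the paper: the paper's own proof is a one-line reduction to the argument of Corollary \ref{cor:equivofflatfib:preservesKahler} (equivalences preserve the induced base metric, and an equivalence of the underlying toric spaces intertwines the K\"ahler structures if and only if it intertwines the orthogonal complements to the fibers), combined with Theorems \ref{thm:kahlmetrtoricsp} and \ref{thm:bi-lagrangian:classification2}. Your write-up simply spells out the density argument for extending the identification of connections from $\mathring{S}$ to all of $S$, which is exactly the intended reasoning.
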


\subsection{Hybrid $b$-metrics} 
\label{sec:bmetrics}

Let $\Delta$ be a manifold with corners. Given a smooth section
\begin{equation}
    g\in \Gamma^\infty(\tensor[^b]{T^*\Delta}{}\otimes T^*\Delta),
\end{equation} 
we let $g^\flat:\tensor[^b]{T\Delta}{}\to T^*\Delta$ denote the induced vector bundle map. We also denote by
\[
\widehat{g}\in \Gamma^\infty(\tensor[^b]{T^*\Delta}{}\otimes \tensor[^b]{T^*\Delta}{}) 
\]
the section induced by $g$ via the anchor map of $\tensor[^b]{T\Delta}{}$, so that $\widehat{g}^\flat=\rho^*\circ g^\flat$.

\begin{definition}\label{def:metricwithpoles} A smooth section $g$ of $\tensor[^b]{T^*\Delta}{}\otimes T^*\Delta$ will be called:
\begin{enumerate}[(i)]
    \item \textbf{non-degenerate} if $g^\flat_x$ is an isomorphism for each $x\in \Delta$; 
    \item \textbf{symmetric} if $\widehat{g}_x$ is symmetric for all $x\in \Delta$ (equivalently, for all $x\in \mathring{\Delta}$); 
    \item \textbf{positive semi-definite} if $\widehat{g}_x$ is positive semi-definite for all $x\in \Delta$ (equivalently, for all $x\in \mathring{\Delta}$).
\end{enumerate}
We will call $g$ a \textbf{hybrid $b$-metric} on $\Delta$ if it satisfies all these three properties. 
\end{definition}

A hybrid $b$-metric on $\Delta$ can be thought of as a smooth Riemannian metric on $\mathring{\Delta}$ with specified singularities at the boundary. In local coordinates, these singularities are of the same type as those of the singular metrics in \cite{Abr98}. Assume $\Delta$ is an open set in $\R^n_k$. A smooth section $g$ of $\tensor[^b]{T^*\Delta}{}\otimes T^*\Delta$ is symmetric if and only if it is of the form
\begin{equation}
\label{eqn:localform:hybrid-b-metric}
    g=\sum_{i=1}^k\frac{1}{x^i}g_{ii}\d x^i\otimes \d x^i+\sum_{(i,j)\in I_k} g_{ij}\d x^i\otimes \d x^j, 
\end{equation} 
for some $g_{ij}\in C^\infty(\Delta)$ such that $g_{ij}=g_{ji}$ for all $i,j\in \{1,\dots,n\}$, where we denote
\[
I_k:=\{(i,j)\in \{1,\dots,n\}^2\mid i\neq j \text{ or } i=j>k\}. 
\]
If $g$ is symmetric, then
\begin{itemize}\item positive semi-definiteness of $g$ means that for each $x\in \mathring{\Delta}$ the matrix $\mathring{g}(x)$ given by
\begin{equation}\label{eqn:coefmat:underlyingmetric}
    \mathring{g}_{ij}(x):=\begin{cases} 
    \frac{1}{x^i}g_{ii}(x)\quad&\text{ if } i=j\leq k, \\
    g_{ij}(x) \quad&\text{ if }(i,j)\in I_k,
    \end{cases} 
\end{equation} 
is positive semi-definite. 
\item non-degeneracy of $g$ at a point in $\Delta$ means that
\begin{equation}\label{eqn:detfncsingmetr} 
(x^1\cdot \dots \cdot x^k)\cdot \det(\mathring{g})\in C^\infty(\Delta)
\end{equation}
is non-zero at that point.  
\end{itemize} 
Therefore, if $g$ is symmetric and positive semi-definite, then it is non-degenerate if and only if (\ref{eqn:detfncsingmetr}) is strictly positive on all of $\Delta$. The latter condition is as in \cite{Abr98}. 

Assume now that $\Delta$ is an affine manifold with corners, i.e., $\Delta$ has an atlas for which the transition functions are restrictions of affine transformations. An \textbf{Hessian hybrid $b$-metric} $g$ on $\Delta$ is a hybrid $b$-metric satisfying
\[ \d^\nabla g^\flat=0, \]
where $\nabla$ is the associated flat $\tensor[^b]{T\Delta}{}$-connection on $T^*\Delta$. Such a $b$-metric $g$ admits local potentials around points in the boundary of $\Delta$ which are of a very particular form.

\begin{proposition}
\label{prop:portential:hybrid:metric}
Let $\Delta$ is an affine manifold with corners and $g$ a Hessian hybrid $b$-metric on $\Delta$. Around a point $x\in\partial \Delta$ of depth $k$ there exists a chart $(U,x^i)$ centered at $x$ for which
\[
    g=\sum_{i,j=1}^n \frac{\partial^2 \phi}{\partial x^i\partial x^j}\d x^i\otimes\d x^j
\]
where $\phi$ is a smooth function on $\mathring{\Delta}\cap U$ of the form
\begin{equation}\label{eqn:localpotential:hessian:hybridmetric}
    \phi=\sum_{i=1}^kc_ix^i\log(x^i)+f,\quad c_i:=g_{ii}(0)\in \R,\quad f\in C^\infty(U).
\end{equation} 
\end{proposition}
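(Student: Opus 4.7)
The plan is to work locally in an affine chart $(U,x^1,\dots,x^n)$ centered at $x$ with $U\cap\Delta\subset\R^n_k$, writing $g$ in the form \eqref{eqn:localform:hybrid-b-metric}, analyze the Hessian condition $\d^\nabla g^\flat=0$ in coordinates, then split off an explicit singular piece with potential $\sum_{i=1}^k c_i x^i\log(x^i)$ and apply the classical Poincar\'e lemma to what remains.

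First I would view $g^\flat\in\Omega^1(\tensor[^b]{T\Delta}{},T^*\Delta)$ as a family of closed $b$-$1$-forms $\alpha_j$ defined by $g^\flat=\sum_j\alpha_j\otimes\d x^j$. Since the frame $\{\d x^j\}$ is $\nabla$-flat, the condition $\d^\nabla g^\flat=0$ is equivalent to $\d\alpha_j=0$ (as $b$-$1$-forms) for each $j$. Reading off the components of $\alpha_j$ from \eqref{eqn:localform:hybrid-b-metric} yields, for $j\leq k$,
\[
\alpha_j=\tfrac{g_{jj}}{x^j}\d x^j+\sum_{i\neq j}g_{ij}\d x^i,
\]
while for $j>k$, $\alpha_j=\sum_i g_{ij}\d x^i$ is smooth. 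A direct computation of $\d\alpha_j$ and collecting the coefficients of $\d x^a\wedge \d x^b$ shows that the $x^j$-singular pieces force $\partial_a g_{jj}=0$ for $a\neq j$, i.e.\ for each $j\leq k$ the function $g_{jj}$ depends only on $x^j$. Once this is established, the remaining vanishing conditions read $\partial_l g_{ij}=\partial_i g_{lj}$, which will be exactly the symmetry needed later.

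Next, set $c_j:=g_{jj}(0)$ for $j\leq k$ and define the singular tensor
\[
g^{\mathrm{sing}}:=\sum_{j=1}^k \tfrac{c_j}{x^j}\,\d x^j\otimes \d x^j=\sum_{j=1}^k c_j\,\mathrm{Hess}(x^j\log x^j).
\]
Because $g_{jj}$ is a smooth function of $x^j$ alone with $g_{jj}(0)=c_j$, the difference $(g_{jj}-c_j)/x^j$ is smooth on $U$, and hence the difference $\widetilde g:=g-g^{\mathrm{sing}}$ extends from $\mathring\Delta\cap U$ to a smooth symmetric $(0,2)$-tensor on $U$, with components
\[
\widetilde g_{ij}=\begin{cases}(g_{jj}-c_j)/x^j,& i=j\leq k,\\ g_{ij},& \text{otherwise}.\end{cases}
\]
By construction $g^{\mathrm{sing}}$ is the Hessian of $\sum_j c_j x^j\log x^j$, so $\widetilde g$ inherits from $g$ the closedness property (on $\mathring\Delta\cap U$ and hence on $U$ by density).

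It remains to integrate $\widetilde g$ to a smooth potential $f\in C^\infty(U)$, which I would do by the standard double application of the Poincar\'e lemma on the star-shaped $U$: the $1$-forms $\widetilde\alpha_j:=\sum_i\widetilde g_{ij}\,\d x^i$ are closed, hence equal $\d\psi_j$ for some $\psi_j\in C^\infty(U)$; symmetry of $\widetilde g$ then makes $\sum_j\psi_j\,\d x^j$ closed, so equal $\d f$ for some $f\in C^\infty(U)$, and one checks $\partial_i\partial_j f=\widetilde g_{ij}$. Setting $\phi:=\sum_{j=1}^k c_j x^j\log x^j+f$ gives a function on $\mathring\Delta\cap U$ with $\mathrm{Hess}(\phi)=g^{\mathrm{sing}}+\widetilde g=g$, of the required form \eqref{eqn:localpotential:hessian:hybridmetric}. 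The main technical obstacle is the computation in the first step: one has to carefully separate the genuinely singular coefficients of $\d\alpha_j$ from the smooth ones to extract the constraint that $g_{jj}$ depends only on $x^j$; everything afterward is formal.
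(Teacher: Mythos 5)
Your overall strategy --- express the Hessian condition in an affine chart, peel off the explicit singular potential $\sum_i c_i x^i\log(x^i)$, and integrate the smooth remainder by two applications of the Poincar\'e lemma --- is exactly the paper's route. However, the key computational claim in your first step is false. Writing $g^\flat=\sum_j\alpha_j\otimes\d x^j$ with, for $j\le k$, $\alpha_j=\tfrac{g_{jj}}{x^j}\d x^j+\sum_{i\ne j}g_{ij}\d x^i$, the coefficient of $\d x^a\wedge\d x^j$ in $\d\alpha_j$ (for $a\ne j$) is $\tfrac{\partial_a g_{jj}}{x^j}-\partial_j g_{aj}$, so its vanishing gives $\partial_a g_{jj}=x^j\,\partial_j g_{aj}$, \emph{not} $\partial_a g_{jj}=0$. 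The singular-looking term $\partial_a g_{jj}/x^j$ is not forced to vanish separately from the smooth one; it is forced to equal it, which only tells you that $\partial_a g_{jj}$ vanishes on $\{x^j=0\}$. Concretely, $\phi=x^1\log x^1+\tfrac12 (x^1)^2x^2+\tfrac12 (x^2)^2$ defines a Hessian hybrid $b$-metric near the origin of $\R^2_1$ with $g_{11}=1+x^1x^2$, $g_{12}=g_{21}=x^1$, $g_{22}=1$; here $g_{11}$ genuinely depends on $x^2$, so your assertion that $g_{jj}$ is a function of $x^j$ alone fails.

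The damage is local and repairable, and the repair is what the paper does: from $\partial_a g_{jj}=x^j\partial_j g_{aj}$ you conclude that $g_{jj}$ restricted to $\{x^j=0\}$ is constant, equal to $c_j:=g_{jj}(0)$; hence $g_{jj}-c_j$ vanishes on $\{x^j=0\}$ and, by the Hadamard lemma, equals $x^j h_j$ with $h_j\in C^\infty(U)$. This weaker statement is all you actually use downstream, since it already gives smoothness of $\widetilde g_{jj}=(g_{jj}-c_j)/x^j=h_j$. With that substitution, your remaining steps (closedness of $\widetilde g$ on all of $U$ by density, then the double Poincar\'e lemma on a star-shaped chart --- where you should note you need its version for opens in $\R^n_k$, i.e.\ for manifolds with corners) coincide with the paper's proof.
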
 

\begin{proof} 
Fix an affine chart for $\Delta$ centered at $x$, onto an open set of the form $[0,\delta[^k\times ]-\delta,\delta[^{n-k}$. The fact that $g$ is Hessian means that the coefficients (\ref{eqn:coefmat:underlyingmetric}) of the Riemannian metric on $\mathring{\Delta}$ underlying $g$ satisfy the usual set of differential equations
\begin{equation}\label{eqn:1:hessianmetric:pde}
    \frac{\partial \mathring{g}_{ij}}{\partial x^l}=\frac{\partial{\mathring{g}_{il}}}{\partial x^j}, \quad 1\leq i,j,l\leq n.
\end{equation} 
In terms of the functions $g_{ij}\in C^\infty(U)$ as in (\ref{eqn:localform:hybrid-b-metric}) this means that
\begin{align}
    \frac{\partial g_{ii}}{\partial x^j}=x^i\frac{\partial g_{ij}}{\partial x^i}&\quad \text{if } i\leq k,\text{ } i\neq j, \label{eqn:2.1:hessianmetric:pde}\\
    \frac{\partial g_{ij}}{\partial x^l}=\frac{\partial g_{il}}{\partial x^j}&\quad \text{if } (i,j),(i,l)\in I_k. \label{eqn:2.2:hessianmetric:pde}
\end{align}
Since (\ref{eqn:2.1:hessianmetric:pde}) vanishes if $x^i=0$, for $i\in \{1,\dots,k\}$ the function
 \[
     x\mapsto g_{ii}(x^1,\dots,x^{i-1},0,x^{i+1},\dots,x^n)
 \] 
is constant on $U$ with value $c_i$. Since $g_{ii}(x)-c_i$ vanishes if $x^i=0$, $g_{ii}-c_{i}$ is of the form $x^ih_i$ for a smooth function $h_i\in C^\infty(U)$. The equations (\ref{eqn:2.1:hessianmetric:pde}) and (\ref{eqn:2.2:hessianmetric:pde}) then express the fact that the $1$-forms $\alpha_i$ on $U$ given by
 \[
     \alpha_i=
     \begin{cases} 
     h_i\d x^i+\sum_{j\neq i} g_{ij}\d x^j\quad &\text{if }1\leq i\leq k,\\ \\
     \sum_{j=1}^ng_{ij}\d x^j,\quad &\text{if }k<i\leq n,
     \end{cases}  
 \]
are closed. By the Poincar\'{e} lemma for manifolds with corners, each $\alpha_i$ must in fact be exact. So, there are $\phi_i\in C^\infty(U)$ such that $\d \phi_i=\alpha_i$. Applying the Poincar\'{e} lemma once more, this time to the closed $1$-form $\sum_{i=1}^n\phi_i\d x^i$ on $U$, we find an $f\in C^\infty(U)$ such that:
 \[
     \frac{\partial^2 f}{\partial x^i \partial x^j}=
     \begin{cases}
         h_i\quad &\text{if }i=j<k,\\
         \\
         g_{ij}\quad &\text{if }(i,j)\in I_k,
     \end{cases} 
 \]
 with $I_k$ as in (\ref{eqn:coefmat:underlyingmetric}). For this choice of $f$, the function $\phi$ defined by (\ref{eqn:localpotential:hessian:hybridmetric}) is indeed a potential for $g$ on $\mathring{\Delta}\cap U$. 
\end{proof}

Next, we introduce the notion of residue appearing in Theorem \ref{thm:kahlmetrtoricsp}.

\begin{definition} 
The \textbf{residue} of a hybrid $b$-metric $g$ at an open facet of $\Delta$ is the residue of the vector valued 1-form $g^\flat\in\Omega^1(\tensor[^b]{T\Delta}{},T^*\Delta)$ at the facet (see Definition \ref{def:residue:form}).
\end{definition}
\smallskip

\begin{example}
Consider a Delzant polytope
\[ \Delta:=\bigcap_{i=1}^d\{\ell_i\leq 0\}\subset \R^n. \]
Then $\mathring{\Delta}$ is contractible, so any Hessian hybrid $b$-metric on $\Delta$ admits a global potential $\phi\in C^\infty(\mathring{\Delta})$. By applying successively Proposition \ref{prop:portential:hybrid:metric}, one concludes that for a Delzant polytope a Hessian hybrid $b$-metric as in Theorem \ref{thm:kahlmetrtoricsp} (ii) has potential of the form
\[ \phi(x)=-\tfrac{1}{4\pi}\sum_{i=1}^d \ell_i(x)\log \ell_i(x) +f(x), \]
where $f$ is smooth on $\Delta$ (not just on $\mathring{\Delta}$). In view of this and \eqref{eqn:detfncsingmetr}, we recover the type of singular Hessian metrics on Delzant polytopes appearing in the works of Guillemin \cite{Guill94} and Abreu \cite{Abr98}.    
\end{example}

\begin{remark}
    In \cite{Abr98} Abreu also observed that the difference between any two of his singular Hessian metrics on a Delzant polytope extends smoothly over the boundary. This holds more generally for any two hybrid $b$-metrics on a manifold with corners $\Delta$ that have the same residues. 
\end{remark}

For the proof of Theorem \ref{thm:kahlmetrtoricsp}, it will be useful to also consider the notion dual to that of the singular metrics above. These will be smooth sections
\[
    \delta\in \Gamma^\infty(T\Delta\otimes\tensor[^b]{T\Delta}{} ).
\] 
Given such a section $\delta$,  we denote by $\delta^\sharp:T^*\Delta\to \tensor[^b]{T\Delta}{}$ the induced vector bundle map. We also denote by
\[
    \widehat{\delta}\in \Gamma^\infty(T\Delta\otimes T\Delta)
\] 
the section induced by $\delta$ via the anchor map of $\tensor[^b]{T\Delta}{}$, so that $\widehat{\delta}^\sharp=\rho\circ\delta^\sharp$.

\begin{definition}
\label{def:metricwithpoles:dual}
A smooth section $\delta$ of $T\Delta\otimes\tensor[^b]{T\Delta}{}$ will be called:
\begin{enumerate}[(i)]
    \item \textbf{non-degenerate} if $\delta^\sharp_x$ is an isomorphism for each $x\in \Delta$; 
    \item \textbf{symmetric} if $\widehat{\delta}_x$ is symmetric for all $x\in \Delta$ (equivalently, for all $x\in \mathring{\Delta}$); 
    \item \textbf{positive semi-definite} if $\widehat{\delta}_x$ is positive semi-definite for all $x\in \Delta$ (equivalently, for all $x\in \mathring{\Delta}$).
\end{enumerate}
We will call $\delta$ a \textbf{hybrid $b^*$-metric} on $\Delta$ if it satisfies all these three properties. 
\end{definition}

A hybrid $b^*$-metric has an underlying metric on $T^*\mathring{\Delta}$ that extends smoothly to the tensor $\widehat{\delta}$ on all of $\Delta$. Note, however, that $\widehat{\delta}$ is degenerate at the boundary. To see how, suppose that $\Delta$ is an open set around the origin in $\R^n_k$. Then a smooth section $\delta$ of $T\Delta\otimes\tensor[^b]{T\Delta}{}$ is symmetric if and only if it is of the form
    \begin{align}
        \delta=\sum_{i,j=1,j\neq i}^k& x^ix^j\delta^{ij}{\partial}_{x^i}\otimes {\partial}_{x^j}+\notag\\
        &+ \sum_{i=1}^kx^i\delta^{ii}{\partial}_{x^i}\otimes {\partial}_{x^i}
        +\sum_{i,j=k+1}^n\delta^{ij}{\partial}_{x^i}\otimes{\partial}_{x^j}+\label{eq:singmetrloccoord:dual}\\
        &\qquad \qquad +\sum_{i=1}^k\sum_{j=k+1}^n x^i\delta^{ij}\left({\partial}_{x^i}\otimes {\partial}_{x^j}+{\partial}_{x^j}\otimes{\partial}_{x^i}\right),\notag
    \end{align} 
    where 
    \[ \delta^{ij}\in C^\infty(\Delta), \quad \delta^{ij}=\delta^{ji}\ (1\leq i,j\leq n).\] 
    For positive semi-definiteness and non-degeracy, there are criteria analogous to those for hybrid $b$-metrics. 

On a given manifold with corners, hybrid $b$-metrics are in bijective correspondence with hybrid $b^*$-metrics, in which a hybrid $b$-metric $g$ and its corresponding hybrid $b^*$-metric $\delta$ are related by the fact that $g^\flat$ and $\delta^\sharp$ are inverse to each other.

\subsection{Proof of Theorem \ref{thm:kahlmetrtoricsp}}

In this section we prove the two directions of the statement of Theorem \ref{thm:kahlmetrtoricsp}. We start by proving the following local result concerning invariant metrics in the local model of a toric $\T$-space discussed in Section \ref{subsec:toric:lagrangian:fibrations}.

\begin{lemma}
\label{lemma:smoothnessinvmetrics:localmodel} 
Let $(S_{k,n},\omega_{k,n})$ be a standard local model with the $\mathbb{T}^n$-action (\ref{eqn:torusaction:localmodel}), and let $U$ be an open set in $\Delta:=\R^n_k$. Then:
\begin{enumerate}[(i)]
\item A smooth $\mathbb{T}^n$-invariant symmetric tensor:
\[ 
   G:\mu_{k,n}^{-1}(U\cap \mathring{\Delta})\to T^*S_{k,n}\otimes T^*S_{k,n} 
\] extends smoothly to all of $\mu_{k,n}^{-1}(U)$ if and only if it takes the form
\begin{align*}
    G&=\sum_{j=1}^k \left(g_{r_j,r_j}(\d r_j)^2+r_j^3 g_{r_j,\phi_j}\d r_j\d\phi_j+r_j^2 g_{\phi_j,\phi_j}(\d\phi_j)^2\right)\\
    & +\sum_{j,l=1,j\not=l}^k \left(r_jr_lg_{r_j,r_l}\d r_j\d r_l+r_j r_l^2 g_{r_j,\phi_l}\d r_j\d\phi_l+r_j^2r_l^2 g_{\phi_j,\phi_l}\d\phi_j\d\phi_l\right)\\
    &\quad +\sum_{j,l=1}^{k,n-k} \left(r_jg_{r_j,x^l}\d r_j\d x^l+r_j g_{r_j,\theta_l}\d r_j\d\theta_l+r_j^2 g_{\phi_j, x^l}\d\phi_j\d x^l+r_j^2 g_{\phi_j,\theta_l}\d\phi_j\d\theta_l\right)\\
    &\quad \quad + \sum_{j,l=1}^{n-k} \left(g_{\theta_j,\theta_l}\d \theta_j\d \theta_l+g_{x^j,\theta_l}\d x^j\d\theta_l+g_{x^j,x^l}\d x^j\d x^l\right)
\end{align*}
where the coefficients functions are $\Tt^n$-invariant smooth functions on $\mu_{k,n}^{-1}(U)$ and $g_{\phi_j,\phi_j}=(2\pi)^2 g_{r_j,r_j}$ over $U\cap \{x^j=0\}$.

    \item Dually, a smooth $\mathbb{T}^n$-invariant symmetric tensor
    \[ 
   G^*:\mu_{k,n}^{-1}(U\cap \mathring{\Delta})\to TS_{k,n}\otimes TS_{k,n} 
    \] 
    extends smoothly to all of $\mu_{k,n}^{-1}(U)$ if and only if its coefficients with respect to the coframe on $\mathring{S}$
    \[
    \d\phi_1,\dots,\d\phi_k,\frac{1}{r_1}\d r_1,\dots,\frac{1}{r_k}\d r_k,\d\theta_1,\dots,\d\theta_{n-k},\d x^1,\dots,\d x^{n-k}
    \] 
    are $\Tt^n$-invariant smooth functions on $\mu_{k,n}^{-1}(U)$, with the exception of the following coefficients, which take the form
    \[ G^*\left(\frac{1}{r_j}\d r_j,\frac{1}{r_j}\d r_j\right)=\frac{1}{r_j^2}g^*_{r_j,r_j},\quad G^*\left(\d\phi_j,\d\phi_j\right)=\frac{1}{r_j^2} g^*_{\phi_j,\phi_j},\]
    with $g^*_{r_j,r_j}$ and $g^*_{\phi_j,\phi_j}$ $\Tt^n$-invariant smooth functions on $\mu_{k,n}^{-1}(U)$ such that $g^*_{\phi_j,\phi_j}=(2\pi)^2 g^*_{r_j,r_j}$ over $U\cap \{x^j=0\}$.
\end{enumerate}
\end{lemma}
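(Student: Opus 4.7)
The plan is to work in the smooth Cartesian + torus coordinates $(u_j,v_j,\theta_l,x^l)$ on $\mu_{k,n}^{-1}(U)$ (where $z_j = u_j + iv_j$), in which smoothness up to the boundary is manifest and the $\Tt^n$-action is by rotations in each $(u_j,v_j)$-plane and translations in each $\theta_l$. By Proposition \ref{prop:invsmoothfunctions:localmodel}(a), the ring $R$ of $\Tt^n$-invariant smooth functions on $\mu_{k,n}^{-1}(U)$ is $\mu_{k,n}^*C^\infty(U)$. The key auxiliary observation is that the $\Tt^n$-invariant smooth $1$-forms on $\mu_{k,n}^{-1}(U)$ are generated, as an $R$-module, by
\begin{equation*}
r_j\,dr_j = u_j\,du_j + v_j\,dv_j,\quad 2\pi\,r_j^2\,d\phi_j = u_j\,dv_j - v_j\,du_j,\quad d\theta_l,\quad dx^l,
\end{equation*}
which are polynomial in the Cartesian coordinates and hence smooth on all of $\mu_{k,n}^{-1}(U)$, even though $dr_j$ and $d\phi_j$ themselves are singular at $r_j=0$. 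This follows from a direct weight decomposition of the $\Tt^n$-representation on $T^*\mu_{k,n}^{-1}(U)$. Dually, the $\Tt^n$-invariant smooth vector fields are generated over $R$ by $r_j\partial_{r_j}, \partial_{\phi_j}, \partial_{\theta_l}, \partial_{x^l}$, using the paper's convention $\partial_{\phi_j} = 2\pi(u_j\partial_{v_j}-v_j\partial_{u_j})$.

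For the forward implication of part (i), I would show that any $\Tt^n$-invariant smooth symmetric $(0,2)$-tensor $G$ on $\mu_{k,n}^{-1}(U)$ is an $R$-linear combination of symmetric products of the four invariant $1$-forms above, together with the trivial-weight generators $du_j^2 + dv_j^2$ (one per complex factor, which is \emph{not} in the $R$-span of symmetric products of the invariant $1$-forms). Assembling the resulting expression in polar coordinates via $r_j\,dr_j = u_j\,du_j + v_j\,dv_j$ and $2\pi r_j^2\,d\phi_j = u_j\,dv_j - v_j\,du_j$ produces exactly the shape displayed in the statement, and the boundary identity $g_{\phi_j\phi_j} = (2\pi)^2 g_{r_j r_j}$ on $\{x^j=0\}$ arises because the generator $du_j^2 + dv_j^2 = dr_j^2 + (2\pi)^2 r_j^2\,d\phi_j^2$ contributes the same coefficient to both $dr_j^2$ and $r_j^2\,d\phi_j^2$ in polar up to the factor $(2\pi)^2$, while the remaining generators contribute only terms carrying an extra factor of $r_j^2$.

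The converse of (i) is the main technical step. Given an expression of the displayed form, the potentially non-smooth pieces are the pure-$\C_j$ summands $g_{r_j r_j}\,dr_j^2 + r_j^2 g_{\phi_j\phi_j}\,d\phi_j^2$, which can be rewritten as
\begin{equation*}
g_{r_j r_j}(du_j^2 + dv_j^2) + \bigl(g_{\phi_j\phi_j} - (2\pi)^2 g_{r_j r_j}\bigr)\,r_j^2\,d\phi_j^2.
\end{equation*}
The boundary relation combined with Proposition \ref{prop:invsmoothfunctions:localmodel}(b) gives $g_{\phi_j\phi_j} - (2\pi)^2 g_{r_j r_j} = |z_j|^2 \cdot \widehat{f}$ for a smooth invariant $\widehat f$, so the second term becomes $\widehat{f}\,(u_jdv_j - v_jdu_j)^2/(2\pi)^2$, which is smooth. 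All remaining terms in the displayed formula reduce to polynomial expressions in the smooth invariant $1$-forms with $R$-valued coefficients: for instance $r_j^3\,dr_j\,d\phi_j = (r_jdr_j)(r_j^2d\phi_j)/(2\pi)$ and $r_j^2 r_l^2\,d\phi_j\,d\phi_l = (r_j^2 d\phi_j)(r_l^2 d\phi_l)/(2\pi)^2$, so no further boundary constraint is needed and smoothness is immediate.

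Part (ii) is the dual statement and is proved by the analogous argument. The dual of the trivial-weight generator is the identity $\partial_{u_j}^2 + \partial_{v_j}^2 = \partial_{r_j}^2 + \partial_{\phi_j}^2/((2\pi)^2 r_j^2)$, and expanding $G^*$ in the singular dual coframe $(\tfrac{1}{r_j}dr_j, d\phi_j, d\theta_l, dx^l)$ produces the prescribed $\tfrac{1}{r_j^2}$-type singularities precisely in the pure-$\C_j$ coefficients. The boundary relation $g^*_{\phi_j\phi_j} = (2\pi)^2 g^*_{r_j r_j}$ on $\{x^j=0\}$ then arises via exactly the same application of Proposition \ref{prop:invsmoothfunctions:localmodel}(b) as in part (i).
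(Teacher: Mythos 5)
Your route is genuinely different from the paper's. The paper proves the backward implication by simply re-expanding the displayed expression in the Cartesian frame $\partial_{u_j},\partial_{v_j},\partial_{\theta_l},\partial_{x^l}$, and proves the forward implication by applying Proposition \ref{prop:invsmoothfunctions:localmodel} to the coefficients of $G$ in the elliptic frame \eqref{eqn:basis:elltngtbun} (which gives the factors $r_j^2$ in $G(\partial_{\phi_j},\partial_{\phi_j})$, etc.) and then extracting the two remaining constraints --- the extra vanishing of the $\d r_j\,\d\phi_j$ coefficient and the boundary identity $g_{\phi_j,\phi_j}=(2\pi)^2 g_{r_j,r_j}$ --- by comparing the limits of $\widehat G(\partial_{u_j},\partial_{v_j})$ and $\widehat G(\partial_{u_j},\partial_{u_j})$ along the real and imaginary axes in the $z_j$-plane. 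You instead deduce everything from a single structural statement: that the module of invariant smooth symmetric $2$-tensors is generated over the invariant functions by symmetric products of $r_j\d r_j$, $r_j^2\d\phi_j$, $\d\theta_l$, $\d x^l$ together with the trivial-weight tensors $\d u_j^2+\d v_j^2$. Granting that statement, your bookkeeping in both directions is correct (the rewriting of the pure $\C_j$ block and the appeal to Proposition \ref{prop:invsmoothfunctions:localmodel}(b) in the converse is exactly right), and the approach is arguably cleaner and transfers to part (ii) with no extra work.

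The gap is that this generation statement is the entire content of the lemma and you have not proved it. It does not follow from your claim about invariant $1$-forms --- as you yourself note by having to adjoin $\d u_j^2+\d v_j^2$, the invariant sections of $S^2T^*$ are not the symmetric square of the invariant sections of $T^*$ --- and ``a direct weight decomposition'' only gives the pointwise statement at points of the boundary strata. To promote it to a statement about smooth sections you need the equivariant division argument: decompose $S^2T^*$ into weight spaces for $\Tt^n$, observe that the coefficient of a weight-$w$ basis tensor in an invariant section is a smooth function transforming with weight $-w$, and show that such a function is divisible by the corresponding monomial in $z_j,\bar z_j$ (e.g.\ weight $(-1,-1)$ in factors $j\neq l$ forces a factor $\bar z_j\bar z_l$). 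This is a weighted analogue of Proposition \ref{prop:invsmoothfunctions:localmodel}(b) and requires the same Taylor-expansion-plus-flat-remainder care; it is standard and certainly fillable, but as written it is asserted rather than established, and it is where all the difficulty of the lemma lives. Once you supply it, your proof is complete; note that the paper's two-limits comparison is precisely an elementary substitute for this division lemma in the only two weights where it is needed.
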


    \begin{proof} 
    The backward implication in (i) readily follows by expressing the coefficients of $G$ with respect to the frame:
    \[
        \partial_{u_1},\partial_{v_1},\dots,\partial_{u_k},\partial_{v_k}, \partial_{\theta_1},\dots,\partial_{\theta_{n-k}}, \partial_{x^1},\dots,\partial_{x^{n-k}}
    \] on $U$ in terms of the coefficients with respect to the frame \eqref{eqn:basis:elltngtbun} on $\mathring{U}$. For the forward implication, suppose that $G$ extends smoothly to all of $\mu^{-1}_{k,n}(U)$. By continuity this extension is $T$-invariant as well, hence so are its coefficient functions with respect to the frame \eqref{eqn:basis:elltngtbun}. Applying Proposition \ref{prop:invsmoothfunctions:localmodel} to these, it follows that the coefficients in the last 3 lines of the formula for $G$ are of the desired form. Moreover, it follows that there are $\Tt^n$-invariant smooth functions $g_{\phi_j,\phi_j},g_{r_j,r_j},\widetilde{g}_{\phi_j,r_j}$ on $\mu^{-1}_{k,n}(U)$ such that:
    \begin{align*}
            G\left(\partial_{\phi_j},\partial_{\phi_j}\right)&=r_j^2g_{\phi_j,\phi_j}, \\
            G\left(r_j\partial_{r_j},r_j\partial_{r_j}\right)&=r_j^2g_{r_j,r_j}, \\
            G\left(\partial_{\phi_j},r_j\partial_{r_j}\right)&=r_j^2\widetilde{g}_{\phi_j,r_j}.
        \end{align*}
    Denote the smooth extension of $G$ to $\mu^{-1}_{k,n}(U)$ by $\widehat{G}$. Comparing the limits of the function $\widehat{G}(\partial_{u_j},\partial_{v_j})$ along the two paths: 
    \begin{align*} &\R\ni \varepsilon \mapsto (z_1,\dots,z_{j-1},\varepsilon,z_{j+1},\dots,z_k,t,x)\\
    &\R\ni \varepsilon \mapsto (z_1,\dots,z_{j-1},i\varepsilon,z_{j+1},\dots,z_k,t,x)
    \end{align*} 
    as $\varepsilon$ tends to zero (for any given $j\in \{1,\dots,k\}$ and $(z,t,x)\in U\cap \{z_j=0\}$) leads to the conclusion that $\widetilde{g}_{\phi_j,r_j}$ vanishes over $U\cap \{x^j=0\}$, so $\widetilde{g}_{\phi_j,r_j}=r_j^2{g}_{\phi_j,r_j}$. Computing the same limits for the function $\widehat{G}(\partial_{u_j},\partial_{u_j})$ shows that $g_{\phi_j,\phi_j}$ and $(2\pi)^2\cdot g_{r_j,r_j}$ coincide over $U\cap \{x^j=0\}$, which completes the proof of (i). Part (ii) follows from the same type of arguments.
    \end{proof}

\subsubsection{From invariant (K\"ahler) metrics to hybrid $b$-metrics}

We will now prove one direction of Theorem \ref{thm:kahlmetrtoricsp}. Let $\mu:(S,\omega)\to M$ be a toric $(\T_\Lambda,\Omega_\Lambda)$-space with Delzant domain $\Delta:=\mu(S)$. Further, let $G$ be an $\omega$-compatible invariant K\"ahler metric on $S$. As in the proof of Theorem \ref{thm:kahlmetrlagrfib}, the $\T$-invariance of $G$ implies that there is a unique symmetric, positive semi-definite section $\widehat{\delta}\in \Gamma^\infty(T\Delta\otimes T\Delta)$ making the following diagrams commute
\begin{equation}\label{eqn:dualmetric:Delzspace:degenerate}
\vcenter{\xymatrix{
TS\ar[r]^{G^\flat} & T^*S\ar[d]^{\act^*} & & T^*S\ar[r]^{{(G^\flat)}^{-1}} & TS\ar[d]^{\d\mu}\\
\mu^*(T^*\Delta)\ar[u]^{\act}\ar[r]_{\widehat{\delta}^\sharp} & \mu^*(T\Delta)& & \mu^*(T^*\Delta)\ar[u]^{\d\mu^*}\ar[r]_{\widehat{\delta}^\sharp} & \mu^*(T\Delta)} 
}
\end{equation}
The tensor $\widehat{\delta}$ extends the metric on $T^*\mathring{\Delta}$ dual to the unique Riemannian metric on $g$ on $\mathring{\Delta}$ making $\mu:(\mathring{S},G)\to (\mathring{\Delta},g)$ a Riemannian submersion. For each locally defined $1$-form $\alpha$ on $\Delta$ the vector field $(G^\flat)^{-1}(\mu^*\alpha)$ is $\T$-invariant, since both $\mu^*\alpha$ and $G$ are. Therefore, $(G^\flat)^{-1}\circ (\d\mu)^*$ lifts to a unique vector bundle map:
\begin{equation}\label{eqn:lifttransverseinfact:metric}
\vcenter{\xymatrixcolsep{5pc}\xymatrix{
 & \tensor[^\mu]{TS}{}\ar[d]^{\rho}\\
\mu^*(T^*\Delta)\ar[r]_{(G^\flat)^{-1}\circ (\d\mu)^*}\ar@{-->}[ru]^{h} & TS}}
\end{equation}
From this and the commutativity of the right-hand square in (\ref{eqn:dualmetric:Delzspace:degenerate}), it follows that $\widehat{\delta}^\sharp$ takes $1$-forms on $\Delta$ to $b$-vector fields. Therefore, $\widehat{\delta}$ lifts to a symmetric and positive semi-definite section $\delta\in\Gamma^\infty(T\Delta\otimes\tensor[^b]{T\Delta}{})$ making the following diagrams commute
\begin{equation}\label{eqn:dualmetric:Delzspace:hybrid}
\vcenter{
\xymatrix{
 & \tensor[^b]{T\Delta}{}\ar[d]^\rho & & & \tensor[^\mu]{TS}{}\ar[d]^{\mu_*}\\
T^*\Delta\ar[ru]^{\delta^\sharp}\ar[r]_{\widehat{\delta}^\sharp} & T\Delta & & \mu^*(T^*\Delta)\ar[ru]^{h}\ar[r]_{\delta^\sharp} & \mu^*(\tensor[^b]{T\Delta}{})}} 
\end{equation} 
with $\mu_*$ as in Proposition \ref{prop:elliptictngbun:liftmomentummap}. In what follows, we will show that $\delta$ is non-degenerate, so that it is a hybrid $b^*$-metric on $\Delta$. Its dual will then be the hybrid $b$-metric on $\Delta$ extending the Riemannian metric $g$, as in item (ii) in Theorem \ref{thm:kahlmetrtoricsp}. Moreover, by commutativity of the right-hand triangle in (\ref{eqn:dualmetric:Delzspace:hybrid}), the composite
\begin{equation}
\label{eqn:extension:horizontallift}
\xymatrix{
\mu^*(\tensor[^b]{T\Delta}{})\ar[r]^{(\delta^\sharp)^{-1}}& \mu^*(T^*\Delta)\ar[r]^{h} & \tensor[^\mu]{TS}{}}
\end{equation} 
will be a section of $\mu_*$, which gives the extension of $(\ker\d\mu)^\perp|_{\mathring{S}}$ to the elliptic connection $\theta$ in item (i) in Theorem \ref{thm:kahlmetrtoricsp}.
\smallskip

It remains to prove that:
\begin{enumerate}[(i)]
    \item $\delta$ is non-degenerate at each $x\in \partial\Delta$;
    \item at each open facet the residue of the hybrid $b$-metric dual to $\delta$ is the primitive inward-pointing normal multiplied by $\frac{1}{4\pi}$;
    \item $\theta$ has zero radial residue. 
\end{enumerate}
{ Fix $x_0\in\partial\Delta$ of depth $k$. In coordinates centered at $x_0$ the  smooth section $\delta$ of $T\Delta\otimes \tensor[^b]{T\Delta}{}$ takes the form \eqref{eq:singmetrloccoord:dual}. At the point $x_0$ we have $x^1=\cdots=x^k=0$, so we can write the map $\delta^\sharp_{x_0}:T_{x_0}^*\Delta\to \tensor[^b]{T_{x_0}\Delta}{}$ as a matrix of the form
\begin{equation}
\label{eqn:matrix:deltasharp}
\begin{pmatrix}
 D_{k} & \rvline & D_{n-k,k} \\
\hline 
0 & \rvline & D_{n-k}
\end{pmatrix}, 
\end{equation}
with respect to standard bases 
\begin{align*}
    &\{\d x^1,\dots,\d x^n\}\text{ for }T^*_{x_0}\Delta,\\
    &\{x^1\partial_{x^1},\dots,x^k\partial_{x^k},\partial_{x^{k+1}},\dots,\partial_{x^n}\}\text{ for }\tensor[^b]{T_{x_0}\Delta}{},
\end{align*} 
where $D_{k}$ is the diagonal $k$ by $k$ matrix with $i^\textrm{th}$ diagonal entry $\delta^{ii}(x_0)$, $D_{n-k}$ is the $n-k$ by $n-k$ matrix with $(i,j)$-entry $\delta^{k+i,k+j}(x_0)$, and similarly for $D_{n-k,k}$. In standard toric coordinates $(z_i,\theta_j,x^j)$ centered at $x_0$  (see Section  \ref{subsec:toric:lagrangian:fibrations}), the vector fields associated to the standard coframe on $\R^n$ by the infinitesimal action $\act$ are} 
\[  
\partial_{\phi_1},\dots,\partial_{\phi_k}, \partial_{\theta_1},\dots,\partial_{\theta_{n-k}}.
\] 
It follows from part (i) of Lemma \ref{lemma:smoothnessinvmetrics:localmodel} and the left diagram in (\ref{eqn:dualmetric:Delzspace:degenerate}) that
\begin{equation}\label{eqn:deltacomputation:left-hand}
\delta^{ij}=\begin{cases}g_{\phi_i,\phi_j}\quad &\text{ if }i,j\leq k,\\
g_{\phi_i,\theta_{j-k}}\quad &\text{ if }i\leq k,\text{ }j\geq k+1,\\
   g_{\phi_j,\theta_{i-k}}\quad &\text{ if }i\geq k+1,\text{ }j\leq k,\\ 
   g_{\theta_{i-k},\theta_{j-k}}\quad &\text{ if }i\geq k+1,\text{ }j\geq k+1.
\end{cases}
\end{equation} From this and the fact that $G$ is positive definite, it follows that the functions $\delta^{ii}$ are strictly positive on $\Delta$ and the matrix (\ref{eqn:matrix:deltasharp}) is invertible. This proves item (i). 

For items (ii) and (iii) we can further assume that the depth of $x_0$ is $1$, i.e., $k=1$. For item (ii), we are left to show that $\delta^{11}(x_0)=4\pi$. Since
\[
    2r\d r,\d x^1,\dots,\d x^{n-1},
\] are the $1$-forms obtained by pulling back the standard coframe on $\R^n$ along $\mu$, it follows from part (ii) of Lemma \ref{lemma:smoothnessinvmetrics:localmodel} and the right diagram in (\ref{eqn:dualmetric:Delzspace:degenerate}) that 
\begin{equation}
\label{eqn:deltacomputation:right-hand} 
\delta^{ij}=\begin{cases}4\, g^*_{r,r},&\text{ if }i=j=1,\\
2\, g^*_{r,x^{j-1}}\quad &\text{ if }i=1,\ j\geq 2,\\
  2\, g^*_{r,x^{i-1}}\quad &\text{ if } i\geq 2,\ j=1,\\ 
   g^*_{x^{i-1},x^{j-1}}\quad &\text{ if }i,j\geq 2.
\end{cases} 
\end{equation}
By considering the coefficient functions of $g$ with respect to the frame
\[
    \frac{1}{r}\partial_{\phi},\partial_{r},\partial_{\theta_1},\dots,\partial_{\theta_{n-1}},\partial_{x^1},\dots,\partial_{x^{n-1}}
\] 
over $\mathring{S}$ and those of $g^*$ with respect to the dual coframe
\[ 
    r\d\phi,\d r,\d\theta_1,\dots,\d\theta_{n-1},\d x^1,\dots,\d x^{n-1}
\] 
over $\mathring{S}$, we obtain mutually inverse matrices of functions that converge as $z$ tends to zero. Comparing the two  limits as $z$ tends to zero leads to the conclusion that
\[
    g^*_{r,r}(x_0)\, g_{r,r}(x_0)=1.
\] 
By Lemma \ref{lemma:smoothnessinvmetrics:localmodel}, we also have that $g_{\phi,\phi}(x_0)=(2\pi)^2\, g_{r,r}(x_0)$. So using (\ref{eqn:deltacomputation:left-hand}) and (\ref{eqn:deltacomputation:right-hand}), it follows that $\delta^{11}(x_0)=4\pi$.

To prove item (iii), we ought to show that, for any $b$-vector field $\nu$ that is Euler-like with respect to an open facet $F$, the lift of $2\, \nu$ along \eqref{eqn:extension:horizontallift} is Euler-like with respect to $S_F$. In view of Remark \ref{rem:eulerlikevf:manwithcorncoord}, we can assume that $\nu=x^1\partial_{x^1}$. The lift of the $b$-vector field $2x^1\partial_{x^1}$  is of the form
\[
  \widetilde{2x^1\partial_{x^1}}=r\partial_{r}+f^\phi\partial_{\phi} +\sum_{j=1}^{n-1} f^\theta_j\partial_{\theta_j},
\] where $f^\phi$ and $f^\theta_j$ are $\Tt^n$-invariant smooth functions on $\mu_{1,n}^{-1}(U)$. By Remark \ref{rem:eulerlikevf:toriccoord}, it is enough to show that $f^\phi(x)=f^\theta_1(x)=\cdots=f^\theta_{n-1}(x)=0$ whenever $x^1=0$. Note that over $\mathring{S}$ one has
\begin{align*} 
0&=g\left(2\widetilde{x^1\partial_{x^1}},\partial_{\theta_l}\right)=r^2\,g_{r,\theta_l}+g_{\phi,\theta_l}\, f^\phi+\sum_{j=1}^{n-1}g_{\theta_j,\theta_l}\, f^\theta_j,\\
0&= \frac{1}{r^2}\, g\left(2\widetilde{x^1\partial_{x^1}},\partial_{\phi}\right)=r^2\, g_{\phi,r}+g_{\phi,\phi}\, f^\phi+\sum_{j=1}^{n-1}g_{\phi,\theta_j}\, f^\theta_j,
\end{align*}
where we first we used that $\ker(\d\mu)$ and $\mathcal{L}$ are orthogonal over $\mathring{S}$, and then we applied Lemma \ref{lemma:smoothnessinvmetrics:localmodel}. In view of (\ref{eqn:deltacomputation:left-hand}), letting $z$ tend to zero it follows that $(f^\phi(x),f_1^\theta(x),\dots,f_{n-1}^\theta(x))$ belongs to the kernel of the invertible matrix \eqref{eqn:matrix:deltasharp} whenever $x^1=0$. So,  $f^\phi(x)=f^\theta_1(x)=\dots=f^\theta_{n-1}(x)=0$ if $x^1=0$.  


    This concludes the proof of one direction of Theorem \ref{thm:kahlmetrtoricsp}.
    
    \subsubsection{From hybrid $b$-metrics to invariant (K\"ahler) metrics}
    
    To prove the other direction of Theorem \ref{thm:kahlmetrtoricsp}, let $\mu:(S,\omega)\to M$ be a toric $(\T_\Lambda,\Omega_\Lambda)$-space with Delzant domain $\Delta:=\mu(S)$. Further, let $\theta$ be an elliptic connection and let $g$ be a hybrid $b$-metric on $\Delta$ as in the statement of the theorem. We use $\delta$ to denote the corresponding hybrid $b^*$-metric and we let $G$ be the unique invariant $\omega$-compatible K\"ahler metric on $\mathring{S}$ such that $\mu:(\mathring{S},G)\to (\mathring{\Delta},g)$ is a Riemannian submersion and $\ker\theta\vert_{\mathring{S}}=(\ker\d\mu\vert_{\mathring{S}})^\perp$. By the proof of Theorem \ref{thm:kahlmetrlagrfib}, this data satisfies \eqref{eqn:dualmetric:Delzspace:degenerate} on $\mathring{S}$. We ought to show that $G$ extends to a Riemannian metric on all of $S$. This extension is then automatically invariant and compatible with $\omega$, due to density of $\mathring{S}$.
    
    First, we will show that $G$ extends smoothly to $S$ as a tensor. Since this is a local property, we can verify this in standard toric coordinates $(z,\theta,x)$ by showing that the coefficients of $G$ are of the form in part (i) of Lemma \ref{lemma:smoothnessinvmetrics:localmodel}. Because the left square in \eqref{eqn:dualmetric:Delzspace:degenerate} commutes, the coefficients of $G$ involving only 
    \[ 
    \partial_{\phi_1},\dots,\partial_{\phi_k},\partial_{\theta_1},\dots,\partial_{\theta_{n-k}}
    \]
    are of this form, with the required $\Tt^n$-invariant smooth functions on $\mu_{k,n}^{-1}(U)$ given by the pull-backs of the corresponding coefficients $\delta^{ij}$ of $\delta$, as in \eqref{eq:singmetrloccoord:dual}. For the coefficients of $G$ involving $r_i\partial_{r_i}$, notice that the horizontal lift of $2x^i{\partial}_{x^i}$ must be of the form
    \begin{equation}
    \label{eqn:horizontallift:vanishingcoeff}
        \widetilde{2x^i{\partial}_{x^i}}=r_i\partial_{r_i}+r_i^2\,\Big(\sum_{j=1}^kf^\phi_{i,j}\partial_{\phi_j}+\sum_{j=1}^{n-k}f_{i,j}^\theta\partial_{\theta_j}\Big),
    \end{equation}
    where $f^\phi_j$ and $f^\theta_j$ are $\Tt^n$-invariant smooth functions on $\mu_{k,n}^{-1}(U)$. Indeed, this follows from the condition in (ii) in Theorem \ref{thm:kahlmetrtoricsp}, Remarks \ref{rem:eulerlikevf:manwithcorncoord} and \ref{rem:eulerlikevf:toriccoord}, and Proposition \ref{prop:invsmoothfunctions:localmodel}. Since the horizontal lift is orthogonal to $\ker\d\mu$, it follows that the coefficients $G\left(r_i\partial_{r_i},\partial_{\phi_l}\right)$ and $G\left(r_i\partial_{r_i},\partial_{\theta_l}\right)$ are of the required form, and that
    \begin{equation}
    \label{eqn:proof:fromhybridtokahlermetric:1}
    G\left(\widetilde{2x^i{\partial}_{x^i}},\widetilde{2x^l\partial_{x^l}}\right)=G\left(\widetilde{2x^i{\partial}_{x^i}},r_l\partial_{r_l}\right).
    \end{equation} 
   for $i,l\leq k$. Since $\mu$ is a Riemannian submersion, it holds that:
   \[
    G\left(\widetilde{2x^i{\partial}_{x^i}},\widetilde{2x^l\partial_{x^l}}\right)=\begin{cases} r_i^2r_l^2\mu^*(g_{il}) &\text{ if }i\neq l,\\
           r_i^2\mu^*(g_{ii}) &\text{ if }i=l,
       \end{cases}
   \]
    with $g_{il}\in \mathcal{C}^\infty(U)$ as in \eqref{eqn:localform:hybrid-b-metric}. So, it follows from \eqref{eqn:proof:fromhybridtokahlermetric:1} that the coefficients $G\left(r_i\partial_{r_i},r_l\partial_{r_l}\right)$ are also of the required form, and that   
    \[ 
   g_{\phi_i,\phi_i}=\mu_{k,n}^*(\delta^{ii})\quad\text{and}\quad g_{r_i,r_i}=\mu^*_{k,n}(g_{ii})\quad \text{over}\quad x^i=0.
    \]
   Since $g^\flat$ and $\delta^\sharp$ are inverse to each other, it holds that $g_{ii}(x)\delta^{ii}(x)=1$ when $x^i=0$. By the assumption on the residue of $g$, one has that $g_{ii}(x)=\frac{1}{4\pi}$ when $x^i=0$. Hence, it follows that
    \[ 
    g_{\phi_i,\phi_i}=(2\pi)^2g_{r_i,r_i}\quad \text{over}\quad x^i=0. 
    \]
    By similar arguments, the other coefficients are of the required form as well. So, the tensor $G$ indeed extends smoothly to all of $S$. This extension is symmetric and positive semi-definite, since it is so on the dense subset $\mathring{S}$. So, to conclude that it is a Riemannian metric on $S$, it suffices to show that the dual metric $G^*$ on $\mathring{S}$ extends smoothly, as a tensor, to all of $S$. This follows from an argument along the same lines as that for $G$, by instead using part (ii) of Lemma \ref{lemma:smoothnessinvmetrics:localmodel} and the coframe over $\mathring{S}\cap\mu^{-1}(U)$ dual to the frame:
    \[
        \widetilde{2x^1\partial_{x^1}},\dots,\widetilde{2x^k\partial_{x^k}},\widetilde{\partial_{x^{k+1}}},\dots,\widetilde{\partial_{x^{n}}},\partial_{\phi_1},\dots,\partial_{\phi_k},\partial_{\theta_1},\dots,\partial_{\theta_{n-k}}.
    \]

This concludes the proof of Theorem \ref{thm:kahlmetrtoricsp}.

\subsection{Examples}

\begin{example} 
\label{ex:trivspherebun:stdIA:cylinder:3}
Consider the integral affine cylinder $(M,\Lambda)=(\mathbb{S}^1\times \R,\Z\d x\oplus \Z\d h)$ and the Delzant domain $\Delta=\mathbb{S}^1\times [-1,1]$ as in Example \ref{ex:trivspherebun:stdIA:cylinder:1}. Further, consider the corresponding canonical toric $\T_\Lambda$-space $\mu:(\Tt^2\times \mathbb{S}^2,\omega)\to \Ss^1\times\R$ described in Example \ref{ex:trivspherebun:stdIA:cylinder}. As in Example \ref{ex:trivspherebun:stdIA:cylinder:2}, there are Hessian hybrid $b$-metrics of the form 
\[
g=\left(\frac{1}{2\pi(1-h^2)}+f(h)\right)\d h^2+c(x)\,\d x^2,
\] 
where now the singular term is so that the residue at both facets is $\tfrac{1}{4\pi}$. By a computation as in Example \ref{ex:trivspherebun:stdIA:cylinder:2}, the extremal condition holds if and only if $f=0$ and $c(x)$ is constant, {in which case the scalar curvature is 
\[
S_G=2\pi.
\]
}Using the flat Lagrangian elliptic connection $D_{a,b}$ of Example \ref{ex:trivspherebun:stdIA:cylinder:1}, Theorem \ref{thm:kahlmetrtoricsp} gives the family of $\omega$-compatible extremal K\"ahler metrics 
\begin{equation}
    \label{eq:metric:trivspherebun}
    G=\left(\frac{1}{\tau(h)}+\frac{b^2}{c}\right)\d h^2+\tau(h)(b\d x-\d \phi)^2+\frac{2b}{c}(a\d x-\d y)\d h+g_{\Tt^2},
\end{equation}
where $\tau(h)=2\pi(1-h^2)$ and $g_{\Tt^2}$ is the flat metric on the torus given by
\[ g_{\Tt^2}=\frac{1}{c}\big((a^2+c^2)\d x^2-2a\d x\d y+ \d y^2\big). \]
Note that when $a=b=0$ and $c=1$ we recover the product of the round metric on $\Ss^2$ with the standard flat metric on $\Tt^2$. 
\end{example}

\begin{remark} {The underlying complex manifolds of the K\"ahler manifolds in Example \ref{ex:trivspherebun:stdIA:cylinder:3} can also be described as follows. Consider the complex torus $\C/\Lambda$, with $\Lambda=\Z\oplus (-a+ic)\Z$ for $a,c$ as in the Example \ref{ex:trivspherebun:stdIA:cylinder:3}. Further consider, for $\lambda\in \C^\times$, the ruled surface $X_{\lambda}:=(\C/\Z)\times_\Z \C\mathbb{P}^1$ over $\C/\Lambda$, where the $\Z$-action on $(\C/\Z)\times\C\mathbb{P}^1$ is generated by
\[
([z],[w_0:w_1])\mapsto ([z-a+ic],[w_0:\lambda w_1]).
\] The $X_\lambda$ form a non-trivial family of complex manifolds (see, e.g., \cite{Suwa69}). For each $b
\in \R$, the complex manifold $(\Tt^2\times \Ss^2,J:=J_{a,b,c})$ in Example \ref{ex:trivspherebun:stdIA:cylinder:3} is part of this family: it is biholomorphic to $X_\lambda$, for $\lambda=e^{2\pi ib}$, via the map
\[
\Tt^2\times \Ss^2\to X_\lambda,\quad (y,x,\phi,h)\mapsto \big[[y+(-a+ic)x-bh],\Phi(\phi-bx,h)\big],
\] where $\Phi:\Ss^2\to\C\mathbb{P}^1$ denotes the diffeomorphism $(u,v,h)\mapsto [\tfrac{u+iv}{1+h}:1]$.
}

\end{remark}

\begin{example} 
\label{ex:nontrivspherebun:nonstdIA:cylinder:3}
Let $(M,\Lambda)=(\mathbb{S}^1\times ]-2,\infty[,\Z((h+2)\d x+x\d h)\oplus \Z\d h)$ be  the non-standard integral affine cylinder and consider the Delzant domain $\Delta=\mathbb{S}^1\times [-1,1]$ (see Example \ref{ex:nontrivspherebun:nonstdIA:cylinder:1}). Further, we consider the corresponding canonical toric $\T_\Lambda$-space $\mu:(\Tt^2\twprod \mathbb{S}^2,\omega)\to \Ss^1\times]-2,\infty[$, which is described in Example \ref{ex:nontrivspherebun:nonstdIA:cylinder}. Similar to Example \ref{ex:nonstdcyl}, there are Hessian hybrid $b$-metrics of the form 
\[
g=\left(\frac{1}{2\pi(1-h^2)}+f(h)\right)\d h^2+ (h+2)c\,\d x^2,
\]
with $c>0$ a constant and $f\in C^\infty(]-2,\infty[)$. Let us define a function $\tau(h)$ by
\[
\frac{1}{\tau(h)}:=\frac{1}{2\pi(1-h^2)}+f(h).
\]
One finds the boundary conditions
\[ \tau(\pm 1)=0,\quad \tau'(\pm 1)=\mp 4\pi. \]
Applying formula \eqref{eqn:scalcurvexpr:lagrfib}, one finds that
\[ 
S_G=-\frac{1}{2}\tau''(h)-\frac{\tau'(h)}{h+2}. 
\]
Proceeding as in Example \ref{ex:nonstdcyl}, taking into account the boundary conditions, one finds that $S_G$ is an affine function if and only if
\[
\tau(h)=-\frac{2\pi}{11}\left(2(h+2)^3-5(h+2)^2-15+\frac{18}{h+2}\right).
\]
This gives the affine function
\[ S_G=\frac{2\pi}{11}(12h+9).\]
The corresponding Hessian hybrid $b$-metric takes the form
\[
g=\left(\frac{1}{2\pi(1-h^2)}+\frac{1}{\pi(2h^2+11h+20)}\right)\d h^2+(h+2)c\,\d x^2.
\]
We can now use the family of elliptic Lagrangian connections $D_{a}$ on the $\T_\Lambda$-space $\mu:(\Tt^2\twprod \mathbb{S}^2,\omega)\to \Ss^1\times]-2,\infty[$ discussed in Example \ref{ex:nontrivspherebun:nonstdIA:cylinder:1} to lift $g$ to the family of $\omega$-compatible extremal K\"ahler metrics
\begin{equation}
    \label{eq:metric:nontrivspherebun}
    G=\frac{1}{\tau(h)}\d h^2+\tau(h)\theta^2+(h+2)g_{\Tt^2},
\end{equation}
where $\theta:=(x\d y+\d \phi)$ and $g_{\Tt^2}$ is the flat metric on the torus given by
\[ g_{\Tt^2}=\frac{1}{c}\big((a^2+c^2)\d x^2-2a\d x\d y+ \d y^2\big). \]
Notice the similarity between formulas \eqref{eq:metric:trivspherebun} and \eqref{eq:metric:nontrivspherebun} when $b=0$. The metric \eqref{eq:metric:nontrivspherebun} appears as a special case of the metrics considered by Apostolov et al.~\cite{ACGT08} (see the formula in Theorem 3, loc.~cit.). The precise relation between the approach followed here and the results in \cite{ACGT08}  will be discussed in a sequel to this paper.
\end{example}

\subsection{Invariant K\"ahler metrics via K\"ahler reduction}
\label{sec:invariant:metrics:finite:type}
The Delzant-type construction from Section \ref{sec:delzant} (see Theorem \ref{thm:Delzant}) can be used to construct invariant K\"ahler metrics via K\"ahler reduction, much like in \cite{Guill94}.

Let $(M,\Lambda)$ be a connected integral affine manifold and $\Delta\subset M$ a Delzant domain of finite type. Assume that $M$ admits a Hessian metric $g$ and let $\widetilde{g}$ be its lift to the universal covering space $(\widetilde{M},\widetilde{\Lambda})$. Let $G_{\widetilde{\Lambda}}$ be the $\Omega_{\widetilde{\Lambda}}$-compatible invariant K\"ahler metric on $\T_{\widetilde{\Lambda}}$ obtained by lifting $\widetilde{g}$ via the canonical Lagrangian 
connection. 

\begin{proposition}
\label{prop:reduced:metric}
    Let $(M,\Lambda,g)$ be a connected integral affine Hessian manifold and let $\Delta$ be a Delzant domain of finite type. Consider the product metric $G_{\widetilde{\Lambda}}\times g_\st$ on $\T_{\widetilde{\Lambda}}\times \C^d$. The Hessian hybrid $b$-metric $g_\Delta$ on $\Delta$ induced by the reduced K\"ahler metric on the symplectic quotient 
    \[
    ((\T_{\widetilde{\Lambda}}\times \C^d)\sslash(\Gamma\ltimes \Tt^d),\,\omega_\red) 
    \] 
    takes the form
    \[
    g_\Delta=g+\mathrm{Hess}_\Lambda(\phi),
    \] 
    where $\phi$ is the smooth function on $\mathring{\Delta}$ corresponding to the function on $\widetilde{M}$ given by 
    \begin{equation}
        \label{eq:canonical:potential}
        \widetilde{\phi}=-\frac{1}{4\pi}\sum_{i=1}^d\ell_i\log|\ell_i|.
    \end{equation}
\end{proposition}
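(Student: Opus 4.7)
The strategy is to reduce to the 1-connected case and then apply the standard rule for symplectic potentials under toric K\"ahler reduction.

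First, I would reduce to the case $\widetilde M = M$, $\Gamma = 1$, $\widehat\Delta = \Delta$. Since $\widetilde{g}$ is $\Gamma$-invariant, $G_{\widetilde\Lambda}$ is $\Gamma$-equivariant (for the natural $\Gamma$-action by cotangent lift), and the product K\"ahler structure on $\T_{\widetilde\Lambda}\times \C^d$ is preserved by $\Gamma\ltimes \Tt^d$. Reducing in stages---first by the normal subgroup $\Tt^d$, then by $\Gamma$---yields the K\"ahler metric on the quotient, and the Hessian hybrid $b$-metric it induces on $\Delta$ is the $\Gamma$-quotient of the one induced on $\widehat\Delta$. Since the $\Gamma$-action on $\widehat\Delta$ permutes the open facets and hence their primitive boundary defining functions $\ell_1,\dots,\ell_d$, the function $\widetilde\phi := -\tfrac{1}{4\pi}\sum_i \ell_i\log|\ell_i|$ on $\widehat{\mathring\Delta}$ is $\Gamma$-invariant and descends to $\phi$ on $\mathring\Delta$. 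Hence it suffices to treat the 1-connected case.

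Next, I would work in an integral affine chart $(U,x^i)$ on $M$ where $g|_U = \mathrm{Hess}_\Lambda(\psi)$ for some potential $\psi$. By Theorem \ref{thm:kahlmetrlagrfib}, in action-angle coordinates $(x^i,\theta_i)$ on $\T_\Lambda|_U$ the metric $G_{\widetilde\Lambda}$ is the toric K\"ahler metric with symplectic potential $\psi$, namely $\sum \psi_{ij}\d x^i\d x^j + \sum \psi^{ij}\d\theta_i\d\theta_j$. On $\C^d$, a direct computation from $\omega_\st = \sum \tfrac{1}{2\pi i}\d z_j\wedge \d\bar z_j$ in action-angle coordinates $(y_j,\tau_j)$, with $y_j = |z_j|^2$ and $\tau_j\in\R/\Z$ parametrizing the $S^1$-action, gives $(g_\st)_{y_jy_j} = \tfrac{1}{4\pi y_j}$, so that $g_\st$ is the toric K\"ahler metric with symplectic potential $\psi_0(y) = \tfrac{1}{4\pi}\sum_j y_j\log y_j$ (modulo affine terms). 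Therefore the product K\"ahler metric on $\T_\Lambda|_U\times \C^d$ is a toric $(\Tt^n\times \Tt^d)$-K\"ahler manifold with moment image $U\times \R^d_{\geq 0}$ and total symplectic potential $\Psi_\mathrm{tot}(x,y) = \psi(x) + \psi_0(y)$.

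I would then apply the symplectic potential restriction rule for toric K\"ahler reduction. The diagonal $\Tt^d$ sits inside $\Tt^n\times \Tt^d$ via the Lie algebra embedding $\R^d \to \R^n\oplus \R^d$, $e_i \mapsto (\d\ell_i, e_i)$, and its moment map is $\mu_\phi(x,y) = \ell(x) + y$ with $\ell := (\ell_1,\dots,\ell_d)$. Reducing at level zero imposes $y_j = -\ell_j(x)$, and by Theorem \ref{thm:Delzant:1:connected} the quotient is equivalent to $S_\Delta$. The residual $\T_\Lambda$-action furnishes action coordinates $x$ on the reduction, and the reduced symplectic potential is obtained by restricting $\Psi_\mathrm{tot}$ to the level set:
\[
    \Psi_\mathrm{red}(x) = \psi(x) + \psi_0(-\ell(x)) = \psi(x) - \tfrac{1}{4\pi}\sum_i \ell_i(x)\log|\ell_i(x)| = \psi(x) + \phi(x),
\]
modulo an affine function of $x$. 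Taking Hessians yields $g_\Delta|_{U\cap \mathring\Delta} = \mathrm{Hess}_\Lambda(\psi + \phi) = g + \mathrm{Hess}_\Lambda(\phi)$. Since both sides are Hessian hybrid $b$-metrics on $\Delta$ (by Theorem \ref{thm:kahlmetrtoricsp}) agreeing on the dense subset $\mathring\Delta$, they agree on all of $\Delta$.

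The main obstacle is to justify the ``symplectic potential restriction rule'' for toric K\"ahler reduction in this hybrid setting. This is classical in toric K\"ahler geometry (cf.~work of Guillemin and Abreu), and the argument adapts because, on the product $\T_\Lambda|_U\times \C^d$, K\"ahler potentials and moment maps decompose additively over the two factors; the ambiguity of $\Psi_\mathrm{red}$ by an affine function matches the choice of a complement to the diagonal $\Tt^d$ in $\Tt^n\times \Tt^d$ and is killed by the Hessian. Alternatively, one may verify the conclusion directly on the open dense subset where $\mu_\Delta : S_\Delta\to M$ is a principal $\T_\Lambda$-bundle: via the explicit symplectomorphism of Theorem \ref{thm:Delzant:1:connected} identifying $\mu_\Delta^{-1}(\mathring\Delta)$ with $\T_\Lambda|_{\mathring\Delta}$, compute the orthogonal complement to the fibers in the reduced K\"ahler metric and read off the induced base Hessian metric.
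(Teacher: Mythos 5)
Your proposal is correct and follows essentially the same route as the paper: the paper's own proof is a short sketch invoking ``the usual K\"ahler reduction recipe'' (restrict the product metric to the zero level and descend), together with exactly your observation that $\widetilde\phi$ descends to $\mathring\Delta$ because $\Gamma$ permutes the primitive boundary defining functions. Your version simply carries out that recipe explicitly via symplectic potentials --- correctly identifying $\tfrac{1}{4\pi}\sum_j y_j\log y_j$ as the potential of $(\C^d,\omega_{\mathrm{st}},g_{\mathrm{st}})$ in the action coordinates $y_j=|z_j|^2$ and applying the restriction rule to the level set $y=-\ell(x)$ --- which supplies more detail than the paper itself provides.
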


The proof follows the usual K\"ahler reduction recipe: one restricts the metric to the zero level, obtaining a $(\Gamma\ltimes \Tt^d)$-invariant metric, that then descends to the symplectic quotient, yielding the expression in the theorem. Notice that, although each term in  \eqref{eq:canonical:potential} is a function on $\widetilde{M}$ which may fail to descend to $\mathring{\Delta}$, the sum does descend to a smooth function, since the $\Gamma$-action permutes the primitive boundary defining functions.

\begin{remark}
For a Delzant polytope, Guillemin showed in \cite{Guill94} that, if one applies K\"ahler reduction to the standard Delzant construction, one obtains an invariant K\"ahler metric on the symplectic toric manifold which induces a Hessian metric on $\mathring{\Delta}$ with potential given by \eqref{eq:canonical:potential}. Our Delzant type construction is different from the standard one (see Example \ref{ex:standard:Delzant}) and results in the presence  of the original Hessian metric on $M$ as an additional term. This is unavoidable since for a general Delzant domain of finite type there may not exist a global potential. Moreover, the Hessian of \eqref{eq:canonical:potential} may not even define a metric in $\mathring{\Delta}$. This is the case, for instance, in Examples \ref{ex:trivspherebun:stdIA:cylinder:3} and 
\ref{ex:nontrivspherebun:nonstdIA:cylinder:3}. 
Also, the Hessian hybrid $b$-metrics constructed in those examples show that $\Delta$ may carry Hessian hybrid $b$-metrics with residues as in Theorem \ref{thm:kahlmetrtoricsp} which are not obtained by K\"ahler reduction as in Proposition \ref{prop:reduced:metric}.
\end{remark}

\begin{corollary}
    Any finite-type Delzant domain of a Hessian integral affine manifold admits a Hessian hybrid $b$-metric with residues as in Theorem \ref{thm:kahlmetrtoricsp}. 
\end{corollary}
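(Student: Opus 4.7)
The plan is to obtain the desired Hessian hybrid $b$-metric by K\"ahler reduction, directly applying the construction underlying Proposition \ref{prop:reduced:metric}. Given the finite-type Delzant subspace $\Delta\subset(M,\Lambda)$ and a Hessian metric $g$ on $M$, I would first pass to the universal cover $(\widetilde{M},\widetilde{\Lambda})$, where the lifted metric $\widetilde{g}$ is still Hessian and the finite-type hypothesis provides a complete list of primitive boundary defining functions $\ell_1,\dots,\ell_d$ for $\widehat{\Delta}$. Theorem \ref{thm:kahlmetrlagrfib} applied to the Lagrangian fibration $\T_{\widetilde{\Lambda}}\to \widetilde{M}$ with its canonical horizontal distribution (which is flat and Lagrangian) produces an $\Omega_{\widetilde{\Lambda}}$-compatible invariant K\"ahler metric $G_{\widetilde{\Lambda}}$ on $\T_{\widetilde{\Lambda}}$ inducing $\widetilde{g}$.

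The second step is to form the product K\"ahler manifold $(\T_{\widetilde{\Lambda}}\times\C^d,\,\Omega_{\widetilde{\Lambda}}+\omega_\st,\,G_{\widetilde{\Lambda}}\times g_\st)$ and invoke the Delzant-type construction of Theorem \ref{thm:Delzant}. The group $\Gamma\ltimes\Tt^d$ acts by K\"ahler isometries: the $\Tt^d$-action preserves the K\"ahler structure because it is Hamiltonian and linear in the $\C^d$-factor, while the $\Gamma$-action preserves the K\"ahler structure on $\T_{\widetilde{\Lambda}}$ because it covers integral affine transformations of $(\widetilde{M},\widetilde{\Lambda},\widetilde{g})$ (using that $g$ is a metric on the quotient $M$), and permutes the $\C^d$-coordinates according to its action on $\H(\widehat{\Delta})$. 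Thus standard K\"ahler reduction applies at the regular level set $\mu_\phi^{-1}(0)$, producing an invariant $\omega_\red$-compatible K\"ahler metric $G_\red$ on the toric $\T_\Lambda$-space $(S_\Delta,\omega_\Delta)$ from Theorem \ref{thm:Delzant}.

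The third and concluding step is to invoke the forward direction of Theorem \ref{thm:kahlmetrtoricsp}: the invariant K\"ahler metric $G_\red$ on $S_\Delta$ induces a Hessian hybrid $b$-metric on $\Delta=\mu(S_\Delta)$ whose residue at each open facet is the corresponding primitive outward-pointing normal multiplied by $\tfrac{1}{4\pi}$. This is exactly the Hessian hybrid $b$-metric asserted by the corollary; its explicit form $g+\mathrm{Hess}_\Lambda(\phi)$, with $\phi$ given by \eqref{eq:canonical:potential}, is recorded in Proposition \ref{prop:reduced:metric}.

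The only genuine obstacle is the K\"ahler reduction step, specifically verifying that the product K\"ahler structure really is invariant under $\Gamma\ltimes\Tt^d$ and descends nondegenerately to the quotient. This is the technical content of Proposition \ref{prop:reduced:metric} and amounts to a coordinate computation using that the $\Gamma$-action preserves $G_{\widetilde{\Lambda}}$ (inherited from the Hessian structure of $M$) and permutes the Hermitian factors of $(\C^d,g_\st)$ according to its action on $\H(\widehat{\Delta})$. Once that is in hand, the corollary is just the observation that the reduced metric satisfies the residue hypothesis of Theorem \ref{thm:kahlmetrtoricsp}, which holds automatically for \emph{any} invariant K\"ahler metric on a toric $\T$-space.
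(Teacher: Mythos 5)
Your proposal is correct and follows essentially the same route as the paper: the corollary is an immediate consequence of Proposition \ref{prop:reduced:metric} (Kähler reduction of the product metric $G_{\widetilde{\Lambda}}\times g_{\mathrm{st}}$ through the Delzant-type construction of Theorem \ref{thm:Delzant}) combined with the forward direction of Theorem \ref{thm:kahlmetrtoricsp}, which is exactly the chain of steps you describe. The additional detail you supply on why $\Gamma\ltimes\Tt^d$ acts by Kähler isometries is a useful elaboration of what the paper leaves implicit in the phrase "the usual K\"ahler reduction recipe," but it is not a different argument.
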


\end{document}